\numberwithin{equation}{section}
\newcommand{\arxiv}[1]{{\tt arXiv:#1}}
\newtheorem{Proposition}{Proposition}[section]
\newtheorem{Lemma}[Proposition]{Lemma}
\newtheorem{Theorem}[Proposition]{Theorem}
\newtheorem{Corollary}[Proposition]{Corollary}
\theoremstyle{definition}
\newtheorem{Conjecture}[Proposition]{Conjecture}
\theoremstyle{remark}
\newtheorem{Remark}[Proposition]{Remark}
\def\V{\mathbb V}
\def\O{\mathcal O}
\def\tabA{\mathsf A}
\def\tabB{\mathsf B}
\def\tabC{\mathsf C}
\def\tabD{\mathsf D}
\def\AA{A}
\def\BB{B}
\def\CC{C}
\def\sub{\subseteq}
\newcommand{\Tab}{\operatorname{Tab}}
\newcommand{\col}{\operatorname{col}}
\newcommand{\row}{\operatorname{row}}
\newcommand{\g}{\mathfrak{g}}
\newcommand{\rr}{\mathfrak{r}}
\newcommand{\n}{\mathfrak{n}}
\newcommand{\h}{\mathfrak{h}}
\newcommand{\gl}{\mathfrak{gl}}
\newcommand{\m}{\mathfrak{m}}
\newcommand{\p}{\mathfrak{p}}
\def\qdim{\operatorname{dim}_q}
\def\upit{\downtouparrow}
\def\HC{\operatorname{HC}}
\def\hc{\operatorname{hc}}
\def\lex{\operatorname{lex}}
\def\rsmod{\operatorname{smod-}\!}
\def\rsmodchi{\operatorname{smod_\chi-}\!}
\def\lmod{\!\operatorname{-mod}}
\def\lsmod{\!\operatorname{-smod}}
\def\qdim{\operatorname{dim}_q}
\newcommand{\sqbinom}[2]{\genfrac{[}{]}{0pt}{}{#1}{#2}}
\def\op{\operatorname{op}}
\def\pr{\operatorname{pr}}
\def\prr{\mathrm{p}}
\def\lmof{\!\operatorname{-mod_{\operatorname{fd}}}}
\def\grlmof{\!\operatorname{-grmod_{\operatorname{fd}}}}
\def\lsmof{\!\operatorname{-smod_{\operatorname{fd}}}}
\def\mat{\operatorname{def}}
\def\atyp{\operatorname{atyp}}
\def\coloneqq{:=}
\def\deg{\operatorname{deg}}
\def\parity{\operatorname{par}}
\def\Hom{\operatorname{Hom}}
\def\id{\operatorname{id}}
\def\wt{{\operatorname{wt}}}
\def\End{{\operatorname{End}}}
\def\Rep{{\operatorname{Rep}}}
\def\ostar{\circledast}
\def\Q{{\mathbb Q}}
\def\C{{\mathbb C}}
\def\Z{{\mathbb Z}}
\def\N{{\mathbb N}}
\def\0{{\bar 0}}
\def\1{{\bar 1}}
\def\eps{{\varepsilon}}
\def\sig{{\sigma}}
\def\bsig{{\text{\boldmath$\sigma$}}}
\def\b{\mathfrak b}
\def\g{{\mathfrak g}}
\def\gl{\mathfrak{gl}}
\def\h{\mathfrak h}
\def\m{\mathfrak m}
\def\n{\mathfrak n}
\def\p{\mathfrak p}
\def\t{\mathfrak t}
\def\cH{\mathcal H}
\def\cS{\mathcal S}
\def\roweq{\sim}
\def\linked{\approx}
\title[Whittaker coinvariants]{Whittaker coinvariants for $\bm{\mathrm{GL}(m|n)}$}
\author{Jonathan Brundan and Simon M.~Goodwin}
\address{Department of Mathematics, University of Oregon, Eugene, OR 97403, USA}
\email{brundan@uoregon.edu}
\address{School of Mathematics,
University of Birmingham,
Birmingham, B15 2TT,
UK}
\email{s.m.goodwin@bham.ac.uk}
\thanks{2010 {\it Mathematics Subject Classification}: 17B10, 17B37.}
\thanks{First author supported in part by NSF grant nos. DMS-1161094
  and DMS-1700905.}
\thanks{Second author supported in part by EPSRC grant no. EP/R018952/1.}
\begin{document}

\begin{abstract}
Let $W_{m|n}$ be
the (finite) $W$-algebra attached to the
principal
nilpotent orbit in the general linear Lie superalgebra
$\mathfrak{gl}_{m|n}(\mathbb{C})$.
In this paper we study the {\em Whittaker coinvariants functor}, which
is an exact functor from
category $\mathcal O$ for
$\mathfrak{gl}_{m|n}(\mathbb{C})$ to a certain category of
finite-dimensional modules over $W_{m|n}$.
We show that this functor has
properties similar to Soergel's
functor $\V$ in the setting of category $\mathcal O$ for a semisimple
Lie algebra.
We also use it to compute the center of $W_{m|n}$ explicitly, and deduce
consequences for
the classification of blocks of $\mathcal O$ up to Morita/derived equivalence.
\end{abstract}

\maketitle

\section{Introduction}

This article is a sequel to \cite{BBG}, in which we began a study of the
{\em principal $W$-algebra} $W = W_{m|n}$ associated to the general linear Lie superalgebra
$\mathfrak{g} = \mathfrak{gl}_{m|n}(\C)$.
This associative superalgebra is a quantization
of the Slodowy slice to the principal nilpotent orbit in $\g$; see
e.g.\ \cite{P, GG, Losev} for more
about (finite) $W$-algebras in the purely even case.

There are several different approaches to the construction of $W$.
We begin by briefly recalling one of these in more detail.
Since
$\mathfrak{gl}_{m|n}(\C) \cong
\mathfrak{gl}_{n|m}(\C)$, there is no loss in generality in assuming
throughout the article that $m \leq n$.
Pick a nilpotent element $e \in \g_{\0}$ with just two Jordan blocks (necessarily of sizes $m$ and $n$), and let
$\g = \bigoplus_{d \in \Z} \g(d)$ be a good grading for $e \in \g(1)$.
Let $\p := \bigoplus_{d \geq 0} \g(d)$ and $\m := \bigoplus_{d < 0} \g(d)$.
We get a generic character
$\chi:\m \to \C$ by taking the supertrace form with $e$.
Setting
$\m_\chi := \{x-\chi(x) \:|\:x \in \m\} \sub U(\m)$, we then have by definition that
$$
W := \{u \in U(\p) \:|\:  u \, \m_\chi \sub \m_\chi U(\g) \}.
$$
In \cite{BBG}, we obtained a presentation for $W$ by generators and relations, showing that it is a certain truncated shifted version of the Yangian
$Y(\gl_{1|1})$. In
particular, it is quite close to being supercommutative.
We also classified its irreducible representations via highest weight theory.
Every irreducible representation arises as a quotient of an appropriately defined Verma module, all of which have dimension $2^m$.
Then there is another more
explicit construction of the irreducible representations, implying that
they have dimension
$2^{m-t}$ for some {\em atypicality} $0 \leq t \leq m$.

By a {\em Whittaker vector},
we mean a vector $v$ in some right $\g$-module
such that
$v x = \chi(x) v$ for each $x \in \m$; equivalently, $v\,\m_\chi  = 0$.
This is the appropriate analog for $\g$ of the notion of a Whittaker vector
for a semisimple Lie algebra
as studied in
Kostant's classic paper \cite{K}.  From the definition of $W$, we see that the space
of Whittaker vectors, which we denote by $H^0(M)$, is a right $W$-module.
We refer to $H^0$ as the {\em Whittaker invariants} functor.
On the other hand, for a left $\g$-module $M$, it is clear from the definition of $W$ that the space
$H_0(M):= M / \m_\chi M$
of {\em Whittaker coinvariants} is a left $W$-module.
The restriction of this functor to
the
BGG category $\O$ for $\g$
(defined with respect to the
standard Borel subalgebra $\b$ of $\g$ such that $\b_{\0} = \p_{\0}$)
gives an exact functor
from $\O$ to the
category of finite-dimensional left $W$-modules.

The main goal in the first part of this article is to
describe the effect of $H_0$ on
various natural families of modules in $\mathcal O$.
In particular, in
Theorem~\ref{T:main}, we show that it sends
Verma modules in $\mathcal O$ (induced from the standard Borel $\b$) to
the Verma modules for $W$.
Our proof of this is elementary but surprisingly technical, and
it turns out
to be the key
ingredient needed for many things after that. We use it to
show that $H_0$ sends
irreducible modules in $\mathcal O$ of maximal Gelfand--Kirillov dimension
to irreducible $W$-modules, and it sends all other irreducibles in
$\mathcal O$ to zero.
Moreover, every irreducible $W$-module arises in this way.
We also compute
the composition multiplicities of Verma modules for $W$.
They always have composition length $2^t$ where
$t$ is the atypicality mentioned earlier, but are not necessarily
multiplicity-free.
As another more surprising application, we deduce that the
center of $W$ is canonically isomorphic to the center of $U(\g)$; see Theorem~\ref{zonto}.
 Thus central characters for $\g$ and $W$ are identified.

After that, we restrict
attention just to the subcategory
$\mathcal O_\Z$ of $\mathcal O$ that is the sum of all of its blocks
with
integral central character.
Let $\overline{\mathcal O}_\Z$ be the full subcategory of
$W\lmod$ consisting of the $W$-modules
isomorphic to $H_0(M)$ for
$M \in \mathcal O_\Z$.
We show that $\overline{\mathcal O}_\Z$ is Abelian,
and
the Whittaker coinvariants functor restricts to an
exact functor
$$H_0:
\O_\Z
\to\overline{\mathcal O}_\Z
$$
which
satisfies the universal property of the quotient of $\mathcal
O_\Z$ by the Serre subcategory $\mathcal T_\Z$
consisting of all the modules of less
than maximal Gelfand--Kirillov dimension; see Theorem~\ref{qmain}.
Thus,
$\overline{\mathcal O}_\Z$
is an explicit realization of the
Serre quotient $\mathcal O_\Z / \mathcal T_\Z$.
By
\cite[Theorem 4.10]{BLW}, the quotient functor
$\O_\Z \rightarrow \O_\Z / \mathcal T_\Z$
is fully faithful on projectives, hence, so too is $H_0$.
This is reminiscent of a result of Backelin \cite{Back} in the setting
of category $\O$ for a semisimple Lie algebra. Backelin's result
was based ultimately on
the {\em Struktursatz} from \cite{Soergel}.
In that case,
Soergel's
{\em Endomorphismensatz}
shows moreover
that the blocks of the quotient category
can be realized explicitly in terms of the
cohomology algebras of some underlying partial flag varieties.

It would be very interesting to
establish some sort
of analog of Soergel's Endomorphismensatz in the super case.
Ideally, this would
give an
explicit
combinatorial description (e.g.\ by quiver and relations) of the basic algebras
$\BB_\xi$ that are
Morita equivalent to the various blocks $\overline{\mathcal O}_\xi$
of our category $\overline{\mathcal O}_\Z$. Note these
algebras are not commutative in general;
e.g.\ see \cite[Example 4.7 and Remark 4.8]{Bsurvey} for some baby
examples.
In Soergel's proof of
the Endomorphismensatz,
the cohomology algebras of
partial flag varieties
arise as quotients of $Z(\g)$,
which is also
the principal $W$-algebra in that setting
according to \cite{K}.
Paralleling this in the super case, we show that all the maximally atypical $\BB_\xi$'s can be
realized as quotients of a certain idempotented form
$\dot W$ of $W$; see Theorems~\ref{landing} and \ref{moreispossible}.
We also compute explicitly the Cartan matrix of
$\BB_\xi$; see Theorem~\ref{crazy} for an elementary proof based on
properties of the Whittaker coinvariants functor, and Theorem~\ref{crazier}
for a proof based on the super Kazhdan-Lusztig conjecture of
\cite{CLW, BLW}
(which has the advantage of incorporating the natural grading).

We end the article by
discussing some applications to the classification of blocks of
$\O_\Z$, both
up to Morita equivalence and up to gradable derived equivalence in the
sense of \cite[Definition 4.2]{CM}; see Theorems~\ref{freebie} and
\ref{moritatheorem} and Conjectures~\ref{firstc} and \ref{crossbow}.

Finally in the introduction, we draw attention to the work of Losev in \cite{Losev2}.
This paper includes a study of Whittaker coinvariants functors associated to
arbitrary nilpotent orbits in semisimple Lie algebras. Lie superalgebras are not considered in detail,
though some remarks are made about how the theory may apply in this situation in \cite[\S6.3.2]{Losev2}.
His theory includes many of the features
discussed above. In particular, he also views these functors as some generalized Soergel functors.
The approach in this paper is quite different and leads to more explicit results, which we require
for the subsequent applications.

\vspace{2mm}
\noindent
{\em Acknowledgements.}
This article was written in part during  the programme ``Local
Representation Theory and Simple Groups''
held at the Bernoulli Center, EPFL, Lausanne, Switzerland in Autumn 2016.
We also thank Kevin Coulembier for pointing out the counterexample
mentioned after Conjecture 4.37.

\vspace{2mm}
\noindent
{\em Notation.}
We fix once and for all some choice of {\em parity function}
$\parity:\C \rightarrow \Z/2$
such that $\parity(0) = \0$ and $\parity(z+1) = \parity(z)+\1$ for all $z \in \C$.
Also, $\leq$ denotes the partial order on $\C$
defined by $z \leq w$ if $w-z \in \N$.

\section{Category \texorpdfstring{$\mathcal O$}{O} for the general linear Lie superalgebra}\label{sprelim}
In this section, we set up our general combinatorial notation, then
review various standard facts about category
$\mathcal O$ for $\mathfrak{gl}_{m|n}(\C)$.
We assume from the outset that $m \leq n$ as this will be essential
when we introduce the $W$-algebra in the next section, but note
that the general results in this section do not depend on this hypothesis.

\subsection{Combinatorics}\label{scomb}
We fix integers $0 \leq m \leq n$
and a two-rowed {\em pyramid} $\pi$
with $m$ boxes in the first
(top) row and $n$ boxes in the second (bottom) row.
We require that the top row does not jut out past the bottom row.
For example, here are the possible pyramids for $m = 2$ and $n = 5$:
$$
\begin{array}{c}
\begin{picture}(60,24) \put(0,0){\line(1,0){60}}
\put(0,12){\line(1,0){60}} \put(0,24){\line(1,0){24}}
\put(0,0){\line(0,1){24}} \put(12,0){\line(0,1){24}}
\put(24,0){\line(0,1){24}} \put(36,0){\line(0,1){12}}
\put(48,0){\line(0,1){12}}
\put(60,0){\line(0,1){12}}
\put(3,14.5){\hbox{1}}
\put(15,14.5){\hbox{2}}
\put(3,2.5){\hbox{3}}
\put(15,2.5){\hbox{4}}
\put(27,2.5){\hbox{5}}
\put(39,2.5){\hbox{6}}
\put(51,2.5){\hbox{7}}
\end{picture}
\end{array}
,\:\:\:\:
\begin{array}{c}
\begin{picture}(60,24) \put(0,0){\line(1,0){60}}
\put(0,12){\line(1,0){60}} \put(12,24){\line(1,0){24}}
\put(0,0){\line(0,1){12}} \put(12,0){\line(0,1){24}}
\put(24,0){\line(0,1){24}} \put(36,0){\line(0,1){24}}
\put(48,0){\line(0,1){12}}
\put(60,0){\line(0,1){12}}
\put(15,14.5){\hbox{1}}
\put(27,14.5){\hbox{2}}
\put(3,2.5){\hbox{3}}
\put(15,2.5){\hbox{4}}
\put(27,2.5){\hbox{5}}
\put(39,2.5){\hbox{6}}
\put(51,2.5){\hbox{7}}
\end{picture}
\end{array},\:\:\:\:
\begin{array}{c}
\begin{picture}(60,24) \put(0,0){\line(1,0){60}}
\put(0,12){\line(1,0){60}} \put(24,24){\line(1,0){24}}
\put(0,0){\line(0,1){12}} \put(12,0){\line(0,1){12}}
\put(24,0){\line(0,1){24}} \put(36,0){\line(0,1){24}}
\put(48,0){\line(0,1){24}}
\put(60,0){\line(0,1){12}}
\put(27,14.5){\hbox{1}}
\put(39,14.5){\hbox{2}}
\put(3,2.5){\hbox{3}}
\put(15,2.5){\hbox{4}}
\put(27,2.5){\hbox{5}}
\put(39,2.5){\hbox{6}}
\put(51,2.5){\hbox{7}}
\end{picture}\end{array}
,\:\:\:\:
\begin{array}{c}
\begin{picture}(60,24) \put(0,0){\line(1,0){60}}
\put(0,12){\line(1,0){60}} \put(36,24){\line(1,0){24}}
\put(0,0){\line(0,1){12}} \put(12,0){\line(0,1){12}}
\put(24,0){\line(0,1){12}} \put(36,0){\line(0,1){24}}
\put(48,0){\line(0,1){24}}
\put(60,0){\line(0,1){24}}
\put(39,14.5){\hbox{1}}
\put(51,14.5){\hbox{2}}
\put(3,2.5){\hbox{3}}
\put(15,2.5){\hbox{4}}
\put(27,2.5){\hbox{5}}
\put(39,2.5){\hbox{6}}
\put(51,2.5){\hbox{7}}
\end{picture}
\end{array}.
$$
As in these examples, we number the boxes of $\pi$ by $1,\dots,m+n$, so that the boxes in the first (resp.\ second) row are indexed $1,\dots m$ (resp.\ $m+1,\dots,m+n$) from left to right.
Then we write $\row(i)$ and $\col(i)$ for the row and column numbers
of the $i$th box of $\pi$, numbering columns by $1,\dots,n$ in order from left to
right.
Also we denote the number of columns of height $1$ on the
left (resp. right) side of $\pi$ by $s_-$ (resp. $s_+$);
in the degenerate case $m=0$,
one should instead pick any $s_-,s_+\geq 0$ with $s_-+s_+ = n$.

A {\em $\pi$-tableau} is a filling of the boxes of
the pyramid $\pi$ by
complex numbers.
Let $\Tab$ denote the set of all such $\pi$-tableaux.
Sometimes we will represent $\tabA \in \Tab$
as an array
$\tabA
 = \substack{a_1 \cdots a_{m} \\ b_1 \cdots b_{n}}$ of complex numbers.
Here are a few combinatorial notions
about tableaux.
\begin{itemize}
\item
For $\tabA = \substack{a_1 \cdots a_{m} \\ b_1 \cdots b_{n}}$,
we let $a(\tabA) := a_1+\cdots+a_m$ and $b(\tabA) := b_1+\cdots+b_n$
be the sum of the entries on its top and bottom rows, respectively.
\item We say that
$\tabA= \substack{a_1 \cdots a_{m} \\ b_1 \cdots b_{n}}$
is {\em dominant} if $a_1 > \cdots > a_m$ and $b_1 < \cdots < b_n$.
\item We say that
$\tabA= \substack{a_1 \cdots a_{m} \\ b_1 \cdots b_{n}}$
is {\em anti-dominant} if $a_i \not > a_j$ for each $1 \leq
i < j \leq m$ and $b_i \not < b_j$ for each $1 \leq i < j
\leq n$.
\item
A {\em matched pair} in $\tabA$ is a pair of equal entries
from the same column.
\item The {\em defect} $\mat(\tabA)$
is the number of matched pairs in $\tabA$.
\item
Two $\pi$-tableaux $\tabA, \tabB$ are {\em row equivalent},
denoted $\tabA \roweq \tabB$, if $\tabB$ can be obtained from $\tabA$ by rearranging
entries within each row.
\item
The {\em degree of atypicality} $\atyp(\tabA)$
is the maximal defect of any $\tabB \roweq \tabA$.
\item
We write $\tabB \upit \tabA$ if $\tabB$ can be obtained from $\tabA$
by picking several of the matched pairs in $\tabA$ and subtracting $1$ from each of them.
There are $2^{\mat(\tabA)}$ such tableaux $\tabB$.
\item
The {\em Bruhat order} $\preceq$ on $\Tab$ is the smallest partial order
such that $\tabB \prec \tabA$ whenever one of the following holds:
\begin{itemize}
\item
$\tabB$ is obtained from $\tabA= \substack{a_1 \cdots a_{m} \\ b_1 \cdots b_{n}}$ by interchanging
$a_i$ and $a_j$, assuming
$a_i > a_j$
for some $1 \leq i < j \leq m$;
\item
$\tabB$ is obtained from $\tabA= \substack{a_1 \cdots a_{m} \\ b_1 \cdots b_{n}}$ by interchanging
$b_i$ and $b_j$, assuming
$b_i < b_j$
for some $1 \leq i < j \leq n$;
\item
$\tabB$ is obtained from $\tabA=
\substack{a_1 \cdots a_{m} \\ b_1 \cdots b_{n}}$ by subtracting one from both $a_i$ and $b_j$,
assuming $a_i = b_j$ for some
$1 \leq i \leq m$ and $1 \leq j \leq n$.
\end{itemize}
\item
Let $\linked$ be the equivalence relation generated by the Bruhat order
$\preceq$.
We refer to the $\linked$-equivalence classes as {\em linkage
  classes}.
All $\pi$-tableaux in a given linkage class $\xi$ have the same atypicality.
\end{itemize}
The representation theoretic significance of these definitions
will be made clear later in the article.

\subsection{Modules and supermodules}\label{itwas}
In the introduction we have ignored the distinction between modules
and supermodules.
We will be more careful in the remainder of the article.
For an associative algebra $A$, we write $A\lmod$ or $A\lmof$ for the categories of left
$A$-modules or finite-dimensional left $A$-modules, respectively.

Superalgebras
and supermodules are
objects in the symmetric monoidal
category of vector superspaces.
We denote the parity of a homogeneous vector $v$ in a vector superspace
by $|v| \in
\Z/2$, and
recall that the tensor flip  $V \otimes W \stackrel{\sim}{\rightarrow} W \otimes V$
 is given on homogeneous vectors by $v\otimes w \mapsto (-1)^{|v||w|} w \otimes v$.
The notation $[.\,,.]$ always denotes the {\em supercommutator}
$[x,y] = xy - (-1)^{|x||y|} yx$ of homogeneous elements of
a superalgebra.

Let $A$ be an associative
superalgebra.  A left {\em $A$-supermodule}
is a superspace $M = M_\0 \oplus M_\1$ equipped with a linear
left action of $A$ such that $A_i M_j \subseteq M_{i+j}$.
A {\em supermodule homomorphism} is a parity-preserving linear map
that is a
homomorphism
in the usual sense.
We write $A\lsmod$ for the Abelian category of all left
$A$-supermodules and supermodule homomorphisms,
and $A\lsmof$ for the subcategory of finite-dimensional ones.
We denote the usual {\em parity switching functor} on all of these categories
by $\Pi$.

\subsection{Super category \texorpdfstring{$\mathcal O$}{O}}\label{so}
Let $\mathfrak g$ be the Lie superalgebra
$\mathfrak{gl}_{m|n}(\C)$. We write $e_{i,j}$ for the $ij$-matrix unit in $\g$, which is
of parity $|i|+|j|$ where
$$
|i| = \begin{cases} \0 & \text{for $1 \leq i \leq m$}, \\
\1 & \text{for $m+1 \leq i \leq m+n$}.
\end{cases}
$$
Let $\t$ be the Cartan subalgebra of $\g$ consisting of all diagonal
matrices
and $\{\delta_i\}_{1\leq i \leq m+n}$ be the basis for $\t^*$
dual to the basis $\{e_{i,i}\}_{1 \leq i \leq m+n}$ of $\t$.
The usual {\em supertrace form}
$(.\,,.)$ on $\g$
induces a non-degenerate super-symmetric bilinear
form $(.\,,.)$ on $\t^*$
such that
$(\delta_i,\delta_j) = (-1)^{|i|} \delta_{i,j}$.

Now suppose that $\lhd$ is a total order on the set
$\{1,\dots,m+n\}$.
Let $\mathfrak{b}^{\lhd}$ be the Borel subalgebra of $\g$ spanned by
$\{e_{i,j}\}_{i \unlhd j}$.
Then define $\mathcal O^{\lhd}$ to be the full category of $U(\mathfrak{g})\lsmod$
consisting of all
$\mathfrak{g}$-supermodules $M$ such that
\begin{itemize}
\item
$M$ is
finitely generated over $\g$;
\item
$M$ is locally finite-dimensional over
$\mathfrak{b}^{\lhd}$;
\item $M$ is semisimple over $\mathfrak{t}$;
\item
the $\lambda$-weight space $M_\lambda$
of $M$ is concentrated in parity\footnote{We have made this particular
 choice so that (\ref{e:espressoclub}) holds; it is important also in
 the proof of Lemma \ref{bookkeeping}.}
\begin{align}\label{parity2}
\parity(\lambda) :=
\parity((\lambda,\delta_{m+1}+\cdots+\delta_{m+n})) + \lceil (n-m)/2
  \rceil+m\,s_- \in \Z/2.
\end{align}
\end{itemize}
The parity assumption
means that one can simply forget
the $\Z/2$-grading on
objects
of $\mathcal O^{\lhd}$,
since it can be recovered uniquely from the weights.
The reader should not be concerned about the dependence on the
choice
of the function $\parity :\C\rightarrow\Z/2$ (which was made at the end of the introduction):
the categories $\mathcal O^\lhd$ arising from two different
choices are obviously equivalent.
We note that all objects of $\mathcal O^{\lhd}$ are of finite length.

Introduce the weight $\rho^\lhd \in \t^*$ so that
\begin{equation} \label{e:rho}
(\rho^\lhd,\delta_j) =
\#\left\{i \unlhd j\:\big|\:|i| = \1\right\} - \#\left\{i \lhd
  j\:\big|\: |i| = \0\right\}
\end{equation}
for $j=1,\dots,m+n$.
For $\tabA \in \Tab$, let
$\lambda^\lhd_\tabA \in \t^*$ be the unique weight such that
$(\lambda^\lhd_\tabA+\rho^\lhd,\delta_j)$
is the entry in the $j$th box of $\tabA$.
Then we let $M^{\lhd}(\tabA)$ denote the
{\em Verma supermodule} of $\mathfrak{b}^{\lhd}$-highest weight
$\lambda^\lhd_\tabA$, i.e.\
\begin{equation}\label{verma}
M^{\lhd}(\tabA) := U(\g) \otimes_{U(\b^{\lhd})}\C^{\lhd}_\tabA
\end{equation}
where $\C^{\lhd}_\tabA$ is a one-dimensional
$\mathfrak{b}^{\lhd}$-supermodule of weight $\lambda_\tabA^\lhd$
concentrated in parity $\parity(\lambda_\tabA^\lhd)$. Note the parity
choice here is
forced upon us since we want $M^{\lhd}(\tabA)$ to belong to $\mathcal O^{\lhd}$.
The Verma supermodule $M^{\lhd}(\tabA)$ has a unique irreducible quotient $L^{\lhd}(\tabA)$,
and the supermodules
$\{L^{\lhd}(\tabA)\:|\:\tabA \in \Tab\}$ give a complete
set of inequivalent irreducible objects in $\mathcal O^{\lhd}$.

By a {\em normal order} we mean a total order $\lhd$
such that $1 \lhd \cdots \lhd m$ and $m+1 \lhd \cdots
\lhd m+n$.
For any normal order $\lhd$,
the underlying even subalgebra $\b^{\lhd}_\0$
is equal simply to the usual standard Borel subalgebra
of $\mathfrak{g}_\0 =
\mathfrak{gl}_m(\C) \oplus \mathfrak{gl}_n(\C)$ consisting of upper
triangular matrices.
Observing that a $\mathfrak{g}$-supermodule is locally finite over
$\b^{\lhd}$
if and only if it is locally finite over
$\b^{\lhd}_\0$, it is clear
for any two normal orders $\lhd$ and $\LHD$
that $\mathcal O^{\lhd} = \mathcal O^{\LHD}$. Henceforth, we denote
this category coming from a normal order simply by
$\mathcal O$.

Let us explain how to translate between the various labellings
of the irreducible objects of $\mathcal O$ arising from different
choices of normal order.
The basic technique
to pass from $\lhd$ to $\LHD$
is to apply a sequence of
{\em odd reflections} connecting $\lhd$ to $\LHD$.
A single odd reflection connects
normal orders
$\lhd$ and $\LHD$ which agree
except at
$i \in \{1,\dots,m\}$ and $j \in \{m+1,\dots,m+n\}$, with $i,j$
being consecutive in both orders.
Assuming that $i \lhd j$,
we have that
$L^{\lhd}(\tabA) \cong L^{\LHD}(\tabA')$
where
$\tabA'$ is obtained from $\tabA$ by adding 1 to its $i$th and $j$th entries if these entries are equal, or
$\tabA' := \tabA$ if these entries are different.
This was observed
originally by Serganova in her PhD thesis.

The following fundamental lemma
is well known, e.g.\ see \cite[Lemma 6.1]{CLW}.  The proof
involves noting that it suffices to consider the case when $\lhd$ and $\LHD$ are connected by a
single odd reflection, and then it can be observed from explicit
formulas for the characters.

\begin{Lemma}\label{vermasequal}
For any two normal orders $\lhd$ and $\LHD$
and $\tabA \in \Tab$,
the formal characters of $M^{\lhd}(\tabA)$ and $M^{\LHD}(\tabA)$ are equal.
Thus the symbols $[M^{\lhd}(\tabA)]$ and $[M^{\LHD}(\tabA)]$ are equal in
the Grothendieck group of $\mathcal O$.
\end{Lemma}

We will mostly work just with the {\em natural order} $<$
on $\{1,\dots,m+n\}$,  meaning of course that $1 < 2 < \cdots < m+n$;
for this order
we denote $\mathfrak{b}^{\!<}, \lambda_\tabA^{\!<}, \rho^{\!<}, M^{\!<}(\tabA)$
and $L^{\!<}(\tabA)$ simply by
$\mathfrak{b}, \lambda_\tabA, \rho, M(\tabA)$ and $L(\tabA)$. In particular
$\mathfrak{b}$ is the {\em standard Borel subalgebra}
of upper triangular matrices in $\mathfrak{g}$.
The resulting labelling of the irreducible objects of $\mathcal O$ is
the best choice for several other purposes.
For example,
the irreducible object $L(\tabA)$ is {\em finite-dimensional} if and only if $\tabA$ is {\em dominant};
hence, the irreducible objects $L(\tabA)$ for all dominant
tableaux $\tabA$ give a complete set of inequivalent finite-dimensional
irreducible $\mathfrak{g}$-supermodules.
This was established originally by
Kac in \cite{Kac2} by
an argument involving parabolic induction from $\g_{\0}$.
In a similar way, one sees
that $L(\tabA)$ is of {\em maximal Gelfand--Kirillov dimension}
amongst all supermodules in $\mathcal O$
if and only if $\tabA$ is {\em anti-dominant}.

The natural order on $\{1,\dots,m+n\}$
corresponds to the ordering of
the boxes of the pyramid $\pi$ induced by the lexicographic order of
coordinates $(\row(i), \col(i))$, i.e.\ $i < j$ if and only if
$\row(i) < \row(j)$, or $\row(i) = \row(j)$ and $\col(i) < \col(j)$.
There is another normal order which plays a significant role for us,
namely, the order $<'$
arising from the reverse lexicographic order on coordinates, i.e.\
 $i <' j$ if and only if $\col(i) < \col(j)$, or $\col(i) = \col(j)$ and
$\row(i) < \row(j)$. For this order, we denote $\mathfrak{b}^{\!<'},
\lambda_\tabA^{\!<'},
\rho^{\!<'},
M^{\!<'}(\tabA)$ and $L^{\!<'}(\tabA)$ by $\b',
\lambda_\tabA'$, $\rho'$, $M'(\tabA)$ and $L'(\tabA)$.
Note that in \cite{BBG}
the weight $\rho'$ was denoted $\widetilde\rho$.

Whereas the Borel subalgebra $\b$ arising from the natural ordering
has a unique odd simple root, the Borel subalgebra $\b'$ has a maximal
number of odd simple roots.
This leads to some significant differences when
working with the ordering $<'$
compared to the natural ordering. For instance,
it is not so easy to describe
the tableaux $\tabA$ such that $L'(\tabA)$ is either finite-dimensional or of
maximal Gelfand--Kirillov dimension in purely combinatorial terms.

Using the description of $\rho'$ given by (\ref{e:rho}) a direct calculation gives
\begin{equation} \label{e:espressoclub}
(\rho',\delta_{m+1}+\cdots+\delta_{m+n}) \equiv \lceil (n-m)/2 \rceil
+m\,s_-\pmod{2}.
\end{equation}
Hence, recalling (\ref{parity2}), we have that
$\parity(\lambda_\tabA') = \parity(b(\tabA))$ for $\tabA \in \Tab$.

\subsection{The Harish-Chandra homomorphism}\label{hchomsec}
Let $Z(\mathfrak{g})$ denote the center of $U(\mathfrak{g})$. This
can be understood via the Harish-Chandra homomorphism, which
gives an isomorphism between $Z(\mathfrak{g})$ and a certain subalgebra $I(\mathfrak{t})$ of $S(\mathfrak{t})$.
Let $x_i := e_{i,i}$ for $i=1,\dots,m$ and $y_j := -e_{m+j,m+j}$ for
$j=1,\dots,n$, so that $S(\mathfrak{t}) = \C[x_1,\dots,x_m,y_1,\dots,y_n]$.
The Weyl group of $\g$ with respect to $\t$
is the product of symmetric groups $S_m \times S_n$, which acts
naturally on $S(\t)$ so that $S_m$ permutes $x_1,\dots,x_m$ and $S_n$ permutes $y_1,\dots,y_n$.
Then
\begin{equation}\label{idef}
I(\mathfrak{t}) := \left\{f \in S(\t)^{S_m \times S_n}\:\bigg|\:
\begin{array}{l}
\frac{\partial f}{\partial x_i} + \frac{\partial f}{\partial y_j}
\equiv 0 \pmod{x_i-y_j}\\
\text{for any $1 \leq i \leq m$, $1 \leq j \leq n$}
\end{array}
\right\}.
\end{equation}
A distinguished set of generators for $I(\t)$ is given by the
{\em elementary supersymmetric polynomials}
\begin{equation}\label{supersymmetric}
e_r(x_1,\dots,x_m / y_1,\dots,y_n):= \sum_{s+t=r} (-1)^t e_s(x_1,\dots,x_m) h_t(y_1,\dots,y_n)
\end{equation}
for all $r \geq 1$, where $e_s(x_1,\dots,x_m)$ is the $s$th elementary symmetric polynomial
and $h_t(y_1,\dots,y_n)$ is the $t$th complete symmetric polynomial;
see e.g.\ \cite[\S0.6.1]{Se}.

To define the Harish-Chandra homomorphism
itself we fix a total order
$\lhd$ on $\{1,\dots,m+n\}$.
Recall that $\b^{\lhd}$ is the Borel subalgebra spanned by $\{e_{i,j}\}_{i \unlhd j}$; let $\n^{\lhd}$ be its nilradical.
Writing $U(\g)_0$ for the centralizer of $\t$ in $U(\g)$,
let $\phi^{\lhd}:U(\g)_0 \to S(\mathfrak{t})$ be the algebra homomorphism defined by the projection
along the direct sum decomposition
$U(\g)_0 = S(\t) \oplus (U(\g)_0 \cap U(\g) \n^\lhd)$.
Let
\begin{equation}\label{hchom}
\HC := S_{-\rho^{\lhd}} \circ \phi^{\lhd}:Z(\g) \to S(\t),
\end{equation}
where the shift automorphism $S_{-\rho^{\lhd}}$ is
the automorphism of $S(\t)$ defined by $x \mapsto x - \rho^{\lhd}(x)$ for each $x \in \t$.
Now we can state the key theorem here; see \cite[\S13.2]{Musson} for a recent exposition of the proof.

\begin{Theorem}[Kac, Sergeev]\label{hc}
The homomorphism $\HC$ is an isomorphism between $Z(\g)$
and $I(\t)$.
\end{Theorem}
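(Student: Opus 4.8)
The plan is to run the classical Harish-Chandra argument, using Verma supermodules to pin down $\HC(z)$ for $z\in Z(\g)$. Fix a normal order $\lhd$ (the case $\lhd\, =\, <$ is what matters in the sequel; independence of the choice follows once the description below is combined with Lemma~\ref{vermasequal}). Two preliminary points: any $z\in Z(\g)$ is $\t$-invariant of weight $0$, hence lies in $U(\g)_0$, so $\HC(z)$ makes sense; and $\HC$ is an algebra homomorphism by the usual check that $U(\g)_0\cap U(\g)\n^\lhd$ is a complementary ideal to $S(\t)$ in $U(\g)_0$ (so that $z_1z_2\equiv\phi^\lhd(z_1)\phi^\lhd(z_2)$ modulo it for central $z_i$) together with the fact that $S_{-\rho^\lhd}$ is an automorphism of $S(\t)$. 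The crucial observation is that, writing $z=\phi^\lhd(z)+u$ with $u\in U(\g)_0\cap U(\g)\n^\lhd$ killing the highest weight vector, $z$ acts on $M^\lhd(\tabA)$ by the scalar $\HC(z)(\lambda^\lhd_\tabA+\rho^\lhd)$; under the identification of $S(\t)$ with polynomials in $x_i=e_{i,i}$ and $y_j=-e_{m+j,m+j}$, this is just $\HC(z)$ evaluated with $x_i,y_j$ equal to the entries of the top and bottom rows of $\tabA$, and as $\tabA$ runs over $\Tab$ this point ranges over all of $\t^*$.

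Injectivity is then immediate: if $\HC(z)=0$, then $z$ acts as $0$ on every Verma supermodule, so $z\in\bigcap_\mu\operatorname{Ann}_{U(\g)}M^\lhd(\mu)=0$ (a nonzero element of $U(\g)$ acts nontrivially on $M^\lhd(\mu)$ for generic $\mu$, by PBW). For the inclusion $\HC(z)\in I(\t)$ I would produce primitive vectors in generic Verma supermodules. Invariance under $S_m\times S_n$: for an even positive root $\alpha$ of $\g_\0$ and $\tabA$ generic with $N:=(\lambda^\lhd_\tabA+\rho^\lhd,\alpha^\vee)\in\Z_{>0}$, the usual $\mathfrak{sl}_2$-vector $f_\alpha^Nv$ on the highest weight vector is primitive in the $\g_\0$-Verma submodule, and a short commutator computation shows it is killed by $\n^\lhd_\1$ as well; hence it generates a highest weight submodule of highest weight $s_\alpha\cdot\lambda^\lhd_\tabA$, so $\HC(z)$ and $s_\alpha\cdot\HC(z)$ agree on a Zariski-dense set, and $\HC(z)\in S(\t)^{S_m\times S_n}$. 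The supersymmetry condition: for an odd isotropic root $\alpha=\delta_i-\delta_{m+j}$ — which, after conjugating by a sequence of odd reflections leaving $\HC$ unchanged, we may take to be simple — and $\tabA$ generic but with a matched pair of value $t$ in boxes $i$ and $m+j$, one has $(\lambda^\lhd_\tabA,\alpha)=0$ (using $(\rho^\lhd,\alpha)=0$ for odd isotropic simple $\alpha$), so $f_\alpha v$ is primitive and generates a highest weight submodule corresponding to the tableau obtained by subtracting $1$ from that matched pair; consequently the one-variable polynomial obtained from $\HC(z)$ by setting $x_i=y_j=t$ (freezing the rest generically) is invariant under $t\mapsto t-1$, hence constant, which says exactly that $\partial\HC(z)/\partial x_i+\partial\HC(z)/\partial y_j\equiv 0\pmod{x_i-y_j}$. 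Thus $\HC(z)\in I(\t)$.

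For surjectivity I would use that, by \cite{Se}, $I(\t)$ is generated by the elementary supersymmetric polynomials $e_r(x_1,\dots,x_m/y_1,\dots,y_n)$, $r\geq 1$, and exhibit for each $r$ a central element $z_r$ with $\HC(z_r)=e_r+(\text{terms of lower degree})$ — for instance the Gelfand-type/super-Capelli invariants of $U(\g)$, whose Harish-Chandra images are classically of this shape — so that subtracting suitable products of the $z_s$ with $s<r$ realizes every $e_r$, hence all of $I(\t)$, in the image. (Alternatively one passes to the PBW-associated graded, where $\HC$ induces the Chevalley-type restriction $S(\g)^{\operatorname{ad}\g}\to S(\t)$ whose image is the graded algebra $I(\t)$ by Sergeev's classical restriction theorem, and pushes surjectivity back up the filtration.) Combined with injectivity and the inclusion $\HC(Z(\g))\subseteq I(\t)$, this yields the isomorphism.

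I expect the main obstacle to be the supersymmetry condition: this is the genuinely ``super'' ingredient, and it rests on setting up the odd-reflection/$\gl_{1|1}$ linkage correctly (so that one reduces to a simple odd root) and on the elementary but essential fact that a one-variable polynomial invariant under $t\mapsto t-1$ must be constant. A secondary point needing care is surjectivity, which requires either a concrete supply of central elements with the prescribed leading Harish-Chandra images or an appeal to Sergeev's classical restriction theorem.
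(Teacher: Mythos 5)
The paper does not actually prove this theorem: it is quoted as a known result of Kac and Sergeev, with \cite[\S 13.2]{Musson} cited for a modern exposition, so there is no internal proof to compare against. Your sketch reconstructs essentially that standard argument — central elements act on Verma highest weight vectors through $\phi^\lhd$, so $\HC(z)$ is read off by evaluating at the tableau entries; $\mathfrak{sl}_2$-primitive vectors $f_\alpha^N v$ for the even simple roots of the natural order give $S_m\times S_n$-invariance; odd isotropic primitive vectors $f_\alpha v$ at matched-pair weights give $h(t)=h(t-1)$ for the one-variable restriction, hence the supersymmetry condition; and surjectivity is handled by explicit central elements or Sergeev's restriction theorem — and it is sound in outline. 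Two points deserve care. First, your reduction of $\delta_i-\delta_{m+j}$ to a simple odd root should invoke independence of $\HC$ only across \emph{normal} orders, derived as you indicate from Lemma~\ref{vermasequal} and the evaluation-on-Vermas description; this avoids circularity with Theorem~\ref{indep}, whose treatment of non-normal orders uses Theorem~\ref{hc} itself. Second, surjectivity is the thinnest step as written (``classically of this shape''); in the context of this paper it can be closed cleanly by the elements $z_r$ of (\ref{ze}), since the computation in the proof of Theorem~\ref{expcent} that $\HC(z_r)=e_r(x_1,\dots,x_m/y_1,\dots,y_n)$ uses only centrality of the $z_r$ (via Nazarov--Gow) and the projection $\phi$, not Theorem~\ref{hc}, and Sergeev's result that these elementary supersymmetric polynomials generate $I(\t)$ then finishes the argument without any filtration bookkeeping.
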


Our definition of the Harish-Chandra homomorphism involves the choice of the total order $\lhd$. But in fact one obtains the same isomorphism
$Z(\g) \stackrel{\sim}{\to} I(\t)$ no matter which order is chosen:

\begin{Theorem}\label{indep}
The map $\HC:Z(\g) \to S(\t)$ does not depend on the particular choice of the total order $\lhd$ used in its definition.
\end{Theorem}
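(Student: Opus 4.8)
The plan is to prove $\HC^\lhd=\HC^{<}$ for every total order $\lhd$, writing $\HC^\lhd$ for the homomorphism (\ref{hchom}) built from $\lhd$ and $<$ for the natural order. Fix $z\in Z(\g)$. The one computation that drives everything is that $z$ acts on the Verma supermodule $M^\lhd(\tabA)$, and hence on its irreducible quotient $L^\lhd(\tabA)$, by the scalar $\phi^\lhd(z)\bigl(\lambda^\lhd_\tabA\bigr)=\HC^\lhd(z)(\mu_\tabA)$: the highest weight vector is killed by $\n^\lhd$, hence by $z-\phi^\lhd(z)\in U(\g)_0\cap U(\g)\n^\lhd$, so $z$ acts on it, and therefore on all of $M^\lhd(\tabA)$, by $\phi^\lhd(z)(\lambda^\lhd_\tabA)$, which by definition of the shift $S_{-\rho^\lhd}$ equals $\HC^\lhd(z)\bigl(\lambda^\lhd_\tabA+\rho^\lhd\bigr)$. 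Here $\mu_\tabA:=\lambda^\lhd_\tabA+\rho^\lhd$ is the weight whose pairing with $\delta_j$ is the $j$th entry of $\tabA$; it does not depend on $\lhd$, and it runs over all of $\t^*$ as $\tabA$ runs over $\Tab$. Since $\HC^\lhd(z)$ and $\HC^{<}(z)$ are polynomial functions on $\t^*$, it is enough to show they agree at every point $\mu_\tabA$.

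Next I would reduce to adjacent transpositions: any total order on $\{1,\dots,m+n\}$ is joined to $<$ by a sequence of interchanges of two consecutive elements. It then suffices to treat one such interchange $\lhd\to\LHD$, at consecutive positions $i$ and $j$, and to prove: (a) if $|i|\ne|j|$ then $\HC^\lhd(z)=\HC^\LHD(z)$; and (b) if $|i|=|j|$ then $\HC^\lhd(z)=s\bigl(\HC^\LHD(z)\bigr)$, where $s=(i\,j)$ acts on $S(\t)$ as in \S\ref{hchomsec}. Granting (a) and (b), walking from a given $\lhd$ to $<$ and composing the steps yields $\HC^\lhd(z)=w\bigl(\HC^{<}(z)\bigr)$ for some $w\in S_m\times S_n$, namely the product of the transpositions contributed by the steps of type (b). By Theorem~\ref{hc}, $\HC^{<}(z)\in I(\t)\subseteq S(\t)^{S_m\times S_n}$, so $w$ fixes it and $\HC^\lhd(z)=\HC^{<}(z)$, as desired.

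For (a): the PBW computation in the proof of Lemma~\ref{vermasequal} nowhere uses normality, so it shows $\operatorname{ch} M^\lhd(\tabA)=\operatorname{ch} M^\LHD(\tabA)$ for every $\tabA$ as soon as $\lhd$ and $\LHD$ are related by interchanging a consecutive even/odd pair. Hence $M^\lhd(\tabA)$ and $M^\LHD(\tabA)$ have the same composition factors; picking one of these, $z$ acts on it both by $\HC^\lhd(z)(\mu_\tabA)$ and by $\HC^\LHD(z)(\mu_\tabA)$, so these scalars agree — for every $\tabA$, hence everywhere on $\t^*$. For (b): let $\dot s\in\GL_m(\C)\times\GL_n(\C)\subseteq\GL_{m|n}(\C)$ be a permutation matrix representing $s$. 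Conjugation by $\dot s$ is an automorphism of $U(\g)$ that normalizes $\t$ and acts on $S(\t)$ by $s$; since $i,j$ are consecutive in both orders it carries $\n^\lhd$ onto $\n^\LHD$, hence the complementary summand $U(\g)_0\cap U(\g)\n^\lhd$ of $S(\t)$ onto $U(\g)_0\cap U(\g)\n^\LHD$, so that $\phi^\LHD$ of a conjugate equals $s\circ\phi^\lhd$ on $U(\g)_0$. As $\GL_{m|n}(\C)$ is connected, conjugation by $\dot s$ fixes $Z(\g)$ pointwise, so $\phi^\LHD(z)=s\bigl(\phi^\lhd(z)\bigr)$. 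A routine check from (\ref{e:rho}) gives $\rho^\LHD=s(\rho^\lhd)$, and since $S_{-s\nu}\circ s=s\circ S_{-\nu}$ on $S(\t)$, we get $\HC^\LHD(z)=S_{-\rho^\LHD}\bigl(\phi^\LHD(z)\bigr)=s\bigl(S_{-\rho^\lhd}(\phi^\lhd(z))\bigr)=s\bigl(\HC^\lhd(z)\bigr)$.

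I expect case (b) to be the crux: an even reflection does not fix $\HC$ but conjugates it by a nontrivial element of $S_m\times S_n$, and this is harmless only because, after the whole chain of moves is traversed, one reaches $\HC^{<}(z)$, which Theorem~\ref{hc} identifies with an element of $I(\t)$ and hence of the $S_m\times S_n$-invariants. By contrast, the odd reflections in case (a) leave $\HC$ unchanged on the nose. (If one preferred, one could replace the appeal to Lemma~\ref{vermasequal} in (a) by the odd reflection rule $L^\lhd(\tabA)\cong L^\LHD(\tabA')$, noting $\tabA'=\tabA$ whenever the entries of $\tabA$ in boxes $i,j$ are distinct, and then extend from that Zariski-dense set of weights $\mu_\tabA$; but using Lemma~\ref{vermasequal} avoids the density argument.)
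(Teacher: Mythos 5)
Your argument rests on the same two ingredients as the paper's proof: conjugation by permutation matrices (inner automorphisms fix $Z(\g)$, conjugation exchanges the decompositions defining $\phi^\lhd$ and $\phi^\LHD$, and $S_{-\sigma(\rho^\lhd)}\circ\sigma=\sigma\circ S_{-\rho^\lhd}$), combined with the $S_m\times S_n$-invariance of the image of $\HC$ supplied by Theorem~\ref{hc}; and the equality of Verma characters from Lemma~\ref{vermasequal} to compare central characters across a mixed swap. The difference is organizational. The paper performs your case (b) all at once, conjugating the given order to a normal order in a single step, and only then compares two normal orders; this guarantees that both Verma supermodules are objects of the one category $\O$, so Lemma~\ref{vermasequal} literally gives equal classes in $K_0(\O)$, hence a common composition factor and equal central characters. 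Your chain of adjacent swaps interleaves the two kinds of moves, so your case (a) must be applied to orders that need not be normal. There the character identity does extend, as you observe, but the inference ``equal characters $\Rightarrow$ same composition factors $\Rightarrow$ equal central character'' is no longer covered by the lemma as stated and needs a word of justification: one should check that the weights of $M^{\LHD}(\tabA)$ lie in $\lambda^\lhd_\tabA$ minus the monoid generated by the $\lhd$-positive roots (the swapped root is odd, so it enters with coefficient at most one, and $\lambda^{\LHD}_\tabA=\lambda^\lhd_\tabA+\delta_j-\delta_i$), so that both modules are locally $\n^\lhd$-finite and every composition factor of either is of the form $L^\lhd(\tabB)$; linear independence of the characters of these irreducibles then yields the common factor. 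Your parenthetical odd-reflection alternative has the same wrinkle, since the paper quotes Serganova's lemma only for normal orders. With that remark added your proof is complete; what the paper's ordering of the steps buys is precisely that this point never arises, while your version has the mild advantage of never leaving the level of single adjacent transpositions.
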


\begin{proof}
Suppose first that $\lhd$ and $\LHD$ are two orders that are conjugate under $S_m \times S_n$,
i.e.\ there exists a permutation $\sigma \in S_m \times S_n$ such that $$
i \lhd j \Leftrightarrow \sigma(i) \LHD \sigma(j),
$$
where
$S_m$ permutes $\{1,\dots,m\}$ and $S_n$ permutes $\{m+1,\dots,m+n\}$.
Let $\HC$ and $\HC'$ be  the Harish-Chandra homomorphisms defined via
$\lhd$ and $\LHD$, respectively. Take any $z \in Z(\g)$
and write it as $z = z_0 + z_1$ for
$z_0 \in S(\t)$, $z_1 \in U(\g)_0 \cap U(\g) \n^{\lhd}$.
Identifying the Weyl group $S_m \times S_n$ with permutation matrices in the
group $\mathrm{GL}_m(\C) \times \mathrm{GL}_n(\C)$ in the obvious way, we get an action
of $S_m \times S_n$ on $\g$ by conjugation. Since this is an action by inner automorphisms
it fixes $z$, so we have that $z = \sigma(z) =
\sigma(z_0) + \sigma(z_1)$ with $\sigma(z_0) \in S(\t)$
and $\sigma(z_1) \in U(\g)_0 \cap U(\g) \n^{\LHD}$.
Now compute:
$$
\HC'(z) = S_{-\rho^{\LHD}}(\sigma(z_0))
= S_{-\sigma(\rho^{\lhd})}(\sigma(z_0)) = \sigma(S_{-\rho^{\lhd}}(z_0))
=
\sigma(\HC(z)) = \HC(z),
$$
where the last equality follows because $\HC(z)$ is symmetric by
Theorem~\ref{hc}.

Since any order is $S_m \times S_n$-conjugate to a normal order,
we have thus reduced the problem to showing that the Harish-Chandra homomorphisms
arising from any two normal orders $\lhd$ and $\LHD$ are equal.
Again, let $\HC$ and $\HC'$ be the Harish-Chandra homomorphisms defined
from the orders $\lhd$ and $\LHD$, respectively, assuming now that both orders are normal.
For any $z \in Z(\g)$ and $\tabA \in \Tab$,
the element $z$ acts on
the Verma supermodule $M^{\lhd}(\tabA)$ (resp.\ $M^{\LHD}(\tabA)$)
by the scalar
$\HC(z)(\lambda_\tabA)$ (resp.\
$\HC'(z)(\lambda_\tabA)$).
So to prove that $\HC(z) = \HC'(z)$ it suffices to show that $M^{\lhd}(\tabA)$
and $M^{\LHD}(\tabA)$ have the same central character, which follows from
Lemma~\ref{vermasequal}.
\end{proof}

There is an explicit formula for
the elements $z_r \in Z(\g)$ lifting the elementary supersymmetric polynomials $e_r(x_1,\dots,x_m/y_1,\dots,y_n)$.
To formulate this, recall from \cite{GKLLRT} that
the {\em $kk$-quasideterminant} of a $k \times k$-matrix
$M$ is $$
|M|_{k,k} := d - c a^{-1} b,
$$
assuming
$M$ is decomposed into block matrices as
$M=\left(\begin{array}{l|l}
a&b\\\hline
c&d
\end{array}
\right)$
so that $a$ is an invertible
$(k-1) \times (k-1)$ matrix and $d$ is a scalar.
Working in the algebra $U(\g)[[u^{-1}]]$ where $u$ is an indeterminate,
let
$$
\zeta_k(u) :=
u|T_k(u)|_{k,k},
$$
where $T_k(u)$ is the $k \times k$ matrix with $ij$-entry
$\delta_{i,j} + (-1)^{|i|} u^{-1} e_{i,j}$.
The coefficients of these formal Laurent series
for $k=1,\dots,m+n$
generate a commutative subalgebra of $U(\g)$.
Then set
\begin{equation}\label{ze}
z(u) = \sum_{r \geq 0} z_r u^{-r} :=
\prod_{k=1}^m \zeta_k(u+1-k)
\bigg/
\prod_{k=1}^n \zeta_{m+k}(u+k-m).
\end{equation}
This defines
elements $z_1,z_2,z_3,\ldots\in U(\g)$.
For example for $\mathfrak{gl}_{1|1}(\C)$ one gets $z_1 = e_{1,1}+e_{2,2}$.

\begin{Theorem}\label{expcent}
The elements $\{z_r\}_{r \geq 1}$ generate the center $Z(\g)$.
Moreover,
$$
\HC(z_r) = e_r(x_1,\dots,x_m/ y_1,\dots,y_n).
$$
\end{Theorem}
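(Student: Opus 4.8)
The plan is to prove both assertions at once by computing the action of the generating series $z(u) = \sum_{r \ge 0} z_r u^{-r}$ on highest weight vectors of Verma supermodules, working throughout with the standard Borel $\b$ and the natural order (legitimate for computing $\HC$ by Theorem~\ref{indep}). Granting for the moment that the $z_r$ are central, $z_r$ acts on $M(\tabA)$ by the scalar $\HC(z_r)(\lambda_\tabA)$, so it suffices to prove that $z(u)$ acts on the highest weight vector $v^+$ of $M(\tabA)$, for $\tabA = \substack{a_1 \cdots a_m \\ b_1 \cdots b_n}$, by the scalar series $\prod_{i=1}^m(1+u^{-1}a_i)\big/\prod_{j=1}^n(1+u^{-1}b_j)$, which equals $\sum_{r \ge 0} e_r(a_1,\dots,a_m/b_1,\dots,b_n)\,u^{-r}$ by the standard generating function for the elementary supersymmetric polynomials. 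Once this is established, $\HC(z_r)$ and $e_r(x_1,\dots,x_m/y_1,\dots,y_n)$ are two elements of $S(\t)$ taking the same value at $\lambda_\tabA$ for every $\tabA \in \Tab$; since these weights exhaust $\t^*$, they coincide, which is the asserted formula. That the $z_r$ generate $Z(\g)$ then follows immediately, since $\HC$ is an isomorphism onto $I(\t)$ by Theorem~\ref{hc} and $I(\t)$ is generated by the $e_r(x_1,\dots,x_m/y_1,\dots,y_n)$.

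The computational core is the action on $v^+$ of a single factor $\zeta_k(v) = v|T_k(v)|_{k,k}$. Writing $|T_k(v)|_{k,k} = d - c a^{-1} b$ with $d = 1 + (-1)^{|k|}v^{-1}e_{k,k}$ the $(k,k)$-entry, the column $b$ has entries $(-1)^{|i|}v^{-1}e_{i,k}$ for $i < k$, all of which lie in the nilradical of $\b$ and hence kill $v^+$; and since in the product $c a^{-1} b$ it is $b$ that acts first, we get $\zeta_k(v)\,v^+ = v\,d\,v^+ = \bigl(v + (-1)^{|k|}\mu_k\bigr)v^+$, where $\mu_k$ is the $e_{k,k}$-eigenvalue of $v^+$. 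Expressing $\mu_k$ through the entries of $\tabA$ by means of $(\lambda_\tabA+\rho,\delta_j) = (\text{entry in box }j)$ together with the formula $(\ref{e:rho})$ for $(\rho,\delta_j)$ in the natural order, one finds that the shifts $u+1-k$ and $u+k-m$ occurring in $(\ref{ze})$ are arranged precisely so that the $k$-th numerator factor contributes $1 + u^{-1}a_k$ (for $1 \le k \le m$) and the $j$-th denominator factor contributes $1 + u^{-1}b_j$ (for $1 \le j \le n$) to the action on $v^+$. Because the coefficients of all the series $\zeta_\ell(v)$ lie in one commutative subalgebra of $U(\g)$, these factors, together with the inversion in $(\ref{ze})$, combine without trouble to give the claimed series.

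For the centrality of the $z_r$ I would appeal to input from outside the Harish-Chandra computation. The series $z(u)$ is the image of the quantum Berezinian of the super Yangian $Y(\gl_{m|n})$ under the evaluation homomorphism $t_{i,j}(u) \mapsto \delta_{i,j} + (-1)^{|i|}u^{-1}e_{i,j}$, and the quantum Berezinian has central coefficients and factors, via the Gauss decomposition, exactly as in $(\ref{ze})$. Alternatively, one can verify directly, using the commutation relations among the quasiminors $\zeta_\ell(v)$, that each $z_r$ supercommutes with the Chevalley generators of $\g$. On either route, it is the Harish-Chandra computation of the previous paragraph that carries the real weight of the proof.

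The step I expect to be genuinely delicate is the matching in the second paragraph: one must track the $\rho$-bookkeeping carefully, in particular the change in character of $(\rho,\delta_j)$ as $j$ passes from the top row of the pyramid to the bottom row, in order to see that the numerator and denominator factors in $(\ref{ze})$ contribute exactly $1 + u^{-1}a_k$ and $1 + u^{-1}b_j$, so that the product collapses onto the supersymmetric generating function with no spurious factors left over. Everything after that is routine.
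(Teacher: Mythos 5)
Your proposal is correct and follows essentially the same route as the paper: centrality of the $z_r$ via Nazarov's quantum Berezinian, Gow's factorization and the evaluation homomorphism, and the formula for $\HC(z_r)$ via the natural order $<$, where only the diagonal entry of each quasideterminant survives. Your evaluation on highest weight vectors of the Verma supermodules $M(\tabA)$ together with Zariski-density interpolation is just a repackaging of the paper's step of applying the projection $\phi^{\!<}$ directly to the series $\zeta_k(u)$ and then shifting by $S_{-\rho}$, so the two arguments coincide in substance.
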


\begin{proof}
Let $Y(\g)$ be the Yangian of $\g$ and
$b_{m|n}(u) \in Y(\g)[[u^{-1}]]$ be Nazarov's quantum Berezinian from \cite{Naz};
see also \cite[Definition 3.1]{Gow}.
In \cite[Theorem 1]{Gow}, Gow establishes a remarkable factorization of this
quantum Berezinian, from which
we see that $z(u)$ is the image of
$(u+1)^{-1} (u+2)^{-1} \cdots (u+n-m)^{-1} b_{m|n}(u)$ under the usual
evaluation homomorphism $Y(\g) \twoheadrightarrow U(\g)$.
The coefficients of $b_{m|n}(u)$ are central in $Y(\g)$ by \cite{Naz}
(or \cite[Theorem 2]{Gow}).
Hence, the coefficients $z_1,z_2,\dots$ of our $z(u)$ are central in $U(\g)$.

It remains to compute $\HC(z_r)$. For this we use the definition of $\HC$
coming from the natural order $<$, since for this order it is clear how to apply the projection $\phi = \phi^{\!<}$ to each of the power series $\zeta_i(u)$.
One gets that
$$
\HC(z(u)) =
\prod_{k=1}^m (1+u^{-1} x_k) \bigg / \prod_{k=1}^n (1+u^{-1} y_k).
$$
The $u^{-r}$-coefficient of this expression is equal to
$e_r(x_1,\dots,x_m / y_1,\dots,y_n)$.
\end{proof}

\begin{Remark}
In fact there exist other factorizations of $z(u)$ analogous to (\ref{ze})
which are adapted to more
general total orders $\lhd$.
To explain, let $\lhd$ be an arbitrary total order on $\{1,\dots,m+n\}$.
Let $\sigma \in S_{m+n}$ be the permutation such that $\sigma(1) \lhd \sigma(2) \lhd \cdots \lhd \sigma(m+n)$.
Let $$
\zeta^{\lhd}_{\sigma(k)}(u) := u |T^{\lhd}_k(u)|_{k,k},
$$
where $T_k^{\lhd}(u)$ is the $k \times k$ matrix with $ij$-entry
$\delta_{i,j} + (-1)^{|\sigma(i)|} u^{-1}e_{\sigma(i),\sigma(j)}$.
Then we have that
$$
z(u) =
\prod_{k=1}^m \zeta_k^{\lhd}(u+(\rho^{\lhd}, \delta_{k}))
\bigg /
\prod_{k=1}^n \zeta_k^{\lhd}(u+(\rho^{\lhd}, \delta_{k})).
$$
This (and some analogous factorizations of the quantum Berezinian in $Y(\g)$)
may be derived from \cite[Theorem 1]{Gow} by some explicit
commutations in the super Yangian; we omit the details.
\end{Remark}

\subsection{Projectives, prinjectives and blocks}
The following {\em linkage principle}
gives some rough
information about the composition multiplicities of the
Verma supermodules $M(\tabA)$.
This involves the Bruhat order $\preceq$ from \S\ref{scomb}.

\begin{Lemma}\label{linkage}
$[M(\tabA):L(\tabB)] \neq 0 \Rightarrow \tabB \preceq \tabA$.
\end{Lemma}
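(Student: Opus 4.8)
The plan is to extract from $[M(\tabA):L(\tabB)]\neq0$ the two standard necessary conditions — a dominance inequality of weights and an equality of central characters — and then to check, after translating everything into the combinatorics of $\pi$-tableaux, that these already force $\tabB\preceq\tabA$. Throughout write $\tabA=\substack{a_1\cdots a_m\\ b_1\cdots b_n}$ and $\tabB=\substack{a_1'\cdots a_m'\\ b_1'\cdots b_n'}$.

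First I would observe that $L(\tabB)$, being a subquotient of the Verma supermodule $M(\tabA)$, has highest weight $\lambda_\tabB$ occurring among the weights of $M(\tabA)$; since those are all of the form $\lambda_\tabA$ minus a sum of positive roots $\delta_i-\delta_j$ $(i<j)$, this yields
\begin{itemize}
\item[(a)] $\lambda_\tabA-\lambda_\tabB$ lies in the $\N$-span of the positive roots of $\g$ relative to $\b$.
\end{itemize}
Moreover $L(\tabB)$ is a quotient of $M(\tabB)$ and a subquotient of $M(\tabA)$, so the two Verma supermodules afford the same central character; evaluating $\HC$ on the generating series $z(u)$ by Theorems~\ref{hc} and~\ref{expcent}, this becomes the identity of rational functions $\prod_i(u+a_i)\prod_j(u+b_j')=\prod_i(u+a_i')\prod_j(u+b_j)$, that is,
\begin{itemize}
\item[(b)] $\{a_1,\dots,a_m\}\uplus\{b_1',\dots,b_n'\}=\{a_1',\dots,a_m'\}\uplus\{b_1,\dots,b_n\}$ as multisets of complex numbers.
\end{itemize}

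The remaining claim is purely combinatorial: (a) and (b) imply $\tabB\preceq\tabA$. Writing $\lambda_\tabA-\lambda_\tabB=\sum_kc_k\delta_k$, so that $c_i=a_i-a_i'$ and $c_{m+j}=b_j'-b_j$, condition (a) says that every partial sum $P_k:=c_1+\cdots+c_k$ is a non-negative integer and $P_{m+n}=0$. I would induct on $N:=\sum_{k<m+n}P_k\in\N$, which vanishes exactly when $\tabA=\tabB$. For the inductive step, with $\tabA\neq\tabB$, it is enough to produce one Bruhat move $\tabA\succ\tabC$ with $\lambda_\tabB\le\lambda_\tabC$: each of the three moves preserves the central character (for the third because the common linear factor cancels in the rational function), so $(\tabC,\tabB)$ again satisfies (a) and (b); and $\lambda_\tabA-\lambda_\tabC$ is a positive integer multiple $d(\delta_i-\delta_j)$ of a single positive root, so $N$ strictly decreases and the inductive hypothesis gives $\tabB\preceq\tabC\prec\tabA$. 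A move $\tabA\succ\tabC$ exists iff $\tabA$ is not anti-dominant or shares an entry between its two rows, and in that case the requirement $\lambda_\tabB\le\lambda_\tabC$ reads $P_k\ge d$ for all $i\le k<j$.

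The crux — and what I expect to be the real obstacle — is to guarantee that, whenever $\tabA\neq\tabB$, some admissible move meets this numerical bound, i.e.\ does not overshoot $\lambda_\tabB$. I would first dispose of the case that no move is available at all: then $\tabA$ is anti-dominant with disjoint rows, so its central-character rational function is already in lowest terms, (b) forces $\{a_i\}=\{a_i'\}$ and $\{b_j\}=\{b_j'\}$ as multisets, and the partial-sum constraints of (a) together with the anti-dominance of $\tabA$ — argued one $\C/\Z$-coset at a time, where the entries become honestly integral and anti-dominance becomes weak monotonicity — force $\tabA=\tabB$, a contradiction. Granting that a move exists, I would organise the choice by performing first only the row-internal (top and bottom) transpositions, which alter only the partial sums internal to a single row, so as to reduce to the case that both rows of $\tabA$ are themselves anti-dominant; this is essentially the classical fact that the dominance order on the weight lattice of $\gl_m\oplus\gl_n$ is generated by adjacent transpositions taken downward, so a safe transposition exists unless the rows are already sorted. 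Once the rows of $\tabA$ are anti-dominant, any surviving discrepancy with $\tabB$ must be carried by repeated entries $a_p=b_q$, and one lowers, say, the leftmost such pair, checking coset by coset that the intervening partial sums stay non-negative. (Alternatively, one could bypass this bookkeeping by invoking a strong linkage principle for $\gl_{m|n}$ already in the literature, observing that the three moves defining $\preceq$ are precisely the weight-decreasing even reflections together with the odd reflections; but the hands-on argument keeps the section self-contained.)
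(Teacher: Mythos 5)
Your reduction of the lemma to the purely combinatorial claim that (a) dominance of weights plus (b) equality of central characters already force $\tabB\preceq\tabA$ is exactly where the argument breaks: that claim is false, so the crux you flag cannot be overcome by more careful bookkeeping. Take $m=n=2$, $\tabA=\substack{4\ 1\\ 1\ 4}$ and $\tabB=\substack{3\ 2\\ 2\ 3}$. Condition (b) holds, since $\prod_i(u+a_i)/\prod_j(u+b_j)=1$ for both tableaux (indeed they lie in the same linkage class: both lie above $\substack{1\ 3\\ 3\ 1}$ in $\preceq$). Condition (a) holds, since with your notation $c=(1,-1,1,-1)$ has partial sums $1,0,1,0$, i.e.\ $\lambda_\tabA-\lambda_\tabB=(\delta_1-\delta_2)+(\delta_3-\delta_4)$ is a sum of positive roots. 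Yet $\tabB\not\preceq\tabA$: the first two generating moves of the Bruhat order permute entries within a row, hence preserve the top-row sum $a(\cdot)$ and the top-row multiset, while the third move strictly decreases $a(\cdot)$ by $1$; since $a(\tabA)=a(\tabB)=5$, any descending chain from $\tabA$ to $\tabB$ could use only row-permutation moves, and these cannot turn the top-row multiset $\{4,1\}$ into $\{3,2\}$. So (a) and (b) are strictly weaker than the conclusion of the lemma, and your induction (including the "safe move exists" analysis) is attempting to prove a false implication; whether $[M(\tabA):L(\tabB)]$ happens to vanish in this example is beside the point, because the entire proposal rests on that implication.

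The finer, root-by-root information really is needed. The paper proves the lemma by citing the superalgebra analogue of the Jantzen sum formula (Musson, Gorelik; details in the reference given there): the terms of the sum formula are attached to individual even roots with positive integral pairing and to isotropic odd roots orthogonal to $\lambda+\rho$, which is precisely what produces the three generating moves of $\preceq$ and allows an induction along the order. Your parenthetical fallback of "invoking a strong linkage principle already in the literature" is thus not an optional shortcut but the actual content of the proof; as written, the hands-on argument does not establish the lemma.
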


\begin{proof}
This is a consequence of the superalgebra analog of the Jantzen sum formula
from \cite[\S10.3]{Musson} or \cite{Gorelik};
see \cite[Lemma 2.5]{Bsurvey} for details.
\end{proof}

For $\tabA \in \Tab$ we denote the
projective cover of $L(\tabA)$ in $\mathcal O$
by $P(\tabA)$.
The supermodule $P(\tabA)$
has a Verma flag, that is, a finite filtration with sections
of the form $M(\tabB)$ for $\tabB \in \Tab$. Moreover, by {\em BGG reciprocity},
the multiplicity $(P(\tabA):M(\tabB))$ of $M(\tabB)$ in a Verma flag of $P(\tabA)$ is equal to the composition multiplicity $[M(\tabB):L(\tabA)]$; see e.g.\ \cite{Btilt}.
Combined with Lemma~\ref{linkage}, this implies that the category $\mathcal O$
is a {\em highest weight category} with weight poset
$(\Tab, \preceq)$.
Of course its standard objects are the Verma supermodules $\{M(\tabA)\}_{\tabA \in \Tab}$.

\begin{Remark}
In fact each choice of normal order $\lhd$ on $\{1,\dots,m+n\}$
gives rise to a {\em different}
structure of highest weight category on $\mathcal O$,
with standard objects being the corresponding Verma supermodules $M^{\lhd}(\tabA)$.
In this article we only need
the highest weight structure that comes from the natural order.
\end{Remark}

By a {\em prinjective object} we mean one that is both projective and injective.
The following lemma classifies the prinjective objects in $\mathcal O$, showing that they are the projective covers of the irreducible objects of maximal Gelfand--Kirillov dimension.

\begin{Lemma}\label{prinj}
Let $\tabA \in \Tab$.  Then
$P(\tabA)$ is prinjective if and only if $\tabA$ is anti-dominant.
In that case $P(\tabA)$ is both the projective cover and the injective hull of $L(\tabA)$.
\end{Lemma}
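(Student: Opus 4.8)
The plan is to reduce the lemma to two ingredients: a composition-multiplicity identity coming from BGG reciprocity, and the characterization of anti-dominant tableaux via Gelfand--Kirillov dimension recalled in \S\ref{so}. Throughout, write $M\mapsto M^\vee$ for the usual simple-preserving duality on $\mathcal O$, so that $P(\tabA)^\vee$ is the injective hull $I(\tabA)$ of $L(\tabA)$ and $I(\tabA)$ has a dual Verma flag. For any $\tabA$ and $\tabC$, reading off characters from a Verma flag and applying BGG reciprocity gives
\[
[P(\tabA):L(\tabC)]=\sum_{\tabB}(P(\tabA):M(\tabB))\,[M(\tabB):L(\tabC)]=\sum_{\tabB}[M(\tabB):L(\tabA)]\,[M(\tabB):L(\tabC)];
\]
applying $\vee$ shows $[I(\tabA):L(\tabC)]$ is given by exactly the same sum, so $[P(\tabA):L(\tabC)]=[I(\tabA):L(\tabC)]$ for all $\tabC$. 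The decomposition matrix $\big([M(\tabB):L(\tabC)]\big)$ is row- and column-finite (each Verma module has finite length, and by BGG reciprocity each $L(\tabC)$ is a composition factor of only finitely many Verma modules, since $P(\tabC)$ has a finite Verma flag) and unitriangular for $\preceq$ by Lemma~\ref{linkage}, hence invertible; consequently so is the Gram-type matrix $\big([I(\tabA):L(\tabC)]\big)$, and in particular its rows are pairwise distinct.

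From these observations, $P(\tabA)$ is prinjective if and only if $P(\tabA)\cong I(\tabA)$. One direction is immediate. Conversely, if $P(\tabA)$ is prinjective then in particular it is injective, and being indecomposable it is the injective hull of its simple socle $L(\tabB)$, so $P(\tabA)\cong I(\tabB)$; comparing composition factors gives $[I(\tabB):L(\tabC)]=[P(\tabA):L(\tabC)]=[I(\tabA):L(\tabC)]$ for all $\tabC$, forcing $\tabB=\tabA$. Moreover, since $P(\tabA)$ and $I(\tabA)$ always have the same composition factors, any embedding $P(\tabA)\hookrightarrow I(\tabA)$ must be an isomorphism, and such an embedding exists precisely when $\operatorname{soc}P(\tabA)=L(\tabA)$. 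Thus the lemma --- including the ``in that case'' assertion --- reduces to showing that $\operatorname{soc}P(\tabA)=L(\tabA)$ if and only if $\tabA$ is anti-dominant.

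For the ``only if'' I would use the auxiliary claim that every simple submodule of a Verma module is anti-dominant: granting it, if $\operatorname{soc}P(\tabA)=L(\tabA)$ then running up a Verma flag of $P(\tabA)$ shows that this simple socle embeds into $\operatorname{soc}M(\tabB)$ for some Verma section $M(\tabB)$, so $\tabA$ is anti-dominant. The auxiliary claim itself I would prove by restricting to $\g_{\0}=\gl_m(\C)\oplus\gl_n(\C)$: a Verma module for $\g$ restricts to a $\g_{\0}$-module admitting an ordinary Verma flag, so its socle consists of anti-dominant $\g_{\0}$-simples, which have the maximal Gelfand--Kirillov dimension $\dim\mathfrak n^-_{\0}$; since restriction to the finite-codimension subalgebra $\g_{\0}$ does not change $\operatorname{GKdim}$, any simple $\g$-submodule of a Verma module has maximal $\operatorname{GKdim}$ and hence is anti-dominant by \S\ref{so}.

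The substantive direction is the ``if'': assuming $\tabA$ anti-dominant, show $\operatorname{soc}P(\tabA)=L(\tabA)$, equivalently $P(\tabA)\cong I(\tabA)$. The cleanest route I see is to prove that $P(\tabA)$ has a dual Verma flag; then $P(\tabA)$, having both a Verma and a dual Verma flag and being indecomposable, is a self-dual tilting module, so $\operatorname{soc}P(\tabA)\cong\operatorname{head}P(\tabA)=L(\tabA)$. To build the dual Verma flag I would bring in projective (translation) functors --- tensoring with $\C^{m|n}$ and its dual followed by projection to a block --- which are exact, biadjoint, and preserve both Verma and dual Verma flags, hence carry indecomposable projective-injectives to direct sums of such; one then tries to reach an arbitrary anti-dominant $\tabA$ in a block from a simplest one and extract $P(\tabA)$ as the summand of the correct highest weight. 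I expect this to be the main obstacle: in the super case, unlike for a semisimple Lie algebra where the argument is anchored at the simple anti-dominant Verma module, atypical blocks are infinite and contain no simple Verma module, so there is no obvious base object, and one must instead induct up the Bruhat order, using Lemma~\ref{linkage} and the combinatorics of $\preceq$ to kill the relevant $\operatorname{Ext}^1(M(\tabB),-)$ at each stage. (Alternatively, one may simply cite the known classification of tilting and projective-injective modules in category $\mathcal O$ for $\gl_{m|n}$.)
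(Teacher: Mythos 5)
Your preliminary reductions are sound, and in fact more detailed than anything the paper records: the paper disposes of this lemma purely by citation, to \cite[Theorem 2.22]{BLW} (with \cite[Theorem 3.10]{BLW}), \cite[Lemma 4.3, Remark 4.4]{Bsurvey} and \cite[Corollary 6.2(ii)]{CS}. In particular, your observation that $P(\tabA)$ and its dual $I(\tabA)$ have the same composition factors, the Cartan-matrix argument reducing prinjectivity of $P(\tabA)$ to $\operatorname{soc}P(\tabA)=L(\tabA)$, and the ``only if'' direction via the fact that any simple submodule of a Verma supermodule has maximal Gelfand--Kirillov dimension (hence is of the form $L(\tabC)$ with $\tabC$ anti-dominant) are all correct, modulo routine finiteness checks.

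The genuine gap is the ``if'' direction, which is the substantive content of the lemma and which you only sketch. Your plan --- build a dual Verma flag on $P(\tabA)$ by projective functors, ``inducting up the Bruhat order'' inside the block --- does not get off the ground: as you half-acknowledge, an atypical linkage class contains no simple Verma supermodule, and in fact, because the third generating relation of $\preceq$ lowers a matched pair (and can be applied indefinitely in either direction), an atypical class is infinite with no minimal and no maximal element, so an induction along $\preceq$ within a block has neither a base case nor a termination mechanism. The arguments in the literature do not stay inside one block: they anchor at the typical singleton blocks, whose unique tableau has constant rows $a\cdots a$ over $b\cdots b$ with $a\neq b$ and where $P(\tabB)=M(\tabB)=L(\tabB)$, and then transport prinjectivity across blocks via the $\mathfrak{sl}_\infty$-categorical action (tensoring with $U$ and $U^\vee$ and projecting onto generalized eigenspaces); this is exactly the content of \cite[Theorem 2.24]{BLW} that the present paper invokes later, in the proof of Lemma~\ref{projc}. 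Alternatively one can use the homological criteria of \cite{CS}. So your closing parenthetical --- ``simply cite the known classification'' --- is in effect the paper's entire proof; without that citation, the implication that anti-dominant $\tabA$ gives prinjective $P(\tabA)$ remains unproved in your write-up.
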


\begin{proof}
This is a consequence of \cite[Theorem 2.22]{BLW} (bearing in mind also \cite[Theorem 3.10]{BLW}); see
also \cite[Lemma~4.3, Remark~4.4]{Bsurvey}
and \cite[Corollary 6.2(ii)]{CS}.
\end{proof}

Recall finally that $\linked$ is the equivalence relation on $\Tab$ generated by the Bruhat order.
For a linkage class
$\xi \in \Tab /\!\linked$,
we let $\mathcal O_\xi$ be the Serre subcategory of $\mathcal O$
generated by $\{L(\tabA)\}_{\tabA \in \xi}$.
Lemma~\ref{linkage} implies immediately that this is a sum of blocks of $\mathcal O$. In fact each $\mathcal O_\xi$ is an indecomposable block,
thanks to \cite[Theorem 3.12]{CMW}.
Thus the blocks of $\mathcal O$ are in bijection with the linkage classes.

\begin{Lemma}\label{twist}
Let $S_m \times S_n$ act on $\Tab$
by permuting entries within rows.
Suppose we are given a linkage class $\xi \in \Tab /\!\linked$
and simple transposition $\sigma \in S_m \times S_n$
such that $\sigma(\xi) := \{\sigma(\tabA)\:|\:\tabA \in \xi\}$
is a different linkage class to $\xi$.
Then, there is an equivalence of categories
$T_\sigma: \mathcal O_\xi \to \mathcal O_{\sigma(\xi)}$
such that
$T_\sigma(M(\tabA)) \cong M(\sigma(\tabA))$
and
$T_\sigma(L(\tabA)) \cong L(\sigma(\tabA))$
for each $\tabA \in \xi$.
\end{Lemma}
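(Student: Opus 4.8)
The plan is to realize $T_\sigma$ as a translation-functor-type equivalence, but since $\sigma$ acts by permuting entries \emph{within a row}, the relevant underlying symmetry is a conjugation by a permutation matrix rather than a tensoring-with-finite-dimensional construction. Concretely, identify $S_m\times S_n$ with the permutation matrices inside $\GL_m(\C)\times\GL_n(\C)\subseteq\GL_{m|n}(\C)$, so each such $\sigma$ gives an (inner, but not by an element of the connected group acting trivially — rather by a genuine automorphism) automorphism $\theta_\sigma$ of $\g=\gl_{m|n}(\C)$ by conjugation. This $\theta_\sigma$ preserves $\t$ and acts on $\t^*$, and it preserves the even Borel $\b_\0$ (upper triangular in each block is stabilized under permuting basis vectors within each block only up to reordering — so in fact $\theta_\sigma$ sends $\b$ to the Borel associated with the permuted basis, not back to $\b$). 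The first step is therefore to set up the twisting functor $M\mapsto M^{\theta_\sigma}$ on $\g\lsmod$ and check it restricts to an equivalence $\mathcal O\to\mathcal O$: one needs that $\theta_\sigma$ maps $\b^{\lhd}$ for the natural order to $\b^{\LHD}$ for some other normal order $\LHD$, which is immediate since conjugating by a within-rows permutation produces exactly the Borel for the reordered natural order, and $\mathcal O^{\LHD}=\mathcal O$ for any normal order by the discussion preceding Lemma~\ref{vermasequal}. One must also check the parity normalization (\ref{parity2}) is respected; since $\theta_\sigma$ permutes the $\delta_i$ within each block it fixes $\delta_{m+1}+\cdots+\delta_{m+n}$ up to reordering, hence $\parity(\lambda)$ transforms correctly under $\lambda\mapsto\sigma(\lambda)$.

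Next I would compute the effect on the combinatorial data. By construction of $\lambda^{\lhd}_{\tabA}$ via $(\lambda^{\lhd}_{\tabA}+\rho^{\lhd},\delta_j)=$ the $j$th entry of $\tabA$, and since $\theta_\sigma$ intertwines the $\rho$-shifts appropriately (exactly the computation $S_{-\rho^{\LHD}}\circ\sigma=\sigma\circ S_{-\rho^{\lhd}}$ already used in the proof of Theorem~\ref{indep}), the twisting functor sends the Verma supermodule $M(\tabA)$ to the Verma supermodule for the order $\LHD$ with highest weight $\sigma(\lambda_{\tabA})$, which is $M^{\LHD}(\sigma(\tabA))$ in the tableau labelling. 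Then I invoke Lemma~\ref{vermasequal}: $M^{\LHD}(\sigma(\tabA))$ and $M(\sigma(\tabA))$ have the same character, and in fact — because passing between normal orders is by odd reflections, and an odd reflection changes the labelling tableau only when two matched entries coincide — I must be slightly careful: what the twisting functor literally produces is $M^{\LHD}(\sigma(\tabA))$, and I want to conclude its class equals $[M(\sigma(\tabA))]$. The characters agree by Lemma~\ref{vermasequal}, so the Grothendieck classes agree; and since both are standard objects for (different) highest weight structures with the \emph{same} irreducible composition factors appearing, the twist carries $L^{\lhd}(\tabA)$ to $L^{\LHD}(\sigma(\tabA))$. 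To identify $L^{\LHD}(\sigma(\tabA))$ with $L(\sigma(\tabA))=L^{\lhd}(\sigma(\tabA))$ I use the hypothesis that $\sigma(\xi)$ is a \emph{different} linkage class from $\xi$: the odd-reflection recipe only alters the tableau label within a linkage class, so on the nose $L^{\LHD}(\tabB)=L^{\lhd}(\tabB')$ for some $\tabB'$ with $\tabB'\linked\tabB$; combined with the bijection-of-blocks statement (blocks $\leftrightarrow$ linkage classes, via \cite[Theorem 3.12]{CMW}), and the fact that both $\sigma(\xi)$-labelled families restrict to the \emph{same} block, one pins down $\tabB'$ uniquely — this is where the "different linkage class" hypothesis is actually doing work, ruling out ambiguity.

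Finally I would restrict the equivalence to the block level: the twisting functor is an exact autoequivalence of $\mathcal O$ permuting the blocks via $\xi\mapsto$ (the linkage class of the twisted labels), which by the previous paragraph is $\sigma(\xi)$; hence it restricts to an equivalence $T_\sigma\colon\mathcal O_\xi\to\mathcal O_{\sigma(\xi)}$ with the asserted effect on Vermas and simples. The main obstacle I anticipate is precisely the bookkeeping in the middle step: tracking the tableau label through the composite "odd reflections $\circ$ conjugation" and verifying that the net effect is honestly $\tabA\mapsto\sigma(\tabA)$ with no residual shift, together with the parity normalization (\ref{parity2}) — the conceptual content is light, but the combinatorial/parity bookkeeping is where an error would hide, which is why the "different linkage class" hypothesis is included to sidestep the case where odd reflections could genuinely move the label.
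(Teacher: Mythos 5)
Your construction breaks down at the very first step: twisting by conjugation with a permutation matrix $\sigma \in S_m\times S_n$ does \emph{not} give an endofunctor of $\mathcal O$. The point you gloss over is that every normal order has the \emph{same} even Borel: by definition a normal order satisfies $1\lhd\cdots\lhd m$ and $m+1\lhd\cdots\lhd m+n$, so $\b^{\lhd}_{\0}$ is always the standard upper-triangular Borel $\b_{\0}$ of $\g_{\0}$, and different normal orders differ only in how the odd roots are distributed. The ``reordered natural order'' produced by a nontrivial within-row permutation violates $1\lhd\cdots\lhd m$, so it is not a normal order, and the corresponding Borel has even part $\sigma\b_{\0}\sigma^{-1}\neq\b_{\0}$. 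Consequently a twisted module $M^{\theta_\sigma}$ is locally finite over $\sigma^{-1}\b_{\0}\sigma$ rather than over $\b_{\0}$, and it generally leaves $\mathcal O$: the twist of a Verma supermodule is a Verma supermodule for a non-standard Borel of $\g_{\0}$, whose weight support is unbounded in directions that are positive for $\b$, hence not locally $\b$-finite. The fact that $\theta_\sigma$ is inner does not save you, because objects of $\mathcal O$ need not integrate to the group $\GL_m(\C)\times\GL_n(\C)$ (that integration argument is exactly what is used for central elements in the proof of Theorem~\ref{indep}, and is unavailable here). Once this fails, the rest of your bookkeeping cannot get started: Lemma~\ref{vermasequal} and the odd-reflection dictionary only compare \emph{normal} orders, and your claimed identity ``$\mathcal O^{\LHD}=\mathcal O$'' is false for the order you actually produce.

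For comparison, the paper proves the lemma by citing Cheng--Mazorchuk--Wang \cite[Proposition 3.9]{CMW}, where $T_\sigma$ is an Arkhipov-type twisting functor attached to the simple transposition $\sigma$, built from a semiregular bimodule (Ore localization of $U(\g)$ at a root vector), not from an automorphism twist. Such functors are in general only derived self-equivalences of $\mathcal O$; the hypothesis that $\sigma(\xi)$ is a \emph{different} linkage class from $\xi$ is exactly what guarantees that $T_\sigma$ and its adjoint are exact on $\mathcal O_\xi$ and mutually inverse there, carrying $M(\tabA)$ to $M(\sigma(\tabA))$ and $L(\tabA)$ to $L(\sigma(\tabA))$. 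So that hypothesis is doing much heavier work than the labelling disambiguation you assign to it: it is what upgrades a derived equivalence to the abelian block equivalence asserted in the lemma. If you want a self-contained proof rather than a citation, you would need to develop these twisting functors (or an equivalent wall-crossing/translation argument), not a conjugation twist.
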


\begin{proof}
This is a reformulation of \cite[Proposition 3.9]{CMW},
where the equivalence $T_\sigma$ is constructed explicitly as a certain twisting functor.
\end{proof}

\section{Principal \texorpdfstring{$W$}{W}-algebras and Whittaker functors}\label{swhittaker}

After reviewing some basic definitions and results from \cite{BBG},
we proceed to introduce the Whittaker coinvariants functor
$H_0$, which
takes representations of $\g$ to representations of its
{\em principal $W$-algebra}, i.e.\ the (finite) $W$-algebra
associated to a principal nilpotent orbit $e \in \g$.
We will mainly be concerned with the restriction of this functor to
the category $\O$.
The main result of the section shows that $H_0$
sends $M(\tabA)$ to the corresponding Verma supermodule $\overline{M}(\tabA)$
for $W$;
up to a parity shift, the latter was already introduced in \cite{BBG}.
This has several important
consequences: we use it
to determine the composition multiplicities of each $\overline{M}(\tabA)$,
to show that $H_0$ sends irreducibles in $\mathcal O$ to irreducibles or zero,
and to describe the center of $W$ explicitly.

\subsection{The principal \texorpdfstring{$W$}{W}-superalgebra}
We continue with $\g = \mathfrak{gl}_{m|n}(\C)$ as in the previous section.
Consider the principal nilpotent element
\begin{equation*}
e \coloneqq
e_{1,2}+e_{2,3}+\cdots+e_{m-1,m} +
e_{m+1,m+2}+e_{m+2,m+3}+\cdots+e_{m+n-1,m+n}
\in \g.
\end{equation*}
Define a
good
grading $\g = \bigoplus_{r \in \Z} \g(r)$ for $e \in \g(1)$
by declaring that each matrix unit $e_{i,j}$ is of degree
\begin{equation}\label{e:dege}
\deg(e_{i,j}) \coloneqq \col(j)- \col(i).
\end{equation}
Set
$$
\p \coloneqq \bigoplus_{r \ge 0} \g(r), \qquad \h \coloneqq \g(0),\qquad
\m \coloneqq\bigoplus_{r < 0} \g(r).
$$
Let $\chi \in \g^*$ be defined by $\chi(x) \coloneqq (x,e)$.
The restriction of $\chi$ to $\m$ is a character of $\m$.
Then define
$\m_\chi \coloneqq \{x-\chi(x) \:|\: x \in \m\}$,
which a shifted copy of $\m$ inside $U(\m)$.
The {\em principal $W$-superalgebra}
may then be defined as
\begin{equation}  \label{e:wdef}
W \coloneqq \{u \in U(\p) \:|\:  u\, \m_\chi \subseteq \m_\chi U(\g) \},
\end{equation}
which is a subalgebra of $U(\p)$.
Although this definition depends implicitly on the choice of pyramid $\pi$,
the isomorphism type of $W$ depends only on $m$ and $n$ not $\pi$, see \cite[Remark 4.8]{BBG}.
The following theorem shows that $W$ is isomorphic
to a truncated
shifted version of the Yangian $Y(\mathfrak{gl}_{1|1})$.

\begin{Theorem}[{\cite[Theorem 4.5]{BBG}}]\label{wpres}
The superalgebra $W$ contains distinguished even elements
$\{d_1^{(r)}, d_2^{(r)}\}_{r \geq 0}$
and odd elements $\{e^{(r)}\}_{r > s_+} \cup \{f^{(r)}\}_{r > s_-}$.
These elements generate $W$ subject only to the following relations:
\begin{align*}
d_i^{(0)} &= 1,
& d_1^{(r)} &= 0 \text{ for $r > m$},\\
[d_i^{(r)}, d_j^{(s)}] &= 0,
&[e^{(r)}, f^{(s)}] &= \sum_{a=0}^{r+s-1} \tilde d_1^{(a)} d_2^{(r+s-1-a)},\\
[e^{(r)}, e^{(s)}] &= 0,
&[d_i^{(r)}, e^{(s)}] &=
\sum_{a=0}^{r-1} d_i^{(a)} e^{(r+s-1-a)},\\
[f^{(r)}, f^{(s)}] &= 0,
&[d_i^{(r)}, f^{(s)}] &=-
\sum_{a=0}^{r-1} f^{(r+s-1-a)} d_i^{(a)},
\end{align*}
where $\tilde d_i^{(r)}$ is
defined recursively from
$\sum_{a=0}^r \tilde d_i^{(a)} d_i^{(r-a)} = \delta_{r,0}$.
\end{Theorem}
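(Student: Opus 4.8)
This is \cite[Theorem 4.5]{BBG}; the argument adapts to $\gl_{1|1}$ the method Brundan and Kleshchev developed for parabolic presentations of $W$-algebras in the purely even case. The plan is to produce a surjection onto $W$ from the abstract superalgebra $\widehat{W}$ presented by the displayed generators and relations, and then prove injectivity by a graded dimension count with respect to the Kazhdan filtration.

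\emph{Constructing the generators.} Working inside $U(\p)$, I would form the $(m+n)\times(m+n)$ matrix built from $e$ together with the $u^{-1}$-weighted matrix units $e_{i,j}$ of nonnegative degree (in the spirit of the series $T_k(u)$ of \S\ref{hchomsec}), with rows and columns grouped according to the columns of the pyramid $\pi$, and take column quasideterminants of it in the sense of \cite{GKLLRT}. This yields power series over $U(\p)$ whose coefficients I define to be $d_1^{(r)}, d_2^{(r)}, e^{(r)}, f^{(r)}$; the shifts $s_\pm$ appear as the lowest degrees in which $e^{(r)}$, $f^{(r)}$ survive (recording how far the height-one columns of $\pi$ overhang on each side), while $d_1^{(r)} = 0$ for $r > m$ because the first Jordan block of $e$ has size $m$. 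The first substantive step is to verify that these coefficients actually lie in $W$, either by a direct commutator computation with $\m_\chi$ or --- more cleanly --- by identifying them with the images under the Miura transform of Gauss-decomposition generators of a shifted super Yangian $Y_{s_-,s_+}(\gl_{1|1})$, using injectivity of the Miura map $W \hookrightarrow U(\g(0))$ together with $\g(0) \cong \gl_{1|1}^{\oplus m} \oplus \gl_{0|1}^{\oplus(n-m)}$.

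\emph{Relations and generation.} Once the generators are fixed, the listed relations are obtained either by transporting the RTT relations of $Y(\gl_{1|1})$ through the Gauss decomposition or by hand from the commutation rules in $U(\g)$; in either case they reduce to $2\times 2$ and $3\times 3$ quasideterminant identities and are routine. For generation I would use the Kazhdan (good) filtration on $W$: by Premet's theorem $\gr W \cong S(\g^e)$, and since $e$ is principal with two Jordan blocks of sizes $m \le n$ one has $\dim \g^e_{\0} = m+n$ and $\dim \g^e_{\1} = 2m$, with explicit graded dimensions in each Kazhdan degree. Checking that the symbols of the constructed $d_i^{(r)}, e^{(r)}, f^{(r)}$ generate the algebra $\gr W$ then produces the surjection $\widehat{W} \twoheadrightarrow W$.

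\emph{Completeness of the presentation.} The remaining and, I expect, hardest step is to show this surjection is injective. Arguing purely formally from the relations, one runs a straightening procedure to prove that $\widehat{W}$ is spanned by ordered monomials in the $d_i^{(r)}$, the $f^{(r)}$ and the $e^{(r)}$ --- a PBW-type spanning set --- and computes the associated graded dimension of $\widehat{W}$. Matching this against $\gr W = S(\g^e)$ in each filtered degree forces the surjection to be bijective, hence an isomorphism. The obstacle here is the bookkeeping: the truncation $d_1^{(r)} = 0$ for $r > m$ interacts, via the relations for $[e^{(r)},f^{(s)}]$ and $[d_i^{(r)},e^{(s)}]$, with the admissible degrees of the $d_2$- and $e,f$-generators, and the offsets $s_\pm$ shift the lowest degrees of the $e^{(r)}$ and $f^{(r)}$; getting the two graded dimension counts to agree exactly, degree by degree, is the crux of the proof.
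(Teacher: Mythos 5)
The paper itself gives no proof of this statement---it is quoted verbatim from \cite[Theorem 4.5]{BBG}---so the only benchmark is the argument in that reference, and your outline follows essentially the same route: explicit generators constructed inside $U(\p)$ and checked to lie in $W$ via the injective Miura transform into $U(\h)$ (with $\h \cong \gl_1(\C)^{\oplus s_-}\oplus\gl_{1|1}(\C)^{\oplus m}\oplus\gl_1(\C)^{\oplus s_+}$), verification of the shifted super-Yangian relations, and completeness of the presentation by comparing a PBW-type spanning set with $\gr W \cong S(\g^e)$, where $\dim \g^e_{\0}=m+n$ and $\dim \g^e_{\1}=2m$. Nothing in your sketch conflicts with the cited proof, and the points you single out as delicate (membership in $W$ and the degree-by-degree dimension match involving the shifts $s_\pm$ and the truncation $d_1^{(r)}=0$ for $r>m$) are indeed where the substance of the argument in \cite{BBG} lies.
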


We will occasionally need to appeal to
the
explicit formulae\footnote{There is a typo in \cite[(4.12)--(4.13)]{BBG}; both of these formulae need an extra minus sign. Similarly the formulae for $\tilde{d}_i^{(r)}$ in \cite[(4.19)--(4.20)]{BBG} need to be changed by a sign.}\label{footnotepage}
 for the
generators $d_i^{(r)}, e^{(s)}$ and $f^{(s)}$
from \cite[\S4]{BBG}.
In particular, these formulae show that $d_1^{(1)}$ and $d_2^{(1)}$ are
the unique elements of $S(\t)=\C[\t^*]$ such that
\begin{align}
d_1^{(1)}(\lambda) &= (\lambda+\rho',\delta_1+\cdots+\delta_m),\label{nobetter}\\
d_2^{(1)}(\lambda) &= (\lambda+\rho',\delta_{m+1}+\cdots+\delta_{m+n}).\label{better2}
\end{align}
 It is also often useful to work with the generating functions
$$
d_i(u) := \sum_{r \geq 0} d_i^{(r)} u^{-r} \in W[[u^{-1}]],
\qquad
\tilde{d}_i(u) := \sum_{r \geq 0} \tilde{d}_i^{(r)} u^{-r} \in W[[u^{-1}]],
$$
so that $\tilde{d}_i(u) = d_i(u)^{-1}$.
Using these, we can define more elements $\{c^{(r)}, \tilde
c^{(r)}\}_{r \geq 0}$
by setting
\begin{equation}\label{ctilde}
c(u) =
\sum_{r \geq 0} c^{(r)} u^{-r} := \tilde d_1(u) d_2(u),
\qquad
\tilde c(u) =
\sum_{r \geq 0} \tilde c^{(r)} u^{-r} := d_1(u) \tilde d_2(u).
\end{equation}
In particular, by the defining relations, we have that $c^{(r+s-1)} =
[e^{(r)}, f^{(s)}]$ for $r > s_+, s > s_-$.
The elements $\{c^{(r)}\}_{r \geq 1}$
are known to belong to the center $Z(W)$; see \cite[Remark 2.3]{BBG}.
Hence, so too do the elements $\{\tilde c^{(r)}\}_{r \geq 1}$.
We will show in Corollary~\ref{bib} below that either of these
families of elements give generators for $Z(W)$.

Recall finally by \cite[Theorem 6.1]{BBG} that $W$
has a triangular decomposition:
let $W^0, W^+$ and $W^-$ be the subalgebras generated by
$\{d_1^{(r)},d_2^{(s)}\}_{1 \leq r \leq m,1 \leq s \leq n}$,
$\{e^{(r)}\}_{s_+ < r \leq s_++m}$
and $\{f^{(r)}\}_{s_- < r \leq s_-+m}$, respectively;
then the multiplication map $W^- \otimes W^0 \otimes W^+ \to W$ is a vector space isomorphism.
Moreover, by the PBW theorem for $W$, the subalgebra
$W^0$ is a free polynomial algebra of rank $m+n$, while $W^+$ and $W^-$
are Grassmann algebras of dimension $2^{m}$.

\subsection{Highest weight theory for \texorpdfstring{$W$}{W}}\label{wo}
Next, we review some results about the representation theory of $W$
established in \cite{BBG}.
The triangular decomposition allows us to define Verma supermodules for $W$ as follows.
Let $W^\sharp := W^0 W^+$.
This is a subalgebra of $W$, and there is a surjective homomorphism $W^\sharp \to W^0$ which is the identity on $W^0$ and zero on each $e^{(r)} \in W^+$.

Given $\tabA = \substack{a_1 \cdots a_m \\ b_1 \cdots b_n} \in \Tab$,
let $\C_\tabA$ be the
one-dimensional $W^0$-supermodule spanned by a vector $\overline 1_\tabA$
of parity $\parity(b(\tabA))$,
such that
\begin{equation}\label{hwa}
d_1^{(r)} \overline 1_\tabA = e_r(a_1,\dots,a_m) \overline 1_\tabA,
\qquad
d_2^{(s)} \overline 1_\tabA = e_s(b_1,\dots,b_n) \overline 1_\tabA
\end{equation}
for $1 \leq r \leq m, 1 \leq s \leq n$.
View $\C_\tabA$ as a
$W^\sharp$-supermodule via the surjection
$W^\sharp \twoheadrightarrow W^0$.
Then induce to form the {\em Verma supermodule}
\begin{equation} \label{e:WVerma}
\overline{M}(\tabA) \coloneqq W \otimes_{W^\sharp} \C_\tabA,
\end{equation}
setting $\overline{m}_\tabA := 1 \otimes \overline 1_\tabA$.
Of course, $\overline{M}(\tabA)$
only depends on the row equivalence class of $\tabA$.
The PBW theorem for $W$ implies that
$\dim \overline{M}(\tabA) = 2^m$.

We say that $M \in W\lsmod$ is
a {\em highest weight supermodule} of {\em highest weight}
$\tabA  = \substack{a_1 \cdots a_m \\ b_1 \cdots b_n} \in \Tab$
if there exists a homogeneous vector $v \in M$ that generates $M$ as a $W$-supermodule with
$e^{(r)} v = 0$ for $r > s_+$, $d_1^{(r)} v = e_r(a_1,\dots,a_m)
v$ for $1 \leq r \leq m$,
and $d_2^{(s)} v = e_r(b_1,\dots,b_n) v$ for $1
\leq s \leq n$.  The Verma supermodule $\overline M(\tabA)$ is the
{\em universal highest weight supermodule} of highest weight $\tabA$: given any highest weight supermodule
$M$ of highest weight $\tabA$ as above, there exists a unique
surjective homomorphism from either $\overline{M}(\tabA)$ or $\Pi
\overline{M}(\tabA)$ onto $M$ such that $\overline{m}_\tabA \mapsto
v$;
the homomorphism is from $\overline{M}(\tabA)$ if and only if
$|v|=\parity(b(A))$.

By \cite[Lemma 7.1]{BBG}, each $\overline{M}(\tabA)$
has a unique irreducible quotient
$\overline{L}(\tabA)$.

\begin{Theorem}[{\cite[Theorem 7.2]{BBG}}]\label{bbbg}
The supermodules $\{\overline{L}(\tabA)\}_{\tabA \in \Tab}$
give all of the
irreducible $W$-supermodules (up to isomorphism and parity switch).
Moreover, $\overline{L}(\tabA) \cong \overline{L}(\tabB)$ if and only if $\tabA \roweq \tabB$.
\end{Theorem}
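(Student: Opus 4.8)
\medskip
The plan is to develop a highest weight theory for $W$ directly from its triangular decomposition $W^-\otimes W^0\otimes W^+\to W$, using that $W^+$ is a finite-dimensional Grassmann algebra, so that its augmentation ideal $W^+_{>0}$ (the span of the $e^{(r)}$ and their products) is nilpotent. Let $M$ be any irreducible $W$-supermodule. Then $M^{W^+_{>0}}:=\{v\in M\mid W^+_{>0}v=0\}$ is nonzero: taking the largest $k$ with $(W^+_{>0})^kM\ne 0$, the subspace $(W^+_{>0})^kM$ lies in $M^{W^+_{>0}}$. Using $[d_i^{(r)},e^{(s)}]=\sum_{a=0}^{r-1}d_i^{(a)}e^{(r+s-1-a)}\in W^0W^+_{>0}$, one checks that $M^{W^+_{>0}}$ is a $W^0$-submodule of $M$. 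The next step --- the one that really uses the fine structure of $W$ --- is to produce inside $M^{W^+_{>0}}$ a common eigenvector $v$ for the commuting operators $\{d_1^{(r)}\}_{1\le r\le m}\cup\{d_2^{(s)}\}_{1\le s\le n}$: since $M=W^-W^0v'$ is finitely generated over the Noetherian algebra $W^-W^0$ for any $0\ne v'\in M^{W^+_{>0}}$, one uses the explicit description of $W$ from \cite{BBG} to see that $M^{W^+_{>0}}$ is finite-dimensional, whence the commuting family has a joint eigenvector (as $\C$ is algebraically closed), and so $\dim M\le 2^m$. The map $(a_1,\dots,a_m)\mapsto(e_1(a_1,\dots,a_m),\dots,e_m(a_1,\dots,a_m))$ is onto $\C^m$, and similarly for the $b$'s, so the $W^0$-eigenvalues on $v$ can be written as $e_r(a_1,\dots,a_m)$ and $e_s(b_1,\dots,b_n)$ for a suitable $\tabA\in\Tab$; replacing $M$ by $\Pi M$ if necessary, $v$ has parity $\parity(b(\tabA))$. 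Then $M=Wv$ is a highest weight supermodule of highest weight $\tabA$, so the universal property of $\overline M(\tabA)$ gives a surjection $\overline M(\tabA)\twoheadrightarrow M$ and hence $M\cong\overline L(\tabA)$. This shows the $\overline L(\tabA)$ exhaust the irreducibles up to isomorphism and parity switch.

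For the last assertion, the ``if'' direction is immediate: if $\tabA\roweq\tabB$ then the values $e_r(a_1,\dots,a_m)$, $e_s(b_1,\dots,b_n)$ and the sum $b(\tabA)$ are unchanged under permuting entries within rows, so $\C_\tabA=\C_\tabB$ as $W^0$-supermodules, whence $\overline M(\tabA)=\overline M(\tabB)$ and $\overline L(\tabA)=\overline L(\tabB)$. For the ``only if'' direction I would show that the highest weight is an isomorphism invariant of $\overline L(\tabA)$. Write $\tabA=\substack{a_1\cdots a_m\\ b_1\cdots b_n}$, so that $\overline M(\tabA)=W^-\overline m_\tabA$ has basis $\{f_S\overline m_\tabA\}$ indexed by the subsets $S$ of the $m$ Grassmann generators of $W^-$. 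From $d_1^{(1)}\overline 1_\tabA=e_1(a_1,\dots,a_m)\overline 1_\tabA=a(\tabA)\overline 1_\tabA$ and the relation $[d_1^{(1)},f^{(s)}]=-f^{(s)}$ (the $r=1$ case of $[d_i^{(r)},f^{(s)}]=-\sum_{a=0}^{r-1}f^{(r+s-1-a)}d_i^{(a)}$, using $d_1^{(0)}=1$), a short induction gives $d_1^{(1)}f_S\overline m_\tabA=(a(\tabA)-|S|)f_S\overline m_\tabA$. Thus $d_1^{(1)}$ acts diagonalizably on $\overline M(\tabA)$ with eigenvalues $a(\tabA),a(\tabA)-1,\dots,a(\tabA)-m$; among these, $a(\tabA)$ is intrinsically distinguished --- it is the unique eigenvalue $\mu$ for which none of $\mu+1,\mu+2,\dots$ is an eigenvalue --- and its eigenspace is the line $\C\overline m_\tabA$. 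Passing to the irreducible quotient $\overline L(\tabA)$, the operator $d_1^{(1)}$ is still diagonalizable, still has $a(\tabA)$ as its distinguished top eigenvalue, and the corresponding eigenspace is the image of $\C\overline m_\tabA$, which is one-dimensional (it is nonzero, since the image of $\overline m_\tabA$ generates $\overline L(\tabA)$). This line is $W^0$-stable, with $W^0$ acting on it through the character $d_1^{(r)}\mapsto e_r(a_1,\dots,a_m)$, $d_2^{(s)}\mapsto e_s(b_1,\dots,b_n)$. Hence any isomorphism $\overline L(\tabA)\cong\overline L(\tabB)$, with $\tabB=\substack{a_1'\cdots a_m'\\ b_1'\cdots b_n'}$, is parity-preserving and carries the distinguished line for $\tabA$ to that for $\tabB$, forcing $e_r(a_1,\dots,a_m)=e_r(a_1',\dots,a_m')$ for $1\le r\le m$ and $e_s(b_1,\dots,b_n)=e_s(b_1',\dots,b_n')$ for $1\le s\le n$. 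Since elementary symmetric polynomials determine a multiset, the top rows of $\tabA$ and $\tabB$ agree as multisets and likewise the bottom rows, i.e.\ $\tabA\roweq\tabB$.

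I expect the main obstacle to be the production of a joint $W^0$-eigenvector in $M^{W^+_{>0}}$ for an arbitrary irreducible $M$ --- equivalently, the finite-dimensionality of irreducible $W$-supermodules, which requires genuine input from the structure theory of $W$ (and could alternatively be obtained from the explicit construction of the irreducibles alluded to in the introduction). By contrast, the separation of the $\overline L(\tabA)$'s is an elementary consequence of the presentation of $W$ and its triangular decomposition.
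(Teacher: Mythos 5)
You should note first that the paper contains no proof of this statement: it is quoted verbatim from \cite[Theorem 7.2]{BBG}, so the only available benchmark is the highest weight theory developed there, and your skeleton follows that development. The reduction is sound: the augmentation ideal of the Grassmann algebra $W^+$ is nilpotent, so any irreducible $M$ has nonzero invariants $M^{W^+_{>0}}$, which are $W^0$-stable by $[d_i^{(r)},e^{(s)}]=\sum_a d_i^{(a)}e^{(r+s-1-a)}$; given a joint $W^0$-eigenvector there, the universal property of $\overline{M}(\tabA)$ (after a parity switch) exhibits $M$ as some $\overline{L}(\tabA)$. Your separation argument is correct and complete: $d_1^{(1)}$ acts diagonally on the basis $f_S\overline{m}_\tabA$ of $\overline{M}(\tabA)$ with eigenvalue $a(\tabA)-|S|$, the top eigenline survives in the quotient $\overline{L}(\tabA)$, is $W^0$-stable (as $W^0$ is commutative) and carries the character $d_1^{(r)}\mapsto e_r(a_1,\dots,a_m)$, $d_2^{(s)}\mapsto e_s(b_1,\dots,b_n)$, and elementary symmetric functions determine multisets; the converse is clear since $\overline{M}(\tabA)$ depends only on the row-equivalence class.

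The genuine gap is the one you flag yourself: producing a joint $W^0$-eigenvector in $M^{W^+_{>0}}$ for an arbitrary irreducible $M$, equivalently some finiteness statement. Your stated justification --- $M$ is finitely generated over the Noetherian algebra $W^-W^0$, and then ``the explicit description of $W$'' gives $\dim M^{W^+_{>0}}<\infty$ --- is not an argument: finite generation only shows that $M$, and hence the submodule $M^{W^+_{>0}}$, is finitely generated over the polynomial algebra $W^0$, and a finitely generated module over $\C[x_1,\dots,x_{m+n}]$ need contain no common eigenvector at all (the regular module already fails), so the exhaustion half of the theorem is left unproven. One way to close it along your lines: the defining relations give $W^0W^-=W^-W^0$ (induct on $r$ when moving $d_i^{(r)}$ past $f^{(s)}$), so the triangular decomposition yields $W=W^0W^-W^+$, i.e.\ $W$ is a finitely generated module over the commutative Noetherian subalgebra $W^0$; hence $W$ is a PI algebra, and Kaplansky's theorem combined with the Schur lemma for countable-dimensional algebras ($\End_W(M)=\C$) shows $W/\operatorname{Ann}_W(M)$ is a matrix algebra over $\C$, so $M$ is finite-dimensional, and the commuting family $\{d_1^{(r)},d_2^{(s)}\}$ then has a joint eigenvector in the nonzero finite-dimensional $W^0$-stable space $M^{W^+_{>0}}$. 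Without some such input (or an explicit appeal to the argument of \cite[\S 7]{BBG}), the first assertion of the theorem is not established.
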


In particular, the theorem shows that all irreducible
$W$-supermodules are finite-dimensional. Henceforth, we will restrict our
attention to the full subcategory $W\lsmof$ of $W\lsmod$ consisting of
finite-dimensional supermodules.

There is actually a very simple way to realize $\overline{L}(\tabA)$ explicitly.
Recall that
\begin{equation}\label{hstruct}
\mathfrak{h} \cong \mathfrak{gl}_1(\C)^{\oplus s_-} \oplus
\mathfrak{gl}_{1|1}(\C)^{\oplus m} \oplus \mathfrak{gl}_1(\C)^{\oplus
  s_+}.
\end{equation}
For any $\tabA
 = \substack{a_1 \cdots a_{m} \\ b_1 \cdots b_{n}} \in \Tab$, let $K(\tabA)$ be
the
$\h$-supermodule induced from a one-dimensional $\b'\cap\mathfrak{h}$-supermodule
of weight $\lambda'_\tabA$ and parity $\parity(\lambda'_\tabA) = \parity(b(\tabA))$, cf.\ (\ref{e:espressoclub});
we use the letter $K$ here because it is a Kac supermodule for $\mathfrak{h}$ (as well as being a Verma supermodule).
Note that $\dim K(\tabA) = 2^m$.
We denote the highest weight vector in $K(\tabA)$ by $k_\tabA$.
Observe that
\begin{equation}\label{mk}
M'(\tabA) \cong U(\mathfrak{g}) \otimes_{U(\mathfrak{p})} K(\tabA).
\end{equation}
Also let $V(\tabA)$ be the unique irreducible quotient of
$K(\tabA)$. Thus $V(\tabA)$ is an irreducible $\mathfrak{h}$-supermodule
of $\b'\cap \mathfrak h$-highest weight $\lambda_\tabA'$,
and $\dim V(\tabA) = 2^{m - \mat(\tabA)}$.
Finally,
using the (injective!) homomorphism $W \hookrightarrow U(\p) \twoheadrightarrow U(\h)$
derived from the natural inclusion and projection maps,
we restrict these supermodules to $W$ to obtain
\begin{equation}\label{craven}
\overline{K}(\tabA) :=  K(\tabA) \downarrow^{U(\h)}_{W},
\qquad
\overline{V}(\tabA) := V(\tabA) \downarrow^{U(\h)}_{W}.
\end{equation}
We sometimes denote $k_\tabA \in \overline{K}(\tabA)$ instead by $\overline k_\tabA$.

\begin{Theorem}
\label{irco}
If $\mat(\tabA) = \atyp(\tabA)$
then $\overline{L}(\tabA) \cong \overline{V}(\tabA)$.
\end{Theorem}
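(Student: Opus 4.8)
The plan is to exploit the universal property of the Verma supermodule $\overline{M}(\tabA)$ together with the explicit description of $W$ as a subalgebra of $U(\h)$ under the map $W \hookrightarrow U(\p) \twoheadrightarrow U(\h)$. First I would check that $\overline{K}(\tabA)$ is a highest weight supermodule for $W$ of highest weight $\tabA$, with highest weight vector $\overline k_\tabA$: one must verify that $e^{(r)}\,\overline k_\tabA = 0$ for $r > s_+$ and that $d_1^{(r)}, d_2^{(s)}$ act on $\overline k_\tabA$ by the scalars $e_r(a_1,\dots,a_m)$, $e_s(b_1,\dots,b_n)$ prescribed in (\ref{hwa}). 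This should follow from the explicit formulae of \cite[\S4]{BBG} for the generators $d_i^{(r)}, e^{(s)}$ (bearing in mind the sign corrections in the footnote) applied to the highest weight vector $k_\tabA \in K(\tabA)$; the vector $k_\tabA$ is killed by $\b' \cap \h$, which forces $e^{(r)}$ to act as zero, and the diagonal Cartan part of $\h$ acts by $\lambda'_\tabA$, giving the elementary symmetric polynomial eigenvalues via (\ref{nobetter})--(\ref{better2}) and the factorization of $d_i(u)$. Since $\overline k_\tabA$ has parity $\parity(b(\tabA))$ by construction, the universal property then yields a surjection $\overline{M}(\tabA) \twoheadrightarrow \overline{K}(\tabA)$.

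Because $\dim \overline{M}(\tabA) = 2^m = \dim \overline{K}(\tabA)$, this surjection is an isomorphism, so $\overline{M}(\tabA) \cong \overline{K}(\tabA)$. Next I would pass to irreducible quotients: $\overline{V}(\tabA)$ is the restriction to $W$ of the irreducible $\h$-head $V(\tabA)$ of $K(\tabA)$, and it is a quotient of $\overline{K}(\tabA) \cong \overline{M}(\tabA)$; hence $\overline{V}(\tabA)$ is a highest weight supermodule of highest weight $\tabA$, so by \cite[Lemma 7.1]{BBG} it has $\overline{L}(\tabA)$ as a quotient (it is nonzero since $\overline k_\tabA$ maps to a nonzero vector). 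To conclude $\overline{V}(\tabA) \cong \overline{L}(\tabA)$ it then suffices to show $\overline{V}(\tabA)$ is irreducible as a $W$-supermodule. This is exactly where the hypothesis $\mat(\tabA) = \atyp(\tabA)$ enters: we have $\dim \overline{V}(\tabA) = 2^{m-\mat(\tabA)}$, and the point is that when the defect equals the atypicality this dimension is minimal among all $\tabB \roweq \tabA$, so no proper $W$-submodule can exist.

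The main obstacle is precisely proving the irreducibility of $\overline{V}(\tabA)$ under the hypothesis. The cleanest route I see is a dimension/counting argument: the irreducible $W$-supermodules $\overline{L}(\tabB)$ for $\tabB \roweq \tabA$ all share a central character (by (\ref{hwa}) the elements $c^{(r)}$, being symmetric functions of the joint multiset of entries, act by the same scalars on row-equivalent tableaux), and one wants to show that $\overline{L}(\tabA)$ already accounts for the full $2^{m-\atyp(\tabA)}$-dimensional space $\overline{V}(\tabA)$. One can instead argue directly inside $U(\h)$: decompose $\h$ as in (\ref{hstruct}), so $V(\tabA)$ is an outer tensor product of irreducible $\gl_{1|1}$-modules (one- or two-dimensional according to whether the corresponding column of $\tabA$ is a matched pair), and then show that the subalgebra $W$ of $U(\h)$ acts transitively enough to make this tensor product irreducible — using the relation $c^{(r+s-1)} = [e^{(r)}, f^{(s)}]$ and the fact that $e^{(r)}, f^{(s)}$ generate $W^{\pm}$ to move between the $2^{m-\mat(\tabA)}$ weight lines. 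The condition $\mat(\tabA) = \atyp(\tabA)$ guarantees that after reordering within rows one cannot create additional matched pairs, which is what prevents $V(\tabA)$ from decomposing further upon restriction to $W$; making this last implication precise — likely by reducing to the rank-one (single $\gl_{1|1}$ block) case and invoking the explicit action of $e^{(s_++1)}, f^{(s_-+1)}$ — is the technical heart of the argument.
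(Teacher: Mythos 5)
The paper's own proof of Theorem~\ref{irco} is essentially a citation of \cite[Theorem 8.4]{BBG} plus a parity check, so what you are really proposing is to reprove that result of \cite{BBG}; that is legitimate, but your plan has a gap at each of its two main steps. First, the claim that the universal property gives a surjection $\overline{M}(\tabA)\twoheadrightarrow\overline{K}(\tabA)$, hence an isomorphism by counting dimensions, is unjustified: the universal property applies to \emph{highest weight} supermodules, so you would first have to show that $\overline{k}_\tabA$ \emph{generates} $\overline{K}(\tabA)$ over $W$. Checking $e^{(r)}\overline{k}_\tabA=0$ and the $W^0$-eigenvalues only produces a homomorphism $\overline{M}(\tabA)\to\overline{K}(\tabA)$, and the paper itself is deliberately careful on this point: in the appendix (proof of Theorem~\ref{T:main}) the authors construct exactly this map and call it ``non-zero (but not necessarily surjective)''. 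Cyclicity is a real issue, not a formality: under the Miura embedding $W\hookrightarrow U(\h)$, the generators $f^{(s_-+1)},\dots,f^{(s_-+m)}$ act on $\overline{k}_\tabA$ through linear combinations of the $m$ odd block lowering operators of $\h$ whose coefficient matrix consists of polynomial values in the entries of $\tabA$; for degenerate (in particular repeated-entry, highly atypical) tableaux this matrix can drop rank, so $W\overline{k}_\tabA$ may well be a proper subspace of $\overline{K}(\tabA)$. Fortunately this step is also unnecessary for your goal: to identify $\overline{V}(\tabA)$ with $\overline{L}(\tabA)$ it suffices that the image of $\overline{k}_\tabA$ in $\overline{V}(\tabA)$ is a highest weight vector of weight $\tabA$ (and parity $p(b(\tabA))$) and that $\overline{V}(\tabA)$ is irreducible over $W$.

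That irreducibility, however, is the entire content of the theorem, and your proposal does not prove it. The dimension heuristic fails as stated: a proper $W$-submodule of $\overline{V}(\tabA)$ need not be isomorphic to any $\overline{V}(\tabB)$ with $\tabB\roweq\tabA$, so minimality of $2^{m-\mat(\tabA)}$ among that family rules nothing out; nor do central characters pin things down, since tableaux related by $\upit$ (lowering matched pairs) have the same central character but lie in different row classes. The alternative route, ``reduce to a single $\gl_{1|1}$ block and invoke the explicit action of $e^{(s_++1)},f^{(s_-+1)}$'', is not available as stated, because $W$ is not a tensor product of block subalgebras of $U(\h)$: the Miura images of its generators mix all the blocks, and this mixing is precisely why $\overline{V}(\tabA)$ can fail to be irreducible when $\mat(\tabA)<\atyp(\tabA)$ and why \cite[Theorem 8.4]{BBG} requires a genuine computation with those explicit formulas. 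As written, your plan identifies the correct reduction but then defers exactly the statement to be proved, so it does not yet constitute a proof.
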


\begin{proof}
This is essentially \cite[Theorem 8.4]{BBG}, but we should note also
that the isomorphism constructed is necessarily even since
it sends $\overline m_\tabA$ to $\overline k_\tabA$,
which are both of the same parity $\parity(b(\tabA))$.
\end{proof}

\begin{Lemma}\label{music}
For any $\tabA \in \Tab$, we have that
$[\overline{K}(\tabA)] = \sum_{\tabB \upit \tabA} [\overline{V}(\tabB)]$,
equality written in the Grothendieck group $K_0(W\lsmof)$.
\end{Lemma}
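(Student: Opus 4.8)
The plan is to prove the stated identity first at the level of the reductive subalgebra $\h$, and then push it down to $W$ along the restriction in (\ref{craven}).

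\emph{Reduction to $\h$.} I will establish
$$
[K(\tabA)] = \sum_{\tabB \upit \tabA} [V(\tabB)] \qquad\text{in } K_0(U(\h)\lsmof).
$$
To do this, use (\ref{hstruct}) to write $\h$ as a direct sum of $s_-+s_+$ copies of $\gl_1(\C)$, one for each height-one column of the pyramid, together with $m$ copies of $\gl_{1|1}(\C)$, the $i$-th such copy being the one living in the height-two column that contains the $i$-th box of the top row. Since $K(\tabA)$ is induced from a one-dimensional $(\b'\cap\h)$-supermodule of weight $\lambda'_\tabA$, it is the outer tensor product over these summands of: a one-dimensional module for each $\gl_1$-summand, and the two-dimensional $\gl_{1|1}$-Verma (= Kac) supermodule $K_i$ for each $\gl_{1|1}$-summand. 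A direct computation inside $\gl_{1|1}$ then shows that $K_i$ is irreducible unless the relevant height-two column of $\tabA$ is a matched pair, in which case $K_i$ has a two-step composition series with one-dimensional top and one-dimensional socle, the socle being the module obtained by decrementing that matched pair by $1$. One must also check here, using the form $(\delta_i,\delta_j)=(-1)^{|i|}\delta_{i,j}$ and the formula (\ref{e:rho}) for $\rho'$, that ``the column is a matched pair'' is exactly the atypicality condition for $K_i$, and that the socle of $K_i$ really is isomorphic to the top factor of the $i$-th Kac module attached to the tableau with that pair decremented.

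\emph{Expansion and descent.} Granting this, I would expand the outer tensor product in the Grothendieck group. Since the matched columns of any $\tabB$ with $\tabB \upit \tabA$ coincide with the matched columns of $\tabA$, the terms of the expansion are indexed by subsets $S$ of the set of matched columns of $\tabA$ (choose the socle rather than the top in the columns lying in $S$), and the $S$-term is precisely $[V(\tabB_S)]$, where $\tabB_S \upit \tabA$ is obtained from $\tabA$ by decrementing the matched pairs in $S$; as $S\mapsto\tabB_S$ is a bijection onto $\{\tabB:\tabB\upit\tabA\}$, this yields the displayed identity in $K_0(U(\h)\lsmof)$. A short parity check, using $p(z+1)=p(z)+\1$ together with $b(\tabB_S)=b(\tabA)-|S|$, confirms that each composition factor carries exactly the parity prescribed for $V(\tabB_S)$ in (\ref{craven}) (each step down a $\gl_{1|1}$-Kac module shifts parity by $\1$), so no parity switches intervene. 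Finally, restriction along the injection $W\hookrightarrow U(\h)$ is exact, hence induces a homomorphism $K_0(U(\h)\lsmof)\to K_0(W\lsmof)$ sending $[K(\tabA)]\mapsto[\overline K(\tabA)]$ and $[V(\tabB)]\mapsto[\overline V(\tabB)]$; applying it to the identity above proves the lemma.

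The one genuinely delicate point is the bookkeeping in the middle step: reconciling the combinatorial operation $\upit$ with the socle of the atypical $\gl_{1|1}$-Kac module, and verifying that the $\rho'$-shift makes ``matched pair'' coincide with ``atypical weight''. This is an explicit low-dimensional computation with the bilinear form and $\rho'$; everything else (exactness of restriction, the shape of $\gl_{1|1}$-Vermas, and multiplicativity of the class of an outer tensor product in $K_0$) is routine.
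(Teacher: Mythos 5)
Your proof is correct and follows essentially the same route as the paper: the paper's own argument simply cites ``the representation theory of $\mathfrak{gl}_{1|1}(\C)$'' to get $[K(\tabA)]=\sum_{\tabB\upit\tabA}[V(\tabB)]$ in $K_0(U(\h)\lsmof)$ and then restricts to $W$, which is exactly what you do, just with the $\gl_{1|1}$ Kac-module analysis, the matched-pair/atypicality identification and the parity bookkeeping written out in full. No gaps; your extra checks are the details the paper leaves implicit.
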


\begin{proof}
By
the representation theory of $\mathfrak{gl}_{1|1}(\C)$,
we have that
$$
[K(\tabA)] = \sum_{\tabB \upit \tabA} [V(\tabB)].
$$
The lemma follows from this on restricting to $W$.
\end{proof}

\subsection{Invariants and coinvariants}\label{invandcov}
Given a right $\g$-supermodule $M$,
it is easy to check from \eqref{e:wdef} that the subspace
\begin{equation} \label{e:whittinv}
H^0(M) := H^0(\m_\chi,M) = \{v \in M \:|\: v \, \m_\chi = 0\}
\end{equation}
is stable under right multiplication by elements of $W$.
Hence, we obtain the {\em Whittaker invariants functor}
\begin{equation}
H^0:\rsmod{U(\g)}
\to \rsmod{W}.
\end{equation}
Let $\rsmodchi U(\g)$ be the full subcategory of $\rsmod U(\g)$
consisting of all the
supermodules on which $\m_\chi$ acts locally nilpotently.
The super analog of {\em Skryabin's theorem} asserts that
the restriction of $H^0$ defines an equivalence of categories
from $\rsmodchi U(\g)$ to $\rsmod W$.
Let $Q$ denote the
$(W,U(\g))$-superbimodule
\begin{equation}
Q :=
U(\g)/\m_\chi U(\g),
\end{equation}
denoting the canonical
image of $1 \in U(\g)$ in $Q$ by $1_\chi$.
Then the functor
\begin{equation}\label{gravy}
- \otimes_W Q : \rsmod W \to \rsmodchi U(\g)
\end{equation}
is the inverse functor to $H^0$ in Skrabin's theorem.
As observed already in \cite[Remarks 3.9--3.10]{Zhao},
Skryabin's proof of this result in the purely even setting
from \cite{Skry} extends routinely to the
super case.
Along the way, one sees that $Q$ is a free left $W$-supermodule with an
explicitly constructed basis, from which we see that there exists
a $W$-supermodule
homomorphism
\begin{equation}\label{prmap}
\prr:Q \twoheadrightarrow W
\end{equation}
such that $\prr(1_\chi) = 1$. We fix such a choice for later use.

Instead, suppose that $M$ is a left $\g$-supermodule.
Then again it is clear from \eqref{e:wdef} that the left action
of $W$ leaves the subspace $\m_\chi M$ invariant, hence, we get induced a well-defined left action of $W$ on
\begin{equation} \label{e:whittcoinv}
H_0(M) := H_0(\m_\chi,M) =M/\m_\chi M.
\end{equation}
This gives us the {\em Whittaker coinvariants functor}
\begin{equation}
H_0:U(\g)\lsmod  \to W\lsmod.
\end{equation}
Equivalently, this is the functor $Q \otimes_W-$.

The first lemma below connects Whittaker invariants
and coinvariants. To formulate it we need some duals:
if $M$ is a left supermodule over some superalgebra
then we write $M^*$ for the full linear dual
of $M$ considered as a right supermodule
with the obvious action $(fv)(a) = f(va)$ (no signs!). Similarly,
we write ${^*}M$ for the dual of a right
supermodule, which is a left supermodule.
There are natural
supermodule homomorphisms $M \rightarrow ({^*}M)^*$ and
$M \rightarrow {^*}(M^*)$ (which involve a sign!).
Note also that if $V$ is a finite-dimensional superspace and $M$ is
arbitrary then
the canonical maps
\begin{equation}\label{dualitymap}
M^* \otimes V^* \stackrel{\sim}{\rightarrow} (V \otimes M)^*,
\qquad
{^*}M \otimes {^*}V \stackrel{\sim}{\rightarrow} {^*}(V \otimes M)
\end{equation}
are isomorphisms.

\begin{Lemma} \label{L:equiv}
Let $M$ be a left $\g$-supermodule.  Then there is a functorial isomorphism
$H_0(M)^* \cong H^0(M^*)$.
In particular, if $H_0(M)$ is finite-dimensional, then
$H_0(M) \cong {^*}H^0(M^*)$.
\end{Lemma}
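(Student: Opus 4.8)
The plan is to produce a chain of natural isomorphisms
$$
H_0(M)^* = (M/\m_\chi M)^* \;\cong\; \{f \in M^* \:|\: f\,\m_\chi = 0\} \;=\; H^0(M^*),
$$
where throughout $M^*$ denotes the full linear dual regarded as a right $\g$-supermodule as in the setup just before the lemma. First I would unwind the definitions: an element $f \in M^*$ satisfies $f\,\m_\chi = 0$ exactly when $f$ annihilates the subspace $\m_\chi M \subseteq M$, since $(f\,x)(v) = f(xv)$ for $x \in \m$ (and the $\chi$-shift is built into $\m_\chi$). Thus the subspace $H^0(M^*) \subseteq M^*$ is literally the annihilator $(\m_\chi M)^\perp$, and the standard linear-algebra identification of $(M/N)^*$ with $N^\perp \subseteq M^*$ for a subspace $N$ gives the desired vector-space isomorphism. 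The only thing to check beyond linear algebra is that this identification is a map of right $W$-supermodules, which is immediate because the left $W$-action on $H_0(M)$ and the right $W$-action on $H^0(M^*)$ are both induced from the respective $W$-actions on $M$ and $M^*$, and the dualization $M \rightsquigarrow M^*$ is set up precisely to swap left and right actions without signs. Functoriality in $M$ is then automatic since every step is natural.

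For the second assertion, suppose $H_0(M)$ is finite-dimensional. Then I would apply the canonical map $N \to {^*}(N^*)$ for a right supermodule $N$ (mentioned in the excerpt), which is an isomorphism when $N$ is finite-dimensional; taking $N = H_0(M)$ and substituting the isomorphism $H_0(M)^* \cong H^0(M^*)$ just established yields
$$
H_0(M) \;\cong\; {^*}\!\bigl(H_0(M)^*\bigr) \;\cong\; {^*}H^0(M^*),
$$
as claimed. One should note that $H^0(M^*)$ is then finite-dimensional too (being isomorphic as a vector space to $H_0(M)^*$), so the notation ${^*}H^0(M^*)$ behaves as expected.

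I do not expect any serious obstacle here: the lemma is essentially a bookkeeping statement reconciling the two duality conventions (the one for left modules and the one for right modules) with the two functors $H_0$ and $H^0$. The mildly delicate point — the only place a sign could in principle intrude — is verifying that the identification $(M/\m_\chi M)^* \cong (\m_\chi M)^\perp$ respects the $W$-actions; I would handle this by checking on elements, namely that for $u \in W$, $v \in M$, and $f$ annihilating $\m_\chi M$, the formula $(f u)(v + \m_\chi M) = f(u v + \m_\chi M)$ holds on the nose with no sign, which follows from the conventions $(fu)(v) = f(uv)$ adopted in the paragraph preceding the lemma. Everything else is the elementary duality between quotients and subobjects together with the finite-dimensional double-dual isomorphism.
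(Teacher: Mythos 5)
Your proof is correct and follows essentially the same route as the paper: the paper likewise dualizes the surjection $M \twoheadrightarrow H_0(M)$ to identify $H_0(M)^*$ with the annihilator of $\m_\chi M$ inside $M^*$, which is exactly $H^0(M^*)$, and then deduces the second claim from the finite-dimensional double-dual isomorphism. Your explicit check that the $W$-actions match with no sign issues is the same bookkeeping the paper's conventions are designed to make automatic.
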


\begin{proof}
The natural inclusion $H_0(M)^* \hookrightarrow M^*$ induced by
$M \twoheadrightarrow H_0(M)$
has image contained in
$H^0(M^*)$. This gives a $W$-supermodule homomorphism
$H_0(M)^* \hookrightarrow H^0(M^*)$.
To see that it is surjective,
we observe that any element
of $H^0(M^*) \subseteq M^*$ annihilates $\m_\chi M$, hence,
comes from an element of $H_0(M)^*$.
\end{proof}

The next lemma is an analog of another well-known result in the
even setting.

\begin{Lemma}\label{exactness}
The functor $H_0$ sends short exact sequences of left $\mathfrak{g}$-supermodules that are finitely generated over $\m$ to short exact sequences of finite-dimensional left $W$-supermodules.
\end{Lemma}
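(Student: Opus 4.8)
The plan is to use the right exactness of $H_0\cong Q\otimes_{U(\g)}-$ to reduce the statement to two points, and then to handle these separately. Given a short exact sequence $0\to M'\xrightarrow{f}M\xrightarrow{g}M''\to 0$ of left $\g$-supermodules, right exactness of $H_0$ makes $H_0(M')\xrightarrow{H_0(f)}H_0(M)\xrightarrow{H_0(g)}H_0(M'')\to 0$ exact, and since $U(\m)$ is Noetherian, $M'$ and $M''$ are finitely generated over $\m$ whenever $M$ is. So it suffices to prove: (a) $H_0(N)$ is finite-dimensional whenever $N$ is finitely generated over $\m$; and (b) $H_0(f)$ is injective.

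For (a), I would first record that $\m=\bigoplus_{r<0}\g(r)$ is a subalgebra with $[\m,\m]\subseteq\bigoplus_{r\le -2}\g(r)$, whereas $\chi$ is supported on $\g(-1)$; hence $\chi$ restricts to a character of $\m$ with $\chi([\m,\m])=0$, so that $\m_\chi$ is itself a Lie (super)subalgebra of $U(\g)$, isomorphic to $\m$ via $x\mapsto x-\chi(x)$, and $U(\m)=U(\m_\chi)$ by the PBW theorem. The identity $x(zv)=z(xv)+[x,y]v$ for $x\in\m$, $z=y-\chi(y)\in\m_\chi$ and $v\in N$, together with $[x,y]\in[\m,\m]\subseteq\m_\chi$, shows that $\m_\chi N$ is an $\m$-submodule of $N$ and that $\m$ acts on $N/\m_\chi N$ through the character $\chi$. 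Hence $H_0(N)=N/\m_\chi N$ is spanned by the images of any finite $U(\m)$-generating set of $N$, and in particular is finite-dimensional; the $W$-supermodule structure plays no role here.

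Statement (b) is the heart of the matter. Using $U(\m)=U(\m_\chi)$ I would rewrite $H_0(N)=N/\m_\chi N$ as the degree-zero homology $H_0(\m_\chi,N)$ of the Lie superalgebra $\m_\chi$ acting on $N$ via $\m_\chi\subseteq U(\g)$, and more generally identify $\operatorname{Tor}^{U(\g)}_i(Q,N)\cong H_i(\m_\chi,N)$ using $Q\cong\C_\chi\otimes_{U(\m)}U(\g)$ together with the freeness of $U(\g)$ as a left $U(\m)$-module. The long exact homology sequence attached to the original sequence then exhibits $\ker H_0(f)$ as a quotient of $H_1(\m_\chi,M'')$, so it is enough to prove that $H_1(\m_\chi,P)=0$ for every $\g$-supermodule $P$ finitely generated over $\m$. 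For this I would pass to associated graded modules with respect to the Kazhdan filtration on $U(\g)$ (placing $\g(r)$ in filtration degree $r+2$): then $\gr U(\g)=S(\g)$, the symbols of the elements of $\m_\chi$ span the graded subspace $\m\subseteq S(\g)$, and $\gr Q=S(\g)/\m S(\g)=S(\p)$. Equipping $P$ with a good filtration yields a finitely generated graded $S(\g)$-module $\gr P$, and (since $\gr(\m_\chi P)=\m\,\gr P$) the problem reduces, after lifting back through the filtration, to the vanishing of $\operatorname{Tor}^{S(\g)}_1(S(\p),\gr P)$, i.e.\ to the linear forms spanning $\m$ forming the beginning of a regular sequence on $\gr P$. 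I expect this last input — the regularity of $\m$ on the Kazhdan-associated graded of a $\g$-supermodule, which is the super analogue of a standard fact in the purely even $W$-algebra setting and ultimately reflects the geometry of the principal nilpotent orbit with respect to the chosen good grading — to be the only genuine obstacle; everything preceding it is formal, and for the applications in the paper it would in any case suffice to verify it when $P$ lies in category $\O$, where the support of $\gr P$ sits inside the nilpotent cone and the relevant transversality with $\p^*$ can be read off directly.
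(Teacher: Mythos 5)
Your reduction and your part (a) are fine: right exactness of $H_0\cong Q\otimes_{U(\g)}-$, finite generation of $M'$ and $M''$ over the Noetherian algebra $U(\m)$, the observation that $\m$ acts on $N/\m_\chi N$ through $\chi$ (so finite generation over $\m$ gives finite-dimensionality), and the identification $\operatorname{Tor}_i^{U(\g)}(Q,N)\cong\operatorname{Tor}_i^{U(\m)}(\C_\chi,N)$ are all correct. The genuine gap is the step you yourself defer: the vanishing $H_1(\m_\chi,M'')=0$ is exactly the content of the lemma beyond formalities, and the associated-graded argument you propose for it cannot work, even after restricting to category $\O$. Passing to Kazhdan symbols kills the shift by $\chi$ (the symbol of $x-\chi(x)$ for $x\in\g(-1)$ is just $x$), and the resulting criterion --- that a basis of $\m$ be a regular sequence on $\gr P$ --- is false for modules squarely within the scope of the lemma. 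For instance, take any nonzero finite-dimensional supermodule $P$ (these are finitely generated over $\m$, and dominant irreducibles of this kind lie in $\O$): any compatible filtration has $\gr P$ supported at $0\in\g^*$, so every nonzero even element of $\m$ acts nilpotently on $\gr P$ and is a zero-divisor; hence the sequence is not regular and $\operatorname{Tor}_1^{S(\g)}(S(\p),\gr P)\neq 0$, even though $H_1(\m_\chi,P)=0$ (in the rank-one picture this is just the invertibility of $f-1$ on a space where $f$ is nilpotent). So the first page of your spectral sequence does not vanish and nothing can be concluded about the abutment; the fallback ``support in the nilpotent cone plus transversality with $\p^*$'' fails for the same examples, whose support is $\{0\}$. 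There are further technical problems with this route: the Kazhdan filtration is non-positive on $\g(r)$ for $r\le -2$, so convergence of the spectral sequence is not automatic, and the equality $\gr(\m_\chi P)=\m\,\gr P$ that you use implicitly also requires proof.

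The paper sidesteps all of this by dualizing into Skryabin's setting rather than computing homology directly: for the $\chi$-restricted dual $M^\#=\{f\in M^*\mid f(\m_\chi^rM)=0\text{ for }r\gg0\}$ one has $H^0(M^*)=H^0(M^\#)$, the functor $(-)^\#\cong\Hom_\m(-,E_\chi)$ is exact because $E_\chi=U(\m)^\#$ is an injective $\m$-supermodule (Skryabin's noncommutative Artin--Rees argument), $H^0$ is exact on the locally nilpotent category by Skryabin's theorem, and finally $H_0(M)\cong{^*}H^0(M^*)$ by Lemma~\ref{L:equiv} once finite-dimensionality is known --- which is your part (a). If you want to complete your homological approach instead, the realistic fix is to prove $H_1(\m_\chi,P)=0$ by the same duality (relating it to $H^1$ of $P^\#$ and invoking a super analogue of Lynch's higher-cohomology vanishing for Whittaker modules), not by regularity on the Kazhdan-associated graded; note this needs strictly more input than the paper's proof, which only uses exactness of $H^0$ on the locally nilpotent category together with exactness of $(-)^\#$.
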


\begin{proof}
For any left $\m$-supermodule $M$, we introduce its $\chi$-restricted dual
$$
M^\# := \{f \in M^*\:|\:f(\m_\chi^r M) = 0\text{ for }r \gg 0\}.
$$
This defines a functor $(-)^\#: U(\m)\lsmod \to \rsmod U(\m)$.
We claim that this functor is exact.
To see this, we note as in the proof of \cite[Lemma 3.10]{Moeglin} that
the functor $(-)^\#$ is isomorphic to
$\Hom_{\m}(-, E_\chi)$,
where $E_\chi :=U(\m)^\#$ viewed  as an $(\m,\m)$-superbimodule in the obvious way.
The proof of \cite[Assertion 2]{Skry} shows that
$E_\chi$ is injective as a left
$\m$-supermodule; this follows ultimately from the non-commutative Artin-Rees lemma.
The desired exactness follows.

If $M$ is a left $\g$-supermodule then $M^\#$ is actually a $\g$-submodule of $M^*$,
and this submodule
belongs to $\rsmodchi U(\g)$.
Hence, $(-)^\#$ can also be viewed as an exact functor
$U(\g)\lsmod\to\rsmodchi U(\g)$.
As in \cite[Lemma~3.11]{Moeglin}, we have
quite obviously for any left $\g$-supermodule that
$H^0(M^*) = H^0(M^\#)$ as subspaces of $M^*$.
Since $H^0$ is exact on $\rsmodchi U(\g)$ by Skryabin's theorem, we have now proved that the functor
$U(\g)\lsmod \to \rsmod W$ given by $ M \mapsto H^0(M^*)$ is exact.
Finally,
if $M$ is a left $\g$-supermodule that is finitely generated over $\m$, then it is clear that $H_0(M)$ is finite-dimensional,
so that
$H_0(M) \cong {^*}H^0(M^*)$ by Lemma~\ref{L:equiv}.
The lemma follows.
\end{proof}

\begin{Corollary}
The restriction of the functor
$H_0$
to the category $\O$ from \S\ref{so}
is {exact} and has image contained in $W\lsmof$.
\end{Corollary}

\begin{proof}
In view of the lemma, it just remains to observe that all supermodules in $\O$ are finitely
generated over $\mathfrak{m}$.
This follows because the Verma supermodules $M(\tabA)$ are finitely
generated over $\mathfrak{m}$, which is easily seen from the
definition since
$\mathfrak{g}_\0 = \mathfrak{b}_\0\oplus \mathfrak{m}_\0$.
\end{proof}

We also need the following lemma which takes care of all the necessary bookkeeping regarding
parities.

\begin{Lemma}\label{bookkeeping}
For $M \in \O$, the elements $d_1^{(1)},  d_2^{(1)}\in W^0$
act semisimply on
$H_0(M)$. Moreover, the $z$-eigenspace of $d_2^{(1)}$
is concentrated in parity $\parity(z)$
for each $z\in \C$.
\end{Lemma}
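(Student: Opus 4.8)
The plan is to reduce the statement to the weight–space decomposition of $M$, using the fact that $d_1^{(1)}$ and $d_2^{(1)}$ are, up to additive scalars, elements of $\t$. Unwinding (\ref{nobetter})--(\ref{better2}) via the identity $(\lambda,\delta_j)=(-1)^{|j|}\lambda(e_{j,j})$, the unique elements of $S(\t)=\C[\t^*]$ realising those eigenvalue functions are
$$
d_1^{(1)} = \theta_1 + c_1, \qquad d_2^{(1)} = -\theta_2 + c_2,
$$
where $\theta_1 := e_{1,1}+\cdots+e_{m,m}$ and $\theta_2 := e_{m+1,m+1}+\cdots+e_{m+n,m+n}$ lie in $\t$, and $c_1 := (\rho',\delta_1+\cdots+\delta_m)$, $c_2 := (\rho',\delta_{m+1}+\cdots+\delta_{m+n})$ are integers by (\ref{e:rho}). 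Since $M=\bigoplus_\lambda M_\lambda$ is semisimple over $\t$, both $d_1^{(1)}$ and $d_2^{(1)}$ act semisimply on $M$, with $d_i^{(1)}$ acting on $M_\lambda$ by the scalar $d_i^{(1)}(\lambda)$; and the action of $d_i^{(1)}\in W$ on $H_0(M)=M/\m_\chi M$ is exactly the one induced from the $U(\p)$-module structure on $M$, so in particular $\m_\chi M$ is stable under $\theta_1$ and $\theta_2$. (This stability can also be checked by hand: $\theta_1,\theta_2\in\g(0)$ normalise $\m$, and $\chi([\theta_i,x])=0$ for every $x\in\m$ because the only root vectors of $\m$ on which $\chi$ is nonzero are the $e_{j+1,j}$ with $j,j+1$ in the same row of $\pi$, and these are centralised by $\theta_1$ and $\theta_2$; hence $[\theta_i,\m_\chi]\subseteq\m_\chi$ and so $\theta_i(\m_\chi M)\subseteq\m_\chi M$.)

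The first assertion is then immediate from the elementary fact that a semisimple operator on a vector space restricts to a semisimple operator on any invariant subspace and descends to a semisimple operator on the quotient; we apply this to $d_i^{(1)}$ acting on $M$ with invariant subspace $\m_\chi M$. More precisely, writing $M^{(z)}$ for the $z$-eigenspace of $d_2^{(1)}$ acting on $M$, invariance of $\m_\chi M$ forces $\m_\chi M=\bigoplus_z(\m_\chi M\cap M^{(z)})$, so the $z$-eigenspace of $d_2^{(1)}$ acting on $H_0(M)$ equals $M^{(z)}/(\m_\chi M\cap M^{(z)})$.

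For the parity statement, a weight $\lambda$ contributes to $M^{(z)}$ exactly when $(\lambda+\rho',\delta_{m+1}+\cdots+\delta_{m+n})=z$, i.e.\ $(\lambda,\delta_{m+1}+\cdots+\delta_{m+n})=z-c_2$; for such $\lambda$ the parity condition (\ref{parity2}) reads $\parity(\lambda)=p(z-c_2)+\lceil(n-m)/2\rceil+m\,s_-$. Since $c_2\in\Z$, the relation $p(w+1)=p(w)+\1$ gives $p(z-c_2)=p(z)+p(c_2)$, while by (\ref{e:espressoclub}) the integer $\lceil(n-m)/2\rceil+m\,s_-$ has the same image in $\Z/2$ as $c_2$, namely $p(c_2)$; hence $\parity(\lambda)=p(z)+p(c_2)+p(c_2)=p(z)$. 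Thus $M^{(z)}$ is concentrated in parity $p(z)$, and since the quotient map $M\twoheadrightarrow H_0(M)$ preserves parity, so is its image $M^{(z)}/(\m_\chi M\cap M^{(z)})$, which is the $z$-eigenspace of $d_2^{(1)}$ on $H_0(M)$. The only real work here is the bookkeeping of this last paragraph — pinning down $d_i^{(1)}$ as the explicit elements of $\t$ and pushing (\ref{parity2}) through the eigenvalue relation via (\ref{e:espressoclub}) — since the underlying module-theoretic point is trivial.
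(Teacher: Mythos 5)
Your proof is correct and follows essentially the same route as the paper's: you use (\ref{nobetter})--(\ref{better2}) to see that $d_1^{(1)}, d_2^{(1)} \in S(\t)$ act on each weight vector of $M$ (hence on its image in $H_0(M)$) by the stated scalars, which gives semisimplicity, and then push (\ref{parity2}) through the eigenvalue relation using (\ref{e:espressoclub}) to get the parity claim. Your extra details (writing $d_1^{(1)}=\theta_1+c_1$, $d_2^{(1)}=-\theta_2+c_2$ explicitly and verifying the invariance of $\m_\chi M$ by hand) are fine but not needed, since invariance is already built into the definition of the $W$-action on $H_0(M)$.
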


\begin{proof}
For $M \in \O$ and any vector $v \in M$ of $\t$-weight
$\lambda$, we know from (\ref{nobetter})--(\ref{better2})
that $d_1^{(1)}$ and $d_2^{(1)}$ act on $v$ (hence, $v + \mathfrak{m}_\chi M$)
by the scalars
$(\lambda+\rho', \delta_{1}+\dots+\delta_{n})$
and
$(\lambda+\rho', \delta_{m+1}+\dots+\delta_{m+n})$, respectively.
Hence, they both act semisimply on all of $H_0(M)$.
For the last part, let $z := (\lambda+\rho', \delta_{m+1}+\dots+\delta_{m+n})$.
Then, by (\ref{parity2}) and (\ref{e:espressoclub}),
the vector $v$ is of parity $\parity(\lambda)=\parity(z)$.
\end{proof}

\subsection{On tensoring with finite-dimensional
  representations}\label{pf}

Let $\Rep(\g)$ be the symmetric monoidal category of
{\em rational representations} of $\g$, that is,
finite-dimensional left
$\g$-supermodules
which are semisimple over $\t$ with weights lying in $\t_\Z^* := \bigoplus_{i=1}^{m+n} \Z\delta_i$.
Tensoring with
$V \in \Rep(\g)$
defines a {\em projective functor}
$$
V \otimes - :U(\g)\lsmod
\rightarrow U(\g)\lsmod.
$$
This is a rigid object of the strict monoidal category $\End(U(\g)\lsmod)$
of $\C$-linear
endofunctors of $U(\g)\lsmod$:
it has a biadjoint defined by tensoring with
the usual dual
$V^\vee$ of $V$ in the category
$\Rep(\g)$.
In this subsection, we introduce an analogous biadjoint pair of endofunctors
$V \ostar -$ and $V^\vee \ostar -$ of $W\lsmof$.  We will use the language
of module categories over monoidal categories,
see e.g.\ \cite[Chapter 7]{EGNO}.

It is convenient to start by working with right supermodules.
From the previous subsection, we recall the notation $M^*$ and ${^*}M$ for duals of left (resp. right)
supermodules, which are right (resp. left) supermodules.
In particular, for $V$ as in the previous paragraph,
$V^*$ is a right $U(\g)$-supermodule.
Tensoring with it gives us an exact functor
$- \otimes V^*:\rsmod U(\g) \rightarrow \rsmod U(\g)$.
In fact, writing $\End(\rsmod U(\g))$ for the strict monoidal category
of all $\C$-linear
endofunctors of $\rsmod U(\g)$, this defines a monoidal functor
$$
\Rep(\g)^{\operatorname{op}} \rightarrow \End(\rsmod U(\g)),
\quad
V \mapsto - \otimes V^*.
$$
In other words, $\rsmod U(\g)$ is a right module category over the
monoidal category $\Rep(\g)$.
The main coherence map that is needed for this comes from the natural isomorphisms
$(- \otimes W^*) \circ (- \otimes V^*) \cong - \otimes (V^*
\otimes W^*) \cong - \otimes ((W \otimes V)^*)$.

It is clear that $- \otimes V^*$ takes objects of $\rsmodchi U(\g)$
to objects of $\rsmodchi U(\g)$.
Hence, we can consider
$- \otimes V^*$ also as an endofunctor of  $\rsmodchi U(\g).$
Transporting this through Skryabin's equivalence from (\ref{gravy}),
we obtain an exact functor
$$
- \ostar V^*:=
H^0((- \otimes_W Q) \otimes V^*):
\rsmod W \rightarrow \rsmod W
$$
Like in the previous paragraph, this actually defines a monoidal functor
$$
\Rep(\g)^{\operatorname{op}} \rightarrow \End(\rsmod W),
$$
making $\rsmod W$ into a right module category over $\Rep(\g)$.
To construct the coherence map
$(- \ostar W^*) \circ (- \ostar V^*) \cong
-\ostar ((W \otimes V)^*)$ for this, one needs to
use the canonical adjunction between $- \otimes_W Q$
and $H^0$.
Perhaps the most important fact about this functor is that
there is a  isomorphism of vector superspaces
\begin{equation}\label{most}
M \ostar V^* \stackrel{\sim}{\rightarrow}
M \otimes V^*
\end{equation}
which is natural in both $M$ and $V$.
In particular, $-\ostar V^*$ takes finite-dimensional $W$-supermodules
to finite-dimensional $W$-supermodules.
By definition, the isomorphism (\ref{most}) is defined by the restriction of the map
\begin{align*}
(M \otimes_W Q) \otimes V^* &\rightarrow M \otimes V^*, \\
(m \otimes 1_\chi u) \otimes f &\mapsto m \,\prr(u) \otimes f
\end{align*}
where $\prr$ is the map from (\ref{prmap}). The proof of this assertion
goes back to the PhD thesis of Lynch. For this and
other details about this construction, we refer to
\cite[\S8.2]{BKrep}; the super case is essentially the same.

\begin{Lemma}\label{yu}
There is an isomorphism
$H^0(M) \ostar V^*
\cong H^0(M \otimes V^*)$ which is natural in $M$
and $V$. It makes $H^0:\rsmod U(\g) \rightarrow \rsmod W$
into a morphism of right $\Rep(\g)$-module categories.
\end{Lemma}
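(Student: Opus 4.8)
The plan is to unwind both sides of the claimed isomorphism $H^0(M)\ostar V^*\cong H^0(M\otimes V^*)$ using the definitions already set up in \S\ref{pf}, and to show that the canonical map identifying them is a $W$-supermodule homomorphism, hence an isomorphism since it is already known to be a linear isomorphism on the underlying superspaces. Recall that by definition $H^0(M)\ostar V^* = H^0\big((H^0(M)\otimes_W Q)\otimes V^*\big)$, so the essential point is to produce a natural isomorphism $(H^0(M)\otimes_W Q)\otimes V^* \cong M\otimes V^*$ of objects of $\rsmodchi U(\g)$ and then apply $H^0$ to both sides. The first factor here is almost tautological: by Skryabin's theorem (the super analog cited after (\ref{gravy})), the counit of the adjunction gives a natural isomorphism $H^0(M)\otimes_W Q\xrightarrow{\sim} M$ of right $\g$-supermodules, for any $M\in\rsmodchi U(\g)$ — and the point is that $H^0(M)$ makes sense because we want to start with $M\in\rsmod U(\g)$, apply $H^0$, and the inverse Skryabin functor returns $M$ only after restricting to $\rsmodchi U(\g)$; but tensoring with $V^*$ and then taking $H^0$ only ever sees the $\chi$-restricted part, so this is harmless. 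Tensoring this isomorphism on the right with $V^*$ gives $(H^0(M)\otimes_W Q)\otimes V^*\xrightarrow{\sim} M\otimes V^*$ in $\rsmodchi U(\g)$, and applying the exact functor $H^0$ yields the desired natural isomorphism $H^0(M)\ostar V^*\cong H^0(M\otimes V^*)$.

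**Next I would** verify the module-category compatibility. The claim is that $H^0$, equipped with these isomorphisms, is a morphism of right $\Rep(\g)$-module categories: the coherence condition is that the diagram relating $H^0((M\otimes W^*)\otimes V^*)$, $H^0(M\otimes((V\otimes W)^*))$ on the category-$\mathcal O$ side, and the analogous $\ostar$-side diagram built from the coherence map $(-\ostar W^*)\circ(-\ostar V^*)\cong -\ostar((W\otimes V)^*)$, commutes. Here one simply observes that both the coherence map for $\rsmod W$ as a $\Rep(\g)$-module category and the isomorphism of Lemma~\ref{yu} were constructed from the \emph{same} adjunction between $-\otimes_W Q$ and $H^0$, together with the fixed associativity and duality isomorphisms $(V^*\otimes W^*)\cong(W\otimes V)^*$ from (\ref{dualitymap}). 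So the pentagon/compatibility axiom reduces to the triangle identities for the Skryabin adjunction plus the coherence already recorded for $\Rep(\g)^{\op}\to\End(\rsmod U(\g))$; this is a routine diagram chase with no new input.

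**The main obstacle** — to the extent there is one — is purely bookkeeping: making sure the naturality in both $M$ and $V$ genuinely lifts to the module-category level, i.e.\ that no spurious signs appear when one threads a homogeneous element $f\in V^*$ through the map $(m\otimes 1_\chi u)\otimes f\mapsto m\,\prr(u)\otimes f$ of (\ref{most}) and then tensors further. One must check that this explicit formula is compatible with the associativity constraint, which amounts to the statement that $\prr:Q\twoheadrightarrow W$ from (\ref{prmap}) intertwines appropriately with the bimodule structure on $Q$; this is exactly the content of the Lynch-type computation referenced at the end of \S\ref{pf} (see \cite[\S8.2]{BKrep}), and one just notes it goes through verbatim in the super setting. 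I would therefore organize the proof as: (i) invoke Skryabin to get $H^0(M)\otimes_W Q\cong M$ naturally for $M\in\rsmodchi U(\g)$; (ii) tensor with $V^*$ and apply the exact $H^0$ to get the stated isomorphism, citing (\ref{most}) for the identification of underlying superspaces and hence finite-dimensionality; (iii) check the module-category coherence by reducing to the adjunction triangle identities and the coherence already in place for $\rsmod U(\g)$, referring to \cite[\S8.2]{BKrep} for the details common to both the even and super cases.
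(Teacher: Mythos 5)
Your proposal is correct and follows essentially the same route as the paper: the paper's proof is exactly your steps (i)--(ii), namely start from the canonical isomorphism $M \cong H^0(M)\otimes_W Q$ coming from the Skryabin adjunction and apply $H^0\circ(-\otimes V^*)$ to both sides, with the module-category coherence left as a formal consequence of the same adjunction. Your extra remarks on the $\rsmodchi U(\g)$ caveat and the coherence diagram are just a more explicit account of points the paper treats as routine.
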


\begin{proof}
We start from the canonical isomorphism
$M \cong H^0(M) \otimes_W Q$ defined by the canonical adjunction from
Skryabin's theorem.
Then apply $H^0 \circ (- \otimes V^*)$ to both sides.
\end{proof}

We are ready to switch the discussion to left supermodules.
For $V \in \Rep(\g)$ as before and $M \in W\lsmof$,
we define
\begin{equation}
V \ostar M :=
{^*} (M^* \ostar V^*),
\end{equation}
noting that $M^* \ostar V^*$ is also finite-dimensional thanks to
(\ref{most}).
Again, we have that $(W \ostar -) \circ (V \ostar -)
\cong (W \otimes V) \ostar -$, so that we obtain
a monoidal functor
\begin{equation}\label{itsmonoidal}
\Rep(\g) \rightarrow \End(W\lsmof),
\qquad
V \mapsto V \ostar -
\end{equation}
making $W\lsmof$ into a (left) module category over $\Rep(\g)$.
Also, applying ${^*}(-)$ to (\ref{most}) with $M$ replaced by $M^*$
then using (\ref{dualitymap}), we get a canonical isomorphism
\begin{equation}
V \otimes M
\cong
{^*}(V^*) \otimes {^*}(M^*)
\cong {^*} (M^* \otimes V^*)
\stackrel{\sim}{\rightarrow} {^*}(M^* \circledast V^*) = V \ostar M
\end{equation}
as vector superspaces.

In general, due to the parity condition prescribed by (\ref{parity2}),
the
endofunctor $V \otimes -$ does not leave $\O$
invariant.
However, it does providing the $\lambda$-weight space of $V$ is concentrated in parity
$\parity((\lambda,\delta_{m+1}+\cdots+\delta_{m+n}))$
for all $\lambda \in \t_\Z^*$.
Let $\Rep_0(\g)$ be the full monoidal subcategory of
$\Rep(\g)$
consisting of all such $V$.
Then, for $V \in \Rep_0(\g)$, we do get a monoidal functor
\begin{equation}
\Rep_0(\g) \rightarrow \End(\mathcal O),
\qquad
V \mapsto V \otimes -.
\end{equation}
So $\mathcal O$ is a module category over $\Rep_0(\g)$.

\begin{Theorem}\label{crappy}
There is a natural isomorphism
$H_0(V \otimes M) \cong V \ostar H_0(M)$
making $H_0:\O \rightarrow W \lsmof$ into a morphism of $\Rep_0(\g)$-module categories.
\end{Theorem}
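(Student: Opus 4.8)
The plan is to reduce the statement to the corresponding fact for the Whittaker \emph{invariants} functor, established in Lemma~\ref{yu}, by passing to full linear duals. First I would record that, since $V \in \Rep_0(\g)$, the endofunctor $V \otimes -$ preserves $\mathcal O$, so $V \otimes M \in \mathcal O$ whenever $M \in \mathcal O$; by Lemma~\ref{exactness} and the remark following it, both $H_0(M)$ and $H_0(V \otimes M)$ are then finite-dimensional. Consequently Lemma~\ref{L:equiv} gives $H_0(N) \cong {}^*H^0(N^*)$ functorially for $N \in \{M,\,V \otimes M\}$, and indeed on all of $\mathcal O$ the functor $H_0$ is naturally isomorphic to the composite ${}^*(-) \circ H^0 \circ (-)^*$, where $(-)^*:\mathcal O \to \rsmod U(\g)$, $H^0:\rsmod U(\g) \to \rsmod W$, and ${}^*(-):\rsmod W \to W\lsmof$ (the last landing in finite-dimensional supermodules because $H^0(M^*)\cong H_0(M)^*$).

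Next I would build the desired natural isomorphism as the chain
\begin{align*}
H_0(V \otimes M)
&\cong {}^*H^0\bigl((V \otimes M)^*\bigr)
\cong {}^*H^0\bigl(M^* \otimes V^*\bigr)\\
&\cong {}^*\bigl(H^0(M^*) \ostar V^*\bigr)
\cong {}^*\bigl(H_0(M)^* \ostar V^*\bigr)
= V \ostar H_0(M),
\end{align*}
in which the first and fourth isomorphisms come from Lemma~\ref{L:equiv}, the second is the (super) isomorphism $(V \otimes M)^* \cong M^* \otimes V^*$ from (\ref{dualitymap}) (valid because $V$ is finite-dimensional), the third is Lemma~\ref{yu} applied with $M$ replaced by $M^*$, and the final equality is the definition of $V \ostar -$ on $W\lsmof$. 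Each of these isomorphisms is natural in both $M$ and $V$, so the composite is a natural isomorphism of functors, as required for the first assertion.

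The remaining point --- that this natural isomorphism upgrades $H_0$ to a \emph{morphism} of $\Rep_0(\g)$-module categories --- I would handle by checking that every link of the chain respects the relevant module-category structures. The duality functor $(-)^*$ carries the left action $V \otimes -$ to the right action $-\otimes V^*$, and ${}^*(-)$ carries the right action $-\ostar V^*$ back to the left action $V \ostar -$ (by the very definition $V \ostar N := {}^*(N^* \ostar V^*)$ together with the canonical identification $({}^*N)^*\cong N$); meanwhile $H^0$ is a morphism of right $\Rep(\g)$-module categories by Lemma~\ref{yu}. Pasting the coherence diagrams of these three maps (and restricting along $\Rep_0(\g)\hookrightarrow\Rep(\g)$) yields the coherence diagram for $H_0$ relating the associativity constraint on $\mathcal O$ pulled back from $\Rep(\g)$ with the constraint on $W\lsmof$ coming from (\ref{itsmonoidal}).

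I expect this last pasting to be the only genuine obstacle: one must unravel the coherence isomorphism $(V \ostar -)\circ(W\ostar -)\cong(V\otimes W)\ostar -$ on $W\lsmof$ --- which is itself defined by transport through Skryabin's equivalence and the canonical adjunction --- match it against the associativity constraint inherited by $\mathcal O$, and keep careful track of the Koszul signs carried by the duality maps (\ref{dualitymap}). Conceptually nothing new is needed once Lemma~\ref{yu} is available, so I would present this as a routine though fiddly diagram chase rather than writing it out in full.
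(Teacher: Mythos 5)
Your proposal is correct and follows essentially the same route as the paper: the paper's proof is exactly the chain $H_0(V\otimes M)\cong {^*}H^0((V\otimes M)^*)\cong {^*}H^0(M^*\otimes V^*)\cong {^*}(H^0(M^*)\ostar V^*)=V\ostar H_0(M)$ obtained from Lemmas~\ref{L:equiv} and \ref{yu} and (\ref{dualitymap}), with the module-category coherence dismissed as purely formal (with a pointer to \cite[\S8.4]{BKrep}). Your extra commentary on pasting the coherence diagrams is just a more explicit account of that same formal step.
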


\begin{proof}
Take $M \in W\lsmof$. Since $H_0(V \otimes M)$ and $V$ are finite-dimensional, we have from
Lemmas~\ref{L:equiv} and \ref{yu} that
\begin{align*}
H_0(V \otimes M)
&\cong {^*}H^0((V \otimes M)^*)
\cong {^*} H^0(M^* \otimes V^*)\\
&\cong {^*} (H^0(M^*) \ostar V^*)
=V \ostar H_0(M).
\end{align*}
Everything else is purely formal; see \cite[\S8.4]{BKrep} for
further discussion.
\end{proof}

Since $V^\vee$ is both a left dual and right dual to $V \in
\Rep_0(\g)$, it is automatic that $V^\vee \ostar -$ is both left and
right adjoint to $V \ostar -$. Moreover, the monoidal isomorphism
described in Theorem~\ref{crappy} intertwines the canonical adjunctions
between
$V \otimes -$ and $V^\vee \otimes -$ with
the ones between
$V \ostar -$ and $V^\vee \ostar -$.

\subsection{Whittaker coinvariants of \texorpdfstring{$M'(\tabA)$}{M'(\tabA)}}
The following theorem
will allow us to determine
the effect of the
Whittaker coinvariants functor on the Verma supermodule $M'(\tabA)$.

\begin{Theorem}\label{hrest}
The map $U(\p) \rightarrow Q, u \mapsto 1_\chi u$ is an isomorphism of
$(W, U(\p))$-superbimodules.
Hence, for any left $\mathfrak{p}$-supermodule $M$,
there is an isomorphism
$$
M\!\downarrow^{U(\p)}_W \,\stackrel{\sim}{\to} H_0(U(\g) \otimes_{U(\p)} M),
 \qquad
v \mapsto 1 \otimes v + \mathfrak{m}_\chi(U(\g) \otimes_{U(\p)} M)
$$
\end{Theorem}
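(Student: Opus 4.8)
The plan is to reduce the entire statement to the vector-space identity
\[
U(\g)\;=\;U(\p)\;\oplus\;\m_\chi U(\g),
\]
after which everything else is formal. First I observe that $u\mapsto 1_\chi u=u+\m_\chi U(\g)$ is automatically a homomorphism of $(W,U(\p))$-superbimodules: it is right $U(\p)$-linear by construction, and left $W$-linear because $w\,\m_\chi U(\g)\subseteq\m_\chi U(\g)$ for all $w\in W$ by the definition \eqref{e:wdef} of $W$. This map is surjective exactly when $U(\g)=U(\p)+\m_\chi U(\g)$ and injective exactly when $U(\p)\cap\m_\chi U(\g)=0$, so the first assertion of the theorem is equivalent to the displayed identity.

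To prove that identity I would use PBW. Surjectivity follows from a short induction on the number of $\m$-factors: fixing homogeneous bases of $\m$ and $\p$ and expanding an element of $U(\g)$ in ordered monomials $y_{i_1}\cdots y_{i_k}\,p_{j_1}\cdots p_{j_l}$, any monomial with $k\geq1$ equals $(y_{i_1}-\chi(y_{i_1}))\,y_{i_2}\cdots p_{j_l}\in\m_\chi U(\g)$ plus a scalar multiple of a monomial with fewer $\m$-factors. For the directness of the sum I would exploit the hypothesis that $\chi|_\m$ is a \emph{character} of $\m$: this makes $\m_\chi=\{x-\chi(x):x\in\m\}$ a sub-Lie-superalgebra of $U(\g)$ isomorphic to $\m$ (from $[x-\chi(x),y-\chi(y)]=[x,y]=[x,y]-\chi([x,y])$, using that $\chi$ kills $[\m,\m]$ and that scalars are central), so that the unital subalgebra it generates is exactly $U(\m)$ and ordered monomials in a basis of $\m_\chi$ form a PBW basis of $U(\m)$; in particular $U(\m)=\C 1\oplus\m_\chi U(\m)$. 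Multiplying on the right by $U(\p)$ and pushing through the PBW isomorphism $U(\m)\otimes U(\p)\stackrel{\sim}{\to}U(\g)$ then splits $U(\g)$ as $U(\p)\oplus\m_\chi U(\m)U(\p)=U(\p)\oplus\m_\chi U(\g)$. (Equivalently, one can pass to associated graded spaces for the Kazhdan filtration, where the claim degenerates to the evident decomposition $S(\g)=S(\p)\oplus\m\,S(\g)$.)

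For the ``hence'' clause I would argue formally, using that $H_0=Q\otimes_{U(\g)}(-)$. For a left $\p$-supermodule $M$, associativity of the tensor product yields an isomorphism of left $W$-supermodules
\[
H_0\bigl(U(\g)\otimes_{U(\p)}M\bigr)=Q\otimes_{U(\g)}\bigl(U(\g)\otimes_{U(\p)}M\bigr)\;\cong\;Q\otimes_{U(\p)}M,
\]
where $Q$ is now viewed as a right $U(\p)$-supermodule by restriction. By the first part $Q\cong U(\p)$ as a $(W,U(\p))$-superbimodule, so the right-hand side is $U(\p)\otimes_{U(\p)}M\cong M$ with $W$ acting through the inclusion $W\hookrightarrow U(\p)$, i.e.\ it is $M\!\downarrow^{U(\p)}_W$. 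Tracking the element $1_\chi\otimes(1\otimes v)$ along this chain of isomorphisms shows that the composite carries $1\otimes v+\m_\chi(U(\g)\otimes_{U(\p)}M)$ to $v$, so its inverse is the map in the statement.

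I expect the only substantive obstacle to be the directness of $U(\g)=U(\p)\oplus\m_\chi U(\g)$ (equivalently, the injectivity $U(\p)\cap\m_\chi U(\g)=0$); surjectivity and the ``hence'' clause are routine once that is in hand. Super signs are present throughout but harmless, since $\chi$ is an even character supported on $\g_{\0}$.
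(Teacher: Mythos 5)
Your proof is correct and takes essentially the same route as the paper: the paper's proof simply declares the bimodule isomorphism ``immediate from the PBW theorem and the definition of $Q$'' and then runs exactly your formal chain $H_0(U(\g)\otimes_{U(\p)}M)\cong Q\otimes_{U(\g)}U(\g)\otimes_{U(\p)}M\cong Q\otimes_{U(\p)}M\cong U(\p)\otimes_{U(\p)}M\cong M$. Your detailed verification of $U(\g)=U(\p)\oplus\m_\chi U(\g)$ (via $\m_\chi$ being a shifted copy of $\m$, so that $U(\m)=\C 1\oplus\m_\chi U(\m)$, combined with PBW) is just the spelled-out version of the step the paper leaves implicit.
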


\begin{proof}
The first assertion is immediate from the PBW theorem and the
definition of $Q$. Hence,
$$
H_0(U(\g)\otimes_{U(\p)} M)
\cong Q \otimes_{U(\g)}
U(\g) \otimes_{U(\p)} M
\cong Q \otimes_{U(\p)} M \cong U(\p)\otimes_{U(\p)} M \cong M,
$$
which translates into the given isomorphism.
\end{proof}

Recall the definition of the $W$-supermodule $\overline{K}(\tabA)$ from \S\ref{wo}.

\begin{Corollary}\label{hrestc}
For any $\tabA \in \Tab$, we have that $H_0 (M'(\tabA)) \cong
\overline{K}(\tabA)$.
\end{Corollary}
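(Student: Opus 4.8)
The plan is to read this off directly from Theorem~\ref{hrest}. First I would recall from (\ref{mk}) that $M'(\tabA) \cong U(\g) \otimes_{U(\p)} K(\tabA)$, where the $\h$-supermodule $K(\tabA)$ is regarded as a $\p$-supermodule by inflation along the projection $\p \twoheadrightarrow \h$ with kernel $\bigoplus_{r > 0} \g(r)$ (so that the nilradical of $\p$ acts as zero); here one uses that $\b' = (\b'\cap\h) \oplus \bigoplus_{r>0}\g(r)$, which is immediate from $\deg(e_{i,j}) = \col(j)-\col(i)$ and the definition of the order $<'$.

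Next I would apply Theorem~\ref{hrest} with $M := K(\tabA)$ viewed as this $\p$-supermodule. Since that theorem is stated as an isomorphism of $W$-supermodules, and in particular is parity-preserving, it gives at once
$$K(\tabA)\!\downarrow^{U(\p)}_W \;\stackrel{\sim}{\longrightarrow}\; H_0\bigl(U(\g)\otimes_{U(\p)}K(\tabA)\bigr) \;\cong\; H_0(M'(\tabA)),$$
where on the left $W$ acts through the inclusion $W \hookrightarrow U(\p)$. So no separate bookkeeping of parities is required.

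It then remains to observe that $K(\tabA)\!\downarrow^{U(\p)}_W$ coincides with $\overline{K}(\tabA)$. Indeed, by construction the $\p$-action on $K(\tabA)$ factors as $U(\p) \twoheadrightarrow U(\h) \to \End(K(\tabA))$, so restricting it along $W \hookrightarrow U(\p)$ yields precisely the composite $W \hookrightarrow U(\p) \twoheadrightarrow U(\h) \to \End(K(\tabA))$, which is exactly the $W$-action used in (\ref{craven}) to define $\overline{K}(\tabA) = K(\tabA)\!\downarrow^{U(\h)}_W$. Hence the two $W$-supermodules are literally equal, and combining with the previous display gives $H_0(M'(\tabA)) \cong \overline{K}(\tabA)$.

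There is essentially no obstacle here: the statement is a formal corollary of Theorem~\ref{hrest}. The only point meriting any care is the compatibility used in the last step, namely that the $\p$-module structure on $K(\tabA)$ entering (\ref{mk}) is genuinely inflated from $\h$, so that restriction of the $W$-action through $U(\p)$ agrees with restriction through $U(\h)$; but this is routine once the definitions are aligned.
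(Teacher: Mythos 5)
Your proof is correct and is essentially the paper's own argument: apply Theorem~\ref{hrest} to $K(\tabA)$ inflated along $\p \twoheadrightarrow \h$ and use (\ref{mk}), with the identification $K(\tabA)\!\downarrow^{U(\p)}_W = \overline{K}(\tabA)$ being exactly the compatibility the paper takes for granted. The extra detail you give about the inflation and parity is a harmless elaboration of the same route.
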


\begin{proof}
Apply Theorem~\ref{hrest} to the $\mathfrak{p}$-supermodule
obtained by inflating $K(\tabA)$ through
$\mathfrak{p} \twoheadrightarrow \mathfrak{h}$
and use (\ref{mk}).
\end{proof}

\begin{Corollary}\label{hrestd}
For any $\tabA \in \Tab$, there exists a supermodule $M \in \mathcal O$
such that $H_0(M)\cong\overline{L}(\tabA)$.
\end{Corollary}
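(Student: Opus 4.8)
The plan is to reduce to a maximally atypical representative, where Theorem~\ref{irco} identifies the irreducible $W$-supermodule explicitly, and then to realize that representative as the Whittaker coinvariants of a parabolically induced supermodule by the same trick used in the proof of Corollary~\ref{hrestc}, namely Theorem~\ref{hrest}.

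First I would choose $\tabB \roweq \tabA$ with $\mat(\tabB) = \atyp(\tabB)$. Such a $\tabB$ exists by the very definition of the degree of atypicality (the maximum defining $\atyp(\tabA)$ is attained on some row-equivalent tableau), and $\atyp(\tabB) = \atyp(\tabA)$ since $\atyp$ is a row-equivalence invariant. By Theorem~\ref{bbbg} we have $\overline L(\tabA) \cong \overline L(\tabB)$, and by Theorem~\ref{irco} we have $\overline L(\tabB) \cong \overline V(\tabB)$. Hence it is enough to produce $M \in \mathcal O$ with $H_0(M) \cong \overline V(\tabB)$.

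Next I would inflate the irreducible $\h$-supermodule $V(\tabB)$ to a $\p$-supermodule through $\p \twoheadrightarrow \h$ and set $M := U(\g) \otimes_{U(\p)} V(\tabB)$. Since $V(\tabB)$ is a quotient of $K(\tabB)$ as $\h$-supermodules, hence as $\p$-supermodules, $M$ is a quotient of $U(\g) \otimes_{U(\p)} K(\tabB) \cong M'(\tabB)$ by (\ref{mk}); as $M'(\tabB)$ is a Verma supermodule for the normal order $<'$, it lies in $\mathcal O$, and all the defining conditions of $\mathcal O$ (including the parity condition (\ref{parity2})) pass to quotients, so $M \in \mathcal O$. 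Now Theorem~\ref{hrest} applied to the $\p$-supermodule $V(\tabB)$ gives $H_0(M) \cong V(\tabB)\!\downarrow^{U(\p)}_W$. Because the $\p$-action on $V(\tabB)$ is inflated from $\h$, and the embedding $W \hookrightarrow U(\h)$ used to define $\overline V(\tabB)$ in (\ref{craven}) is precisely $W \hookrightarrow U(\p) \twoheadrightarrow U(\h)$, this restriction equals $\overline V(\tabB)$. Chaining the isomorphisms yields $H_0(M) \cong \overline V(\tabB) \cong \overline L(\tabB) \cong \overline L(\tabA)$, as required.

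There is no real obstacle here: the substance is entirely in the earlier results. The only two points needing a moment's care are (i) verifying that $M$ actually lies in $\mathcal O$, which is handled by exhibiting it as a quotient of $M'(\tabB)$; and (ii) matching the two descriptions of the $W$-action on $V(\tabB)$ — through $U(\p)$ versus directly through $U(\h)$ — which agree by the construction of the restriction map in (\ref{craven}). (As a consistency check, right exactness of $H_0$ also shows $\overline K(\tabB) = H_0(M'(\tabB)) \twoheadrightarrow H_0(M) \cong \overline V(\tabB)$, in accordance with Lemma~\ref{music}.)
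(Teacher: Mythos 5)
Your proposal is correct and follows the paper's own proof essentially verbatim: replace $\tabA$ by a row-equivalent tableau with $\mat = \atyp$, inflate $V(\tabB)$ through $\p \twoheadrightarrow \h$, induce, and combine Theorem~\ref{hrest} with Theorem~\ref{irco}. Your extra checks (that the induced module lies in $\mathcal O$ as a quotient of $M'(\tabB)$, and that the two descriptions of the $W$-action agree) are points the paper leaves implicit, but they are correctly handled.
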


\begin{proof}
Since $\overline{L}(\tabA)$ only depends on the row equivalence class of $\tabA$,
we may assume that $\atyp(\tabA) = \mat(\tabA)$.
Then apply Theorem~\ref{hrest} to the $\mathfrak{p}$-supermodule
obtained by inflating $V(\tabA)$ through
$\mathfrak{p} \twoheadrightarrow \mathfrak{h}$
and use Theorem~\ref{irco}.
\end{proof}

\subsection{Whittaker coinvariants of \texorpdfstring{$M(\tabA)$}{M(\tabA)}}
We regard the following theorem as one of the central results of this article.

\begin{Theorem}\label{T:main}
For any $\tabA \in \Tab$, we have that $H_0(M(\tabA)) \cong \overline{M}(\tabA)$.
\end{Theorem}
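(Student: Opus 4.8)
\emph{Strategy.} I would deduce the isomorphism from two facts about $H_0(M(\tabA))$: that its dimension is $2^m$, and that it is a highest weight $W$-supermodule of highest weight $\tabA$. Given both, the universal property of $\overline M(\tabA)$ yields a surjection $\overline M(\tabA)\twoheadrightarrow H_0(M(\tabA))$ --- from $\overline M(\tabA)$ itself rather than $\Pi\overline M(\tabA)$, since the highest weight vector will have parity $p(b(\tabA))$ by Lemma~\ref{bookkeeping} --- and a surjection of supermodules of equal finite dimension is an isomorphism.

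\emph{The dimension.} This is the easy half: $H_0$ is exact on $\mathcal O$ (the remark after Lemma~\ref{exactness}), so $[H_0(M(\tabA))]$ in $K_0(W\lsmof)$ depends only on $[M(\tabA)]$ in $K_0(\mathcal O)$, which by Lemma~\ref{vermasequal} equals $[M'(\tabA)]$; hence $[H_0(M(\tabA))]=[H_0(M'(\tabA))]=[\overline K(\tabA)]$ by Corollary~\ref{hrestc}, and so $\dim H_0(M(\tabA))=\dim\overline K(\tabA)=2^m=\dim\overline M(\tabA)$. (Along the way one also gets $\overline M(\tabA)\cong\overline K(\tabA)$, once one checks that $\overline k_\tabA$ generates $\overline K(\tabA)$ over $W^-$ --- clear from the $\gl_{1|1}$-decomposition~(\ref{hstruct}) of $\h$ --- so that it is a highest weight vector exhibiting $\overline K(\tabA)$ as a quotient of $\overline M(\tabA)$.)

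\emph{The highest weight structure.} The crucial point is that the image of the canonical generator $1_\tabA$ is \emph{not} the vector one wants: since $\rho\neq\rho'$, formulas~(\ref{nobetter})--(\ref{better2}) show $d_1^{(1)}$ acts on it by $(\lambda_\tabA+\rho',\delta_1+\cdots+\delta_m)$, which in general is not $a_1+\cdots+a_m$. The remedy is to join the natural order $<$ to the reverse lexicographic order $<'$ by a chain of odd reflections, the $k$th interchanging a top box $i_k$ with a bottom box $j_k$, and to set $v:=e_{j_N,i_N}\cdots e_{j_1,i_1}\cdot 1_\tabA\in M(\tabA)$: a weight vector of $\t$-weight $\lambda_\tabA'$, equivalently the image of the generator of $M'(\tabA)$ under the composite $M'(\tabA)=M^{\lhd_N}(\tabA)\to\cdots\to M^{\lhd_0}(\tabA)=M(\tabA)$ of the standard odd-reflection maps. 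As the $e_{j_k,i_k}$ are odd matrix units running from a bottom index to a top index, all of their mutual supercommutators vanish, so they span a Grassmann subalgebra of $U(\g)$ and $v\neq 0$; and using $[e_{i,j},e_{j,i}]=e_{i,i}+e_{j,j}$ and the adjacency of $i_k,j_k$ one checks inductively that $v$ is a $\b^{<'}$-highest weight vector of $M(\tabA)$. What then has to be proved is that the image $\bar v$ of $v$ in $H_0(M(\tabA))$ is a highest weight vector of highest weight $\tabA$ generating $H_0(M(\tabA))$: that $\bar v\neq 0$; that $e^{(r)}\bar v=0$ for $r>s_+$; that $d_1^{(r)}\bar v=e_r(a_1,\dots,a_m)\bar v$ and $d_2^{(s)}\bar v=e_s(b_1,\dots,b_n)\bar v$; and that $W^-\bar v$ is already $2^m$-dimensional. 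These are all checked via the explicit formulas for $d_i^{(r)},e^{(s)},f^{(s)}$ from \cite[\S4]{BBG}, together with~(\ref{nobetter})--(\ref{better2}) and the weight $\lambda_\tabA'$ of $v$; the relation to $M'(\tabA)$ and~(\ref{mk}) serves as a guide, since under the hoped-for identification $\bar v$ corresponds to $\overline k_\tabA\in\overline K(\tabA)$.

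\emph{Main obstacle.} The step I expect to be hardest is precisely this last, computational, part. Because the natural and reverse lexicographic choices of $\rho$ differ, none of the eigenvalue and annihilation statements for $\bar v$ can be read off, and proving them forces one to track $v$ carefully through the odd reflections and control a fair number of matrix-unit commutators --- the ``surprisingly technical'' core of the argument. A related subtlety is that the individual odd-reflection maps $M^{\lhd_k}(\tabA)\to M^{\lhd_{k-1}}(\tabA)$ need not be injective (injectivity fails exactly when the entries of $\tabA$ in boxes $i_k$ and $j_k$ coincide), so one cannot simply invoke injectivity of the composite and conclude from exactness of $H_0$; this is why it seems cleanest to argue directly with the vector $\bar v$ inside $H_0(M(\tabA))$.
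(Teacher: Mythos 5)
Your easy half is exactly the paper's: the dimension count via Lemma~\ref{vermasequal}, Lemma~\ref{exactness} and Corollary~\ref{hrestc}, then the universal property of Verma supermodules, with the highest weight pinned down at the end. (In fact the paper never needs an explicit highest weight vector of weight $\tabA$: once $H_0(M(\tabA))$ is known to be a highest weight supermodule, the surjection $\Pi^p\overline M(\tabB)\twoheadrightarrow H_0(M(\tabA))$ is an isomorphism by the dimension count, and $\tabB$, $p$ are identified by comparing $[\Pi^p\overline M(\tabB)]=[\overline K(\tabA)]$ and top $d_2^{(1)}$-eigenvalues.) The genuine gap is that the indispensable hard half --- that $H_0(M(\tabA))$ is generated by a single vector killed by $W^+$, equivalently a spanning set of size $2^m$ together with control of the $f^{(r)}$-action --- is deferred wholesale to ``checked via the explicit formulas''. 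That is precisely the content of the paper's Appendix~\ref{appendix} (the straightening process producing the spanning set $\{u(L)m_\tabA+\m_\chi M\,:\,L\supseteq H\}$, plus the Lie-degree analysis of the leading and lower terms of $f^{(r)}$), and none of it follows from weight bookkeeping: the elements $e^{(r)}$ and $d_i^{(r)}$ for $r\ge 2$ are complicated elements of $U(\p)$, not of $S(\t)$, so (\ref{nobetter})--(\ref{better2}) only control $r=1$.

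Worse, your specific candidate vector fails. Take $m=1$, $n=2$ with the pyramid whose top box sits in column $2$ (so $s_-=1$), and let $a_1$ be the top entry and $b_1,b_2$ the bottom entries of $\tabA$. The single odd reflection joining $<$ to $<'$ swaps boxes $1$ and $2$, so $v=e_{2,1}m_\tabA$. Since $\chi(e_{1,2})=0$, the element $e_{1,2}e_{2,1}e_{3,1}m_\tabA$ lies in $\m_\chi M$, and expanding it (using $[e_{1,2},e_{2,1}]=e_{1,1}+e_{2,2}$, $[e_{1,1}+e_{2,2},e_{3,1}]=-e_{3,1}$, $e_{1,2}e_{3,1}m_\tabA=e_{3,2}m_\tabA$ and $\lambda_\tabA(e_{1,1}+e_{2,2})=a_1-b_1$) gives $e_{2,1}e_{3,2}m_\tabA\equiv(a_1-b_1-1)\,e_{3,1}m_\tabA$ modulo $\m_\chi M$; combining with $(e_{3,2}-\chi(e_{3,2}))e_{2,1}m_\tabA\in\m_\chi M$ and $e_{3,2}e_{2,1}=e_{2,1}e_{3,2}+e_{3,1}$ yields $\chi(e_{3,2})\,e_{2,1}m_\tabA\equiv(a_1-b_1)\,e_{3,1}m_\tabA$, with $\chi(e_{3,2})=\pm1$. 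So whenever $a_1=b_1$ (a perfectly admissible atypical $\tabA$), your $\bar v$ is zero in $H_0(M(\tabA))$, and the asserted properties ($\bar v\neq 0$, $\dim W^-\bar v=2^m$, etc.) cannot hold; the actual generator in this example is the different vector $e_{3,1}m_\tabA+\m_\chi M$ (the paper's $u(H)m_\tabA$), whose nonvanishing is itself only certified a posteriori, after the straightening argument and the dimension count show the $2^m$ monomials form a basis. (Your parenthetical $\overline M(\tabA)\cong\overline K(\tabA)$ is also unsubstantiated --- the paper asserts only a nonzero, not necessarily surjective, map $\overline M(\tabA)\to\overline K(\tabA)$ --- though this is not needed for your main line.) In short: a correct plan for the easy half, but the hard half is missing, and with the vector as chosen it is not merely unfinished but broken.
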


\begin{proof}
See Appendix~\ref{appendix}.
\end{proof}

\begin{Corollary}\label{mek}
For any $\tabA \in \Tab$, we have that
$[\overline{M}(\tabA)] = [\overline{K}(\tabA)]$
in the Grothendieck group $K_0(W\lsmof)$.
\end{Corollary}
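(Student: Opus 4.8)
The plan is to read this off from Theorem~\ref{T:main} and Corollary~\ref{hrestc} by comparing classes in the Grothendieck group $K_0(W\lsmof)$, using the already-established exactness of $H_0$ on $\mathcal O$ together with Lemma~\ref{vermasequal}. The point is that $M(\tabA)$ and $M'(\tabA)$ are the Verma supermodules attached to the two normal orders $<$ and $<'$, so Lemma~\ref{vermasequal} gives $[M(\tabA)] = [M'(\tabA)]$ in $K_0(\mathcal O)$; applying $H_0$ should then give $[\overline M(\tabA)] = [\overline K(\tabA)]$ in $K_0(W\lsmof)$, since by Theorem~\ref{T:main} we have $H_0(M(\tabA)) \cong \overline M(\tabA)$ and by Corollary~\ref{hrestc} we have $H_0(M'(\tabA)) \cong \overline K(\tabA)$.

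To turn this into a proof, first I would invoke Lemma~\ref{exactness}, which shows that the restriction of $H_0$ to $\mathcal O$ is an exact functor into $W\lsmof$. Since every object of $\mathcal O$ has finite length and every object of $W\lsmof$ is finite-dimensional, such an exact functor descends to a group homomorphism $K_0(\mathcal O) \to K_0(W\lsmof)$, $[M] \mapsto [H_0(M)]$. Applying this homomorphism to the identity $[M(\tabA)] = [M'(\tabA)]$ of Lemma~\ref{vermasequal} yields $[H_0(M(\tabA))] = [H_0(M'(\tabA))]$, and now Theorem~\ref{T:main} and Corollary~\ref{hrestc} identify the two sides with $[\overline M(\tabA)]$ and $[\overline K(\tabA)]$ respectively.

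The only thing to watch is the parity bookkeeping: in $K_0(W\lsmof)$ the classes of $\overline L(\tabA)$ and $\Pi \overline L(\tabA)$ are genuinely different, so one needs the isomorphisms above to be honest isomorphisms of $W$-supermodules rather than isomorphisms after a parity switch. This causes no trouble, since the parity conventions in~(\ref{parity2}), (\ref{hwa}) and~(\ref{e:espressoclub}) have been arranged precisely so that $H_0$ preserves parities on the nose (see Lemma~\ref{bookkeeping}), and the isomorphisms of Theorem~\ref{T:main} and Corollary~\ref{hrestc} are parity-preserving for this reason. Beyond this, there is no real obstacle: all the substance is already contained in Theorem~\ref{T:main}, whose proof is carried out separately in the appendix.
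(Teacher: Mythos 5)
Your proposal is correct and follows exactly the paper's own argument: combine Lemma~\ref{vermasequal} with the exactness of $H_0$ from Lemma~\ref{exactness} to get $[H_0(M(\tabA))]=[H_0(M'(\tabA))]$, then identify the two sides via Theorem~\ref{T:main} and Corollary~\ref{hrestc}. Your extra remark on parity is a reasonable sanity check but not an additional ingredient, since both cited isomorphisms are genuine (even) isomorphisms of $W$-supermodules.
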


\begin{proof}
By Lemmas~\ref{vermasequal} and \ref{exactness}
we have that $[H_0 (M(\tabA))] = [H_0 (M'(\tabA))]$.
Now apply Theorem~\ref{T:main} and Corollary~\ref{hrestc}.
\end{proof}

\begin{Corollary}\label{vermascomp}
Suppose $\tabA \in \Tab$ is chosen so that $\mat(\tabA) = \atyp(\tabA)$.
Then
$$
[\overline{M}(\tabA)]
= \sum_{\tabB \upit \tabA} [\overline{L}(\tabB)].
$$
\end{Corollary}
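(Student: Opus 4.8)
The plan is to read off the result from two identities already established in the Grothendieck group, together with the combinatorial stability of the relation $\upit$. First I would combine Corollary~\ref{mek} and Lemma~\ref{music} to obtain
\[
[\overline{M}(\tabA)] \;=\; [\overline{K}(\tabA)] \;=\; \sum_{\tabB \upit \tabA} [\overline{V}(\tabB)]
\]
in $K_0(W\lsmof)$. It then remains to replace each $[\overline{V}(\tabB)]$ on the right by $[\overline{L}(\tabB)]$, which by Theorem~\ref{irco} is legitimate exactly when $\mat(\tabB) = \atyp(\tabB)$. So the corollary reduces to the claim that, under the hypothesis $\mat(\tabA) = \atyp(\tabA)$, every $\tabB$ with $\tabB \upit \tabA$ also satisfies $\mat(\tabB) = \atyp(\tabB)$.

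To prove that claim I would argue as follows. By definition $\tabB$ is obtained from $\tabA$ by choosing some of its matched pairs and subtracting $1$ from both entries of each chosen pair; subtracting $1$ from both entries of a column preserves their equality, and the untouched columns are literally unchanged, so no coincidences within a column are created or destroyed. Hence $\mat(\tabB) = \mat(\tabA)$. On the other hand, decreasing a single matched pair is exactly a Bruhat move of the third type (subtract one from $a_i$ and $b_j$ with $a_i = b_j$), so $\tabB$ is reached from $\tabA$ by a sequence of such moves and therefore $\tabB \preceq \tabA$. In particular $\tabB$ and $\tabA$ lie in the same linkage class, so $\atyp(\tabB) = \atyp(\tabA)$ by the last bullet of \S\ref{scomb}. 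Putting these together with the hypothesis gives $\mat(\tabB) = \mat(\tabA) = \atyp(\tabA) = \atyp(\tabB)$, as desired.

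With the claim in hand, Theorem~\ref{irco} yields $\overline{V}(\tabB) \cong \overline{L}(\tabB)$ for every $\tabB \upit \tabA$, and substituting into the displayed sum completes the proof. There is no real obstacle here: the only content is the elementary observation above. The one point worth checking carefully is that each one-step decrement genuinely is a Bruhat move, so that $\upit$ lands inside a \emph{single} linkage class rather than merely preserving the defect; it is precisely the linkage-class invariance of $\atyp$ that upgrades the equality $\mat(\tabB) = \mat(\tabA)$ to $\mat(\tabB) = \atyp(\tabB)$.
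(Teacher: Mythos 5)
Your proof is correct and takes essentially the same route as the paper, whose proof simply combines Corollary~\ref{mek}, Lemma~\ref{music} and Theorem~\ref{irco}. The only difference is that you spell out the (correct) combinatorial check that every $\tabB \upit \tabA$ again satisfies $\mat(\tabB)=\atyp(\tabB)$ — via $\mat(\tabB)=\mat(\tabA)$ and the linkage-class invariance of $\atyp$ — a point the paper leaves implicit when invoking Theorem~\ref{irco}.
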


\begin{proof}
This is immediate from Corollary~\ref{mek},
Lemma~\ref{music} and Theorem~\ref{irco}.
\end{proof}

\subsection{Whittaker coinvariants of \texorpdfstring{$L(\tabA)$}{L(\tabA)}}
Next we describe the effect of $H_0$ on the irreducible objects of
$\mathcal O$.

\begin{Theorem} \label{wirr}
Let $\tabA \in \Tab$.  Then
$$
H_0(L(\tabA)) \cong
\begin{cases}
\overline{L}(\tabA) &\text{if $\tabA$ is anti-dominant}, \\
      0 &\text{otherwise.}
  \end{cases}
$$
\end{Theorem}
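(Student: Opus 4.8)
The plan is to leverage the exactness of $H_0$ together with the already-established facts about Verma supermodules (Theorem~\ref{T:main}) and anti-dominant tableaux. First I would deal with the anti-dominant case. If $\tabA$ is anti-dominant, then $L(\tabA)$ has maximal Gelfand--Kirillov dimension in $\mathcal O$, so $L(\tabA)$ is \emph{not} a quotient of $M(\tabB)$ for any $\tabB \neq \tabA$ with smaller GK-dimension; more usefully, every composition factor $L(\tabB)$ of the kernel $N$ of $M(\tabA) \twoheadrightarrow L(\tabA)$ has strictly smaller GK-dimension, hence is \emph{not} anti-dominant. Applying the exact functor $H_0$ to $0 \to N \to M(\tabA) \to L(\tabA) \to 0$ and using $H_0(M(\tabA)) \cong \overline{M}(\tabA)$, I get a surjection $\overline{M}(\tabA) \twoheadrightarrow H_0(L(\tabA))$. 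Since $\overline{M}(\tabA)$ is a highest weight supermodule with unique irreducible quotient $\overline{L}(\tabA)$, the image $H_0(L(\tabA))$ is either $0$ or has $\overline{L}(\tabA)$ as a quotient. To rule out $0$, I would use Corollary~\ref{hrestd}: there exists $M \in \mathcal O$ with $H_0(M) \cong \overline{L}(\tabA)$; pushing $M$ down to a nonzero map, or more directly combining Corollary~\ref{vermascomp} with a counting argument on composition factors, forces $H_0(L(\tabA)) \neq 0$, and then $H_0(L(\tabA)) \cong \overline{L}(\tabA)$.

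For the ``otherwise'' direction, suppose $\tabA$ is not anti-dominant; I must show $H_0(L(\tabA)) = 0$. The cleanest route is a Grothendieck-group argument combined with the anti-dominant case. Using exactness of $H_0$, the classes $[H_0(M(\tabA))] = [\overline{M}(\tabA)]$ can be expanded in terms of $[H_0(L(\tabB))]$ via the composition series $[M(\tabA)] = \sum_\tabB [M(\tabA):L(\tabB)] [L(\tabB)]$; by the linkage principle (Lemma~\ref{linkage}) only $\tabB \preceq \tabA$ occur, with $[M(\tabA):L(\tabA)] = 1$. On the other hand, by Corollary~\ref{mek} and Lemma~\ref{music} we know $[\overline{M}(\tabA)] = \sum_{\tabB \upit \tabA_0} [\overline{V}(\tabB)]$ where $\tabA_0 \roweq \tabA$ has $\mat(\tabA_0) = \atyp(\tabA_0)$, a sum of exactly $2^{\atyp(\tabA)}$ irreducible $W$-supermodules (with multiplicity). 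Comparing the two expansions and inducting on the Bruhat order, the anti-dominant-case result lets me peel off the contributions of the anti-dominant $\tabB$'s appearing in $[M(\tabA)]$; what remains must account for $[H_0(L(\tabB))]$ for the non-anti-dominant $\tabB \preceq \tabA$, and a dimension/support count shows these contributions must all vanish. Alternatively, and perhaps more slickly, one can argue: every composition factor of $H_0(L(\tabA))$ is some $\overline{L}(\tabB)$, and by exactness applied to a filtration, the multiplicity of $\overline{L}(\tabB)$ in $H_0(M(\tabA))$ equals $\sum_\tabC [M(\tabA):L(\tabC)]\,[H_0(L(\tabC)):\overline{L}(\tabB)]$; feeding in the known value of $[\overline{M}(\tabA)]$ and the anti-dominant case, a triangularity/counting argument with respect to $\preceq$ forces $H_0(L(\tabC)) = 0$ whenever $\tabC$ is not anti-dominant.

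I expect the main obstacle to be the second direction — showing the vanishing on non-anti-dominant irreducibles — because it is not a priori obvious that $H_0$ kills them rather than merely sending them to smaller modules; the argument genuinely needs the exact count of composition factors of $\overline{M}(\tabA)$ (from Corollary~\ref{vermascomp}, valid only in the matched-equals-atypical case) matched against the number of anti-dominant tableaux in the linkage class, and a nontrivial input is that the number of anti-dominant $\tabB$ with $L(\tabB)$ a composition factor of $M(\tabA)$, counted with the multiplicities $[M(\tabA):L(\tabB)]$, is exactly $2^{\atyp(\tabA)}$ — a combinatorial fact that presumably follows from BGG reciprocity and the structure of $\mathfrak{gl}_{1|1}$-type blocks. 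The first direction is comparatively routine once one knows (via GK-dimension, Lemma~\ref{prinj}, and the maximality characterization of anti-dominant tableaux) that composition factors of the radical of $M(\tabA)$ are never anti-dominant. Throughout, the parity bookkeeping from Lemma~\ref{bookkeeping} guarantees that the isomorphism $H_0(L(\tabA)) \cong \overline{L}(\tabA)$ can be taken to be even, so no spurious $\Pi$ appears.
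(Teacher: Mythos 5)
Your plan for the vanishing direction has a genuine gap that I think is fatal as written. The counting argument needs the input that $\sum_{\tabB\ \mathrm{anti\text{-}dominant}}[M(\tabA):L(\tabB)]=2^{\atyp(\tabA)}$, and you flag this yourself; but this count is simply not available at this point in the development. It does not follow from the linkage principle (Lemma~\ref{linkage}) or from BGG reciprocity, which give no lower bounds on Verma multiplicities; it is essentially super Kazhdan--Lusztig information, and in fact the cleanest way to prove it is as a consequence of the very theorem you are proving (apply the exact functor $H_0$ to a composition series of $M(\tabA)$ and use Corollary~\ref{vermascomp}), so your route is circular. The paper avoids all multiplicity data here: for $\tabA$ not anti-dominant it takes the anti-dominant $\tabB\roweq\tabA$, uses the classical embedding of even Verma modules $M_{ev}(\tabB)\hookrightarrow M_{ev}(\tabA)$ together with parabolic induction from $\p'=\g_{\0}+\b$ to get an embedding $M(\tabB)\hookrightarrow M(\tabA)$; applying $H_0$ and Theorem~\ref{T:main}, both ends become the same $2^m$-dimensional Verma ($\overline{M}(\tabB)\cong\overline{M}(\tabA)$ since $\tabB\roweq\tabA$), so the cokernel $C$ satisfies $H_0(C)=0$, and since $C\twoheadrightarrow L(\tabA)$ this forces $H_0(L(\tabA))=0$. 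No knowledge of composition multiplicities enters.

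In the anti-dominant direction there are two further problems. First, your claim that no composition factor of the radical of $M(\tabA)$ is anti-dominant is false in atypical blocks: already for $\gl_{1|1}(\C)$ with $\tabA=\substack{a\\a}$, the radical of $M(\tabA)$ is $L(\tabB)$ with $\tabB=\substack{a-1\\a-1}$, which is anti-dominant (and in general, for atypical anti-dominant $\tabA$ the factors $L(\tabB)$ with $\tabB\upit\tabA$ occur). This is harmless for the surjection $\overline{M}(\tabA)\twoheadrightarrow H_0(L(\tabA))$, which needs only right exactness, but it shows the GK-dimension heuristic cannot carry the weight you place on it in your closing paragraph. Second, and more seriously, the nonvanishing of $H_0(L(\tabA))$ for \emph{every} anti-dominant $\tabA$ is not obtained by ``pushing $M$ down to a nonzero map'': Corollary~\ref{hrestd} only produces some $M\in\mathcal O$ with $H_0(M)\cong\overline{L}(\tabA)$, and extracting nonvanishing from this requires the paper's intermediate step --- an induction using Lemma~\ref{linkage} and Corollary~\ref{vermascomp} showing that for anti-dominant $\tabA$ either $H_0(L(\tabA))=0$ or $[H_0(L(\tabA))]=[\overline{L}(\tabA)]$ plus terms $[\overline{L}(\tabB)]$ with $a(\tabB)<a(\tabA)$ --- and even then it yields the conclusion only for \emph{one} anti-dominant representative in each row-equivalence class. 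Since tableau entries are arbitrary complex numbers, a row class can contain several anti-dominant tableaux, and the paper needs the twisting-functor equivalences of Lemma~\ref{twist} (comparing $[H_0]$ and $[H_0\circ T_\sigma]$ on Verma classes) to transfer the statement to the remaining representatives; your proposal does not address this step at all.
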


\begin{proof}
We first show that $H_0(L(\tabA)) = 0$ if $\tabA$ is not anti-dominant.
Let $\p'$ be the parabolic subalgebra of $\g$ spanned by
$\{e_{i,j}\}_{\row(i) \leq \row(j)}$, i.e.\ $\p'  = \g_{\0} + \b$.
Let $M_{ev}(\tabA) :=
U(\g_\0) \otimes_{U(\b_\0)} \C_{\lambda_\tabA}$
be the Verma module
for $\g_{\0}$ of $\mathfrak{b}_{\0}$-highest weight $\lambda_\tabA$.
We view it also as a $\p'$-supermodule concentrated in parity $\parity(\lambda_\tabA)$ via the natural projection
$\p' \twoheadrightarrow \g_\0$.
Then we have obviously that
$$
M(\tabA) \cong U(\g) \otimes_{U(\p')} M_{ev}(\tabA).
$$
Assume that $\tabA$ is not anti-dominant, and let $\tabB$ be the unique anti-dominant
tableau such that $\tabA \roweq \tabB \linked \tabA$.
By classical theory, the Verma module $M_{ev}(\tabB)$ embeds into $M_{ev}(\tabA)$.
Hence, applying $U(\g) \otimes_{U(\p')} -$, we
see that
$M(\tabB)$ embeds into $M(\tabA)$ too.
Now apply the exact functor $H_0$ to the resulting short exact sequence
$0 \to M(\tabB) \to M(\tabA) \to C \to 0$
using Theorem~\ref{T:main},
to obtain an exact sequence
$0 \to \overline{M}(\tabB) \to \overline{M}(\tabA)
\to H_0(C) \to 0.$
But $\overline{M}(\tabB) \cong \overline{M}(\tabA)$ as $\tabB \roweq \tabA$,
hence, we must have that $H_0(C) = 0$. Since $C \twoheadrightarrow L(\tabA)$,
this implies that $H_0(L(\tabA)) = 0$.

We next show for anti-dominant $\tabA$ that
\begin{equation}\label{handy}
[H_0(L(\tabA))] = [\overline{L}(\tabA)]+\text{(a sum of
$[\overline{L}(\tabB)]$ for
$\tabB \in \Tab$ with $a(\tabB) < a(\tabA)$)}.
\end{equation}
To see this, note
since $L(\tabA)$ is a quotient of $M(\tabA)$ that
$H_0(L(\tabA))$ is a quotient of $H_0(M(\tabA)) \cong \overline{M}(\tabA)$.
Applying Corollary~\ref{vermascomp},
we deduce either that (\ref{handy}) holds or that $H_0(L(\tabA)) = 0$.
Also by Lemma~\ref{linkage} we know (as $\tabA$ is anti-dominant) that
$$
[M(\tabA)] = [L(\tabA)] + \text{(a sum of $[L(\tabB)]$ for $\tabB \in \Tab$ with $a(\tabB) < a(\tabA)$)}.
$$
Applying $H_0$ and using (\ref{handy}) whenever $H_0(L(\tabB)) \neq 0$,
we deduce that
$$
[\overline{M}(\tabA))] =
[H_0(L(\tabA))] + \text{(a sum of $[\overline{L}(\tabB)]$
for $\tabB \in \Tab$ with $a(\tabB) < a(\tabA)$)}.
$$
Since
this
definitely involves $[\overline{L}(\tabA)]$,
we must have that $H_0(L(\tabA)) \neq 0$, and we have established
(\ref{handy}).

Now we claim for any $\tabA \in \Tab$ that there exists some
anti-dominant $\tabB \roweq \tabA$ such that $H_0(L(\tabB)) \cong \overline{L}(\tabA)$.
To see this, we know by Corollary~\ref{hrestd}
that there exists some $M \in \mathcal O$ with
$H_0(M) \cong \overline{L}(\tabA)$.
Say we have that $[M] = [L(\tabB_1)]+\cdots+[L(\tabB_k)]$ in the Grothendieck
group
for some $\tabB_1,\dots,\tabB_k\in \Tab$.
In view of (\ref{handy}), only one of $\tabB_1,\dots,\tabB_k$ can be anti-dominant,
and this $\tabB_i$ must satisfy $H_0(L(\tabB_i)) \cong \overline{L}(\tabB_i) \cong \overline{L}(\tabA)$.
This proves the claim.

We have now shown in any row equivalence class of $\pi$-tableaux that there exists at least one anti-dominant $\tabA$ with $H_0(L(\tabA)) \cong \overline{L}(\tabA)$.
Suppose that $\tabB$ is a different
anti-dominant tableau in the same row equivalence class as $\tabA$. We need to show that $H_0(L(\tabB)) \cong \overline{L}(\tabB)$ too.
To prove this we may assume that $\tabB = \sigma(\tabA)$ for some simple transposition $\sigma \in S_m \times S_n$.
Let $\xi$ be the linkage class containing $\tabA$, so that $\sigma(\xi)$ is the
linkage class containing $\tabB$.
By Lemma~\ref{twist}, there is an equivalence $T_\sigma:\mathcal O_\xi \to
\mathcal O_{\sigma(\xi)}$ such that $T_\sigma(L(\tabA)) \cong L(\tabB)$
and $T_\sigma(M(\tabC)) \cong M(\sigma(\tabC))$ for each $\tabC \linked \tabA$.
The $\Z$-linear maps
$[H_0]:K_0(\mathcal O_\xi) \to K_0(\overline{\mathcal O})$
and $[H_0 \circ T_\sigma]:K_0(\mathcal O_\xi) \to K_0(\overline{\mathcal O})$
are equal; this follows because they are equal on
$[M(\tabC)]$ for each $\tabC \linked \tabA$
as $\overline{M}(\tabC) \cong \overline{M}(\sigma(\tabC))$.
Hence, we get that
$$
[H_0]([L(\tabB)]) = [H_0\circ T_\sigma]([L(\tabA)]) =
[H_0]([L(\tabA)]) = [\overline{L}(\tabA)] = [\overline{L}(\tabB)].
$$
This implies that $H_0(L(\tabB)) \cong \overline{L}(\tabB)$ as required.
\end{proof}

\begin{Corollary}
The full subcategory of $\mathcal O$
consisting of all objects annihilated by $H_0$ consists of all the supermodules in $\mathcal O$
of strictly less than maximal Gelfand--Kirillov dimension.
\end{Corollary}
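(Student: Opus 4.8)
The plan is to reduce the statement to the level of composition factors, exploiting the exactness of $H_0$ on $\mathcal O$ recorded after Lemma~\ref{exactness}. First I would note the elementary fact that, for $M \in \mathcal O$, one has $H_0(M) = 0$ if and only if $H_0(L) = 0$ for every composition factor $L$ of $M$. Indeed, given a short exact sequence $0 \to A \to M \to B \to 0$ in $\mathcal O$, exactness of $H_0$ yields a short exact sequence $0 \to H_0(A) \to H_0(M) \to H_0(B) \to 0$, so $H_0(M) = 0$ forces $H_0(A) = H_0(B) = 0$, and conversely $H_0(A) = H_0(B) = 0$ forces $H_0(M) = 0$. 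Since every object of $\mathcal O$ has finite length, an induction on composition length gives the claim.

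Next I would invoke Theorem~\ref{wirr}, which computes $H_0$ on the irreducibles: $H_0(L(\tabA))$ equals $\overline L(\tabA) \neq 0$ when $\tabA$ is anti-dominant, and $0$ otherwise. Combined with the first step, this shows that, for $M \in \mathcal O$, one has $H_0(M) = 0$ if and only if $M$ has no composition factor of the form $L(\tabA)$ with $\tabA$ anti-dominant.

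Finally I would translate this into the language of Gelfand--Kirillov dimension. By the discussion in \S\ref{so}, the irreducible objects of maximal Gelfand--Kirillov dimension in $\mathcal O$ are precisely the $L(\tabA)$ with $\tabA$ anti-dominant. Since Gelfand--Kirillov dimension behaves additively along short exact sequences, in the sense that it is the maximum of those of the two ends, the Gelfand--Kirillov dimension of an object of $\mathcal O$ is the maximum of the Gelfand--Kirillov dimensions of its composition factors. Hence $M$ has strictly less than maximal Gelfand--Kirillov dimension if and only if every composition factor of $M$ does, which by the previous two steps is equivalent to $H_0(M) = 0$.

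There is essentially no obstacle here: all the real content has been placed in Theorem~\ref{wirr} and in the already-recalled identification of anti-dominant tableaux with maximal Gelfand--Kirillov dimension. The only point needing a classical external input is the behaviour of Gelfand--Kirillov dimension under short exact sequences, which is standard.
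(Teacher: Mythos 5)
Your proof is correct and follows essentially the same route as the paper: the paper's (very terse) argument is precisely Theorem~\ref{wirr} combined with the fact that $L(\tabA)$ has maximal Gelfand--Kirillov dimension exactly when $\tabA$ is anti-dominant, with the reduction to composition factors via exactness of $H_0$ and the behaviour of Gelfand--Kirillov dimension under extensions left implicit. You have merely spelled out those implicit steps, which is fine.
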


\begin{proof}
This follows from Theorem~\ref{wirr} on recalling that
$\tabA$ is anti-dominant if and only if
$L(\tabA)$ is of maximal Gelfand--Kirillov dimension.
\end{proof}

\subsection{The center of \texorpdfstring{$W$}{W}}\label{scent}
In this subsection
we determine the center of $W$. The argument here is similar in spirit to the proof of an
analogous result in the purely even setting from \cite[\S6.4]{BKrep};
it depends crucially on Corollary~\ref{mek}.
Let $\pr:U(\g) \twoheadrightarrow U(\p)$
be the projection along the direct sum decomposition
$U(\g) = U(\p) \oplus \m_\chi U(\g)$.
It is easy to see from (\ref{e:wdef}) that
the restriction of $\pr$ defines an algebra homomorphism
\begin{equation}\label{eve}
\pr:Z(\g) \to Z(W).
\end{equation}
The goal is to show that this map is actually an {\em isomorphism}.

Consider the Harish-Chandra homomorphism
$\HC:Z(\g) \stackrel{\sim}{\to} I(\t)$ from Theorem~\ref{hc}.
Recalling Theorem~\ref{indep}, we adopt
the definition of $\HC$ that is adapted to the Borel subalgebra $\b'$,
i.e.\ we view $\HC$ as the restriction of the map
\begin{equation}\label{reno}
S_{-\rho'} \circ \phi'
:U(\g)_0 \to S(\t),
\end{equation}
where $\phi':U(\g)_0 \to S(\t)$ is projection along
$U(\g)_0 = S(\t) \oplus (U(\g)_0 \cap U(\g) \n')$
and $\n'$ is the nilradical of $\b'$.
The restriction of (\ref{reno}) to
$Z(\h)$ also gives a conveniently normalized Harish-Chandra homomorphism
for the Lie superalgebra $\h$, that is, an isomorphism
$\hc:Z(\h)\stackrel{\sim}{\to} J(\t)$
where
\begin{equation}\label{jdef}
J(\t) :=
\left\{f \in S(\t)\:\bigg|\:
\begin{array}{l}
\frac{\partial f}{\partial x_i} + \frac{\partial f}{\partial y_j}
\equiv 0 \pmod{x_i-y_j}\\
\text{for $1 \leq i \leq m$ and $j = i+s_-$}
\end{array}
\right\}.
\end{equation}
Also let $\pi:U(\p) \twoheadrightarrow U(\h)$ be the usual projection,
so that $\ker \pi = U(\p) \rr$ where $\rr$ is the nilradical of $\p$.
We have now set up all of the notation
to make sense of the following diagram:
\begin{equation}\label{comd}
\begin{diagram}
\node{U(\g)_0}\arrow{s,l}{\pr}\arrow{e,t}{S_{-\rho'}\circ\phi'}\node{S(\t)}\\
\node{U(\p)_0}\arrow{e,b,A}{\pi}\node{U(\h)_0}\arrow{n,r}{S_{-\rho'}\circ\phi'}
\end{diagram}
\end{equation}
Moreover, this diagram commutes.
The final important point is that the restriction of $\pi$
to $W$ is injective; this is equivalent to the injectivity of the Miura transform in \cite[Theorem 4.5]{BBG}.

\begin{Lemma}\label{theclaim}
The images of
$d_1^{(r)}$ and $\tilde{d}_2^{(r)}$
under the map $S_{-\rho'} \circ \phi' \circ \pi$
are equal to
$e_r(x_1,\dots,x_m)$ and $(-1)^{r} h_r(y_1,\dots,y_n)$, respectively.
\end{Lemma}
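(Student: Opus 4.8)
The plan is to read off both identities from the action of $d_1^{(r)}$ and $\tilde d_2^{(r)}$ on a highest weight vector, using Theorem~\ref{irco} to realise a generic irreducible $W$-supermodule concretely inside an $\h$-supermodule, and then to conclude by Zariski density of weights.

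First I would recall that the restriction of $S_{-\rho'}\circ\phi'$ to $U(\h)_0$ is the Harish--Chandra homomorphism for $\h$ relative to the Borel $\b'\cap\h$, so that any $z\in U(\h)_0$ acts on a $(\b'\cap\h)$-highest weight vector $v$ of weight $\mu$ in an $\h$-supermodule by the scalar $\phi'(z)(\mu)$; note also that $d_1^{(r)},d_2^{(s)}\in W^0\subseteq U(\p)_0$, so their images under $\pi$ lie in $U(\h)_0$ and the left-hand sides make sense. Now fix $\tabA=\substack{a_1 \cdots a_{m} \\ b_1 \cdots b_{n}}\in\Tab$ with pairwise distinct entries, so $\mat(\tabA)=\atyp(\tabA)=0$; by Theorem~\ref{irco}, $\overline{L}(\tabA)\cong\overline{V}(\tabA)=V(\tabA)\!\downarrow^{U(\h)}_{W}$, the isomorphism carrying the highest weight vector $\overline{m}_\tabA$ to $\overline{k}_\tabA=k_\tabA$. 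Since $\overline{m}_\tabA$, hence $k_\tabA$, is a $W$-highest weight vector of highest weight $\tabA$ in the sense of \S\ref{wo}, and since $d_1^{(r)}=0$ for $r>m$ (Theorem~\ref{wpres}) while $d_2^{(s)}=0$ for $s>n$ (the analogous vanishing for the longer row, a consequence of the explicit formulae in \cite[\S4]{BBG}), the vector $k_\tabA$ is a common eigenvector of all $d_1^{(r)}$ and all $d_2^{(s)}$, with eigenvalues $e_r(a_1,\dots,a_m)$ and $e_s(b_1,\dots,b_n)$. Feeding this into the recursion $\sum_{a=0}^{r}\tilde d_2^{(a)}d_2^{(r-a)}=\delta_{r,0}$ together with the identity $\bigl(\sum_{s\ge0}e_s(b)t^s\bigr)^{-1}=\sum_{r\ge0}(-1)^rh_r(b)t^r$ shows that $k_\tabA$ is also an eigenvector of each $\tilde d_2^{(r)}$, with eigenvalue $(-1)^rh_r(b_1,\dots,b_n)$. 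As $k_\tabA$ is the $(\b'\cap\h)$-highest weight vector of $V(\tabA)$ of weight $\lambda'_\tabA$ and $\pi(d_1^{(r)}),\pi(\tilde d_2^{(r)})\in U(\h)_0$, this yields
$$
\phi'(\pi(d_1^{(r)}))(\lambda'_\tabA)=e_r(a_1,\dots,a_m),\qquad
\phi'(\pi(\tilde d_2^{(r)}))(\lambda'_\tabA)=(-1)^rh_r(b_1,\dots,b_n).
$$

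To upgrade these to identities of polynomials I would use that the $i$th box of $\tabA$ contains $(\lambda'_\tabA+\rho',\delta_i)$; a direct check with (\ref{e:rho}) and the form on $\t^*$ gives, as functions of $\lambda'_\tabA$, that $a_i=x_i+\rho'(e_{i,i})$ for $1\le i\le m$ and $b_j=y_j-\rho'(e_{m+j,m+j})$ for $1\le j\le n$. The tableaux with pairwise distinct entries give a Zariski-dense subset $\{\lambda'_\tabA\}$ of $\t^*$ — the map $\tabA\mapsto\lambda'_\tabA$ is an affine bijection $\Tab\stackrel{\sim}{\to}\t^*$, and distinctness of entries excludes only finitely many hyperplanes — so the displayed equalities promote to $\phi'(\pi(d_1^{(r)}))=e_r(x_1+\rho'(e_{1,1}),\dots,x_m+\rho'(e_{m,m}))$ and $\phi'(\pi(\tilde d_2^{(r)}))=(-1)^rh_r(y_1-\rho'(e_{m+1,m+1}),\dots,y_n-\rho'(e_{m+n,m+n}))$. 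Applying $S_{-\rho'}$, which sends $x_i\mapsto x_i-\rho'(e_{i,i})$ and $y_j\mapsto y_j+\rho'(e_{m+j,m+j})$, cancels all the constant shifts and gives the claim.

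The one genuinely fiddly ingredient is the sign bookkeeping of the last paragraph: tracking the factors $(\delta_j,\delta_j)=(-1)^{|j|}$ when passing between box entries and the coordinates $x_i,y_j$, and checking that it is precisely the choice of the Borel $\b'$ (coming from the reverse-lexicographic order) and of $\rho'$ that makes the constants cancel under $S_{-\rho'}$; for $r=1$ this is exactly the content of (\ref{nobetter})--(\ref{better2}). As an alternative to the appeal to Theorem~\ref{irco}, one could instead substitute the explicit formulae for $d_1(u)$ and $\tilde d_2(u)$ from \cite[\S4]{BBG} directly: modulo the nilradical $\rr$ of $\p$ these become, for $d_1$ a finite product, and for $\tilde d_2$ the inverse of a finite product, of factors linear in $u^{-1}$ indexed by the columns of the pyramid, and $S_{-\rho'}\circ\phi'$ collapses each factor to the required shape.
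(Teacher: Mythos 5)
The $d_1^{(r)}$ half of your argument is sound, as is the $\rho'$-shift and Zariski-density bookkeeping and the transfer of eigenvalues from $\overline{m}_\tabA$ to $k_\tabA$ via Theorem~\ref{irco}. The gap is in the $\tilde d_2^{(r)}$ half, and it is genuine: your assertion that $d_2^{(s)}=0$ for $s>n$ is false. In the presentation of Theorem~\ref{wpres} only the first series is truncated ($d_1^{(r)}=0$ for $r>m$); the elements $d_2^{(s)}$ for $s>n$ are nonzero elements of $W$ which, under the triangular decomposition $W=W^-W^0W^+$, genuinely involve the odd generators. Already for $m=n=1$ (where $\m=0$ and $W=U(\gl_{1|1}(\C))$) one sees from the explicit formula for $\pi(\tilde d_2^{(2)})$ that $\tilde d_2^{(2)}$ contains the term $e_{2,1}e_{1,2}$, so $d_2^{(2)}\neq 0$. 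Consequently the highest-weight data (\ref{hwa}) only gives you the eigenvalues of $d_2^{(s)}$ on the highest weight vector for $1\le s\le n$, and your generating-function inversion therefore only yields the eigenvalue $(-1)^r h_r(b_1,\dots,b_n)$ of $\tilde d_2^{(r)}$ for $r\le n$. For $r>n$ you would need to know that $d_2^{(s)}$ still acts on a highest weight vector by $e_s(b_1,\dots,b_n)=0$ for $s>n$; this is true, but it is not a formal consequence of the results you quote -- it amounts to identifying the $W^0$-component of $d_2^{(s)}$ in the triangular decomposition, which is essentially the content at stake. And the restriction to $r\le n$ is not harmless: the lemma is used in Theorem~\ref{zonto} to get $\hc(\pi(\tilde c^{(r)}))=e_r(x_1,\dots,x_m/y_1,\dots,y_n)=\HC(z_r)$ for \emph{every} $r\ge 1$, and these supersymmetric polynomials are nonzero in all degrees.

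The route you relegate to your closing aside is in fact the paper's proof: it takes the closed formula for $\pi(\tilde d_2^{(r)})\in U(\h)$ from \cite[\S4]{BBG} (with the sign correction noted in the footnote), applies $\phi'$ monomial by monomial, and observes that a monomial $e_{i_1,j_1}\cdots e_{i_r,j_r}$ survives only when every factor is a Cartan element from the bottom row, which gives $(-1)^r h_r(y_1,\dots,y_n)$ after the $S_{-\rho'}$-shift -- an argument uniform in $r$. Your highest-weight strategy can be repaired, but only by first proving that $d_2(u)$ acts on $\overline{m}_\tabA$ by the polynomial $\prod_{j=1}^n(1+b_j u^{-1})$ with no corrections in degrees $s>n$, e.g.\ via the Miura factorization of $d_2(u)$ over the $\gl_{1|1}$- and $\gl_1$-summands of $\h$; that again requires the explicit formulas from \cite{BBG}, and as written your proposal does not supply this missing step.
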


\begin{proof}
This
depends on the explicit formulae
for these elements of $W$
from \cite[Section 4]{BBG}.
For example, for $\tilde{d}_2^{(r)}$
remembering the typo pointed out in the footnote on p.\ \pageref{footnotepage}, we have that
\begin{equation*}
\pi(\tilde{d}_2^{(r)}) =
S_{\rho'}\left(
  \sum
(-1)^{r + |i_1|+\cdots+|i_r|}
e_{i_1,j_1} \cdots e_{i_r,j_r}\right),
\end{equation*}
summing over all $1 \le i_1, \dots , i_r, j_1, \dots , j_r
\le m+n$ such that
\begin{itemize}
\item
$\row(i_1) = \row(j_r) = 2$;
\item
$\col(i_s)=\col(j_s)$ for each $s$;
\item
$\row(i_{s+1})=\row(j_{s})$ and
$\col(i_{s+1}) \leq\col(j_s)$ for $s=1,\dots,r-1$.
\end{itemize}
To apply $\phi'$ to this, note
for one of the monomials
$e_{i_1,j_1} \cdots e_{i_r,j_r}$ that $\phi'$
gives zero if $\row(i_r) = 1$, hence,
we may assume that $i_r = j_r$; then we get zero if $\row(i_{r-1}) = 1$,
hence, $i_{r-1}=j_{r-1}$; and so on. We deduce
$S_{-\rho'} (\phi' (\pi(\tilde{d}_2^{(r)})))
 = (-1)^{r} h_r(y_1,\dots,y_n)$
as claimed.
\end{proof}

\begin{Lemma}\label{step}
We have that $\pi(Z(W)) \subseteq Z(\h)$.
\end{Lemma}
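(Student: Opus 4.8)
The plan is to exploit the commutative diagram (\ref{comd}) together with the fact that $\pi|_W$ is injective, reducing the statement $\pi(Z(W)) \subseteq Z(\h)$ to a computation of central characters. First I would recall that, for $\tabA \in \Tab$ with $\mat(\tabA) = \atyp(\tabA)$, the $W$-supermodule $\overline{M}(\tabA)$ has the same class in $K_0(W\lsmof)$ as $\overline{K}(\tabA) = K(\tabA)\!\downarrow^{U(\h)}_W$ by Corollary~\ref{mek}; in particular, any element of $Z(W)$ acts on $\overline{M}(\tabA)$, hence on the corresponding composition factors, and these are built from irreducible $\h$-supermodules $V(\tabB)$ with $\tabB \upit \tabA$.

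The key idea is: take $z \in Z(W)$ and set $w := \pi(z) \in U(\h)_0$, using Lemma~\ref{step}'s own hypothesis that $\pi(Z(W)) \subseteq U(\h)_0$ — actually one first checks $\pi(W) \subseteq U(\h)_0$ since $W \subseteq U(\p)$ centralizes $\t$ and $\pi$ is $\t$-equivariant, so $\pi(z)$ centralizes $\t$. To show $w \in Z(\h)$, it suffices (since $Z(\h)$ is the intersection of the annihilators-of-commutators, but more practically) to show that $w$ acts by a scalar on every Verma supermodule $K(\tabA)$ for $\h$, because the $K(\tabA)$ for $\tabA$ ranging over all tableaux separate elements of $U(\h)_0$ up to their action, and an element of $U(\h)_0$ acting by a scalar on every Kac/Verma module for $\h$ lies in $Z(\h)$ by the Harish-Chandra theory for the reductive-type superalgebra $\h$ (see (\ref{hstruct})). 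Concretely, I would argue: $z \in Z(W)$ acts on $\overline{K}(\tabA) = K(\tabA)\!\downarrow_W$ by the scalar $\hc_W(z)(\lambda_\tabA)$-type central-character value; but the action of $z$ on the highest weight vector $\overline{k}_\tabA = k_\tabA$ is via $\pi(z) = w$ acting on $k_\tabA$ inside $K(\tabA)$, and since $z$ is central in $W$ it acts by that same scalar on all of $\overline{K}(\tabA)$, hence $w = \pi(z)$ acts by a scalar on all of $K(\tabA)$.

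More carefully, here is the order of steps. (1) Observe $\pi(W) \subseteq U(\h)_0$, so $\pi$ restricts to $W \to U(\h)_0$, and this restriction is injective by the last sentence before the lemma. (2) Fix $z \in Z(W)$ and $\tabA \in \Tab$; by choosing a row-equivalent representative we may assume $\mat(\tabA) = \atyp(\tabA)$, so that by Theorem~\ref{irco} and the structure of $K(\tabA)$ its restriction $\overline{K}(\tabA)$ to $W$ is a highest weight supermodule generated by $\overline{k}_\tabA$. (3) Since $z$ is central and acts on the generator $\overline{k}_\tabA$ by the scalar $\pi(z)$ evaluated on $k_\tabA$ — which is a scalar because $k_\tabA$ is a highest weight vector for $\h$ and $\pi(z) \in U(\h)_0$ — it acts by this same scalar on all of $\overline{K}(\tabA)$, hence on the full $\h$-module $K(\tabA)$ the element $w := \pi(z)$ acts by the scalar $\pi(z)|_{k_\tabA}$. (4) Conclude: $w \in U(\h)_0$ acts by a scalar on every Verma supermodule $K(\tabA)$ for $\h$, and since the $K(\tabA)$ collectively detect non-centrality (an element of $U(\h)_0$ lying in the centralizer of $\t$ which acts semisimply-by-scalars on all Vermas must commute with all of $\h$, by the usual argument applying it to $xv_\tabA$ for $x$ in the nilradical), we get $w \in Z(\h)$.

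I expect the main obstacle to be step (3)–(4): precisely, justifying that $z$ acting by a scalar on the cyclic $W$-module $\overline{K}(\tabA)$ forces $w = \pi(z)$ to act by a scalar on the \emph{entire} $\h$-module $K(\tabA)$ — one must be careful that $\overline{K}(\tabA)$ and $K(\tabA)$ have the same underlying vector space but possibly different module structures, and that the $W$-action on $\overline{K}(\tabA)$ is genuinely the restriction-through-$\pi$ of the $U(\h)$-action (this is exactly the content of (\ref{craven}) and the injection $W \hookrightarrow U(\p) \twoheadrightarrow U(\h)$). Granting that, the scalar by which $z$ acts on $\overline{K}(\tabA)$ is literally the scalar by which $w$ acts on $K(\tabA)$, and the rest is the standard Harish-Chandra argument for $\h$. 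An alternative cleaner route, if available, is to note that the coherence data of Theorem~\ref{crappy} and the isomorphism $H_0(M'(\tabA)) \cong \overline{K}(\tabA)$ of Corollary~\ref{hrestc} identify the $W$-central action with the restriction of the $U(\g)$-central action through $\pr$ and then $\pi$, and $\pr(Z(\g)) \subseteq Z(W)$ composed with $\pi$ lands in $\pi(U(\p))$; combined with surjectivity/faithfulness statements this again pins $\pi(Z(W))$ inside $Z(\h)$, but I would present the direct Verma-module argument above as the primary proof since it needs only the highest-weight theory already in place.
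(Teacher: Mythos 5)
Your argument has two genuine gaps, and neither is the point you flagged as the main obstacle (the identification of the $W$-action on $\overline{K}(\tabA)$ with the $U(\h)$-action through $\pi$ is indeed harmless, being the definition (\ref{craven})). First, the claim in your step (1) that $\pi(W)\subseteq U(\h)_0$ ``since $W\subseteq U(\p)$ centralizes $\t$'' is false: $W$ is not stable under $\operatorname{ad}\t$ and its elements are not $\t$-weight vectors --- for instance $\pi(f^{(r)})$ is a sum of odd root vectors times Cartan monomials carrying several different nonzero $\t$-weights. Even the weaker statement $\pi(Z(W))\subseteq U(\h)_0$ is not available at this stage; in the paper it is \emph{deduced from} Lemma~\ref{step} in the proof of Theorem~\ref{zonto}, so assuming it here is circular. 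You need precisely this to know that $\pi(z)$ multiplies the highest weight vector $k_\tabA$ by a scalar. Second, your step (2) asserts that $\overline{K}(\tabA)$ is a highest weight $W$-supermodule generated by $\overline{k}_\tabA$ whenever $\mat(\tabA)=\atyp(\tabA)$; Theorem~\ref{irco} does not say this (it concerns the irreducible quotient $\overline{V}(\tabA)$), and the paper deliberately avoids the claim: in the proof of Theorem~\ref{T:main} the homomorphism $\overline{M}(\tabA)\to\overline{K}(\tabA)$, $\overline{m}_\tabA\mapsto\overline{k}_\tabA$, is explicitly noted to be ``not necessarily surjective'', i.e.\ $W\overline{k}_\tabA$ may be a proper submodule in the atypical case. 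Without cyclicity, centrality of $z$ plus scalar action on $\overline{k}_\tabA$ gives no control over the rest of $\overline{K}(\tabA)$.

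Both problems disappear --- and you land on the paper's actual proof --- if you restrict to typical $\tabA$, i.e.\ $\atyp(\tabA)=0$. There $\overline{K}(\tabA)$ is irreducible over $W$ with one-dimensional endomorphism algebra by Corollaries~\ref{mek} and \ref{vermascomp}, so Schur's lemma gives the scalar action of $z$, hence of $\pi(z)$ on $K(\tabA)$, with no weight-zero or cyclicity input. Then $[\pi(z),u]\in\operatorname{Ann}_{U(\h)}K(\tabA)$ for every $u\in U(\h)$ and every typical $\tabA$, and since the typical tableaux are Zariski dense in $\Tab\cong\mathbb{A}^{m+n}$ and the annihilator of a dense family of Verma (Kac) supermodules for $\h$ vanishes, one gets $[\pi(z),u]=0$. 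Your step (4) is a blurred version of exactly this density argument: it is phrased for ``all Vermas'' (which your steps only purport to give for maximal-defect representatives), and it again presupposes $\pi(z)\in U(\h)_0$. Once repaired along these lines it coincides with the paper's proof rather than giving an alternative route.
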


\begin{proof}
We must show for $z \in Z(W)$ and $u \in U(\h)$ that $[\pi(z), u] = 0$.
If $\tabA \in \Tab$ is any typical tableau, i.e.\ $\atyp(\tabA) = 0$,
then $K(\tabA)$ is an irreducible $\h$-supermodule
which remains irreducible (with one-dimensional endomorphism algebra) on restriction to $W$, as follows
from Corollaries~\ref{mek} and \ref{vermascomp}.
Hence, $\pi(z)$ acts as a scalar on $K(\tabA)$,
implying that $[\pi(z),u] \in \operatorname{Ann}_{U(\h)} K(\tabA)$.
It remains to observe that
$$
\bigcap_{\substack{\tabA\in\Tab\\\atyp(\tabA) = 0}}
\operatorname{Ann}_{U(\h)} K(\tabA) = 0.
$$
This follows because $\{\tabA\in\Tab\:|\:\atyp(\tabA) = 0\}$ is Zariski dense in $\Tab$
(identified with $\mathbb{A}^{m+n}$ in the obvious way).
Now we can apply the standard fact that the annihilator of any dense set of Verma supermodules is zero,
see for example the proof of \cite[Lemma 13.1.4]{Musson}\footnote{It is
easy to supply a direct proof of this statement in the present situation
since $\h$ is a direct sum of copies of
$\gl_1(\C)$ and $\gl_{1|1}(\C)$.}.
\end{proof}

\begin{Theorem}\label{zonto}
The homomorphism
$\pr:Z(\g) \to Z(W)$ from (\ref{eve})
is an algebra isomorphism.
Moreover, we have that $\pr(z_r) = \tilde{c}^{(r)}$, where
$z_r\in Z(\g)$ and $\tilde{c}^{(r)} \in Z(W)$ are defined by (\ref{ze})
and (\ref{ctilde}), respectively.
\end{Theorem}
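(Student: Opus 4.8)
The plan is to establish, in turn, injectivity of $\pr$, the identity $\pr(z_r)=\tilde c^{(r)}$, and surjectivity of $\pr$, the last being the substantive point and the one that uses Corollary~\ref{mek}. Throughout I write $\Phi:=S_{-\rho'}\circ\phi'\circ\pi$ for the composite along the bottom and right edges of (\ref{comd}); it is an algebra homomorphism on the $\t$-weight zero part of $U(\p)$, in particular on $W^0$, and by Lemma~\ref{step} its restriction to $Z(W)$ coincides with $\hc\circ\pi$, hence is injective since $\pi|_W$ is injective (\cite[Theorem 4.5]{BBG}) and $\hc$ is an isomorphism.

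By commutativity of (\ref{comd}) together with Theorem~\ref{indep}, the composite $\Phi\circ\pr$ agrees on $Z(\g)$ with the Harish-Chandra homomorphism $\HC\colon Z(\g)\stackrel{\sim}{\to}I(\t)$; since $\HC$ is injective (Theorem~\ref{hc}), so is $\pr$. Next, both $\pr(z_r)$ and $\tilde c^{(r)}$ lie in $Z(W)$, so by injectivity of $\Phi$ on $Z(W)$ it suffices to check $\Phi(\pr(z_r))=\Phi(\tilde c^{(r)})$. Here $\Phi(\pr(z_r))=\HC(z_r)=e_r(x_1,\dots,x_m/y_1,\dots,y_n)$ by Theorem~\ref{expcent}, while Lemma~\ref{theclaim}, read off on generating functions, gives $\Phi(d_1(u))=\prod_{k=1}^m(1+u^{-1}x_k)$ and $\Phi(\tilde d_2(u))=\prod_{k=1}^n(1+u^{-1}y_k)^{-1}$, so that $\Phi(\tilde c(u))=\Phi(d_1(u))\Phi(\tilde d_2(u))=\prod_{k=1}^m(1+u^{-1}x_k)\big/\prod_{k=1}^n(1+u^{-1}y_k)$, whose $u^{-r}$-coefficient is again $e_r(x_1,\dots,x_m/y_1,\dots,y_n)$. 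Hence $\pr(z_r)=\tilde c^{(r)}$, and since the $z_r$ generate $Z(\g)$ (Theorem~\ref{expcent}) the subalgebra $\pr(Z(\g))\subseteq Z(W)$ is generated by $\{\tilde c^{(r)}\}_{r\geq1}$ and satisfies $\Phi(\pr(Z(\g)))=\HC(Z(\g))=I(\t)$.

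For surjectivity it now suffices to show $\Phi(Z(W))\subseteq I(\t)$: then $I(\t)=\Phi(\pr(Z(\g)))\subseteq\Phi(Z(W))\subseteq I(\t)$ forces equality, and injectivity of $\Phi$ on $Z(W)$ gives $\pr(Z(\g))=Z(W)$. So fix $z\in Z(W)$ and set $f:=\Phi(z)\in J(\t)\subseteq\C[x_1,\dots,x_m,y_1,\dots,y_n]$; I claim $f$ is $(S_m\times S_n)$-symmetric, which together with the defining divisibility conditions of $J(\t)$ (these get permuted among one another by the symmetry of $f$, hence all hold) yields $f\in I(\t)$. To prove the symmetry I compare two computations of the action of $z$ on $\overline M(\tabA)$. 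First, $z$ acts by a scalar $\overline\chi_\tabA(z)$: the top $d_1^{(1)}$-eigenspace of $\overline M(\tabA)$ is $\C\,\overline m_\tabA$ because each generator $f^{(r)}$ of $W^-$ lowers the $d_1^{(1)}$-eigenvalue by $1$, and $z$ preserves this eigenspace since $[z,d_1^{(1)}]=0$, so $z\,\overline m_\tabA\in\C\,\overline m_\tabA$. Second, Corollary~\ref{mek} gives $[\overline M(\tabA)]=[\overline K(\tabA)]$ in $K_0(W\lsmof)$; as $z$ acts by a scalar on each composition factor and also by a scalar on $\overline K(\tabA)=K(\tabA)\downarrow^{U(\h)}_W$ (its action there is through $\pi(z)\in Z(\h)$ on the $\h$-Verma supermodule $K(\tabA)$), the two scalars agree, so $\overline\chi_\tabA(z)$ is the scalar by which $z$ acts on $\overline K(\tabA)$. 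Since $K(\tabA)$ has $\b'\cap\h$-highest weight $\lambda'_\tabA$ and $\hc=S_{-\rho'}\circ\phi'|_{Z(\h)}$, that scalar equals the value of $f$ under $x_i\mapsto a_i$, $y_j\mapsto b_j$, where $a_1,\dots,a_m,b_1,\dots,b_n$ are the entries of $\tabA$. Finally, $\overline M(\tabA)$ depends only on the row equivalence class of $\tabA$, so $f(a_1,\dots,a_m,b_1,\dots,b_n)$ is unchanged under permuting the $a_i$ among themselves and the $b_j$ among themselves; as these entries range over all of $\C^{m+n}$, this is exactly the assertion that $f$ is $(S_m\times S_n)$-symmetric.

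The main obstacle is this last step, producing the symmetry of $\Phi(z)$: it is not visible on the $W$-side alone, and the argument works only because Corollary~\ref{mek} transports the question to the manifestly symmetric $\h$-picture given by $\overline K(\tabA)$, with $\overline M(\tabA)$ supplying the row-invariance. The remaining ingredients — the diagram chase, the generating-function identity, and the bookkeeping with $I(\t)$ versus $J(\t)$ — are routine.
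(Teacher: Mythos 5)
Your proposal is correct and takes essentially the same route as the paper's proof: injectivity and the identity $\pr(z_r)=\tilde{c}^{(r)}$ come from the commutative diagram (\ref{comd}) together with Lemma~\ref{theclaim} and Theorem~\ref{expcent}, and surjectivity is obtained, exactly as in the paper, by using Corollary~\ref{mek} to show $\hc(\pi(Z(W)))\subseteq S(\t)^{S_m\times S_n}\cap J(\t)=I(\t)$. Your explicit comparison of the scalars by which $z$ acts on $\overline{M}(\tabA)$ and on $\overline{K}(\tabA)$ (via shared composition factors and the one-dimensional top $d_1^{(1)}$-eigenspace) is just a spelled-out version of the paper's appeal to the fact that the generalized central character of $\overline{K}(\tabA)$ depends only on the row equivalence class of $\tabA$.
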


\begin{proof}
We observe to start with that $Z(W) \subseteq U(\h)_0 \oplus U(\p) \rr$.
To see this,
note that $U(\p) = U(\h) \oplus U(\p) \rr$.
Hence, we can write $z \in Z(W)$ as $z_0 + z_1$
with $z_0 \in U(\h)$ and $z_1 \in U(\p) \rr$.
Applying $\pi$ and using Lemma~\ref{step},
we get that $z_0 = \pi(z) \in Z(\h) \subseteq U(\h)_0$,
as required.
Hence, it makes sense to restrict all the maps in the commutative diagram
(\ref{comd}) to obtain another commutative diagram
\begin{equation}\label{subd}
\begin{diagram}
\node{Z(\g)}\arrow{s,l}{\pr}\arrow{e,t,J}{\HC}\node{S(\t)}\\
\node{Z(W)}\arrow{e,b,J}{\pi}\node{Z(\h)}\arrow{n,r,J}{\hc}
\end{diagram}
\end{equation}
Since $\HC$ is injective, so too is the map $\pr$.
Since $\tilde{c}^{(r)}
= \sum_{s+t=r} d_1^{(s)} \tilde{d}_2^{(t)}$,
we get from Lemma~\ref{theclaim} and Theorem~\ref{expcent} that
$$
\hc (\pi(\tilde{c}^{(r)}))
= e_r(x_1,\dots,x_m / y_1,\dots,y_n) = \HC(z_r).
$$
Hence, $\pr(z_r) = \tilde{c}^{(r)}$.

To complete the proof of the theorem, we must show that
$\pr$ is surjective.
As $\hc \circ \pi$ is injective and $\HC(Z(\g)) = I(\t)$, this follows
if we can show that $\hc(\pi(Z(W))) \subseteq I(\t)$.
Since $\overline{M}(\tabA) = \overline{M}(\tabB)$ for $\tabA \roweq \tabB$,
Corollary~\ref{mek} implies that
the generalized central character
of the $W$-supermodule
$\overline{K}(\tabA)$ depends only on the row equivalence class of $\tabA$.
Hence, for $z \in Z(W)$ we deduce that
$\pi(z)$ acts by the same scalar on the
$\h$-supermodules
$K(\tabA)$ for all $\tabA$ in the same row equivalence class.
In other words,
$\hc(\pi(Z(W))) \subseteq S(\t)^{S_m\times S_n}$.
We also have that
$\hc(\pi(Z(W))) \subseteq
\hc(Z(\h)) =
J(\t)$.
It remains to observe
by the definitions (\ref{idef}) and (\ref{jdef}) that
$I(\t) = S(\t)^{S_m \times S_n} \cap  J(\t)$.
\end{proof}

\begin{Corollary}\label{bib}
The center $Z(W)$ is generated by the elements $\{\tilde{c}^{(r)}\}_{r
  \geq 1}$;
equivalently, it is generated by the elements $\{c^{(r)}\}_{r \geq 1}$.
\end{Corollary}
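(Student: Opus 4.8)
The plan is to deduce this immediately from the two structural results already in hand. First I would invoke Theorem~\ref{expcent}, which asserts that $\{z_r\}_{r \geq 1}$ generates $Z(\g)$, together with Theorem~\ref{zonto}, which asserts that $\pr:Z(\g) \to Z(W)$ is an algebra isomorphism with $\pr(z_r) = \tilde{c}^{(r)}$ for all $r \geq 1$. Since an algebra isomorphism sends a set of generators to a set of generators, it follows at once that $\{\tilde{c}^{(r)}\}_{r \geq 1}$ generates $Z(W)$.

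For the second, equivalent assertion, the plan is to show that the subalgebra of $W$ generated by $\{c^{(r)}\}_{r \geq 1}$ coincides with the one generated by $\{\tilde{c}^{(r)}\}_{r \geq 1}$. The key point is that the generating functions $c(u)$ and $\tilde{c}(u)$ from (\ref{ctilde}) are mutually inverse in $W[[u^{-1}]]$: combining $c(u) = \tilde{d}_1(u) d_2(u)$ and $\tilde{c}(u) = d_1(u) \tilde{d}_2(u)$ with the commutativity relations $[d_i^{(r)}, d_j^{(s)}] = 0$ of Theorem~\ref{wpres} (which make all four series $d_1(u), d_2(u), \tilde{d}_1(u), \tilde{d}_2(u)$ pairwise commute) and the identity $\tilde{d}_i(u) = d_i(u)^{-1}$, a short computation gives $c(u)\,\tilde{c}(u) = \tilde{d}_1(u)\, d_1(u)\, d_2(u)\, \tilde{d}_2(u) = 1$. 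Comparing coefficients of $u^{-r}$ then yields the recursion $\sum_{a=0}^{r} c^{(a)} \tilde{c}^{(r-a)} = \delta_{r,0}$ with $c^{(0)} = \tilde{c}^{(0)} = 1$, and this expresses each $\tilde{c}^{(r)}$ as a polynomial in $c^{(1)}, \dots, c^{(r)}$ and, by symmetry, each $c^{(r)}$ as a polynomial in $\tilde{c}^{(1)}, \dots, \tilde{c}^{(r)}$. Hence the two families generate the same subalgebra of $W$, namely $Z(W)$.

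I expect no real obstacle here: everything is formal once Theorems~\ref{zonto} and~\ref{expcent} are available, and the only computation needed, namely the identity $\tilde{c}(u) = c(u)^{-1}$, is a one-line consequence of the commutativity of the $d_i^{(r)}$. The only care required is to keep track of the constant terms $c^{(0)} = \tilde{c}^{(0)} = 1$ so that the inversion recursion is well-posed.
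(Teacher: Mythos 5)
Your proposal is correct and follows the same route as the paper, which deduces the corollary directly from Theorems~\ref{zonto} and \ref{expcent}; you have merely spelled out the formal details (transport of generators under the isomorphism $\pr$, and the inversion $c(u)\,\tilde{c}(u)=1$ giving the equivalence of the two generating families) that the paper leaves implicit. No gaps.
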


\begin{proof}
This follows from Theorems~\ref{zonto} and \ref{expcent}.
\end{proof}

\section{The quotient category \texorpdfstring{$\overline{\O}_\Z$}{O\_Z}}\label{sactions}

For the remainder of the article, we
restrict attention to integral central characters, denoting the
corresponding subcategory of $\O$ by $\O_\Z$.
We introduce an Abelian subcategory $\overline{\mathcal O}_\Z$
of $W\lsmof$ such that the Whittaker coinvariants functor
restricts to a quotient functor
$$
H_0 : \O_\Z \to \overline{\O}_\Z.
$$
We show that this functor satisfies the double centralizer property,
i.e.\ it is fully faithful on projectives.
Then we discuss the locally unital
(``idempotented'') algebras that are Morita equivalent to the blocks of
$\overline{\O}_\Z$, and give some applications to the
classification of blocks of $\O_\Z$.

\subsection{Categorical actions}\label{Catact}
Let $\Tab_\Z$ be the subset of $\Tab$ consisting of the tableaux all of whose entries are integers.
Let
$\mathcal O_\Z$ be the Serre subcategory of $\mathcal O$
generated by the $\mathfrak{g}$-supermodules $\{L(\tabA)\}_{\tabA \in \Tab_\Z}$.
It is a sum of blocks of $\mathcal O$:
\begin{equation}\label{ozblocks}
\mathcal O_\Z = \bigoplus_{\xi \in \Tab_\Z / \linked} \mathcal O_\xi.
\end{equation}
In particular, $\mathcal O_\Z$
is itself a highest weight category with weight poset $(\Tab_\Z, \preceq)$.

Adopting some standard Lie theoretic notation,
let $\mathfrak{sl}_\infty$ be the Kac-Moody algebra of type $A_\infty$
(over $\C$), with Chevalley generators $\{E_i, F_i\}_{i \in \Z}$,
weight lattice $P := \bigoplus_{i \in \Z} \Z \eps_i$,
simple roots $\alpha_i = \eps_{i}-\eps_{i+1}$, etc..
We denote its natural module by
 $V^+$ and the dual by $V^-$. These have standard bases $\{v_j^+\}_{j \in \Z}$
and $\{v_j^-\}_{j \in \Z}$, respectively.
The vector $v_j^{\pm}$ is
of weight $\pm \eps_j$,
and
the Chevalley generators act by
\begin{align}\label{e}
F_i v^+_j &= \left\{\begin{array}{ll}
v^+_{j+1}&\text{if $j = i$}\\
0&\text{otherwise,}
\end{array}\right.
&E_i v^+_j &= \left\{\begin{array}{ll}
v^+_{j-1}&\text{if $j = i+1$}\\
0&\text{otherwise},
\end{array}\right.\\
F_i v^-_j &= \left\{\begin{array}{ll}
v^-_{j-1}&\text{if $j = i+1$}\\
0&\text{otherwise},
\end{array}\right.
&E_i v^-_j &= \left\{\begin{array}{ll}
v^-_{j+1}&\text{if $j = i$}\\
0&\text{otherwise}.
\end{array}\right.\label{f}
\end{align}
As goes back to \cite{Bkl} (or \cite[\S7.4]{CR} in the purely even
case), there is a categorical action of
$\mathfrak{sl}_\infty$ on $\O$ in the sense of Rouquier \cite[Definition 5.32]{Rou};
see also \cite[Definition 2.6]{BLW} for our precise conventions. We
just give a brief summary of the construction,
referring to the proof of \cite[Theorem 3.10]{BLW} for details.
\begin{itemize}
\item
The required biadjoint endofunctors $F$ and $E$ are the functors
\begin{equation}
F := U \otimes -,
\qquad E := U^\vee \otimes -,
\end{equation}
where $U$ is the
natural
$\mathfrak{g}$-supermodule of column vectors and
$U^\vee$ is its dual.
\item
The natural transformations $x:F \Rightarrow F$ and $s:F^2 \Rightarrow
F^2$
are defined so that $x_M: U \otimes M \rightarrow U \otimes M$
is left multiplication by the Casimir tensor
\begin{equation}\label{casten}
\Omega :=
\sum_{i,j=1}^{m+n} (-1)^{|j|} e_{i,j} \otimes e_{j,i} \in \g \otimes
\g,
\end{equation}
and $s_M: U \otimes U \otimes M \rightarrow U \otimes U \otimes M$ is
induced by the tensor flip $U \otimes U \rightarrow U \otimes U, u
\otimes v \mapsto (-1)^{|u||v|} v \otimes u$.
\item
Let $F_i$ be the summand of $F$ defined by taking the generalized
$i$-eigenspace of $x$, and $E_i$ be the unique summand of $E$ that is
biadjoint to it.
Let $\O_\Z^\Delta$ be the exact subcategory of $\O_\Z$ consisting of
all supermodules admitting a Verma flag, and
$K_0(\O^\Delta_\Z)_\C$ be its complexified Grothendieck group.
Let $T^{m|n} :=
(V^+)^{\otimes m} \otimes (V^-)^{\otimes n}$, and set
\begin{equation}\label{monomials}
v_\tabA := v^+_{a_1} \otimes \cdots v^+_{a_m} \otimes v^-_{b_1} \otimes
\cdots v^-_{b_n} \in T^{m|n}
\end{equation}
for each
$\tabA  = \substack{a_1 \cdots a_{m} \\ b_1 \cdots b_{n}}\in
\Tab_\Z$.
Then, there is a vector space isomorphism
\begin{equation}\label{ggg}
K_0(\O^\Delta_\Z)_\C \stackrel{\sim}{\rightarrow}
T^{m|n},
\quad
[M(\tabA)] \mapsto v_\tabA.
\end{equation}
Moreover, this map intertwines the operators
induced by the endofunctors $F_i$ and $E_i$ on the left hand space
with the actions of the Chevalley generators of $\mathfrak{sl}_\infty$ on the
right.
\item
Under the isomorphism
from (\ref{ggg}),
the Grothendieck groups $K_0(\O_\xi^\Delta)_\C$ of the blocks
correspond to the weight spaces
of $T^{m|n}$.
\end{itemize}
In fact, $\O_\Z$ is a {\em tensor product categorification}
of $T^{m|n}$ in the general sense
of \cite[Definition 2.10]{BLW}.

\vspace{2mm}

In the rest of the subsection, we are going to formulate an analogous
categorification theorem at the level of $W$.
Observe that a $\pi$-tableau
$\tabA = \substack{a_1 \cdots a_{m} \\ b_1 \cdots b_{n}} \in \Tab_\Z$
is anti-dominant if and only if $a_1 \leq \cdots \leq a_m$ and
$b_1 \geq \cdots \geq b_n$.
Let $\Tab_\Z^\circ$ denote the set of all such tableaux.
It
gives a distinguished set of representatives for $\Tab_\Z
/\!\roweq$.
For a linkage class $\xi \in \Tab_\Z / \!\linked$, we let
$\xi^\circ$ denote the set $\xi \cap \Tab_\Z^\circ$
of anti-dominant tableaux that it contains.

Recall
for $\tabA \in \Tab_\Z$ that $P(\tabA)$ is the projective cover of $L(\tabA)$ in
$\mathcal O_\Z$. Let
\begin{equation}\label{rain}
\overline{P}(\tabA) := H_0(P(\tabA)).
\end{equation}
Then we define $\overline{\O}_\Z$
to be the full
subcategory of $W\lsmof$
consisting of all $W$-supermodules that are isomorphic to subquotients of
finite direct sums of the
supermodules $\{\overline{P}(\tabA)\}_{\tabA \in \Tab_\Z^\circ}$.
This is obviously an Abelian subcategory of $W\lsmof$.
Similarly, given a linkage class $\xi \in \Tab_\Z / \!\linked$, we let
$\overline{\O}_\xi$ be the full
subcategory consisting
of subquotients of finite direct sums of the supermodules
$\{\overline{P}(\tabA)\}_{\tabA \in \xi^\circ}$.

\begin{Lemma}\label{oblock}
The Whittaker coinvariants functor restricts to an exact
functor
$H_0:\mathcal O_\Z \rightarrow \overline{\O}_\Z$
sending each block $\mathcal O_\xi$ to $\overline{\O}_\xi$.
Each $\overline{\mathcal O}_\xi$ is itself a block (i.e.\ it is
indecomposable), and $\overline{\O}_\Z$ decomposes as
$$
\overline{\mathcal O}_\Z = \bigoplus_{\xi \in \Tab_\Z / \linked}
\overline{\mathcal O}_\xi.
$$
Moreover,
the supermodules
$\{\overline{L}(\tabA)\}_{\tabA \in \xi^\circ}$
give a complete set of inequivalent irreducible objects in
each $\overline{\O}_\xi$.
\end{Lemma}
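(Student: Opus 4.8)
\emph{Proof proposal.} First I note that the functor $H_0|_{\O_\Z}$ is already known to be exact with image in $W\lsmof$ (the remark following Lemma~\ref{exactness}), and by the Corollary to Theorem~\ref{wirr} it annihilates precisely the Serre subcategory $\mathcal T_\Z\sub\O_\Z$ of supermodules of less than maximal Gelfand--Kirillov dimension. By the universal property of the Serre quotient, $H_0$ therefore factors as $\O_\Z\xrightarrow{\,q\,}\O_\Z/\mathcal T_\Z\xrightarrow{\,\bar H_0\,}W\lsmof$ with $q$ the quotient functor and $\bar H_0$ exact. Since $\O_\Z=\bigoplus_\xi\O_\xi$ and $q$ is exact, it carries $\O_\xi$ into the summand $\O_\xi/\mathcal T_\xi$ (with $\mathcal T_\xi:=\mathcal T_\Z\cap\O_\xi$), so $\O_\Z/\mathcal T_\Z=\bigoplus_\xi(\O_\xi/\mathcal T_\xi)$.

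The plan is to transfer the structure of the quotient category, as supplied by the general theory of \cite[\S4]{BLW}, through the exact functor $\bar H_0$. Namely, $\O_\Z/\mathcal T_\Z$ has enough projectives, its simple objects are the $q(L(\tabA))$ for $\tabA\in\Tab^\circ_\Z$ (the images of the remaining simples being zero), and $\{q(P(\tabA))\mid\tabA\in\Tab^\circ_\Z\}$ is a full set of projective generators, which under $\bar H_0$ go to $\overline L(\tabA)$ (Theorem~\ref{wirr}) and to $\overline P(\tabA)=H_0(P(\tabA))$ respectively. Every object of $\O_\xi/\mathcal T_\xi$ is then a subquotient of a finite direct sum of copies of the $q(P(\tabA))$, $\tabA\in\xi^\circ$; applying $\bar H_0$ shows that every object in the essential image of $H_0|_{\O_\xi}$ is a subquotient of a finite direct sum of the $\overline P(\tabA)$, $\tabA\in\xi^\circ$, i.e.\ lies in $\overline{\O}_\xi$. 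This gives the first assertion. Moreover each $\overline P(\tabA)$ and each $\overline L(\tabA)$ ($\tabA\in\xi^\circ$) visibly lies in $\overline{\O}_\xi$, and every composition factor occurring in $\overline{\O}_\xi$ is among the $\overline L(\tabA)$, $\tabA\in\xi^\circ$, because the generators $\overline P(\tabA)$ have this property; as these $\overline L(\tabA)$ are pairwise non-isomorphic by Theorem~\ref{bbbg} ($\Tab^\circ_\Z$ being a transversal for $\Tab_\Z/\!\roweq$) and irreducibility is detected in the ambient category $W\lsmof$, the last assertion follows as well.

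For the decomposition $\overline{\O}_\Z=\bigoplus_\xi\overline{\O}_\xi$ it remains to rule out nonzero morphisms and extensions between $\overline{\O}_\xi$ and $\overline{\O}_{\xi'}$ for $\xi\ne\xi'$. Absence of nonzero morphisms is automatic, since objects of $\overline{\O}_\xi$ and $\overline{\O}_{\xi'}$ then have disjoint composition factors. For $\operatorname{Ext}^1$ the plan is to show that any extension of $\overline L(\tabA)$ ($\tabA\in\xi^\circ$) by $\overline L(\tabB)$ ($\tabB\in(\xi')^\circ$) would be a subquotient of some $\overline P(\tabC)$ with $\tabC\in\Tab_\Z^\circ$ — whose composition factors all lie in a single $\xi^\circ$ — a contradiction; this rests on the $\overline P(\tabA)$ being projective objects of $\overline{\O}_\Z$, inherited from the corresponding statement in $\O_\Z/\mathcal T_\Z$ via faithfulness of $\bar H_0$ on these objects. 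Finally, indecomposability of $\overline{\O}_\xi$ should be obtained by transporting the known indecomposability of the block $\O_\xi$ (\cite[Theorem 3.12]{CMW}): a path in the $\operatorname{Ext}$-quiver of $\O_\xi$ joining two antidominant vertices can be pushed forward using exactness of $H_0$, Theorem~\ref{wirr}, and the compatibility of $H_0$ with projective functors from Theorem~\ref{crappy} — or one simply quotes that $\O_\xi/\mathcal T_\xi$ is indecomposable as part of the theory of \cite{BLW}.

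The routine parts here are the exactness, the factorisation through $\O_\Z/\mathcal T_\Z$, and the identification of simple objects. The step I expect to be most delicate is verifying that $\bar H_0$ faithfully reflects the homological structure of $\O_\Z/\mathcal T_\Z$ onto the concrete $W$-supermodule category $\overline{\O}_\Z$ — in particular, that the $\overline P(\tabA)$ really are projective in $\overline{\O}_\Z$ with the expected endomorphism algebras. The cleanest handle on this appears to be the adjunction between $H_0=Q\otimes_{U(\g)}-$ and $R=\Hom_W(Q,-)$: since $Q$ is free as a left $W$-supermodule by Skryabin's theorem and $H_0$ is exact, one gets $\operatorname{Ext}^k_W(\overline P(\tabA),N)\cong\operatorname{Ext}^k_{U(\g)}(P(\tabA),R(N))$, reducing projectivity of $\overline P(\tabA)$ in $\overline{\O}_\Z$ to an $\operatorname{Ext}^1$-vanishing for the projective $P(\tabA)\in\O$ against the $U(\g)$-supermodules $R(N)$, $N\in\overline{\O}_\Z$ — the analogue in the present setting of the even computations underlying \cite[\S8]{BKrep} and \cite[\S4]{BLW}. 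Granting this, all four assertions of the lemma follow.
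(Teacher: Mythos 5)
Your reduction of the first and last assertions to the statement that the images of the prinjectives $P(\tabA)$, $\tabA\in\Tab^\circ_\Z$, are projective generators of $\O_\Z/\mathcal T_\Z$ is workable (it parallels the paper's more elementary argument, which simply embeds each $P(\tabA)$ into a direct sum of prinjectives using Verma flags and the anti-dominance of Verma socles, then applies exactness of $H_0$). The genuine gaps are in the other two assertions. For indecomposability of $\overline{\O}_\xi$, "pushing forward a path in the $\operatorname{Ext}$-quiver of $\O_\xi$" does not work: the intermediate vertices of such a path are generally non-anti-dominant, so $H_0$ kills them and the extensions along the path can simply die; in algebra terms, connectedness of the basic algebra $A$ of $\O_\xi$ does not imply connectedness of $eAe$. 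Nor is "$\O_\xi/\mathcal T_\xi$ is indecomposable" a quotable fact from \cite{BLW}; deducing it from the Struktursatz would in any case be circular in the paper's development, since Corollary~\ref{controls} is obtained downstream of this lemma. The paper's actual mechanism is Corollary~\ref{vermascomp}: the highest weight module $\overline{M}(\tabA)$ is indecomposable, contains every $\overline{L}(\tabB)$ with $\tabB\upit\tabA$ as a composition factor, and depends only on the row-equivalence class of $\tabA$; since $\linked$ is generated by $\roweq$ and $\upit$, all the simples indexed by $\xi^\circ$ lie in one block. This concrete input is missing from your argument.

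Second, your route to the decomposition $\overline{\O}_\Z=\bigoplus_\xi\overline{\O}_\xi$ (via $\operatorname{Ext}^1$-vanishing) rests on the $\overline{P}(\tabA)$ being projective in $\overline{\O}_\Z$, which at this point is not available -- it is the later Lemma~\ref{projc}, proved by a nontrivial induction using the categorical action -- and your proposed shortcut via the adjunction $(H_0,\Hom_W(Q,-))$ does not hold up: $P(\tabA)$ is projective in $\O$ but not in $U(\g)\lsmod$, $H_0$ is only exact on supermodules finitely generated over $\m$ (Lemma~\ref{exactness}), and in fact $\overline{P}(\tabA)$ is \emph{not} projective in $W\lsmof$, only in the subcategory $\overline{\O}_\Z$, so no isomorphism $\operatorname{Ext}^k_W(\overline{P}(\tabA),N)\cong\operatorname{Ext}^k_{U(\g)}(P(\tabA),R(N))$ of the kind you want can be true. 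The paper needs none of this for the present lemma: the block decomposition is immediate (for instance because distinct linkage classes have distinct central characters, using Theorem~\ref{zonto}, while all composition factors of $\overline{P}(\tabA)$ for $\tabA\in\xi^\circ$ are of the form $\overline{L}(\tabB)$, $\tabB\in\xi^\circ$, by exactness and Theorem~\ref{wirr}). In short, your outline front-loads the hardest later results (projectivity of $\overline{P}(\tabA)$, the Struktursatz) and the arguments you sketch for them would fail, whereas the lemma has an elementary proof using Verma flags, Theorem~\ref{wirr} and Corollary~\ref{vermascomp} only.
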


\begin{proof}
We first show that the essential image of $H_0$ is contained in
$\overline{\O}_\Z$.
Any object  $M \in \mathcal O_\Z$ is a quotient of a direct sum of the
projective objects $P(\tabA)$ for $\tabA \in \Tab_\Z$. Since $H_0$ is exact,
we deduce that
$H_0(M)$ is a quotient of a direct sum of the objects $\overline{P}(\tabA)$ for $\tabA \in \Tab_\Z$.
Since, by definition, $\overline{\mathcal O}_\Z$ is closed under
taking quotients and direct sums,
we are thus reduced to showing that each $\overline{P}(\tabA)$ for $\tabA \in \Tab_\Z$
belongs to $\overline{\O}_\Z$.
This is immediate by the definition of
$\overline{\O}_\Z$ if $\tabA$ is anti-dominant.
So suppose that $\tabA$ is not anti-dominant.
Then $P(\tabA)$ has a Verma flag, and the socle of any Verma is anti-dominant,
hence, the injective
hull of $P(\tabA)$ is a direct sum of $P(\tabB)$ for $\tabB \in \Tab_\Z^\circ$; see \cite[Theorem 2.24]{BLW}.
Applying $H_0$ we deduce that $\overline{P}(\tabA)$ embeds into a direct sum of
$\overline{P}(\tabB)$ for $\tabB \in \Tab_\Z^\circ$. Since $\overline{\mathcal
  O}_\Z$ is closed under taking submodules, this implies that $\overline{P}(\tabA)$
belongs to $\overline{\mathcal O}_\Z$.

Thus, $H_0$ restricts to a well-defined exact functor
$\mathcal O_\Z \rightarrow \overline{\O}_\Z$.
The same argument at the level of blocks shows that $H_0$ maps
$\mathcal O_\xi$ to $\overline{\mathcal O}_\xi$, and clearly
$\overline{\O}_\Z$
decomposes as the direct sum of the $\overline{\mathcal O}_\xi$'s.
The irreducible objects in $\overline{\mathcal O}_\xi$ are just the
irreducible objects of $W\lsmod$ that it contains,
so they are represented by $\{\overline{L}(\tabA)\:|\:\tabA \in \xi^\circ\}$
thanks to Theorem~\ref{wirr}.

It remains to show that each $\overline{\mathcal O}_\xi$ is indecomposable.
Corollary~\ref{vermascomp} implies for any tableaux $\tabA, \tabB$ with $\tabB \upit \tabA$ that the irreducible
supermodules $\overline{L}(\tabA)$ and $\overline{L}(\tabB)$ are both composition factors of
the indecomposable object $\overline{M}(\tabA)$. Hence, $\overline{L}(\tabA)$
and $\overline{L}(\tabB)$ belong to the same block of $\overline{\mathcal O}$.
Now observe that
the equivalence relation $\linked$ on $\Tab_\Z$ is generated by the
relations $\sim$ and $\upit$.
\end{proof}

\begin{Remark}\label{recoering}
By Lemma~\ref{bookkeeping} and the definition of $\overline{\O}_\Z$,
the elements $d_1^{(1)}$ and $d_2^{(1)}$
act semisimply on any
object  $M \in \overline{\O}_\Z$.
Lemma~\ref{bookkeeping} shows moreover that the $z$-eigenspace of
$d_2^{(1)}$
is concentrated in
parity $\parity(z)$, i.e.\ the $\Z/2$-grading is determined by the eigenspace
decomposition of $d_2^{(1)}$. This is a
similar situation to category $\O$ itself, where the $\Z/2$-grading
was determined by the weight space decomposition.
\end{Remark}

Next we introduce endofunctors
$\overline{F}$ and $\overline{E}$ of $\overline{\O}_\Z$.
Consider the biadjoint endofunctors $\overline{F} := U \ostar -$ and
$\overline{E} := U^\vee \ostar -$
of $W\lsmof$
from \S\ref{pf} (where $U$ is still the natural
$\g$-supermodule).
By Theorem~\ref{crappy}, we have canonical isomorphisms of functors
\begin{equation}\label{worse}
\overline{F} \circ H_0 \stackrel{\sim}{\Rightarrow}
 H_0 \circ F,
\qquad
\overline{E} \circ H_0 \stackrel{\sim}{\Rightarrow} H_0 \circ E,
\end{equation}
going from $\O$ to $W\lsmof$.
It follows immediately that $\overline F (\overline{P}(\tabA))
\cong H_0(F P(\tabA))$, hence, it is in the subcategory $\overline{\O}_\Z$.
Since $\overline F$ is exact, it follows that $\overline F$ leaves the subcategory
$\overline{\O}_\Z$ of $W\lsmof$ invariant. Similarly, so does
$\overline E$. Hence, we can restrict these endofunctors to obtain
a biadjoint pair of endofunctors
\begin{equation}
\overline F:\overline{\O}_\Z \rightarrow
\overline{\O}_\Z,
\qquad
\overline E:\overline{\O}_\Z \rightarrow
\overline{\O}_\Z.
\end{equation}

\begin{Theorem}\label{bigcat}
There are natural transformations $\bar x:\overline{F} \Rightarrow
\overline{F}$
and $\bar s:\overline{F}^2 \Rightarrow \overline{F}^2$
making
$\overline{\O}_\Z$ into an integrable
$\mathfrak{sl}_\infty$-categorification.
Moreover, the Whittaker coinvariants functor
$
H_0:\O_\Z \rightarrow \overline{\O}_\Z
$
is strongly equivariant in the usual sense of categorical actions (e.g.\ see
\cite[Definition 2.7]{BLW}).
\end{Theorem}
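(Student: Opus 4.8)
The plan is to transport the categorical $\mathfrak{sl}_\infty$-action on $\O_\Z$ along the Whittaker coinvariants functor, using the module-category formalism already set up in \S\ref{pf}. The natural transformations $x:F\Rightarrow F$ and $s:F^2\Rightarrow F^2$ underlying the action on $\O_\Z$ are given by universal data valid on all of $U(\g)\lsmod$: $x_M$ is left multiplication by the Casimir tensor $\Omega$ of \eqref{casten} (which is $\g$-invariant, hence $x_M$ is $\g$-linear), and $s_M$ is induced by the tensor flip. Passing to right supermodules, one obtains in exactly the same way natural transformations of the endofunctors $-\otimes U^*$ and $(-\otimes U^*)^2$ of $\rsmod U(\g)$, and since $-\otimes U^*$ preserves $\rsmodchi U(\g)$ these restrict to that subcategory. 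First I would transport them through Skryabin's equivalence $H^0:\rsmodchi U(\g)\stackrel{\sim}{\to}\rsmod W$, under which $-\otimes U^*$ corresponds to $-\ostar U^*$ by definition, and then through the dualities $(-)^*$ and ${^*}(-)$ used to pass from $-\ostar U^*$ to $\overline F=U\ostar-$. This produces natural transformations $\bar x:\overline F\Rightarrow\overline F$ and $\bar s:\overline F^2\Rightarrow\overline F^2$ on all of $W\lsmof$, which then restrict to $\overline{\O}_\Z$ since, as noted just before the theorem, $\overline F$ and $\overline E$ preserve that subcategory.

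Next I would observe that the isomorphism $H_0(U\otimes M)\cong U\ostar H_0(M)$ of Theorem~\ref{crappy}, which makes $H_0$ a morphism of $\Rep_0(\g)$-module categories, is assembled from precisely the adjunctions, the Skryabin equivalence and the dualities used to construct $\bar x$ and $\bar s$. Consequently this isomorphism — together with the isomorphisms \eqref{worse}, which already give $\overline F\circ H_0\cong H_0\circ F$ and $\overline E\circ H_0\cong H_0\circ E$ — intertwines $x$ with $\bar x$ and $s$ with $\bar s$ on all iterated composites of $F$. By definition this is exactly the statement that $H_0:\O_\Z\to\overline{\O}_\Z$ is strongly equivariant in the sense of \cite[Definition 2.7]{BLW}. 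The super analogue of the required compatibility bookkeeping is essentially the content of \cite[\S8]{BKrep}, to which I would appeal for the routine parts.

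It then remains to verify that $(\overline{\O}_\Z,\overline F,\overline E,\bar x,\bar s)$ satisfies the axioms of an integrable $\mathfrak{sl}_\infty$-categorification. The purely local relations — $\bar s^2=\id$, the braid relation for $\bar s$, the degenerate affine Hecke relation between $\bar x$ and $\bar s$, and the biadjointness of $\overline F$ and $\overline E$ (already noted in \S\ref{pf}) — hold on all of $W\lsmof$, being transported through equivalences and dualities from the corresponding relations for $x$ and $s$ on $U(\g)\lsmod$, which are themselves formal consequences of $\Omega$ being the Casimir tensor and of the naturality of the tensor flip. For the global conditions, write $\overline F=\bigoplus_i\overline F_i$ for the decomposition into generalized $\bar x$-eigenspaces, which exists because every object of $\overline{\O}_\Z$ is finite-dimensional; by equivariance $\overline F_i\circ H_0\cong H_0\circ F_i$, and similarly for $\overline E_i$. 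Since every object of $\overline{\O}_\Z$ is a subquotient of a direct sum of the supermodules $\overline P(\tabA)=H_0(P(\tabA))$, and since $P(\tabA)$ has a Verma flag so that the integrability of $\O_\Z$ gives $F_iP(\tabA)=0=E_iP(\tabA)$ for all but finitely many $i$ and $F_i^NP(\tabA)=0$ for $N\gg0$, the same nilpotency and finiteness properties pass to the $\overline P(\tabA)$ and hence to every object of $\overline{\O}_\Z$. In particular the eigenvalues of $\bar x$ are integral, $\bar x-i$ is locally nilpotent on $\overline F_i$, and $K_0(\overline{\O}_\Z)$ is an integrable $\mathfrak{sl}_\infty$-module, as required.

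The hard part will be the compatibility claim in the second paragraph: checking that the module-category isomorphism of Theorem~\ref{crappy} genuinely intertwines the Casimir natural transformation with its Whittaker counterpart, and likewise the flips. This amounts to tracking $\Omega$ and the tensor-flip maps through the chain of adjunctions, Skryabin's equivalence and the left/right dualities, and verifying compatibility with all the coherence isomorphisms of the two module-category structures; everything else is either formal or a direct transport of structure through an equivalence.
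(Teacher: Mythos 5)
Your proposal is correct and follows essentially the same route as the paper: $\bar x$ and $\bar s$ are obtained by transporting multiplication by the Casimir tensor and the tensor flip through the dualities, Skryabin's equivalence and the $\ostar$-formalism of \S\ref{pf}, the degenerate affine Hecke relations and the intertwining with $x$, $s$ (hence strong equivariance, via the isomorphisms \eqref{worse} coming from Theorem~\ref{crappy}) are formal checks from these definitions, exactly as in the paper --- though note that for condition (E1) of \cite[Definition 2.7]{BLW} one must verify that the adjunction-induced map $H_0 \circ E \Rightarrow \overline{E}\circ H_0$ is invertible (the paper does this by identifying it with the inverse of the second isomorphism in \eqref{worse}), not merely that some isomorphism $\overline{E}\circ H_0 \cong H_0\circ E$ exists. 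The only genuine divergence is the final integrability step, where you deduce local nilpotency of the $\overline{F}_i$ directly on the generators $\overline{P}(\tabA)=H_0(P(\tabA))$, whereas the paper argues via surjectivity of the equivariant map $[H_0]:K_0(\O_\Z)_\C\rightarrow K_0(\overline{\O}_\Z)_\C$; both arguments are valid and rest on the same equivariance.
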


\begin{proof}
First, we go through the construction of $\bar x$.
For a left $U(\g)$-supermodule $M$, we already have $x_M:U \otimes M
\rightarrow U \otimes M$ defined by left multiplication by the tensor
$\Omega$ from (\ref{casten}).
Under the isomorphism (\ref{dualitymap}),
the dual map $(x_M)^*:(U \otimes M)^* \rightarrow (U \otimes M)^*$
is
the map $x_{M^*}:M^* \otimes U^* \rightarrow M^* \otimes U^*$
defined by right multiplication by $\Omega$.
Now suppose that $M \in W \lsmof$.
Applying $H^0$ to $x_{M^* \otimes_W Q}$ gives us an endomorphism
$\bar{x}_{M^*}$
of
$M^* \circledast U^* =
H^0((M^* \otimes_W Q) \otimes U^*)$.
Finally, taking the left dual gives us an
endomorphism $\bar x_M := {^*}({\bar x}_{M^*})$ of $U
\circledast M$.

The definition of $\bar s$ can be obtained in a very similar way, but it
is easier to define this using the
coherence isomorphism $U \ostar (U \ostar M) \cong (U \otimes U) \ostar M$
coming from the monoidal functor (\ref{itsmonoidal}), starting from
the
endomorphism of $(U \otimes U) \ostar M$ obtained by applying $-
\ostar \id_M$ to the tensor flip $U \otimes U \mapsto U
\otimes U$.

The fact that $\bar x$ and $\bar s$ satisfy the appropriate degenerate affine
Hecke algebra relations is just a formal consequence of the fact that
$x$ and $s$ do on $U(\g)\lsmod$.
Also, we've already constructed $\overline{F}$ and $\overline{E}$ so
that they are canonically biadjoint.

Next we show that $H_0$ is a strongly equivariant functor.
We have already constructed the required data of an isomorphism
$\zeta:\overline{F} \circ H_0 \stackrel{\sim}{\Rightarrow} H_0 \circ
F$
on the left hand side of  (\ref{worse}).
We next have to check that $x$ and $s$ are intertwined with $\bar x$
and $\bar s$ in the appropriate sense (we need the $F$-version of
\cite[5.2.1(5)]{CR} as recorded in \cite[Definition 2.7(E2)--(E3)]{BLW}).
This is a formal exercise from the definitions (which were set up
exactly for this purpose). Finally, we must check the $F$-version of \cite[5.1.2(4)]{CR}
(which is \cite[Definition 2.7(E1)]{BLW}). This asserts that a certain
natural transformations $H_0 \circ E \Rightarrow \overline{E} \circ
H_0$ constructed from $\zeta$ using the adjunction
is an isomorphism. In fact, one shows that it is the inverse of the
right hand side
of (\ref{worse}). We omit the details here.

Then we decompose $\overline{F}$ into its $\bar x$-generalized
eigenspaces $\overline{F}_i$
as before, and let $\overline{E}_i$ be the adjoint summands of
$\overline{E}$.
Finally, we need to show that the induced actions of
$[\overline{F}_i]$ and $[\overline{E}_i]$ make
$K_0(\overline{\O}_\Z)_\C$ into an integrable representation of $\mathfrak{sl}_\infty$.
This follows from the equivariance of $H_0$:
we already know that $K_0(\O_\Z)_\C$ is integrable upstairs,
and the $\mathfrak{sl}_\infty$-equivariant
map $[H_0]:K_0(\O_\Z)_\C \rightarrow K_0(\overline\O_\Z)_\C$
is surjective according to Theorem~\ref{wirr} and the description of
irreducible objects in Lemma~\ref{oblock}.
\end{proof}

The
Grothendieck group $K_0(\overline{\O}_\Z)_\C$  may be understood from
the point of view of
this categorification theorem as follows.

\begin{Lemma}\label{bells}
Let $S^{m|n} := S^m V^+ \otimes S^n V^-$ (tensor product of symmetric powers).
Then,  there is a unique injective linear map $j$
making the following into a commutative diagram of
$\mathfrak{sl}_\infty$-module homomorphisms:
\begin{equation*}
\begin{diagram}
\node{\:\:T^{m|n}}\arrow{s,r,J}{i}\arrow{r,t,A}{\operatorname{can}}\node{\:\:S^{m|n}}\arrow{s,r,J}{j}\\
\node{K_0(\mathcal
  O_\Z)_\C}\arrow{r,t,A}{[H_0]}\node{K_0(\overline{\mathcal O}_\Z)_\C}
\end{diagram}
\end{equation*}
Here,
 the top map is the canonical map from
tensor powers to symmetric powers,
and $i$ is the composition of the inverse of (\ref{ggg}) with the
natural inclusion
 $K_0(\O_\Z^\Delta)_\C \hookrightarrow
K_0(\O_\Z)_\C$.
\end{Lemma}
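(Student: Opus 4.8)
The plan is to compute the composite $[H_0]\circ i$ on the standard basis of $T^{m|n}$, observe that it annihilates $\ker(\operatorname{can})$ so that it descends along $\operatorname{can}$, and then check that the descended map is injective and $\mathfrak{sl}_\infty$-equivariant. First I would note that, by the definition of $i$ (the inverse of (\ref{ggg}) followed by the inclusion $K_0(\O^\Delta_\Z)_\C\hookrightarrow K_0(\O_\Z)_\C$), we have $i(v_\tabA)=[M(\tabA)]$, and hence $([H_0]\circ i)(v_\tabA)=[H_0(M(\tabA))]=[\overline{M}(\tabA)]$ by Theorem~\ref{T:main}. Since $\overline{M}(\tabA)$ depends only on the row equivalence class of $\tabA$ — equivalently, by Corollary~\ref{mek}, $[\overline{M}(\tabA)]=[\overline{M}(\tabB)]$ whenever $\tabA\roweq\tabB$ — and since $\ker(\operatorname{can})$ is spanned by the vectors $v_\tabA-v_\tabB$ with $\tabA\roweq\tabB$, we get $\ker(\operatorname{can})\subseteq\ker([H_0]\circ i)$. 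As $\operatorname{can}$ is surjective, this produces a unique linear map $j\colon S^{m|n}\to K_0(\overline\O_\Z)_\C$ with $j\circ\operatorname{can}=[H_0]\circ i$, characterised by $j(\operatorname{can}(v_\tabA))=[\overline{M}(\tabA)]$.

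Next I would prove that $j$ is injective. The anti-dominant integral tableaux (those with $a_1\leq\cdots\leq a_m$ and $b_1\geq\cdots\geq b_n$) index the monomial basis of $S^{m|n}=S^m V^+\otimes S^n V^-$, so $\{\operatorname{can}(v_\tabA)\}_{\tabA\in\Tab_\Z^\circ}$ is a basis of $S^{m|n}$ and it suffices to show that $\{[\overline{M}(\tabA)]\}_{\tabA\in\Tab_\Z^\circ}$ is linearly independent in $K_0(\overline\O_\Z)_\C$. Fix $\tabA\in\Tab_\Z^\circ$. No Bruhat move of the first two types can be applied to $\tabA$, since these require a strict inequality $a_i>a_j$ or $b_i<b_j$ that an anti-dominant tableau does not exhibit; as the remaining (third) type strictly decreases the statistic $a(-)$ and no move increases it, every $\tabB\prec\tabA$ has $a(\tabB)<a(\tabA)$. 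Combined with Lemma~\ref{linkage} this gives $[M(\tabA)]=[L(\tabA)]+\sum_{\tabB}c_{\tabA\tabB}[L(\tabB)]$ with all occurring $\tabB$ satisfying $a(\tabB)<a(\tabA)$. Applying the exact functor $H_0$ and using Theorem~\ref{wirr}, so that $H_0(L(\tabB))$ is $\overline{L}(\tabB)$ when $\tabB$ is anti-dominant and $0$ otherwise, we obtain
\[
[\overline{M}(\tabA)]=[\overline{L}(\tabA)]+\sum_{\substack{\tabB\in\Tab_\Z^\circ\\ a(\tabB)<a(\tabA)}}d_{\tabA\tabB}\,[\overline{L}(\tabB)].
\]
As $\{[\overline{L}(\tabA)]\}_{\tabA\in\Tab_\Z^\circ}$ is a basis of $K_0(\overline\O_\Z)_\C$ by Lemma~\ref{oblock}, any relation $\sum_\tabA\lambda_\tabA[\overline{M}(\tabA)]=0$ must be trivial: choosing $\tabA_0$ in its (necessarily finite) support with $a(\tabA_0)$ maximal, the coefficient of $[\overline{L}(\tabA_0)]$ receives a contribution only from $[\overline{M}(\tabA_0)]$, by maximality of $a(\tabA_0)$, so $\lambda_{\tabA_0}=0$, a contradiction. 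Thus $\{[\overline{M}(\tabA)]\}_{\tabA\in\Tab_\Z^\circ}$ is a basis of $K_0(\overline\O_\Z)_\C$, so $j$ is an isomorphism, in particular injective.

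Finally I would check that all four maps are $\mathfrak{sl}_\infty$-module homomorphisms. The map $\operatorname{can}$ is one by construction; $i$ is one because (\ref{ggg}) intertwines the operators induced by $F_i,E_i$ with the Chevalley generators and, $F$ and $E$ being exact, the inclusion $K_0(\O^\Delta_\Z)_\C\hookrightarrow K_0(\O_\Z)_\C$ is a module map; and $[H_0]$ is one because $H_0$ is strongly equivariant by Theorem~\ref{bigcat}. Equivariance of $j$ is then formal using surjectivity of $\operatorname{can}$: for a Chevalley generator $X$ and $s=\operatorname{can}(t)$ one has $j(Xs)=j(\operatorname{can}(Xt))=([H_0]\circ i)(Xt)=X\cdot([H_0]\circ i)(t)=X\cdot j(s)$. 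I expect the main obstacle to be the injectivity argument, specifically establishing the unitriangular shape of the $\overline{M}$-to-$\overline{L}$ transition matrix on $\Tab_\Z^\circ$; this rests on the fact that no permutation-type Bruhat move applies to an anti-dominant tableau (so that $a(-)$ strictly drops along $\prec$), together with the precise vanishing behaviour of $H_0$ on irreducibles from Theorem~\ref{wirr}. Everything else is bookkeeping with the already-established categorifications and with the central Theorem~\ref{T:main}.
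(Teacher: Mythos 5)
Your argument is essentially the paper's own proof, written out in full: Theorem~\ref{T:main} shows $[H_0]\circ i$ sends $v_\tabA\mapsto[\overline{M}(\tabA)]$, which depends only on the row equivalence class, so the map factors uniquely through $\operatorname{can}$; and injectivity comes down to linear independence of $\{[\overline{M}(\tabA)]\}_{\tabA\in\Tab_\Z^\circ}$, which the paper deduces from the classification of irreducibles and which you establish carefully via the unitriangularity of the $\overline M$-to-$\overline L$ transition matrix with respect to the statistic $a(-)$ (exactly the triangularity already exploited in the proof of Theorem~\ref{wirr}). The equivariance bookkeeping is also fine.

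One assertion at the end is false, though harmless for the lemma: from linear independence you conclude that $\{[\overline{M}(\tabA)]\}_{\tabA\in\Tab_\Z^\circ}$ is a \emph{basis} of $K_0(\overline{\O}_\Z)_\C$ and that $j$ is an isomorphism. It is not spanning: elements of $K_0$ are finite sums of simple classes, and already in a maximally atypical block for $\gl_{1|1}(\C)$ one has $[\overline{M}_\xi(\eps_i)]=[\overline{L}_\xi(\eps_i)]+[\overline{L}_\xi(\eps_{i-1})]$, so a single simple class cannot be written as a finite linear combination of Verma classes (the recursion never terminates); this is why the lemma (and the hooked arrows in the diagram) claim only injectivity of $j$, matching the fact that $\operatorname{can}$ and $[H_0]$ are surjective but $i$ and $j$ are not. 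Since injectivity of $j$ follows from linear independence alone, your proof of the stated lemma stands once that final overclaim is deleted.
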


\begin{proof}
To see this, one just has to observe that $H_0(M(\tabA)) \cong H_0(M(\tabB))$
for all $\tabA
\sim \tabB$ thanks to Theorem~\ref{T:main}. Moreover, the classes of the Verma
supermodules
$\{[\overline{M}(\tabA)]\}_{\tabA \in \Tab^\circ}$ are linearly independent in
$K_0(\overline{\O}_\Z)_\C$
by the classification of irreducible objects.
\end{proof}

\subsection{Serre quotients and the double centralizer
  property}\label{catr}
Throughout the subsection, we often appeal to Theorem~\ref{wirr} and
the exactness of $H_0$ from Lemma~\ref{exactness}.
Although it is immediate from the definition that $\overline{\O}_\Z$ is an Abelian category,
we do not yet
know that it has enough projectives or injectives.
We proceed to establish this, essentially mimicking the proof
of \cite[Lemma 5.7]{BKschur}.

\begin{Lemma}\label{projc}
For each $\tabA \in \Tab^\circ_\Z$, the supermodule $\overline{P}(\tabA)$ is both
the projective cover and the injective hull of $\overline{L}(\tabA)$ in $\overline{\O}_\Z$.
\end{Lemma}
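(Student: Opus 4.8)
The plan is to deduce everything from the categorical structure already in place, especially the exactness of $H_0$, Theorem~\ref{wirr}, Theorem~\ref{T:main}, and Lemma~\ref{prinj}. First I would recall that for anti-dominant $\tabA \in \Tab_\Z^\circ$ the projective cover $P(\tabA)$ in $\O_\Z$ is prinjective by Lemma~\ref{prinj}, so it is simultaneously the projective cover and the injective hull of $L(\tabA)$. Since $H_0$ is exact and kills precisely the objects of submaximal Gelfand--Kirillov dimension (Theorem~\ref{wirr} and its Corollary), I would argue that $\overline{P}(\tabA) = H_0(P(\tabA))$ has simple top and simple socle, both isomorphic to $\overline{L}(\tabA)$: the head of $P(\tabA)$ is $L(\tabA)$, which $H_0$ sends to $\overline{L}(\tabA)$; exactness gives a surjection $\overline{P}(\tabA) \twoheadrightarrow \overline{L}(\tabA)$, and any other composition factor of $\overline{P}(\tabA)$ is of the form $\overline{L}(\tabB)$ with $L(\tabB)$ anti-dominant appearing in $P(\tabA)$. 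Dually, using that $P(\tabA)$ is also injective with socle $L(\tabA)$ and that $H_0$ is exact, $\overline{P}(\tabA)$ has socle $\overline{L}(\tabA)$.

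Next I would establish projectivity of $\overline{P}(\tabA)$ in $\overline{\O}_\Z$. By definition every object of $\overline{\O}_\Z$ is a subquotient of a finite direct sum of the $\overline{P}(\tabB)$'s for $\tabB \in \Tab_\Z^\circ$, so it suffices to check that $\Hom_W(\overline{P}(\tabA), -)$ is exact on short exact sequences of such objects. Here I would invoke the adjunction coming from Skryabin's theorem together with biadjointness properties: $H_0 = Q \otimes_W -$ has a right adjoint $H^0(\Hom(Q,-))$ in an appropriate sense, but more directly one can use that $P(\tabA)$ is projective in $\O_\Z$ and that, by Theorem~\ref{T:main} and standard highest weight category theory, $\Hom_W(H_0(P(\tabA)), H_0(N))$ is computed by $\Hom_\g(P(\tabA), N')$ for a suitable $N'$ — this is exactly the kind of argument in \cite[Lemma 5.7]{BKschur} that the paper flags. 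Concretely, one shows $H_0$ induces an isomorphism $\Hom_\g(P(\tabA), M) \cong \Hom_W(\overline{P}(\tabA), H_0(M))$ for $M \in \O_\Z$, using that $P(\tabA)$ is prinjective and that $H_0(M)$ determines $M$ up to the kernel of $H_0$; since $\Hom_\g(P(\tabA),-)$ is exact and $H_0$ is surjective on objects of $\overline{\O}_\Z$ (up to the subquotient presentation), exactness of $\Hom_W(\overline{P}(\tabA),-)$ follows. Injectivity of $\overline{P}(\tabA)$ then follows by the dual argument, or by applying the duality $M \mapsto {}^*H^0(M^*)$ from Lemma~\ref{L:equiv} together with the self-duality of prinjectives.

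Finally, combining these: $\overline{P}(\tabA)$ is projective with simple top $\overline{L}(\tabA)$, hence is the projective cover of $\overline{L}(\tabA)$; it is injective with simple socle $\overline{L}(\tabA)$, hence is the injective hull; and by Lemma~\ref{oblock} the $\overline{L}(\tabA)$ for $\tabA \in \Tab_\Z^\circ$ exhaust the irreducibles of $\overline{\O}_\Z$, so this accounts for all indecomposable projectives and injectives.

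\emph{The main obstacle} I anticipate is the precise bookkeeping in the $\Hom$-space computation: one must verify that applying $H_0$ to a morphism out of the prinjective $P(\tabA)$ loses no information, i.e. that $H_0$ is fully faithful on the prinjectives. This is not quite formal — it relies on the fact that a morphism $P(\tabA) \to M$ whose image has submaximal Gelfand--Kirillov dimension must be zero because $P(\tabA)$ has anti-dominant socle and hence no nonzero quotient lying in $\mathcal T_\Z$, combined with the adjointness package from Skryabin's theorem and the behaviour of $H^0$ on $(-)^\#$. I would handle this exactly as in the cited \cite[Lemma 5.7]{BKschur}, transporting the argument through the equivalences of \S\ref{invandcov}--\S\ref{pf}; modulo that transport, the rest of the proof is the soft argument sketched above.
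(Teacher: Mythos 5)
Your reduction leaves out precisely the non-formal content, at two places. First, the claim in your opening paragraph that exactness of $H_0$ already gives $\overline{P}(\tabA)$ a simple head and simple socle isomorphic to $\overline{L}(\tabA)$ does not follow: an exact functor that kills some composition factors does not preserve ``simple head'', since a factor $\overline{L}(\tabB)$ coming from the radical of $P(\tabA)$ can split off to the top once the intervening factors lying in $\mathcal T_\Z$ are killed. All you get formally is a surjection $\overline{P}(\tabA)\twoheadrightarrow\overline{L}(\tabA)$; the equality $\dim\Hom_W(\overline{P}(\tabA),\overline{L}(\tabB))=\delta_{\tabA,\tabB}$ is exactly what has to be proved. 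Second, your route to projectivity of $\overline{P}(\tabA)$ in $\overline{\O}_\Z$ goes through the isomorphism $\Hom_{\g}(P(\tabA),M)\cong\Hom_W(\overline{P}(\tabA),H_0(M))$. Injectivity of that map is indeed easy (a nonzero quotient of $P(\tabA)$ has head $L(\tabA)\notin\mathcal T_\Z$ --- it is the head, not the anti-dominant socle you invoke, that matters there), but surjectivity is the whole point: it is the paper's Lemma~\ref{mac}, which is proved there as a consequence of Lemma~\ref{projc}, so taking it as input risks circularity; and it is not supplied by the Skryabin adjunction or by the general theory of Serre quotients, because at this stage one does not yet know that $H_0$ realizes the quotient of $\O_\Z$ by $\mathcal T_\Z$ (that is Theorem~\ref{qmain}, also downstream of this lemma).

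The argument you defer to (``as in \cite[Lemma 5.7]{BKschur}'') is exactly the induction the paper carries out and your proposal omits: one inducts on the least $d$ such that $P(\tabA)$ is a direct summand of $F^dP(\tabB)$ for $\tabB$ alone in its linkage class (\cite[Theorem 2.24]{BLW}); the base case is a simple block; for the induction step one uses that $\overline F=U\ostar-$ and $\overline E=U^\vee\ostar-$ are biadjoint, hence send prinjectives of $\overline{\O}_\Z$ to prinjectives, and that they intertwine $H_0$ with $F$ and $E$ (Theorems~\ref{crappy} and \ref{bigcat}); writing $FP(\tabC)\cong\bigoplus_\tabB P(\tabB)^{\oplus m_\tabB}$ gives $\overline F\,\overline P(\tabC)\cong\bigoplus_\tabB\overline P(\tabB)^{\oplus m_\tabB}$, and the head and socle counts come from the adjunction computation $\dim\Hom_W(\overline F\,\overline P(\tabC),\overline L(\tabD))=[\overline E\,\overline L(\tabD):\overline L(\tabC)]=[E L(\tabD):L(\tabC)]=m_\tabD$, which uses Theorem~\ref{wirr} together with the inductive hypothesis that $\overline P(\tabC)$ is both the projective cover and the injective hull of $\overline L(\tabC)$. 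None of this translation-functor machinery appears in your proposal, so as written the two key assertions (simple head/socle and projectivity of $\overline P(\tabA)$) are not established.
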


\begin{proof}
We need the following fact established in \cite[Theorem 2.24]{BLW}: for any $\tabA \in
\Tab_\Z^\circ$, the prinjective supermodule $P(\tabA)$ is a direct summand
of
$F^d P(\tabB)$ for some $d \geq 0$ and some $\tabB \in \Tab^\circ_\Z$ of the special form
$\tabB =  \substack{a \cdots a \\ b \cdots b}$ with $a \neq b$.
Define $d(\tabA)$ to be the smallest $d$ such that this is the case.

We'll prove the lemma by induction on $d(\tabA)$.
For the base case $d(\tabA) = 0$, we have that
$\tabA$ is the only $\pi$-tableau in its linkage class, so that
$P(\tabA) = L(\tabA)$. Hence, $\overline{P}(\tabA) = H_0(P(\tabA)) = H_0(L(\tabA)) =
\overline{L}(\tabA)$. We deduce immediately from its definition that
$\overline{\O}_\xi$ is simple (i.e.\ equivalent to the category of
finite-dimensional vector spaces). Now the conclusion is
trivial
in this case.

For the induction step, take $\tabA\in \Tab_\Z^\circ$ with $d(\tabA) > 0$.
The functors $F$ and $\overline{F}$ both have biadjoints, hence,
they send prinjectives to
prinjectives.
Using Lemma~\ref{prinj} and the definition of $d(\tabA)$,
we can find
some $\tabC \in \Tab^\circ_\Z$ with $d(\tabC) = d(\tabA)-1$ such that $P(\tabA)$ is a
summand of $F P(\tabC)$.
By induction, $\overline{P}(\tabC)$ is both the projective cover and the
injective hull of $\overline{L}(\tabC)$.
So we have that
\begin{align*}
F P(\tabC) &\cong
\bigoplus_{\tabB \in \Tab_\Z^\circ} P(\tabB)^{\oplus m_\tabB},&
\overline{F} \,\overline{P}(\tabC) &\cong
\bigoplus_{\tabB \in \Tab_\Z^\circ} \overline{P}(\tabB)^{\oplus m_\tabB},
\end{align*}
for some multiplicities $m_\tabB$ with $m_\tabA > 0$,
and deduce that $\overline{P}(\tabA)$ is prinjective in $\overline{\O}_\Z$.

Let $\tabB\in \Tab_\Z^\circ$.
Since $L(\tabB)$ appears in the head of $P(\tabB)$,
we see that $\overline{L}(\tabB)$ appears in the head of
$\overline{P}(\tabB)$.
So for $D \in \Tab_\Z^\circ$, we have
$\dim \Hom_{W}(\overline{P}(\tabB), \overline{L}(\tabD)) \geq \delta_{\tabB,\tabD}$
and
$$
\dim\Hom_W(
\overline{F} \,\overline{P}(\tabC), \overline{L}(\tabD))
\geq \sum_{\tabB \in \Tab_\Z^\circ} m_\tabB \delta_{\tabB,\tabD} = m_\tabD.
$$
Moreover, the equality holds here if and only if
$\dim \Hom_{W}(\overline{P}(\tabB), \overline{L}(\tabD)) =
\delta_{\tabB,\tabD}
$
for all $\tabB$ with $m_\tabB > 0$.
This is indeed the case thanks to the following calculation:
\begin{align*}
\dim \Hom_W(\overline{F}\, \overline{P}(\tabC), \overline{L}(\tabD))
&=
\dim \Hom_W(\overline{P}(\tabC),\overline{E} \,\overline{L}(\tabD)) \\
&= [\overline{E} \,\overline{L}(\tabD): \overline{L}(\tabC)]\\
&= [E L(\tabD):L(\tabC)] \\
&=
\dim\Hom_{\g}(P(\tabC), E L(\tabD))\\
&=
\dim\Hom_{\g}(F P(\tabC), L(\tabD))
= m_\tabD.
\end{align*}

The previous paragraph
establishes that
$\dim \Hom_{W}(\overline{P}(\tabA), \overline{L}(\tabB)) =
\delta_{\tabA,\tabB}$ for all $\tabB$, so $\overline{P}(\tabA)$ has irreducible head $\overline{L}(\tabA)$.
Thus we have shown that $\overline{P}(\tabA)$ is the projective cover of
$\overline{L}(\tabA)$ in $\overline{\O}_\Z$, as required.
A similar calculation
shows that $\dim \Hom_{W}(\overline{L}(\tabB), \overline{P}(\tabA))
=\delta_{\tabA,\tabB}$,
and $\overline{P}(\tabA)$ is the injective hull of $\overline{L}(\tabA)$ too.
\end{proof}

\begin{Lemma}\label{mac}
For any $\tabA \in \Tab^\circ$ and $M \in \O_\Z$, the functor
$H_0$ induces an isomorphism
$$
\Hom_{\g}(P(\tabA), M) \stackrel{\sim}{\rightarrow}
\Hom_W(\overline{P}(\tabA), H_0(M)).
$$
\end{Lemma}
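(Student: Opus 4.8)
The plan is to observe that the two sides are the object-functions of \emph{exact} functors $\O_\Z \to (\text{vector superspaces})$, and that $\phi \mapsto H_0(\phi)$ (using $H_0(P(\tabA)) = \overline P(\tabA)$ from \eqref{rain}) defines a natural transformation between them. Indeed $P(\tabA)$ is projective in $\O_\Z$, so $\Hom_\g(P(\tabA),-)$ is exact on $\O_\Z$; the functor $H_0:\O_\Z \to \overline\O_\Z$ is exact by Lemma~\ref{oblock}; and $\overline P(\tabA)$ is projective in $\overline\O_\Z$ by Lemma~\ref{projc}, so $\Hom_W(\overline P(\tabA),-)$ is exact on $\overline\O_\Z$. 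A natural transformation between exact functors that is an isomorphism on every simple object is an isomorphism on every object of finite length, by a short exact sequence argument (five lemma) together with induction on length. Since every composition factor of $M \in \O_\Z$ is some $L(\tabB)$ with $\tabB \in \Tab_\Z$, it therefore suffices to treat $M = L(\tabB)$ for $\tabB \in \Tab_\Z$.

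Next I would split into cases using Theorem~\ref{wirr}. If $\tabB$ is not anti-dominant then $H_0(L(\tabB)) = 0$, so the right-hand side vanishes; and since $P(\tabA)$ is the projective cover of $L(\tabA)$ with $\tabA$ anti-dominant, we have $\tabA \neq \tabB$ and hence $\Hom_\g(P(\tabA), L(\tabB)) = 0$ as well. If $\tabB$ is anti-dominant then $H_0(L(\tabB)) \cong \overline L(\tabB)$, and using that $P(\tabA)$ is the projective cover of $L(\tabA)$ in $\O_\Z$ together with the fact from Lemma~\ref{projc} that $\overline P(\tabA)$ is the projective cover of $\overline L(\tabA)$ in $\overline\O_\Z$, both sides have dimension $\delta_{\tabA,\tabB}$. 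For $\tabB \neq \tabA$ this already forces both to vanish, so the only remaining point is to show the induced map is nonzero when $\tabB = \tabA$, where both spaces are one-dimensional.

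For that last point I would apply $H_0$ to the canonical surjection $P(\tabA) \twoheadrightarrow L(\tabA)$: exactness of $H_0$ produces a surjection $\overline P(\tabA) \twoheadrightarrow H_0(L(\tabA)) \cong \overline L(\tabA)$, which is nonzero since $\overline L(\tabA) \neq 0$. Thus a nonzero element of $\Hom_\g(P(\tabA), L(\tabA))$ maps to a nonzero element of $\Hom_W(\overline P(\tabA), \overline L(\tabA))$, and the one-dimensional spaces are identified.

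I do not anticipate a serious obstacle here: once Theorem~\ref{wirr} and Lemma~\ref{projc} are available, the argument is essentially a bookkeeping exercise, and the only step not handled by a pure dimension count is the non-vanishing just described. If anything, the most delicate point is simply justifying the reduction to simple objects, i.e.\ confirming that all three functors in play are exact on the relevant (Serre) subcategories, which is why I would set that up carefully at the outset.
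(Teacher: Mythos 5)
Your proposal is correct and follows essentially the same route as the paper's proof: reduce to irreducible $M=L(\tabB)$ using exactness of $H_0$ and projectivity of $P(\tabA)$ and $\overline{P}(\tabA)$, identify the dimensions of both sides via Theorem~\ref{wirr} and Lemma~\ref{projc}, and conclude by noting that the epimorphism $P(\tabA)\twoheadrightarrow L(\tabA)$ remains non-zero after applying $H_0$. Your write-up just makes the reduction-to-simples step (exactness of the two Hom-functors and the five-lemma/length induction) more explicit than the paper does.
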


\begin{proof}
We are trying to show that the natural transformation
$\Hom_{\g}(P(\tabA),-) \Rightarrow \Hom_W(\overline{P}(\tabA), H_0(-))$
induced by the functor $H_0$ is an isomorphism.
Since $H_0$ is exact, it suffices to check this gives an
isomorphism as in the statement
for $M$ an irreducible supermodule in $\O_\Z$.
If $M = L(\tabB)$ for $\tabB \in \Tab_\Z$, then both sides are zero unless $\tabB
= \tabA$, thanks to Theorem~\ref{wirr} and Lemma~\ref{projc}.
If $\tabB=\tabA$ then, by Lemma~\ref{projc}, both sides are one-dimensional.
The left hand side is spanned by an
epimorphism $P(\tabA) \twoheadrightarrow L(\tabA)$, so remains
non-zero when we apply $H_0$.
Hence, $H_0$ does indeed give an isomorphism.
\end{proof}

\begin{Lemma}\label{es}
The functor $H_0$ is essentially surjective.
\end{Lemma}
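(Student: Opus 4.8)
\emph{Proof proposal.} The plan is to realise an arbitrary object of $\overline{\mathcal O}_\Z$ as the cokernel of a morphism between projectives of a special form, and then to lift this presentation through $H_0$. The two ingredients this requires are that $\overline{\mathcal O}_\Z$ has enough projectives, with $\{\overline P(\tabA)\}_{\tabA \in \Tab_\Z^\circ}$ serving as a projective generating family, and that $H_0$ is full on morphisms out of these projectives, which is precisely Lemma~\ref{mac}.

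First I would assemble the structural facts. The category $\overline{\mathcal O}_\Z$ is abelian of finite length (being a full subcategory of $W\lsmof$), its irreducible objects are the $\overline L(\tabA)$ for $\tabA \in \Tab_\Z^\circ$ (Lemma~\ref{oblock}), and each $\overline L(\tabA)$ has projective cover $\overline P(\tabA)$ in $\overline{\mathcal O}_\Z$ (Lemma~\ref{projc}). A standard argument (lifting the cover maps $\overline P(\tabA)\twoheadrightarrow\overline L(\tabA)$ along a surjection of an object onto its head) then shows that every object of $\overline{\mathcal O}_\Z$ admits an epimorphism from a finite direct sum of the $\overline P(\tabA)$'s. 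Moreover, straight from its definition, $\overline{\mathcal O}_\Z$ is closed under subobjects and finite direct sums inside $W\lsmof$, so kernels and cokernels of morphisms between its objects, formed in $W\lsmof$, again lie in $\overline{\mathcal O}_\Z$.

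Now fix $N \in \overline{\mathcal O}_\Z$. Choose an epimorphism $\overline P \twoheadrightarrow N$ with $\overline P = \bigoplus_i \overline P(\tabA_i)$ for suitable $\tabA_i \in \Tab_\Z^\circ$; its kernel $K$ is again in $\overline{\mathcal O}_\Z$, so choose an epimorphism $\overline Q \twoheadrightarrow K$ with $\overline Q = \bigoplus_j \overline P(\tabB_j)$, $\tabB_j \in \Tab_\Z^\circ$. Composing gives an exact sequence $\overline Q \xrightarrow{\phi} \overline P \to N \to 0$. Since $H_0$ is additive, $\overline P = H_0(P)$ and $\overline Q = H_0(Q)$, where $P := \bigoplus_i P(\tabA_i)$ and $Q := \bigoplus_j P(\tabB_j)$ are projective objects of $\mathcal O_\Z$. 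Applying Lemma~\ref{mac} with $M = P$ to each summand $P(\tabB_j)$ of $Q$, the induced map $H_0 : \Hom_{\g}(Q,P) \to \Hom_W(\overline Q,\overline P)$ is an isomorphism, so $\phi = H_0(\psi)$ for some $\psi : Q \to P$ in $\mathcal O_\Z$. Set $M := \operatorname{coker}\psi$, an object of $\mathcal O_\Z$. As $H_0$ is exact on $\mathcal O_\Z$ (Lemma~\ref{oblock}), in particular right exact, it carries the cokernel of $\psi$ to the cokernel of $\phi$, giving $H_0(M) \cong \operatorname{coker}\phi \cong N$. Hence $H_0$ is essentially surjective.

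The only step with genuine content is the lifting of $\phi$, which Lemma~\ref{mac} is tailor-made to supply; the main thing to get right beforehand is the bookkeeping of the second paragraph, namely that $\overline{\mathcal O}_\Z$ really has enough projectives drawn from $\{\overline P(\tabA)\}_{\tabA \in \Tab_\Z^\circ}$ and is closed under the constructions used. After Lemmas~\ref{oblock}, \ref{projc} and \ref{mac}, no new idea is needed.
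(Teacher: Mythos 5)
Your proposal is correct and follows essentially the same route as the paper: build a two-step presentation of $N$ by projectives $\overline P(\tabA)$ (via Lemma~\ref{projc}), lift the connecting map through the isomorphism of Lemma~\ref{mac} applied to prinjectives of $\O_\Z$, and use exactness of $H_0$ to identify cokernels. The extra bookkeeping you supply (enough projectives in $\overline{\mathcal O}_\Z$ and closure under subobjects) is exactly what the paper's phrase ``we can construct a two-step projective resolution'' is implicitly relying on.
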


\begin{proof}
Let $M \in \overline \O_\Z$.
Applying Lemma~\ref{projc}, we can
construct a two-step projective resolution
$$
\overline{P}_1 \stackrel{\overline{f}}{\to} \overline{P}_0 \to M \to 0
$$
in
$\overline{\O}_\Z$. This means that $M \cong \operatorname{coker} \overline{f}$ for projectives
$\overline{P}_1, \overline{P}_0 \in \overline{\mathcal O}_\Z$ and
$\overline{f} \in \Hom_W(\overline{P}_1, \overline{P}_0)$.
Let $P_1, P_0 \in {\mathcal O}_\Z$ be prinjectives such that
$H_0(P_1) \cong \overline{P}_1$ and $H_0(P_0) \cong \overline{P}_0$.
By Lemma~\ref{mac}, the functor $H_0$ defines an isomorphism
$$
\Hom_{\g}(P_1, P_0) \stackrel{\sim}{\rightarrow}
\Hom_W(\overline{P}_1, \overline{P}_0).
$$
Hence,
there exists
$f \in \Hom_\g(P_1, P_0)$ so that $H_0(f)$ identifies with $\overline{f}$.
Then, using exactness, we get that
$H_0 (\operatorname{coker} f) \cong \operatorname{coker} \overline{f}
\cong M$.
 \end{proof}

\begin{Theorem}\label{qmain}
The functor $H_0:\O_\Z \rightarrow \overline{\O}_\Z$ satisfies the
universal property of the Serre quotient of $\O_\Z$ by the Serre
subcategory $\mathcal{T}_\Z$ consisting of all supermodules of less
than maximal Gelfand-Kirillov dimension.
\end{Theorem}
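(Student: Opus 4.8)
The plan is to factor $H_0|_{\O_\Z}$ through the Serre quotient $\O_\Z/\mathcal T_\Z$ and then check that the induced functor is an equivalence; the universal property is automatic once this is done. By Lemma~\ref{exactness} the functor $H_0$ is exact, and by the fact established just after Theorem~\ref{wirr} an object of $\O_\Z$ is killed by $H_0$ if and only if it lies in $\mathcal T_\Z$, i.e.\ $\ker\big(H_0|_{\O_\Z}\big)=\mathcal T_\Z$. Hence, writing $\mathsf q:\O_\Z\to\O_\Z/\mathcal T_\Z$ for the canonical quotient functor, the defining universal property of the Serre quotient produces a unique exact functor $\overline H_0:\O_\Z/\mathcal T_\Z\to\overline\O_\Z$ with $\overline H_0\circ\mathsf q\cong H_0$. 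If $\overline H_0$ is an equivalence, then $H_0=\overline H_0\circ\mathsf q$ inherits the universal property of $\mathsf q$ (post-compose with a quasi-inverse of $\overline H_0$), which is exactly the assertion of the theorem. So the entire problem reduces to showing that $\overline H_0$ is an equivalence.

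To control the projectives of $\O_\Z/\mathcal T_\Z$, I would first observe that for each anti-dominant $\tabA\in\Tab_\Z^\circ$ one has $\Hom_{\O_\Z}(P(\tabA),T)=0$ for all $T\in\mathcal T_\Z$: since $P(\tabA)$ is projective the functor $\Hom_{\O_\Z}(P(\tabA),-)$ is exact, and the composition factors of $T$ are the $L(\tabB)$ with $\tabB$ not anti-dominant, none of which equals $L(\tabA)$. By the standard lemma on Serre quotients it follows that $\mathsf qP(\tabA)$ is projective in $\O_\Z/\mathcal T_\Z$ and that $\mathsf q$ induces an isomorphism $\Hom_{\O_\Z}(P(\tabA),M)\stackrel{\sim}{\to}\Hom_{\O_\Z/\mathcal T_\Z}(\mathsf qP(\tabA),\mathsf qM)$ for every $M\in\O_\Z$. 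In particular $\End_{\O_\Z/\mathcal T_\Z}(\mathsf qP(\tabA))=\End_{\O_\Z}(P(\tabA))$ is local, so $\mathsf qP(\tabA)$ is indecomposable with simple head $\mathsf qL(\tabA)$; and since the $\{\mathsf qL(\tabA)\}_{\tabA\in\Tab_\Z^\circ}$ exhaust the simple objects of $\O_\Z/\mathcal T_\Z$ and all objects there have finite length, the $\{\mathsf qP(\tabA)\}_{\tabA\in\Tab_\Z^\circ}$ form a complete set of indecomposable projectives and $\O_\Z/\mathcal T_\Z$ has enough projectives.

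On the target side, Lemma~\ref{projc} says precisely that $\{\overline P(\tabA)\}_{\tabA\in\Tab_\Z^\circ}$ is a complete set of indecomposable projectives of $\overline\O_\Z$, so this category too has enough projectives. Moreover $\overline H_0(\mathsf qP(\tabA))=H_0(P(\tabA))=\overline P(\tabA)$ by the definition (\ref{rain}), so $\overline H_0$ carries the one projective generator onto the other; and combining the isomorphism of the previous paragraph with Lemma~\ref{mac} gives, for $\tabA\in\Tab_\Z^\circ$ and $M\in\O_\Z$,
\begin{align*}
\Hom_{\O_\Z/\mathcal T_\Z}(\mathsf qP(\tabA),\mathsf qM)
&\cong\Hom_{\O_\Z}(P(\tabA),M)\cong\Hom_W(\overline P(\tabA),H_0(M))\\
&=\Hom_{\overline\O_\Z}(\overline H_0\mathsf qP(\tabA),\overline H_0\mathsf qM),
\end{align*}
all of these being induced by the relevant functors. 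By additivity $\overline H_0$ restricts to a fully faithful, and (taking $M$ projective) essentially surjective, functor between the full subcategories of projectives of $\O_\Z/\mathcal T_\Z$ and $\overline\O_\Z$. Essential surjectivity of $\overline H_0$ on all objects is then immediate (it also follows directly from Lemma~\ref{es}), and fullness and faithfulness follow by the usual argument: present arbitrary objects as cokernels of morphisms of projectives, lift morphisms along projective covers, and use the exactness of $\overline H_0$. Hence $\overline H_0$ is an equivalence, completing the proof.

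The only point requiring genuine care --- the ``main obstacle'' --- is this last reduction ``equivalence on projectives $\Rightarrow$ equivalence'': one must run it directly through projective presentations rather than via finite-dimensional Morita theory, since an individual block $\O_\xi$ (and hence $\overline\O_\xi$) can have infinitely many simple objects. Everything else is a formal consequence of Lemmas~\ref{exactness}, \ref{mac}, \ref{projc} and \ref{es}, together with the computation of $\ker(H_0|_{\O_\Z})$ obtained just after Theorem~\ref{wirr}.
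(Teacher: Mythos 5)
Your proposal is correct and follows essentially the same route as the paper: factor $H_0$ through the Serre quotient using Theorem~\ref{wirr} and exactness, get essential surjectivity from Lemma~\ref{es}, obtain full faithfulness on the prinjectives $P(\tabA)$, $\tabA\in\Tab_\Z^\circ$, by combining Lemma~\ref{mac} with the standard fact that $Q$ is Hom-bijective on projectives killing $\mathcal T_\Z$, and then extend to all objects via projective presentations. Your ``equivalence on projectives implies equivalence'' packaging is just the paper's two-step projective resolution and five-lemma argument in different words, so there is nothing substantively new or missing.
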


\begin{proof}
Recalling Lemma~\ref{prinj}, $\mathcal{T}_\Z$ is generated by
$\{L(\tabA)\}_{\tabA \in \Tab_\Z \setminus \Tab^\circ_\Z}$.
By Theorem~\ref{wirr},
the exact functor $H_0$ annihilates all of these objects.
Hence, by the universal property of the
Serre quotient functor
$Q:\O_\Z \rightarrow \O_\Z / \mathcal{T}_\Z$, we get an
induced functor $G:\O_\Z / \mathcal{T}_\Z \rightarrow \overline{\O}_\Z$
such that $H_0 = G \circ Q$.
By Lemma~\ref{es}, $G$ is essentially surjective. It just remains to show
that it is fully faithful, i.e.\ for all $M, N \in \O_\Z$ we have that
$
G:\Hom_{\mathcal O_\Z / \mathcal T_\Z}(Q M,Q N)
\stackrel{\sim}{\rightarrow} \Hom_{W}(H_0(M), H_0(N)).
$
This is clear from Lemma~\ref{mac}
in case $M$ is prinjective, since $Q$ satisfies an analogous property
by the general theory of quotient functors.
Take any $M' := Q M$ and $N' := QN$
and
a two-step projective resolution $
P_1' \to P_0' \to M' \to 0
$
in $\mathcal O_\Z / \mathcal T_\Z$.
We get a commuting diagram
$$
\begin{diagram}
\node{\!\!\!\!0\rightarrow\Hom_{\mathcal O_\Z / \mathcal T_\Z}(M',N')}\arrow{r}\arrow{s,r}{G}\node
{\Hom_{\mathcal O_\Z / \mathcal T_\Z}(P_0',N')}\arrow{r}\arrow{s,r}{G}\node{\Hom_{\mathcal O_\Z / \mathcal T_\Z}(P_1',N')}\arrow{s,r}{G}\\
\node{\!\!\!\!0\rightarrow\Hom_W(GM',GN')}\arrow{r}\node{\Hom_W(G P_0',G
  N')}\arrow{r}\node{\Hom_W(G P_1',G N')}\
\end{diagram}
$$
with exact rows.
We've already established that the
last two vertical maps are isomorphisms, hence, so is the first one.
\end{proof}

\begin{Corollary}\label{controls}
The functor $H_0:\O_\Z \rightarrow \overline{\O}_\Z$ is fully faithful
on projectives.
\end{Corollary}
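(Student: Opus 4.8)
The plan is to read this off from Theorem~\ref{qmain} together with the abstract double centralizer property for tensor product categorifications established in \cite[Theorem 4.10]{BLW}. First I would recall that the proof of Theorem~\ref{qmain} produces a factorization $H_0 = G \circ Q$, where $Q : \O_\Z \to \O_\Z / \mathcal{T}_\Z$ is the canonical quotient functor onto the Serre quotient and $G : \O_\Z / \mathcal{T}_\Z \to \overline{\O}_\Z$ is the induced functor; since $G$ was shown there to be both fully faithful (via Lemma~\ref{mac}) and essentially surjective (Lemma~\ref{es}), it is an equivalence of categories. Hence it suffices to prove that $Q$ is fully faithful on the full subcategory of projective objects of $\O_\Z$.

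For this I would appeal to the categorical structure set up in \S\ref{Catact}: by \cite{Bkl, BLW} (see Theorem~\ref{bigcat} and the discussion preceding it) the category $\O_\Z$ is a tensor product categorification of $T^{m|n}$ in the sense of \cite[Definition 2.10]{BLW}, and by Lemma~\ref{prinj} the Serre subcategory $\mathcal{T}_\Z$ of objects of less than maximal Gelfand--Kirillov dimension is the one generated by the irreducibles $L(\tabA)$ with $\tabA \in \Tab_\Z \setminus \Tab_\Z^\circ$. Under exactly these hypotheses, \cite[Theorem 4.10]{BLW} asserts that the quotient functor $\O_\Z \to \O_\Z / \mathcal{T}_\Z$ is fully faithful on projectives; composing with the equivalence $G$ then yields the corollary.

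I do not expect a genuine obstacle here, since all the real work has already been carried out: Theorem~\ref{qmain} identifies the target category $\overline{\O}_\Z$ with the Serre quotient $\O_\Z / \mathcal{T}_\Z$, and Theorem~\ref{bigcat} verifies that the Whittaker coinvariants functor is strongly equivariant for the categorical $\mathfrak{sl}_\infty$-action, so that the hypotheses of \cite[Theorem 4.10]{BLW} are in force. If one instead wanted a self-contained argument, the natural attempt would be to upgrade Lemma~\ref{mac} from the prinjectives $P(\tabA)$ with $\tabA$ anti-dominant to arbitrary projectives $P(\tabA)$; but computing $\Hom_W(H_0 P(\tabA), H_0 M)$ for non-anti-dominant $\tabA$ forces one to resolve $P(\tabA)$ by prinjectives, which is essentially the content of the double centralizer property itself, so little would be saved by avoiding the citation.
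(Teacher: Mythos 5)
Your proposal is correct and is essentially the paper's own argument: the proof there simply combines Theorem~\ref{qmain} (which realizes $H_0$ as $G\circ Q$ with $G$ an equivalence onto $\overline{\O}_\Z$) with the fact from \cite[Theorem 4.10]{BLW} that the Serre quotient functor $Q:\O_\Z\rightarrow\O_\Z/\mathcal{T}_\Z$ is fully faithful on projectives. Your additional remarks about the tensor product categorification hypotheses and the role of Theorem~\ref{bigcat} just make explicit why the citation applies, so nothing is missing.
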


\begin{proof}
Given the above theorem, this follows from \cite[Theorem 4.10]{BLW}.
\end{proof}

We stress that, although $\overline{\O}_\Z$ is a quotient of a highest weight
category, it is not highest weight itself (except in the trivial case $m+n=1$).

\subsection{Parametrization of blocks by core and atypicality}\label{morec}
At this point, it is convenient to switch from using anti-dominant
$\pi$-tableaux as
our preferred index set for the irreducible objects of
$\overline{\O}_\Z$ to some equivalent but more suggestive formalism.
By a {\em composition} $\lambda \vDash n$, we mean
an infinite tuple $\lambda = (\lambda_i)_{i \in \Z}$ of non-negative
integers whose sum is $n$.
The sum of two compositions is
obtained simply by adding their corresponding parts.
The {\em strictification} $\lambda^+$ of $\lambda$ is the strict
composition $(\lambda^+_1,\dots,\lambda^+_\ell)$ of $n$ obtained from
$\lambda$ by discarding all of its parts that equal zero.
The {\em transpose} $\lambda^T$ of $\lambda$ is the partition
$(\lambda_1^T, \lambda_2^T,\dots)$ of $n$ defined from $\lambda_i^T := \#\{j \in
\Z \mid 0 < \lambda_j \leq i\}$.
For example, if $\lambda = (\dots,0,2,4,0,0,1,0,\dots)$
then $\lambda^+ = (2,4,1)$ and
$\lambda^T = (3,2,1,1,0,0,\dots)$.
Also, we say that two compositions $\mu,\nu \vDash n$ are {\em equal
  up to translation and duality} if there exists $s \in \Z$ such that
{\em either}
$\mu_i = \nu_{s+i}$ for all $i \in \Z$
{\em or} $\mu_i = \nu_{s-i}$ for all $i \in \Z$.

Compositions $\lambda \vDash n$ may be
identified with special elements
of the weight lattice $P$
of $\mathfrak{sl}_\infty$
via the dictionary
$\lambda \vDash n \leftrightarrow
\sum_{i \in \Z} \lambda_i \eps_i \in P$.  For example, $t \eps_i$ is the composition whose $i$th part is equal
to $t$, with all other parts being zero.
Then the usual dominance order $<$ on $P$ determined by
the simple roots $\alpha_i=\eps_i-\eps_{i+1}$ corresponds to the partial order on compositions given
by $\lambda < \mu$ if $\sum_{j \leq i} \lambda_j < \sum_{j\leq i} \mu_j$ for all $i$.
If $\lambda \vDash n$
then $\lambda+\alpha_i \in P$ is
a well-defined composition of $n$ if and only if $\lambda_{i+1} > 0$, in
which case it is the composition with $\lambda_i+1$ as its $i$th part,
$\lambda_{i+1}-1$ as its $(i+1)$th part, and all other parts the
same as $\lambda$.

The point of this is that
the set
\begin{equation}\label{Xi}
\Xi(m|n) := \left\{(\mu,\nu;t)\:\bigg|\:
\begin{array}{ll}
0 \leq t \leq m, \mu \vDash
m-t, \nu \vDash n-t\\
\text{such that }\mu_i\nu_i = 0\text{ for
  all $i \in \Z$}
\end{array}\right\}
\end{equation}
is in bijection with the set of linkage classes $\xi \in \Tab_\Z /
\!\linked$.
To understand how this goes, given $(\mu,\nu;t) \in \Xi(m|n)$ and $\lambda
\vDash t$, we define $\tabA(\mu,\nu;\lambda)$ to be the unique
anti-dominant tableau that has $\lambda_i+\mu_i$ entries equal
to $i$ in its top row, and $\lambda_i+\nu_i$ entries equal to $i$ in
its bottom row.
For example, if $m=3,n=3, t=1$ and $\mu \vDash 2, \nu \vDash 2$ and
$\lambda \vDash 1$ are the compositions with $\mu_5 =
2, \nu_3=\nu_4= 1$ and $\lambda_j = 1$ for some $j \in \Z$, then
\begin{equation}\label{palatable}
\tabA(\mu,\nu;\lambda) = \left\{
\begin{array}{ll}
\substack{5\, 5\, j \\ j\, 4\, 3}&\text{if $j \geq 5$,}\\\\
\substack{4\,5\, 5 \\ 4\, 4\, 3}&\text{if $j =4$,}\\\\
\substack{j\, 5\, 5\\ 4\, 3\, j}&\text{if $j \leq 3$.}
\end{array}
\right.
\end{equation}
In general, the set $\{\tabA(\mu,\nu;\lambda)\}_{\lambda \vDash t}$
is equal to $\xi^\circ$ for a unique $\xi \in \Tab_\Z / \!\linked$ of
atypicality $t$,
and all linkage classes arise in this way.

Henceforth, we {\em identify} elements $(\mu,\nu;t)$ of $\Xi(m|n)$ with
linkage classes $\xi \in \Tab_\Z /\!\linked$
via the bijection described in the previous paragraph, denoting
the block decompositions
of $\O_\Z$ and $\overline{\O}_\Z$
instead by
$$
\O_\Z = \bigoplus_{\xi \in \Xi(m|n)} \O_\xi,
\qquad
\overline{\O}_\Z = \bigoplus_{\xi \in \Xi(m|n)} \overline{\O}_\xi,
$$
respectively.
Thus, blocks are parameterized by
an {\em atypicality} $t$
and a {\em core} $(\mu,\nu)$.
As usual, the indecomposable projective, standard and irreducible objects
of $\O_\xi$ are represented by the supermodules
$P(\tabA), M(\tabA)$ and $L(\tabA)$ for $\tabA \in \xi$.
For
$\xi = (\mu,\nu;t) \in \Xi(m|n)$
and $\lambda \vDash t$, we will usually
write $\overline{P}_\xi(\lambda), \overline{M}_\xi(\lambda)$ and
$\overline{L}_\xi(\lambda)$ in place of
$\overline{P}(\tabA)$,
$\overline{M}(\tabA)$ and
$\overline{L}(\tabA)$ for $\tabA := \tabA(\mu,\nu;\lambda)$.
In this way,
these families of objects in
$\overline{\O}_\xi$ are now parameterized by compositions
$\lambda \vDash t$ rather than by anti-dominant tableaux.
For example, the block $\xi$ associated to the anti-dominant
tableau $\tabA =
\substack{1\, 1\, 3\, 3 \\ 4\, 2\, 1\, 1}$ has atypicality 2 and core
$(\mu,\nu)$ where $\mu,\nu\vDash 2$ have
$\mu_3 = 2$ and $\nu_2 = \nu_4 = 1$.
Moreover, $\overline{L}(\tabA) = \overline{L}_\xi(\lambda)$ where
$\lambda \vDash 2$ has $\lambda_1=2$.

\subsection{Formal characters}
Let $\{\chi_i\}_{i \in \Z}$ be indeterminates. Set $\chi^{\alpha_i} :=
\chi_i \chi_{i+1}^{-1}$ and $\chi^\eta := \prod_{i \in \Z} \chi_i^{\eta_i}$
for $\eta \vDash m$.
Let $e_r(\eta)$ be the $r$th elementary symmetric function
$e_r(a_1,\dots,a_m)$ where $a_1,\dots,a_m$ are chosen so that $\eta_i$
of them are equal to $i$ for each $i \in \Z$.
Then, for any finite-dimensional $W$-module $M$,
we define its {\em $\eta$-weight space}
\begin{equation}\label{thewtspace}
M_\eta := \big\{v \in M\:\big|\:(d_1^{(r)} - e_r(\eta))^N v = 0\text{ for }N
\gg 0\big\}.
\end{equation}
The {\em formal character} of $M$ is
\begin{equation}
\operatorname{ch} M := \sum_{\eta \vDash m} (\dim M_\eta) \chi^\eta \in
\Z[\chi_i\:|\:i \in \Z].
\end{equation}

\begin{Theorem}\label{characters}
For $\xi = (\mu,\nu;t) \in \Xi(m|n)$ and $\lambda \vDash t$, we have that
\begin{align}\label{mch}
\operatorname{ch} \overline{M}_\xi(\lambda) &=
\chi^{\lambda+\mu}
\prod_{i \in \Z} (1+\chi^{\alpha_i})^{\lambda_{i+1}+\mu_{i+1}},\\
\operatorname{ch} \overline{L}_\xi(\lambda) &= \chi^{\lambda+\mu}
\prod_{i \in \Z} (1+\chi^{\alpha_i})^{\mu_{i+1}}.\label{lch}
\end{align}
Moreover, for any $M \in \overline{\O}_\xi$,
we have that $M = \bigoplus_{\eta \vDash m} M_\eta$.
\end{Theorem}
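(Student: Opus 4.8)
The plan is to reduce both character identities---via Corollary~\ref{mek} and Theorem~\ref{irco}---to character computations for the $\mathfrak{h}$-modules underlying $\overline{K}(\tabA)$ and $\overline{V}(\tabA)$, and then to deduce the direct sum decomposition from the action of $d_1^{(1)},\dots,d_1^{(m)}$ on composition factors. First I would note that $\operatorname{ch}$ factors through the Grothendieck group $K_0(W\lsmof)$: on any finite-dimensional $W$-supermodule the commuting operators $d_1^{(1)},\dots,d_1^{(m)}\in W^0$ give a functorial decomposition into joint generalized eigenspaces, so $\dim M_\eta$ is additive on short exact sequences. Hence, by Corollary~\ref{mek}, $\operatorname{ch}\overline{M}_\xi(\lambda)=\operatorname{ch}\overline{K}(\tabA)$ for $\tabA:=\tabA(\mu,\nu;\lambda)$. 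Moreover this tableau satisfies $\mat(\tabA)=\atyp(\tabA)=t$ (its matched pairs are precisely the columns recorded by $\lambda$), so Theorem~\ref{irco} gives $\overline{L}_\xi(\lambda)\cong\overline{V}(\tabA)$, and it remains to compute $\operatorname{ch}\overline{K}(\tabA)$ and $\operatorname{ch}\overline{V}(\tabA)$.

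For that computation I would use the structure of $\mathfrak{h}$ from~(\ref{hstruct}): $K(\tabA)$ is the outer tensor product, over the $m$ columns of $\pi$ of height two, of Kac supermodules for the $\mathfrak{gl}_{1|1}$-summands, tensored with one-dimensional modules for the $\mathfrak{gl}_1$-summands; a column is matched exactly when the corresponding $\mathfrak{gl}_{1|1}$-weight is atypical, and $V(\tabA)$ is obtained by replacing the Kac supermodule at each matched column by its one-dimensional head. Thus $K(\tabA)$ has a $\mathfrak{t}$-weight basis $\{v_T\}$ indexed by the subsets $T$ of the set of height-two columns, where $v_T$ is obtained by applying the odd lowering operator of each column in $T$ to the highest weight vector, and each such weight line is one-dimensional. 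Each $d_1^{(r)}$ preserves the line $\C v_T$, acting on it by the scalar $e_r$ of the tuple obtained from the top-row entries $(a_1,\dots,a_m)$ by subtracting $1$ in every position lying in $T$: for $r=1$ this is $\sum_i a_i-|T|$ by~(\ref{nobetter}), and the general case follows from the explicit formulae for the generators of $W$ in \cite[\S4]{BBG} (compare Lemma~\ref{theclaim}), or from the commutation $[d_1(u),f(v)]=-\frac{1}{u-v}f(v)d_1(u)$ together with a residue computation on $\overline{M}(\tabA)$. Consequently the joint generalized eigenspaces of $(d_1^{(1)},\dots,d_1^{(m)})$ on $\overline{K}(\tabA)$ are spanned by the $v_T$ (grouped by the value of that multiset), giving $\operatorname{ch}\overline{K}(\tabA)=\prod_{i=1}^m(\chi_{a_i}+\chi_{a_i-1})$, while the head at a matched column retains only the highest weight vector, with the unshifted eigenvalue, so $\operatorname{ch}\overline{V}(\tabA)=\prod_{\text{$i$ matched}}\chi_{a_i}\cdot\prod_{\text{$i$ unmatched}}(\chi_{a_i}+\chi_{a_i-1})$.

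Finally I would translate into the core-and-atypicality notation and establish the decomposition. For $\tabA=\tabA(\mu,\nu;\lambda)$ the multiset of top-row entries has $i$ with multiplicity $\lambda_i+\mu_i$, the matched columns accounting for the $\lambda_i$ copies of each $i$; substituting this into the two products above and rewriting $\chi_a+\chi_{a-1}=\chi_a(1+\chi^{\alpha_{a-1}})$ gives exactly~(\ref{mch}) and~(\ref{lch}). For the last assertion, the commuting $d_1^{(1)},\dots,d_1^{(m)}$ always decompose a finite-dimensional $M\in\overline{\O}_\xi$ as $\bigoplus_{\mathbf{c}}M_{\mathbf{c}}$ over their joint generalized eigenvalues $\mathbf{c}=(c_1,\dots,c_m)$, so it suffices to show that every $\mathbf{c}$ occurring equals $(e_1(\eta),\dots,e_m(\eta))$ for a (necessarily unique) $\eta\vDash m$; this may be checked on the composition factors $\overline{L}(\tabC)$ of $M$, each a quotient of some $\overline{K}(\tabC)$ on which the joint eigenvalues have exactly this shape by the previous step. (One moreover knows from Remark~\ref{recoering} that $d_1^{(1)}$ acts semisimply on all of $\overline{\O}_\Z$.)

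The step I expect to be the main obstacle is establishing that, for $r\ge2$, $d_1^{(r)}$ acts on $v_T$ by $e_r$ of the shifted top-row entries: the case $r=1$ is immediate from~(\ref{nobetter}), but in general this forces one to unwind the somewhat intricate explicit description of the generators of $W$ from \cite{BBG} (in particular the fact that they lie in the zero weight space for $\mathfrak{t}$), or else to run the shifted-Yangian residue argument carefully, including the degenerate cases in which the shifted entries collide.
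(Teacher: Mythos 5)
Your overall strategy coincides with the paper's: reduce (\ref{mch}) to $\operatorname{ch}\overline{K}(\tabA)$ via Corollary~\ref{mek}, reduce (\ref{lch}) to the character of an irreducible $\h$-module via Theorem~\ref{irco}, compute on the $2^m$-dimensional $\t$-weight basis of the Kac supermodule using the eigenvalues $e_r(a_1-\theta_1,\dots,a_m-\theta_m)$, and deduce the final weight-space decomposition by checking it on composition factors. However, two of your intermediate claims are not correct as stated. The more serious one is the parenthetical assertion that $\tabA:=\tabA(\mu,\nu;\lambda)$ itself satisfies $\mat(\tabA)=\atyp(\tabA)=t$. In an anti-dominant tableau the top row weakly increases and the bottom row weakly decreases, so any two matched pairs are forced to carry the same value; hence as soon as $\lambda$ has two distinct nonzero parts (and often even otherwise, depending on where the repeated values land relative to the height-two columns) one has $\mat(\tabA)<t$. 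Since $\dim \overline{V}(\tabA)=2^{m-\mat(\tabA)}$, applying Theorem~\ref{irco} to $\tabA$ itself is illegitimate and would produce a module that is too large, hence a wrong character. The repair is exactly the device used in the paper: choose $\tabB\roweq\tabA$ whose $t$ atypical entries are placed in matched columns, so that $\mat(\tabB)=\atyp(\tabB)$, use $\overline{L}_\xi(\lambda)\cong\overline{L}(\tabB)\cong\overline{V}(\tabB)$, and run your product computation on $\tabB$; this then yields (\ref{lch}).

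The second issue is your claim that each $d_1^{(r)}$ preserves the line $\C v_T$, together with the suggestion that the generators lie in the zero $\t$-weight space. The images $\pi(d_1^{(r)})\in U(\h)$ coming from the explicit formulae of \cite[\S4]{BBG} contain monomials of nonzero $\t$-weight, so in general one only gets that $d_1^{(r)}$ acts on the weight basis in upper-triangular fashion with respect to a lexicographic order, with diagonal entries $e_r(a_1-\theta_1,\dots,a_m-\theta_m)$; this is the statement the paper extracts (compare Lemma~\ref{theclaim}) and it is all that is needed, because the character is defined via generalized eigenspaces as in (\ref{thewtspace}). So the step you flagged as the main obstacle is resolved not by proving diagonality (which is too strong) but by this triangularity, and with that weakening, together with the choice of $\tabB$ above, your argument becomes the paper's proof.
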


\begin{proof}
We first prove (\ref{mch}).
Let $\tabA := \tabA(\mu,\nu;\lambda)$ and
$a_1,\dots,a_m$ be the entries along the top row of $\tabA$.
By Corollary~\ref{mek},
we have that $\operatorname{ch} \overline{M}_\xi(\lambda) =
\operatorname{ch} \overline{K}(\tabA)$, and will compute the latter.
The advantage of this is that $\overline{K}(\tabA)$ is the restriction
of the $\mathfrak{h}$-supermodule $K(\tabA)$.
Recalling (\ref{hstruct}), $K(\tabA)$ possesses
a basis of $\mathfrak{t}$-weight vectors
$\{v_\theta\}_{\theta = (\theta_1,\dots,\theta_m) \in \{0,1\}^m}$
such that $S_{\rho'}(x_{i}) v_\theta = (a_i - \theta_i) v_\theta$ for each
$i=1,\dots,m$ (where $x_i = e_{i,i}$ as in \S\ref{hchomsec}).
Hence,
$$
S_{\rho'}(e_r(x_1,\dots,x_m))v_\theta =
e_r(a_1-\theta_1,\dots,a_m-\theta_m) v_\theta.
$$
For two tuples $\theta,\theta' \in \{0,1\}^m$, we write $\theta'
>_{\operatorname{lex}} \theta$ if
$\theta'_j > \theta_j, \theta_{j+1}'=\theta_{j+1},\dots,\theta'_m =
\theta_m$ for some $1 \leq j \leq m$.
Then we observe that
$$
d_1^{(r)} v_\theta = e_r(a_1-\theta_1,\dots,a_m-\theta_m) v_\theta + (\text{a
  linear combination of $v_{\theta'}$'s for $\theta'
  >_{\operatorname{lex}} \theta$}).
$$
This follows from the explicit formula for
$\pi(d_1^{(r)})$ recorded in the proof of \cite[Lemma 8.3]{BBG};
see also Lemma~\ref{theclaim}.
Hence,
we see that
$v_\theta$ contributes the monomial
$\chi_{a_1-\theta_1}\cdots \chi_{a_m-\theta_m}$ to the formal character of
$\overline{K}(\tabA)$.
We have now shown that
$$
\operatorname{ch} \overline{M}_\xi(\lambda) =
\chi_{a_1}\cdots \chi_{a_m}
\sum_{\theta \in \{0,1\}^m} \bigg[\prod_{{\substack{1\leq i\leq m \\ \theta_i = 1}}} \chi^{\alpha_{a_i-1}}\bigg],
$$
which simplifies to give (\ref{mch})
since $\chi_{a_1}\cdots \chi_{a_m} = \chi^{\lambda+\mu}$.

The proof of (\ref{lch}) is very similar, using instead that
$\operatorname{ch} \overline{L}_\xi(\lambda) =
\operatorname{ch}\overline{V}(\tabB)$ according to Theorem~\ref{irco},
where $\tabB \sim \tabA(\mu,\nu;\lambda)$ is chosen so
that the entries along its top row are
$b_1,\dots,b_{m-t},c_1,\dots,c_t$ and the entry immediately below each
of the $c_i$'s is another $c_i$.
Then $V(\tabB)$ possesses
a basis of $\mathfrak{t}$-weight vectors
$\{v_\theta\}_{\theta = (\theta_1,\dots,\theta_{m-t}) \in \{0,1\}^{m-t}}$
such that
$S_{\rho'}(x_{i}) v_\theta = (b_i - \theta_i) v_\theta$ for
$i=1,\dots,m-t$ and
$S_{\rho'}(x_{i}) v_\theta = c_i v_\theta$
for $i=m-t+1,\dots,m$.
So the same argument as in the previous paragraph gives that
$$
\operatorname{ch} \overline{L}_\xi(\lambda) =
\chi_{b_1}\cdots \chi_{b_{m-t}} \chi_{c_1}\cdots \chi_{c_t}
\sum_{\theta \in \{0,1\}^{m-t}} \bigg[\prod_{{\substack{1\leq i\leq m-t \\ \theta_i = 1}}} \chi^{\alpha_{b_i-1}}\bigg],
$$
which simplifies to give (\ref{lch}).

Finally, to get the last sentence, we just exhibited a basis
showing that it is true for $M
=\overline{L}_\xi(\lambda)$, which is enough to establish it in general.
\end{proof}

\begin{Corollary}\label{injectivityofch}
The map $K_0(\overline{\O}_\xi) \rightarrow \Z[\chi_i\:|\:i \in
\Z]$ given by $[M] \mapsto \operatorname{ch}(M)$ is injective.
\end{Corollary}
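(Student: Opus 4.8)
The plan is to deduce this directly from the character formula (\ref{lch}) for the irreducible supermodules. Since every object of $\overline{\O}_\xi$ is finite-dimensional, $\overline{\O}_\xi$ is an abelian length category, and by Lemma~\ref{oblock} its simple objects are represented precisely by the $\overline{L}_\xi(\lambda)$ for $\lambda \vDash t$; hence $K_0(\overline{\O}_\xi)$ is free abelian with basis $\{[\overline{L}_\xi(\lambda)]\}_{\lambda \vDash t}$. Using the last sentence of Theorem~\ref{characters} together with the exactness of the generalized weight-space functor $M \mapsto M_\eta$, the assignment $M \mapsto \operatorname{ch} M$ is additive on short exact sequences, so it descends to a group homomorphism on $K_0(\overline{\O}_\xi)$. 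A homomorphism out of a free abelian group is injective exactly when it is injective on a basis, so it suffices to show that the characters $\{\operatorname{ch}\overline{L}_\xi(\lambda)\}_{\lambda \vDash t}$ are linearly independent over $\Z$.

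This is where (\ref{lch}) does all the work. Write $\xi = (\mu,\nu;t)$ and set $P := \prod_{i\in\Z}(1+\chi^{\alpha_i})^{\mu_{i+1}}$, a finite product since $\mu$ has only finitely many nonzero parts. The crucial point is that $P$ does not depend on $\lambda$: formula (\ref{lch}) reads $\operatorname{ch}\overline{L}_\xi(\lambda) = \chi^{\lambda+\mu}\,P$ for every $\lambda \vDash t$. Now pass to the Laurent polynomial ring $\Z[\chi_i^{\pm1}\:|\:i\in\Z]$, which contains $\Z[\chi_i\:|\:i\in\Z]$ and is an integral domain. There, $P$ is a product of the nonzero elements $1+\chi^{\alpha_i}=1+\chi_i\chi_{i+1}^{-1}$, so $P\ne 0$; and the monomials $\{\chi^{\lambda+\mu}\}_{\lambda\vDash t}$ are pairwise distinct, since $\lambda\mapsto\lambda+\mu$ is injective, hence $\Z$-linearly independent. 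Therefore a finite $\Z$-linear relation $\sum_\lambda c_\lambda\operatorname{ch}\overline{L}_\xi(\lambda) = P\cdot\bigl(\sum_\lambda c_\lambda\chi^{\lambda+\mu}\bigr) = 0$ forces $\sum_\lambda c_\lambda\chi^{\lambda+\mu}=0$, as the ring is a domain and $P\ne0$, and hence $c_\lambda = 0$ for all $\lambda$. This proves injectivity of $\operatorname{ch}$ as a map into $\Z[\chi_i^{\pm1}\:|\:i\in\Z]$, and a fortiori of the original map into the subring $\Z[\chi_i\:|\:i\in\Z]$, which is the assertion.

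I expect no serious obstacle here: the substantive content is already contained in Theorem~\ref{characters}, and in particular in the observation that the factor $P$ in (\ref{lch}) is independent of $\lambda$. The only points requiring a little care are that $K_0(\overline{\O}_\xi)$ genuinely has $\{[\overline{L}_\xi(\lambda)]\}$ as a free basis (which uses only that $\overline{\O}_\xi$ is an abelian length category with this complete list of simples, cf.\ Lemma~\ref{oblock} and Remark~\ref{recoering}), and the harmless enlargement to the Laurent ring so that the common factor $P$ may be cancelled. One could instead avoid the Laurent ring by reading off from (\ref{lch}) that $\operatorname{ch}\overline{L}_\xi(\lambda)$ is unitriangular with respect to the dominance order on compositions, with unique dominance-minimal monomial $\chi^{\lambda+\mu}$ of coefficient $1$, but factoring out $P$ is cleaner.
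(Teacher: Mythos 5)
Your proof is correct and follows essentially the same route as the paper: both reduce the statement to linear independence of the characters $\operatorname{ch}\overline{L}_\xi(\lambda)$, which the paper reads off from (\ref{lch}) via the unitriangularity you mention at the end (each character is $\chi^{\lambda+\mu}$ plus dominance-higher monomials). Your variant of cancelling the common factor $P=\prod_i(1+\chi^{\alpha_i})^{\mu_{i+1}}$ in the Laurent domain is an equally valid way to see the same independence, and the preliminary bookkeeping (freeness of $K_0$ on the simples, additivity of $\operatorname{ch}$) that you spell out is simply left implicit in the paper.
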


\begin{proof}
By Theorem~\ref{characters},
$\operatorname{ch}\overline{L}_\xi(\lambda)$
is equal to $\chi^{\lambda+\mu}$ plus a sum of terms of the form $\chi^\nu$
for $\nu > \lambda+\mu$.
Hence, the formal characters of the irreducible objects in
$\overline{\O}_\xi$ are linearly independent, which implies the corollary.
\end{proof}

The following lemma describes what the $e^{(r)}$'s and $f^{(r)}$'s do
to weight spaces.

\begin{Lemma}\label{jumpers}
For any finite-dimensional $W$-module $M$ and $\eta \vDash m$, we have that
\begin{align}\label{acty1}
f^{(s_-+r_1)}
\cdots f^{(s_-+r_k)}
 M_\eta&\subseteq
\bigoplus_{\substack{\theta \vDash k \\
     \theta_i \leq \eta_{i+1}}}
M_{\eta+\sum_i \theta_i \alpha_i},\\
e^{(s_++r_1)}\cdots e^{(s_++r_k)} M_\eta &\subseteq
\bigoplus_{\substack{\theta \vDash k \\  \theta_i \leq \eta_i}}
M_{\eta-\sum_i \theta_i\alpha_i}\label{acty2}
\end{align}
for all $k \geq 0$ and $r_1,\dots,r_k > 0$.
\end{Lemma}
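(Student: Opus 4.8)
The plan is to prove both inclusions by a generating‑function analysis, treating all $k$ at once rather than inducting one operator at a time (a naive one‑step induction yields only the weaker bound $\theta_i\le\eta_{i+1}+\theta_{i+1}$, and recovering the sharp $\theta_i\le\eta_{i+1}$ is the real content). Write $d_1(u)=\sum_{r\ge0}d_1^{(r)}u^{-r}$, $f(v)=\sum_{r>s_-}f^{(r)}v^{-r}$, $e(v)=\sum_{r>s_+}e^{(r)}v^{-r}$, and for $\eta\vDash m$ set $D_\eta(u):=\prod_i(1+iu^{-1})^{\eta_i}$, so that $v_0\in M_\eta$ means $d_1(u)v_0=D_\eta(u)v_0$ modulo a nilpotent correction. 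Summing the geometric series, the relations $[d_1^{(r)},f^{(s)}]=-\sum_{a=0}^{r-1}f^{(r+s-1-a)}d_1^{(a)}$ and $[d_1^{(r)},e^{(s)}]=\sum_{a=0}^{r-1}d_1^{(a)}e^{(r+s-1-a)}$ of Theorem~\ref{wpres} become
\[
d_1(u)\,f(v)=\Bigl(\tfrac{u-v-1}{u-v}\,f(v)+\tfrac{1}{u-v}\,f(u)\Bigr)d_1(u),\qquad
e(v)\,d_1(u)=d_1(u)\Bigl(\tfrac{u-v-1}{u-v}\,e(v)+\tfrac{1}{u-v}\,e(u)\Bigr).
\]
Since $W^0$ is commutative and the relations $d_1^{(m+1)}=d_1^{(m+2)}=\cdots=0$ express each $f^{(s)}$ with $s>s_-+m$ as a right $W^0$‑combination of $f^{(s_-+1)},\dots,f^{(s_-+m)}$, left multiplication by $d_1^{(r)}$ preserves the subspace $\sum_{t_1,\dots,t_k>s_-}f^{(t_1)}\cdots f^{(t_k)}W^0$ of $W$, which (as $W^-$ is a Grassmann algebra on those $m$ generators) is free of rank $\binom{m}{k}$ over $W^0$. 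A short filtration argument, passing from a generalised eigenvector $v_0\in M_\eta$ to honest ones, then reduces the whole lemma to an assertion inside $W$: on the $\binom{m}{k}$‑dimensional space obtained from that subspace by specialising the right $W^0$‑action to the one‑dimensional character $\eta$, the generalised eigenvalues of left multiplication by $d_1(u)$ are precisely the $D_{\eta'}(u)$ with $\eta'=\eta+\sum_i\theta_i\alpha_i$, $\theta\vDash k$, $\theta_i\le\eta_{i+1}$, occurring with multiplicity $\prod_i\binom{\eta_{i+1}}{\theta_i}$ (note $\sum_{\theta\vDash k}\prod_i\binom{\eta_{i+1}}{\theta_i}=\binom{m}{k}$, a useful running check).

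Applying the first relation $k$ times gives
\[
d_1(u)\,f(v_1)\cdots f(v_k)\,v_0=D_\eta(u)\prod_{\ell=1}^{k}\Bigl(\tfrac{u-v_\ell-1}{u-v_\ell}\,f(v_\ell)+\tfrac{1}{u-v_\ell}\,f(u)\Bigr)v_0,
\]
and the decisive simplification is $f(u)^2=0$ (the $f^{(s)}$ anticommute), so only the terms with at most one factor $f(u)$ survive the expansion. For $k=1$ this already finishes: the span of $f^{(s_-+1)}v_0,\dots,f^{(s_-+m)}v_0$ carries a shift operator whose characteristic polynomial is $\prod_i(x+i)^{\eta_i}$ — forced, after specialising $d_1^{(a)}$ to $e_a(\eta)$, by the companion‑type relation $f^{(s_-+m+1)}=-\sum_{a=1}^{m}f^{(s_-+m+1-a)}d_1^{(a)}$ coming from $d_1^{(m+1)}=0$ — and in its generalised eigenbasis $d_1(u)$ acts by $D_\eta(u)\cdot\tfrac{u+a-1}{u+a}$ for $a$ a part of $\eta$; since $\tfrac{u+a}{u}\cdot\tfrac{u+a-1}{u+a}=\tfrac{u+a-1}{u}$, this series equals $D_{\eta+\alpha_{a-1}}(u)$, with $-a$ (hence $\eta+\alpha_{a-1}$) appearing with multiplicity $\eta_a=\eta_{(a-1)+1}$, as required.

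The general case is where the work lies, and I expect the bookkeeping for $k\ge2$ to be the main obstacle. One extracts the coefficient of $v_1^{-(s_-+r_1)}\cdots v_k^{-(s_-+r_k)}$ from the displayed identity and must show that the surviving at‑most‑one‑$f(u)$ terms, after using the companion‑type reductions for indices exceeding $s_-+m$, contribute only generalised eigenvalues $D_{\eta'}(u)$ of the stated shape; the antisymmetry of $f(v_1)\cdots f(v_k)$ combined with $f(u)^2=0$ is exactly what forces the cancellations responsible for the sharp bound $\theta_i\le\eta_{i+1}$ — an $\alpha_i$‑shift repeated more than $\eta_{i+1}$ times would require squaring an $f(u)$‑type factor — and the repeated‑part case of $\eta$, where genuine generalised eigenvectors intervene, needs the most care. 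Finally, the statement for the $e^{(s)}$ follows by the same method using the second relation (with $W^+$, Grassmann on $e^{(s_++1)},\dots,e^{(s_++m)}$, in place of $W^-$ and the roles of $i$ and $i+1$ swapped, so the constraint becomes $\theta_i\le\eta_i$), or more slickly by transporting the $f$‑statement through the anti‑automorphism of $W$ that fixes every $d_i^{(r)}$ and interchanges $e^{(r)}$ with $f^{(r)}$.
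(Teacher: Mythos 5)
Your reduction to a universal statement is sound, and in fact it parallels the paper's own first step: the space you build by specialising the right $W^0$-action on $\sum f^{(t_1)}\cdots f^{(t_k)}W^0$ to the character $\eta$ is essentially the induced module $\overline{M}(\eta)=W_1^\flat\otimes_{W_1^0}\C_\eta$ used in the paper, and your $k=1$ computation can be made rigorous (the companion relation $f^{(s+m)}=-\sum_{a=1}^m f^{(s+m-a)}d_1^{(a)}$ does follow from $d_1^{(m+1)}=0$ together with the $[d_1^{(r)},f^{(s)}]$ relation of Theorem~\ref{wpres}). The genuine gap is that the whole content of the lemma --- the sharp constraint $\theta_i\le\eta_{i+1}$ for arbitrary $k$ --- is exactly the part you defer: you state that the $k\ge 2$ bookkeeping is ``the main obstacle'' and that one ``must show'' the surviving at-most-one-$f(u)$ terms only produce eigenvalues of the stated shape. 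The claim that antisymmetry together with $f(u)^2=0$ ``forces the cancellations'' is a heuristic, not an argument: to make it precise you would have to control, inside each coefficient extraction, the rewriting of every $f^{(s)}$ with $s>s_-+m$ back into the Grassmann generators (each such rewriting reinserts $d_1^{(a)}$'s which must again be commuted to the right, producing new high-index $f$'s), and you would have to deal with genuine generalised eigenvectors when $\eta$ has repeated parts. None of this is carried out, so as written the proposal does not prove the lemma. A secondary problem: your ``slicker'' deduction of (\ref{acty2}) via an anti-automorphism fixing the $d_i^{(r)}$ and interchanging $e^{(r)}\leftrightarrow f^{(r)}$ is not even well defined when $s_+\neq s_-$, since the index ranges of the two families differ; the paper instead twists by the automorphism $d_i^{(r)}\mapsto(-1)^rd_i^{(r)}$, $e^{(s_++r)}\mapsto(-1)^rf^{(s_-+r)}$, $f^{(s_-+r)}\mapsto(-1)^re^{(s_++r)}$, which also reverses $\eta$ and so converts the bound correctly; your fallback of redoing the argument for the $e$'s would be fine once the $f$-case is complete.

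The way to close the gap --- and it is how the paper proceeds --- is to avoid the generating-function combinatorics altogether: by the PBW theorem, your universal module is the restriction to $W_1^\flat$ of the Verma supermodule $\overline{M}_\xi(\eta)$ for any maximally atypical block $\xi=(0,\nu;m)$, and its $d_1$-weights have already been computed: formula (\ref{mch}) of Theorem~\ref{characters} with $\mu=0$ (which rests on Corollary~\ref{mek}, not on this lemma, so there is no circularity) says that the only weights occurring are $\eta+\sum_i\theta_i\alpha_i$ with $0\le\theta_i\le\eta_{i+1}$. The remaining constraint $\theta\vDash k$ needs only the observation that $f^{(s_-+r_1)}\cdots f^{(s_-+r_k)}\overline{m}_\eta$ is a $d_1^{(1)}$-eigenvector whose eigenvalue is $\sum_i i\eta_i-k$, since each $f^{(r)}$ lowers the $d_1^{(1)}$-eigenvalue by exactly $1$. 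In other words, the precise spectral statement you set out to prove by expanding $R$-matrix-style identities is exactly the character formula already available, and quoting it replaces the entire $k\ge2$ analysis.
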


\begin{proof}
We will prove (\ref{acty1}).
Then (\ref{acty2}) follows
by twisting with the involution $\iota:W \stackrel{\sim}{\rightarrow} W$ given by
$d_i^{(r)} \mapsto (-1)^r d_i^{(r)}$,
$e^{(s_++r)} \mapsto (-1)^r f^{(s_-+r)}$, and $f^{(s_-+r)} \mapsto (-1)^r
e^{(s_++r)}$;
note for this that $\iota^*(M_\eta) = \iota^*(M)_{\eta'}$
where $\eta'_i = \eta_{-i}$.

To establish (\ref{acty1}),
let $W_1^0$ (resp. $W_1^\flat$)
be the subalgebra of
$W$ generated by $d_1^{(1)},\dots,d_1^{(m)}$
(resp. by $d_1^{(1)},\dots,d_1^{(m)}, f^{(s_-+1)},\dots,f^{(s_-+m)}$).
For $\eta \vDash m$,
we define the weight spaces $M_\eta$ of a finite-dimensional $W_1^\flat$-module $M$
by the same formula (\ref{thewtspace}) as before.
Let $\C_\eta$ be a one-dimensional
$W_1^0$-module with basis $\overline{1}_\eta$ such that $d_1^{(r)}
\overline{1}_\eta =
e_r(\eta) \overline{1}_\eta$ for each $r$.
Then form the induced module
$\overline{M}(\eta) := W_1^{\flat} \otimes_{W_1^0} \C_\eta$,
setting $\overline{m}_\eta := 1 \otimes \overline{1}_\eta$.
We claim that
\begin{equation}\label{thing1}
f^{(s_-+r_1)}
\cdots f^{(s_-+r_k)} \overline{m}_\eta\in
\bigoplus_{\substack{\theta \vDash k \\
     \theta_i \leq \eta_{i+1}}}
\overline{M}(\eta)_{\eta+\sum_i \theta_i \alpha_i}.
\end{equation}
To deduce (\ref{acty1}) from this claim, it suffices to show
for any $v \in M_\eta$ that is a simultaneous eigenvector
for all $d_1^{(1)},\dots, d_1^{(m)}$ that
$f^{(s_-+r_1)}
\cdots f^{(s_-+r_k)} v$ belongs to the subspace on the right hand side
of (\ref{acty1}).
This follows from (\ref{thing1}) because there is a unique $W_1^\flat$-module
homomorphism
$\omega: \overline{M}(\eta) \rightarrow M$ such that  $\overline{m}_\eta \mapsto
v$, and $\omega\left(\overline{M}(\eta)_{\eta+\sum_i \theta_i \alpha_i}\right)
\subseteq M_{\eta+\sum_i \theta_i \alpha_i}$.

Finally, to prove (\ref{thing1}),
we pick any block $\xi = (0,\nu;m) \in \Xi(m|n)$ of maximal
atypicality.
Applying the PBW theorem for $W$,
we see that
$W_1^\flat$-module $\overline{M}(\eta)$ may be identified with
the restriction
of the Verma supermodule $\overline{M}_\xi(\eta)$, so that
$\overline{m}_\eta$ is the highest weight vector in $\overline{M}_\xi(\eta)$.
As $\overline{m}_\eta$ is a $d_1^{(1)}$-eigenvector of
eigenvalue
$\sum_{i \in \Z} i \eta_i$, the relations imply that
$f^{(s_-+r_1)} \cdots f^{(s_-+r_k)} \overline{m}_\eta$
is a $d_1^{(1)}$-eigenvector of eigenvalue
$\sum_{i \in \Z} i \eta_i - k$. Hence, this vector
lies in the sum of
the weight spaces $\overline{M}_\xi(\eta)_{\eta'}$ for $\eta' \vDash m$ with $\sum_i
i \eta_i' = \sum_i i \eta_i - k$.
Finally, we apply (\ref{mch}) to see that
$\overline{M}_\xi(\eta)_{\eta'}$ is zero unless $\eta' = \eta + \sum_i
\theta_i \alpha_i$ for $\theta \vDash k$ with $\theta_i \leq
\eta_{i+1}$ for all $i$.
\end{proof}

\subsection{Cartan matrix of \texorpdfstring{$\overline{\O}_\xi$}{O\_xi}}
The next goal is to calculate the Cartan matrix of the
block $\overline{\O}_\xi$.
We will deduce this from the following lemma describing the composition
multiplicities in the Verma supermodules $\overline{M}_\xi(\lambda)$.
This is a reformulation of Corollary~\ref{vermascomp}, but we
will give an alternative proof here using the formal
characters computed in Theorem~\ref{characters}.

\begin{Lemma}\label{crazypre}
For any $\lambda,\kappa \vDash t$,
the Verma multiplicity
$[\overline{M}_{\xi}(\lambda):
\overline{L}_{\xi}(\kappa)]$ is non-zero
if and only if
$\kappa = \lambda +\sum_i \theta_i \alpha_i$ for
$\theta = (\theta_i)_{i \in \Z}$
satisfying
$0 \leq \theta_i \leq \lambda_{i+1}$ for all $i$;
equivalently,
$\lambda = \kappa - \sum_i \theta_i \alpha_i$
for $\theta$ with $0 \leq \theta_i \leq \kappa_i$ for all $i$.
When this holds, we have that
$$
[\overline{M}_{\xi}(\lambda):
\overline{L}_{\xi}(\kappa)]
=
\prod_{i \in \Z}
\tbinom{\lambda_{i+1}}{\theta_i}.
$$
\end{Lemma}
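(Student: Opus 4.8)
The plan is to deduce this directly from the character formulas of Theorem~\ref{characters} together with the injectivity of the formal character map (Corollary~\ref{injectivityofch}), bypassing $\upit$ altogether. Write $\xi = (\mu,\nu;t)$. Since $\{[\overline{L}_\xi(\kappa)]\}_{\kappa\vDash t}$ is a $\Z$-basis of $K_0(\overline{\O}_\xi)$ by Lemma~\ref{oblock}, and the character map $K_0(\overline{\O}_\xi)\to\Z[\chi_i\mid i\in\Z]$ is injective, it suffices to establish the single identity of formal characters
$$
\operatorname{ch}\overline{M}_\xi(\lambda) = \sum_\theta\Big(\prod_{i\in\Z}\tbinom{\lambda_{i+1}}{\theta_i}\Big)\operatorname{ch}\overline{L}_\xi\big(\lambda+{\textstyle\sum_i\theta_i\alpha_i}\big),
$$
summed over the (finitely many) tuples $\theta=(\theta_i)_{i\in\Z}$ with $0\le\theta_i\le\lambda_{i+1}$ for all $i$, and then to read off the multiplicities by comparing coefficients in the basis of irreducible classes.

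For the character identity I would simply substitute the two formulas from Theorem~\ref{characters}. Using $\chi^{(\lambda+\sum_i\theta_i\alpha_i)+\mu}=\chi^{\lambda+\mu}\prod_i(\chi^{\alpha_i})^{\theta_i}$ and cancelling the common factor $\chi^{\lambda+\mu}\prod_i(1+\chi^{\alpha_i})^{\mu_{i+1}}$ (legitimate, as we work in a domain), the claim reduces to
$$
\prod_{i\in\Z}(1+\chi^{\alpha_i})^{\lambda_{i+1}} = \sum_\theta\prod_{i\in\Z}\tbinom{\lambda_{i+1}}{\theta_i}(\chi^{\alpha_i})^{\theta_i},
$$
and the right-hand side factors over $i$ into the single-variable binomial expansions $\sum_{\theta_i=0}^{\lambda_{i+1}}\tbinom{\lambda_{i+1}}{\theta_i}(\chi^{\alpha_i})^{\theta_i}=(1+\chi^{\alpha_i})^{\lambda_{i+1}}$. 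So this step is nothing more than the binomial theorem applied one variable at a time.

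The remaining work is bookkeeping about the index set, which I would record carefully. First, for $\theta$ with $0\le\theta_i\le\lambda_{i+1}$ the tuple $\kappa:=\lambda+\sum_i\theta_i\alpha_i$ really is a composition of $t$: its $i$th part is $\lambda_i+\theta_i-\theta_{i-1}\ge\theta_i\ge0$ since $\theta_{i-1}\le\lambda_i$, and its total is unchanged because each $\alpha_i$ sums to zero. Second, distinct $\theta$ give distinct $\kappa$ because the simple roots $\alpha_i$ are linearly independent in $P$; this is what guarantees that the displayed combination is already written in the basis $\{[\overline{L}_\xi(\kappa)]\}_{\kappa\vDash t}$, so comparing coefficients legitimately yields $[\overline{M}_\xi(\lambda):\overline{L}_\xi(\kappa)]=\prod_i\tbinom{\lambda_{i+1}}{\theta_i}$ when $\kappa=\lambda+\sum_i\theta_i\alpha_i$ with $0\le\theta_i\le\lambda_{i+1}$, and $0$ otherwise. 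Third, the equivalence of the two parametrizations is a one-line check: from $\kappa_i=\lambda_i+\theta_i-\theta_{i-1}$ one gets $\theta_i\le\kappa_i\Leftrightarrow\theta_{i-1}\le\lambda_i$, so ``$0\le\theta_i\le\lambda_{i+1}$ for all $i$'' and ``$0\le\theta_i\le\kappa_i$ for all $i$'' describe the same set.

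There is no serious obstacle here: all the substance sits in Theorem~\ref{characters} and Corollary~\ref{injectivityofch}, and the rest is the binomial theorem plus elementary combinatorics. The one point I would be careful to spell out is why the binomial computation, which a priori only identifies the \emph{class} $[\overline{M}_\xi(\lambda)]$ with an integer combination of irreducible classes, actually computes the composition multiplicities --- this uses precisely the linear independence of the irreducible characters from Corollary~\ref{injectivityofch} together with the fact that distinct $\theta$ produce distinct $\kappa$. (Alternatively one could match the two parametrizations and quote Corollary~\ref{vermascomp}, but the character route is self-contained.)
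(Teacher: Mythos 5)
Your proposal is correct and follows essentially the same route as the paper: the paper's proof of Lemma~\ref{crazypre} likewise substitutes the two formulas of Theorem~\ref{characters}, expands $\prod_i(1+\chi^{\alpha_i})^{\lambda_{i+1}}$ by the binomial theorem, and concludes via the injectivity of the character map from Corollary~\ref{injectivityofch}. The extra bookkeeping you record (that $\lambda+\sum_i\theta_i\alpha_i$ is a genuine composition, that distinct $\theta$ give distinct $\kappa$, and the equivalence of the two parametrizations) is left implicit in the paper but is accurate.
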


\begin{proof}
In view of Corollary~\ref{injectivityofch}, this follows from the following calculation:
\begin{align*}
\operatorname{ch} \overline{M}_\xi(\lambda)
&=
\chi^{\lambda+\mu}
\prod_{i \in \Z} (1+\chi^{\alpha_i})^{\lambda_{i+1}+\mu_{i+1}}\\
&=
\biggl(\chi^{\lambda}
\prod_{i \in \Z} (1+\chi^{\alpha_i})^{\lambda_{i+1}}\biggr) \biggl(\chi^\mu
\prod_{i \in \Z} (1+\chi^{\alpha_i})^{\mu_{i+1}} \biggr)
\\
&=
\biggl(\sum_{\substack{\theta = (\theta_i)_{i \in \Z}\\0 \leq \theta_i \leq
    \lambda_{i+1}}}
\chi^{\lambda}
\prod_{i \in \Z} \tbinom{\lambda_{i+1}}{\theta_i}(\chi^{\alpha_i})^{\theta_i}
\biggr) \biggl(\chi^\mu \prod_{i \in \Z}
(1+\chi^{\alpha_i})^{\mu_{i+1}}\biggr)\\
&=
\sum_{\substack{\theta = (\theta_i)_{i \in \Z}\\0 \leq \theta_i \leq
    \lambda_{i+1}}}
\biggl(
\prod_{i \in \Z} \tbinom{\lambda_{i+1}}{\theta_i}\biggr)
\operatorname{ch} \overline{L}_\xi\Bigl(\lambda+\sum_{i \in \Z} \theta_i \alpha_i\Bigr).
\end{align*}
Here we have used both of the formulae from Theorem~\ref{characters}.
\end{proof}

\begin{Theorem}\label{crazy}
For $\xi = (\mu,\nu;t) \in \Xi(m|n)$
and any $\lambda,\kappa \vDash t$, the multiplicity
$[\overline{P}_\xi(\lambda)\!:\!\overline{L}_\xi(\kappa)]$ is non-zero if and
only if
$\kappa=\lambda+ \sum_i (\lambda_{i+1}-\rho_{i+1}) \alpha_i$
for $\rho = (\rho_i)_{i \in \Z}$
satisfying $0 \leq \rho_{i+1} \leq \lambda_{i+1}+\min(\lambda_i,\rho_i)$ for all $i$,
in which case
\begin{align*}
[\overline{P}_\xi(\lambda)\!:\!\overline{L}_\xi(\kappa)]
&=
m!\,n!
\hspace{-22mm}
\sum_{\substack{\tau = (\tau_i)_{i \in \Z}\\\max(\lambda_{i+1},\rho_{i+1}) \leq \tau_{i+1} \leq \lambda_{i+1}+\min(\lambda_{i},\rho_i)}}
\hspace{-18mm}
\prod_{i \in \Z}
\frac{\binom{\lambda_{i+1}+\tau_i - \tau_{i+1}}{\tau_{i}-\lambda_{i}}
\binom{\lambda_{i+1}+\tau_i - \tau_{i+1}}{\tau_{i}-\rho_{i}}}
{(\lambda_{i+1}\!+\!\tau_i \!-\! \tau_{i+1})!(\lambda_{i+1}\!+\!\tau_i \!-\!
  \tau_{i+1}\!+\!\gamma_i)!}\,,
\end{align*}
where $\gamma := \mu+\nu$.
Moreover, $[\overline{P}_\xi(\lambda)\!:\!\overline{L}_\xi(\kappa)]
 = [\overline{P}_\xi(\kappa)\!:\!\overline{L}_\xi(\lambda)]$.
\end{Theorem}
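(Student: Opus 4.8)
The plan is to transport the whole computation into category $\O$, where the Cartan invariants of $\overline{\O}_\xi$ become ordinary composition multiplicities in $\O_\xi$ accessible via BGG reciprocity. Fix $\xi=(\mu,\nu;t)\in\Xi(m|n)$ and abbreviate $\tabA_\lambda:=\tabA(\mu,\nu;\lambda)$ for $\lambda\vDash t$. By Lemma~\ref{prinj} the supermodule $P(\tabA_\lambda)$ is prinjective in $\O_\xi$, and $\overline{P}_\xi(\lambda)=H_0(P(\tabA_\lambda))$ by definition. By Lemmas~\ref{oblock} and~\ref{projc}, $\overline{\O}_\xi$ is equivalent to the module category of a finite-dimensional algebra whose indecomposable projectives are the $\overline{P}_\xi(\lambda)$, $\lambda\vDash t$, with $\overline{P}_\xi(\lambda)$ the projective cover of $\overline{L}_\xi(\lambda)$; since all the relevant simples have endomorphism ring $\C$ (the parity rigidity of Remark~\ref{recoering}), the standard identification of Hom-spaces between projectives with composition multiplicities gives
$$
[\overline{P}_\xi(\lambda):\overline{L}_\xi(\kappa)]=\dim\Hom_W(\overline{P}_\xi(\kappa),\overline{P}_\xi(\lambda)).
$$
Now Corollary~\ref{controls} applied to the prinjectives $P(\tabA_\kappa),P(\tabA_\lambda)\in\O_\xi$ yields
$$
\dim\Hom_W(\overline{P}_\xi(\kappa),\overline{P}_\xi(\lambda))=\dim\Hom_\g(P(\tabA_\kappa),P(\tabA_\lambda))=[P(\tabA_\lambda):L(\tabA_\kappa)].
$$

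This last quantity is where the symmetry comes out transparently. Since $P(\tabA_\lambda)$ has a Verma flag with $(P(\tabA_\lambda):M(\tabB))=[M(\tabB):L(\tabA_\lambda)]$ by BGG reciprocity, I would expand
$$
[\overline{P}_\xi(\lambda):\overline{L}_\xi(\kappa)]=\sum_{\tabB}[M(\tabB):L(\tabA_\lambda)]\,[M(\tabB):L(\tabA_\kappa)],
$$
the sum running over all $\tabB$ in the linkage class $\xi$; this expression is manifestly invariant under $\lambda\leftrightarrow\kappa$, which is the final assertion of the theorem. (An even quicker route to the symmetry alone, bypassing BGG reciprocity: $P(\tabA_\lambda)$ is self-dual under the contravariant duality on $\O$, being simultaneously the projective cover and the injective hull of the self-dual simple $L(\tabA_\lambda)$, so $\dim\Hom_\g(P(\tabA_\kappa),P(\tabA_\lambda))=\dim\Hom_\g(P(\tabA_\lambda),P(\tabA_\kappa))$ immediately.)

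For the explicit product formula I would push the same displayed identity one step further. Using Theorem~\ref{T:main}, the exactness of $H_0$, and Theorem~\ref{wirr}, any $\tabB$ in the row-equivalence class of $\tabA_\sigma$ ($\sigma\vDash t$) satisfies $H_0(M(\tabB))\cong\overline{M}_\xi(\sigma)$ and hence $[M(\tabB):L(\tabA_\lambda)]=[\overline{M}_\xi(\sigma):\overline{L}_\xi(\lambda)]$. There are exactly $m!\,n!\big/\prod_i(\sigma_i+\mu_i)!\,(\sigma_i+\nu_i)!$ tableaux row-equivalent to $\tabA_\sigma$, and the constraint $\mu_i\nu_i=0$ built into $\Xi(m|n)$ forces $(\sigma_i+\mu_i)!\,(\sigma_i+\nu_i)!=\sigma_i!\,(\sigma_i+\gamma_i)!$ with $\gamma:=\mu+\nu$; so grouping the sum over $\tabB\in\xi$ by row-equivalence class gives
$$
[\overline{P}_\xi(\lambda):\overline{L}_\xi(\kappa)]=\sum_{\sigma\vDash t}\frac{m!\,n!}{\prod_i\sigma_i!\,(\sigma_i+\gamma_i)!}\,[\overline{M}_\xi(\sigma):\overline{L}_\xi(\lambda)]\,[\overline{M}_\xi(\sigma):\overline{L}_\xi(\kappa)].
$$
Substituting the Verma multiplicities from Lemma~\ref{crazypre} (each a product of binomial coefficients, with prescribed support of $\sigma$ relative to $\lambda$ and $\kappa$), then reindexing by $\tau$ so that $\tau_i-\tau_{i+1}$ records $\sigma_i-\lambda_{i+1}$ and by $\rho$ so that $\kappa=\lambda+\sum_i(\lambda_{i+1}-\rho_{i+1})\alpha_i$, converts this into the closed form in the statement. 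I expect this final conversion to be the only genuine obstacle: it is a bare-hands manipulation of a double sum of products of binomials, and most of the work is in translating the elementary ranges of $\sigma$ and of the two $\theta$-vectors supplied by Lemma~\ref{crazypre} into the inequalities $0\le\rho_{i+1}\le\lambda_{i+1}+\min(\lambda_i,\rho_i)$ and $\max(\lambda_{i+1},\rho_{i+1})\le\tau_{i+1}\le\lambda_{i+1}+\min(\lambda_i,\rho_i)$ that appear in the theorem. Everything preceding that computation is formal.
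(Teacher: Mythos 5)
Your proposal is correct and follows essentially the same route as the paper: reduce $[\overline{P}_\xi(\lambda):\overline{L}_\xi(\kappa)]$ to $[P(\tabA(\mu,\nu;\lambda)):L(\tabA(\mu,\nu;\kappa))]$, expand by BGG reciprocity (giving the symmetry), group the Verma sum by row-equivalence classes with the count $m!\,n!/\prod_i\sigma_i!(\sigma_i+\gamma_i)!$, and substitute Lemma~\ref{crazypre} with the same $\tau,\rho$ reindexing. The only cosmetic difference is your first step, which passes through $\Hom_W$ of projectives and the double centralizer property (Corollary~\ref{controls}), whereas the paper gets the same identity more directly from exactness of $H_0$ and Theorem~\ref{wirr}.
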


\begin{proof}
Let $\tabA := \tabA(\mu,\nu;\kappa)$
and $\tabC := \tabA(\mu,\nu;\lambda)$.
Since these are anti-dominant, Theorem~\ref{wirr} and the exactness of
$H_0$ imply that the multiplicity we are trying to compute is equal to
$[P(\tabC):L(\tabA)]$.
This can be computed by the usual BGG reciprocity formula in the
highest weight category $\O_\Z$:
\begin{equation}\label{upp}
[P(\tabC):L(\tabA)] = \sum_{\tabB \in \xi}
[M(\tabB):L(\tabA)]
[M(\tabB):L(\tabC)].
\end{equation}
In particular, this already establishes the symmetry property at the end of
the statement of the theorem.

For any $\tabB \in \xi$, Theorem~\ref{T:main}
shows that $H_0(M(\tabB))$ is isomorphic to $\overline{M}_\xi(\beta)$
for
$\beta \vDash t$ determined uniquely from $\tabA(\mu,\nu;\beta) \sim \tabB$.
Also, for a given $\beta$, the number of different $\tabB$ satisfying
$\tabB \sim \tabA(\mu,\nu;\beta)$
is
$$
m!n! \Big/  \prod_i (\beta_i+\mu_i)! (\beta_i+\nu_i)!
=
m!n! \Big/  \prod_i \beta_i! (\beta_i+\gamma_i)!.
$$
We deduce from (\ref{upp}) that
$$
[\overline{P}_\xi(\lambda):\overline{L}_\xi(\kappa)]=
m!n!\sum_{\beta \vDash t}
\Big(\prod_i \frac{1}{\beta_i! (\beta_i+\gamma_i)!}
\Big)
[\overline{M}_\xi(\beta):\overline{L}_\xi(\lambda)]
[\overline{M}_\xi(\beta):\overline{L}_\xi(\kappa)].
$$
By Lemma~\ref{crazypre},
$[\overline{M}_\xi(\beta):\overline{L}_\xi(\lambda)]
[\overline{M}_\xi(\beta):\overline{L}_\xi(\kappa)] \neq 0$
only if $\beta  = \kappa -\sum_i \theta_i \alpha_i =  \lambda-
\sum_i
\phi_i \alpha_i$
for $\theta,\phi$ satisfying
$0 \leq \theta_i \leq \kappa_i, 0 \leq \phi_i \leq \lambda_{i}$ for all $i$.
Equivalently, replacing $\phi_i$ by $\tau_{i+1}-\lambda_{i+1}$ and
$\theta_i$ by $\tau_{i+1}-\rho_{i+1}$,
it is non-zero
only if
there exist $\rho = (\rho_i)_{i \in \Z}$ and $\tau = (\tau_i)_{i
  \in \Z}$ such that
$\beta = \lambda + \sum_i (\lambda_{i+1}-\tau_{i+1}) \alpha_i$,
$\kappa = \lambda + \sum_i (\lambda_{i+1}-\rho_{i+1}) \alpha_i$
and $0 \leq \tau_{i+1}-\rho_{i+1} \leq \kappa_i,
0 \leq \tau_{i+1}-\lambda_{i+1} \leq \lambda_i$ for all $i$.
Moreover, when this holds, Lemma~\ref{crazypre} gives that
$$
[\overline{M}_\xi(\beta):\overline{L}_\xi(\lambda)]
[\overline{M}_\xi(\beta):\overline{L}_\xi(\kappa)]
= \prod_i \tbinom{\beta_i}{\tau_i-\lambda_i} \tbinom{\beta_i}{\tau_i-\rho_i}.
$$
In this situation, $\kappa_i = \lambda_{i+1}+\rho_i-\rho_{i+1}$
and $\beta_i = \lambda_i+\tau_i-\tau_{i+1}$, so the inequalities just
recorded may be rewritten as $\max(\lambda_{i+1}, \rho_{i+1}) \leq
\tau_{i+1}
\leq \lambda_{i+1}+\min(\lambda_i,\rho_i)$, and we deduce for
$\kappa = \lambda - \sum_i (\lambda_{i+1}-\rho_{i+1})\alpha_i$ that
$$
[\overline{P}_\xi(\lambda):\overline{L}_\xi(\kappa)]
=
m! n!
\hspace{-22mm}
\sum_{\substack{\tau = (\tau_i)_{i \in \Z}\\\max(\lambda_{i+1},\rho_{i+1}) \leq \tau_{i+1} \leq \lambda_{i+1}+\min(\lambda_{i},\rho_i)}}
\hspace{-18mm}
\prod_i
\frac{\binom{\lambda_{i+1}+\tau_i - \tau_{i+1}}{\tau_{i}-\lambda_{i}}
\binom{\lambda_{i+1}+\tau_i - \tau_{i+1}}{\tau_{i}-\rho_{i}}}
{(\lambda_{i+1}\!+\!\tau_i \!-\! \tau_{i+1})!(\lambda_{i+1}\!+\!\tau_i \!-\!
  \tau_{i+1}\!+\!\gamma_i)!}\,.
$$
Finally, we observe that for this to be non-zero, at least one such
$\tau$ must exist, which exactly requires that
$0 \leq \rho_{i+1} \leq \lambda_{i+1}+\min(\lambda_i,\rho_i)$ for all $i$.
\end{proof}

\begin{Remark}
The multiplicity in Theorem~\ref{crazy}
depends
on $\lambda,\kappa$ and $\gamma = \mu+\nu$, but not directly on $\mu,\nu$.
\end{Remark}

The formula in Theorem~\ref{crazy} is undoubtedly rather cumbersome. Let us
give a small example right away.
Let $(\mu,\nu;t)$ be as in (\ref{palatable}). Since $t = 1$ the Cartan
matrix naturally has its rows and columns indexed by $\Z$.
For $\lambda = \eps_j$,
there are only three possibilities for the composition $\rho$ in
Theorem~\ref{crazy},
namely, $\rho = 0, \eps_j$ or $\eps_j+\eps_{j+1}$. These correspond to
composition factors $\overline{L}_\xi(\kappa)$ of
$\overline{P}_\xi(\lambda)$ with
$\kappa = \eps_{j-1}, \kappa = \eps_j$ and $\kappa = \eps_{j+1}$,
respectively.
Thus, the Cartan matrix is a tri-diagonal matrix. Computing further
from the formula in the theorem one deduces that the Cartan matrix is
$$
\left[
\begin{array}{rrrrrrrrr}
\ddots&\\
&36&18&&&\\
&18&27&9&&\\
&&9&18&9&\\
&&&9&15&6\\
&&&&6&24&18\\
&&&&&18&36\\
&&&&&&&\ddots
\end{array}
\right],
$$
where we have only displayed rows and columns indexed
$1,\dots,6$; the tri-diagonals are constant above and below these
entries.

In the remainder of the subsection, we assume that $t > 0$, and will deduce several more
palatable consequences of the theorem; these will be needed in the
proof of Theorem~\ref{moritatheorem} below.
For $\lambda \vDash t$,
define
$h(\lambda)$ to be the number of compositions $\rho = (\rho_i)_{i
  \in \Z}$ satisfying the
inequalities
\begin{equation} \label{e:rhocond}
0 \leq \rho_{i+1} \leq \lambda_{i+1} + \min(\lambda_i,\rho_i)
\end{equation}
for all
$i \in \Z$. For example, $h(\epsilon_i) = 3$.
Since $\rho_i = \lambda_i = 0$ for $i \ll 0$, this should be
thought of as
a recursive system of inequalities describing
some polytope; we are counting its lattice points.
Theorem~\ref{crazy} tells us that $h(\lambda)$ is the number of
$\kappa \vDash t$ such that $\overline{L}_\xi(\kappa)$ appears as a
composition factor of $\overline{P}_\xi(\lambda)$.

If $\lambda_j = 0$ for some $j$, then we have by the definition that
\begin{equation} \label{e:hsep}
h(\lambda) = h\!\left(\lambda^{\leq j}\right) h\!\left(\lambda^{\geq j}\right),
\end{equation}
where $\lambda^{\leq j}$ (resp. $\lambda^{\geq j}$) is the composition obtained from $\lambda$
by setting all parts in positions $> j$ (resp. $< j$) to zero.  Also
it is clear that $h(\lambda) = h(\mu)$ if $\mu$ is obtained from
$\lambda$ by a translation.
This reduces the problem of computing $h(\lambda)$ to the case that
$\lambda$ is {\em connected}, i.e.\ it is of the form
$$
h(\lambda_1,\dots,\lambda_r) :=
h((\dots,0,\lambda_1,\dots,\lambda_r,0,\dots))$$
for some
$r$ and
$\lambda_1,\dots,\lambda_r > 0$.

\begin{Lemma}\label{step1}
For any $\lambda \vDash t$, we have that
$h(\lambda) \geq \binom{t+2}{2}$, with equality if and only
if $\lambda = t \eps_i$ for some $i \in \Z$.
\end{Lemma}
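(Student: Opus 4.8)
The plan is to combine the two reductions already available --- the multiplicativity \eqref{e:hsep} and translation invariance $h(\lambda) = h(\mu)$ whenever $\mu$ is a translate of $\lambda$ --- with an exact transfer-matrix computation of $h(\lambda)$ for \emph{connected} $\lambda$, i.e.\ $\lambda$ of the form $(\lambda_1,\dots,\lambda_r)$ with all $\lambda_i > 0$. Since \eqref{e:hsep} splits $h$ at any zero position of $\lambda$, the whole statement will follow by induction on $t$, treating the connected and disconnected cases separately. For the disconnected case I will also need the elementary inequality $\binom{a+2}{2}\binom{b+2}{2} > \binom{a+b+2}{2}$ for all integers $a,b \geq 1$: this is immediate from $(a+1)(b+1) \geq a+b+2$ and $(a+2)(b+2) \geq 2(a+b+2)$, which give $\binom{a+2}{2}\binom{b+2}{2} \geq \frac{(a+b+2)^2}{2} > \binom{a+b+2}{2}$.

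First I would dispose of the disconnected case. Assume the lemma for all compositions of size $< t$. If $\lambda \vDash t$ is disconnected, pick a zero position $j$ lying strictly between nonzero parts, so that \eqref{e:hsep} gives $h(\lambda) = h(\lambda^{\leq j})h(\lambda^{\geq j})$ with $\lambda^{\leq j} \vDash t_1$, $\lambda^{\geq j}\vDash t_2$ and $t_1,t_2 \geq 1$, $t_1 + t_2 = t$. By the inductive hypothesis $h(\lambda^{\leq j}) \geq \binom{t_1+2}{2}$ and $h(\lambda^{\geq j}) \geq \binom{t_2+2}{2}$, so the elementary inequality yields $h(\lambda) > \binom{t+2}{2}$; in particular equality never occurs for disconnected $\lambda$, which is consistent since such $\lambda$ is not of the form $t\eps_i$.

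The heart of the argument is the connected case. For connected $\lambda = (\lambda_1,\dots,\lambda_r)$ one checks that a valid $\rho$ is supported on the positions of $\lambda$ together with one more position on the right: $0 \leq \rho_1 \leq \lambda_1$, $0 \leq \rho_k \leq \lambda_k + \min(\lambda_{k-1},\rho_{k-1})$ for $2 \leq k \leq r$, and $0 \leq \rho_{r+1} \leq \min(\lambda_r,\rho_r)$. Setting $f_k(c) := \#\{(\rho_1,\dots,\rho_k)\text{ valid}:\min(\lambda_k,\rho_k)=c\}$ for $0 \leq c \leq \lambda_k$, and $F_k := \sum_c f_k(c)$, $S_k := \sum_c c\,f_k(c)$, a direct count (splitting into the cases $c < \lambda_k$ and $c = \lambda_k$) gives $f_1 \equiv 1$ on $\{0,\dots,\lambda_1\}$, the recursions
\begin{align*}
F_k &= (\lambda_k+1)F_{k-1} + S_{k-1}, & S_k &= \tbinom{\lambda_k+1}{2}F_{k-1} + \lambda_k S_{k-1},
\end{align*}
and the formula $h(\lambda) = \sum_c f_r(c)(c+1) = F_r + S_r$. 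When $r = 1$ this gives $F_1 = t+1$, $S_1 = \binom{t+1}{2}$, hence $h(t\eps_i) = \binom{t+2}{2}$, the claimed equality case. For $r \geq 2$ I would show, by induction on $r$, that $F_r > t+1$ and $S_r \geq \binom{t+1}{2}$: writing $\lambda^\flat = (\lambda_1,\dots,\lambda_{r-1})$ with sum $t' = t-\lambda_r \geq 1$, the inductive bounds $F_{r-1} \geq t'+1$, $S_{r-1}\geq\binom{t'+1}{2}$ give $F_r \geq (\lambda_r+1)(t'+1) \geq t+2$ and, after simplification, $S_r \geq \frac{\lambda_r(t'+1)(t+1)}{2} \geq \binom{t+1}{2}$ (using $\lambda_r,t' \geq 1$). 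Therefore $h(\lambda) = F_r + S_r > (t+1) + \binom{t+1}{2} = \binom{t+2}{2}$ whenever $\lambda$ is connected with $r \geq 2$ parts, so equality in the connected case forces $r = 1$, i.e.\ $\lambda = t\eps_i$.

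Putting the two cases together proves $h(\lambda) \geq \binom{t+2}{2}$ for all $\lambda \vDash t$, with equality precisely when $\lambda = t\eps_i$. The only point needing care is the bookkeeping behind the transfer matrix --- correctly identifying the support of a valid $\rho$ and handling the two cases in the recursion for $f_k$ --- together with the observation that, although $S_r = \binom{t+1}{2}$ can also hold for $r = 2$ with $\lambda_r = 1$, this is harmless because $F_r > t+1$ strictly as soon as $r \geq 2$. Beyond that the argument is routine algebra, and I anticipate no serious obstacle.
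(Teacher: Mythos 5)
Your proof is correct, and its skeleton (split into connected and disconnected $\lambda$, handle the disconnected case via \eqref{e:hsep} together with the inequality $\binom{t_1+2}{2}\binom{t_2+2}{2}>\binom{t+2}{2}$ and induction on $t$) coincides with the paper's; the disconnected half is essentially identical. Where you genuinely diverge is the connected case. The paper argues by merging: it claims $h(\dots,\lambda_{k-1},\lambda_k,\lambda_{k+1})>h(\dots,\lambda_{k-1},\lambda_k+\lambda_{k+1})$ for the rightmost two nonzero parts, comparing, for each fixed prefix of $\rho$, the number of admissible extensions, and then reduces by induction on the number of parts to the one-part case $h(t\eps_i)=\binom{t+2}{2}$. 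You instead compute $h$ exactly by a transfer-matrix recursion in the statistics $F_k,S_k$ attached to $c=\min(\lambda_k,\rho_k)$: your identification of the support of $\rho$, the base case $f_1\equiv 1$, the recursions $F_k=(\lambda_k+1)F_{k-1}+S_{k-1}$, $S_k=\binom{\lambda_k+1}{2}F_{k-1}+\lambda_k S_{k-1}$, the identity $h=F_r+S_r$, and the inductive bounds $F_r\geq \lambda_r t'+t+1\geq t+2$ and $S_r\geq \tfrac{\lambda_r(t'+1)(t+1)}{2}\geq\binom{t+1}{2}$ all check out, and they give the strict inequality for $r\geq 2$ as well as the equality $h(t\eps_i)=\binom{t+2}{2}$ in one stroke. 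The paper's merging step is shorter and more conceptual (a monotonicity statement under coalescing adjacent parts), but it is only sketched; your recursion is more bookkeeping but is fully quantitative, and as a bonus it computes $h(\lambda)$ exactly for connected $\lambda$, which would also serve, via \eqref{e:hsep}, to address the upper bound $h(\lambda)\leq 3^t$ mentioned in the Remark following the lemma.
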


\begin{proof}
It is easy to check that $h(t \eps_i) = \binom{t+2}{2}$.
Conversely,
we must show that $h(\lambda) > \binom{t+2}{2}$ whenever it has at
least two non-zero parts.
If $\lambda$ is not connected,
the proof is easily computed by induction on $t$,
using the elementary inequality
$\binom{t'+2}{2}\binom{t''+2}{2} > \binom{t+2}{2}$
if $t',t'' \geq 1$ satisfy $t'+t''=t$.
If $\lambda$ is connected, let $\lambda_k,\lambda_{k+1}$
be its rightmost two non-zero parts, so
$\lambda = (\dots,\lambda_{k-1},\lambda_k,\lambda_{k+1})$.
Then the conclusion follows easily from the claim that
$$
h(\dots,\lambda_{k-1},\lambda_k,\lambda_{k+1})>
h(\dots,\lambda_{k-1},\lambda_k+\lambda_{k+1}).
$$
This can be proved by showing
for each choice of the entries of $\rho$ up to $\rho_{k-1}$ that there are more ways
of extending this to a sequence satisfying \eqref{e:rhocond} for $\lambda = (\dots,\lambda_{k-1},\lambda_k,\lambda_{k+1},\dots)$
than for $\lambda' = (\dots,\lambda_{k-1},\lambda_k+\lambda_{k+1},\dots)$.
\end{proof}

\begin{Remark}
We also expect for $\lambda \vDash t$ that $h(\lambda) \leq
3^t$, with equality if and only if $\lambda$ is generic in the sense
that it consists of isolated $1$'s. We won't need this observation here,
but note that $h(\lambda) = 3^t$ for generic $\lambda$ follows from
$h(\eps_i) = 3$ and \eqref{e:hsep}.
\end{Remark}

\begin{Lemma}\label{step2}
The space
$\Hom_{\overline{\O}_\xi}(\overline{P}_\xi(t \eps_j),\overline{P}_\xi(t
\eps_i))$ is non-zero
if and only if $|i-j| \leq 1$.
\end{Lemma}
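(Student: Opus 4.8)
The plan is to reduce the statement to a composition-multiplicity computation and then read it off from Theorem~\ref{crazy}.

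First I would use that $\overline{\O}_\xi$ is a finite-length abelian category with enough projectives (Lemma~\ref{projc}), in which $\overline{P}_\xi(\kappa)$ is the projective cover of the simple object $\overline{L}_\xi(\kappa)$. Thus $\Hom_{\overline{\O}_\xi}(\overline{P}_\xi(t\eps_j),-)$ is exact, vanishes on every simple object other than $\overline{L}_\xi(t\eps_j)$, and is one-dimensional on $\overline{L}_\xi(t\eps_j)$; inducting along a composition series of $\overline{P}_\xi(t\eps_i)$ then gives
\[
\dim\Hom_{\overline{\O}_\xi}\bigl(\overline{P}_\xi(t\eps_j),\overline{P}_\xi(t\eps_i)\bigr)=\bigl[\overline{P}_\xi(t\eps_i):\overline{L}_\xi(t\eps_j)\bigr].
\]
So it suffices to show that this multiplicity is non-zero exactly when $|i-j|\leq 1$.

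Next I would invoke Theorem~\ref{crazy} with $\lambda=t\eps_i$, i.e.\ $\lambda_k=t\delta_{k,i}$. Its non-vanishing criterion asks for a tuple $\rho=(\rho_k)_{k\in\Z}$ of non-negative integers with $0\leq\rho_{k+1}\leq\lambda_{k+1}+\min(\lambda_k,\rho_k)$ for all $k$, such that $\kappa=\lambda+\sum_k(\lambda_{k+1}-\rho_{k+1})\alpha_k$. Since both $\lambda_k$ and $\lambda_{k+1}$ vanish once $k\leq i-2$ or $k\geq i+1$, the inequalities force $\rho_k=0$ for all $k\leq i-1$ and all $k\geq i+2$, while the two remaining inequalities (at $k=i-1$ and $k=i$) read $0\leq\rho_i\leq t$ and $0\leq\rho_{i+1}\leq\rho_i$. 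Expanding $\alpha_k=\eps_k-\eps_{k+1}$ then shows that $\kappa$ is the composition with parts $(t-\rho_i,\ \rho_i-\rho_{i+1},\ \rho_{i+1})$ in positions $i-1,i,i+1$ and zeros elsewhere. In particular every composition factor of $\overline{P}_\xi(t\eps_i)$ is some $\overline{L}_\xi(\kappa)$ with $\kappa$ supported in $\{i-1,i,i+1\}$, which rules out $\kappa=t\eps_j$ for $|i-j|\geq 2$.

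For the reverse implication I would just exhibit suitable $\rho$: taking $\rho=0$ produces $\kappa=t\eps_{i-1}$, taking $\rho=t\eps_i$ produces $\kappa=t\eps_i$, and taking $\rho_i=\rho_{i+1}=t$ with all other parts zero produces $\kappa=t\eps_{i+1}$; in each case the inequalities above hold trivially. Hence the multiplicity is non-zero for all $j$ with $|i-j|\leq 1$, and the lemma follows. (As a consistency check, the symmetry $[\overline{P}_\xi(\lambda):\overline{L}_\xi(\kappa)]=[\overline{P}_\xi(\kappa):\overline{L}_\xi(\lambda)]$ from Theorem~\ref{crazy} is compatible with the symmetry of the condition $|i-j|\leq 1$.) No step here is genuinely hard; the only point requiring care is the bookkeeping of the inequalities coming from Theorem~\ref{crazy}, which is precisely what pins down the admissible $\kappa$.
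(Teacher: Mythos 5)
Your proposal is correct and follows essentially the same route as the paper: both reduce the Hom space to the Cartan matrix entry $[\overline{P}_\xi(t\eps_i):\overline{L}_\xi(t\eps_j)]$ and then analyse the system of inequalities on $\rho$ from Theorem~\ref{crazy} for $\lambda=t\eps_i$. Your write-up is merely a bit more explicit than the paper's, which argues only the case $i<j$ (using the symmetry of the Cartan matrix) and leaves the exhibition of admissible $\rho$ for $|i-j|\leq 1$ implicit, whereas you determine all admissible $\kappa$ and verify the converse directly.
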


\begin{proof}
This is the multiplicity
computed in Theorem~\ref{crazy}
for $\lambda = t \eps_i$ and $\kappa = t \eps_j$.
Since the Cartan matrix is symmetric, we may assume that $i < j$.
From the equation $\lambda-\kappa = \sum_i (\rho_i -\lambda_i)
\alpha_{i-1}$,
we deduce that we must have $\rho_i=\rho_{i+1}=\cdots=\rho_j=t$
and all other parts zero.
But this contradicts the system of inequalities $\rho_{k+1} \leq \lambda_{k+1} +
\min(\lambda_k,\rho_k)$
unless we actually have that $j=i+1$.
\end{proof}

\begin{Lemma}\label{step3}
Let $\gamma := \mu+\nu$.
For each $i \in \Z$ we have that
$$
\dim \End_{\overline{\O}_\xi}(\overline{P}_\xi(t \eps_i))
=
\frac{m!n!}{t!\prod_{j} \gamma_j!}
\sum_{r=0}^t
\binom{t}{r}
\frac{\gamma_i! \gamma_{i+1}!}{(\gamma_{i}+t-r)!(\gamma_{i+1}+r)!}.
$$
This is equal to
$\frac{m!n!}{(t!)^2 \prod_j \gamma_j!} \binom{2t}{t}$
whenever $\gamma_i=\gamma_{i+1} = 0$.
\end{Lemma}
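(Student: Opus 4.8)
The plan is to identify $\dim\End_{\overline{\O}_\xi}(\overline{P}_\xi(t\eps_i))$ with a single diagonal entry of the Cartan matrix already computed in Theorem~\ref{crazy}, and then evaluate the resulting finite sum by hand. Since $\overline{P}_\xi(t\eps_i)$ is the projective cover in $\overline{\O}_\xi$ of the irreducible object $\overline{L}_\xi(t\eps_i)$ by Lemma~\ref{projc}, and $\End(\overline{L}_\xi(t\eps_i))=\C$ (recall from Remark~\ref{recoering} that the $\Z/2$-grading is determined inside $\overline{\O}_\xi$, so this is just Schur's lemma over $\C$), the standard identity $\dim\Hom(\overline{P}_\xi(\lambda),M)=[M:\overline{L}_\xi(\lambda)]$ applied to $M=\overline{P}_\xi(t\eps_i)$ gives
$$\dim\End_{\overline{\O}_\xi}(\overline{P}_\xi(t\eps_i)) = [\overline{P}_\xi(t\eps_i):\overline{L}_\xi(t\eps_i)],$$
so it remains to compute the right-hand side from Theorem~\ref{crazy} with $\lambda=\kappa=t\eps_i$.

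First I would pin down the data $\rho$ and $\tau$ appearing in Theorem~\ref{crazy}. The condition $\kappa=\lambda+\sum_j(\lambda_{j+1}-\rho_{j+1})\alpha_j$ becomes $\sum_j(\lambda_{j+1}-\rho_{j+1})\alpha_j=0$; comparing $\eps_k$-coefficients this says $\lambda_{j+1}-\rho_{j+1}$ is independent of $j$, and since any admissible $\rho$ satisfies $\rho_j=0$ for $j\ll 0$ (the inequality $0\le\rho_{j+1}\le\lambda_{j+1}+\min(\lambda_j,\rho_j)$ forces this once $\lambda_j=\lambda_{j+1}=0$), this constant is $0$, hence $\rho=\lambda=t\eps_i$. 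With this $\rho$, the constraints $\max(\lambda_{j+1},\rho_{j+1})\le\tau_{j+1}\le\lambda_{j+1}+\min(\lambda_j,\rho_j)$ force $\tau_i=t$ and $\tau_j=0$ for $j\notin\{i,i+1\}$, while $\tau_{i+1}$ ranges freely over $\{0,1,\dots,t\}$; write $\tau_{i+1}=r$. For this $\tau$ one checks that $\lambda_{j+1}+\tau_j-\tau_{j+1}$ vanishes except for $j=i$ (value $t-r$) and $j=i+1$ (value $r$), and that both $\tau_j-\lambda_j$ and $\tau_j-\rho_j$ vanish except for $j=i+1$ (value $r$). Hence the $j$th factor of the product in Theorem~\ref{crazy} equals $1/\gamma_j!$ for $j\notin\{i,i+1\}$, equals $1/\bigl((t-r)!\,(t-r+\gamma_i)!\bigr)$ for $j=i$, and equals $1/\bigl(r!\,(r+\gamma_{i+1})!\bigr)$ for $j=i+1$.

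Assembling these contributions (the sum over $\tau$ becoming a sum over $r$) yields
$$[\overline{P}_\xi(t\eps_i):\overline{L}_\xi(t\eps_i)] = m!\,n!\sum_{r=0}^{t}\frac{1}{(t-r)!\,r!\,(t-r+\gamma_i)!\,(r+\gamma_{i+1})!}\prod_{j\notin\{i,i+1\}}\frac{1}{\gamma_j!}\,,$$
and rewriting $\tfrac{1}{(t-r)!\,r!}=\tfrac{1}{t!}\binom{t}{r}$ and $\prod_{j\notin\{i,i+1\}}\gamma_j!=\bigl(\prod_j\gamma_j!\bigr)\big/(\gamma_i!\,\gamma_{i+1}!)$ gives exactly the displayed formula. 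For the last assertion, when $\gamma_i=\gamma_{i+1}=0$ the sum collapses to $\sum_{r=0}^{t}\binom{t}{r}\tfrac{1}{(t-r)!\,r!}=\tfrac{1}{t!}\sum_{r=0}^{t}\binom{t}{r}^2=\tfrac{1}{t!}\binom{2t}{t}$ by the Vandermonde identity, so the total becomes $\tfrac{m!\,n!}{(t!)^2\prod_j\gamma_j!}\binom{2t}{t}$. There is no genuine obstacle here; the only point requiring care is the bookkeeping of which of the infinitely many factors in the product of Theorem~\ref{crazy} are non-trivial, which reduces to noting that all factors with $j\notin\{i,i+1\}$ contribute $1/\gamma_j!$.
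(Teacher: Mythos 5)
Your proof is correct and follows essentially the same route as the paper: the paper likewise reads off the value from Theorem~\ref{crazy} with $\lambda=\kappa=t\eps_i$, noting that the only admissible $\tau$'s have $\tau_i=t$, $\tau_{i+1}=r$ with $0\le r\le t$ and all other parts zero, and then uses $\sum_{r=0}^t\binom{t}{r}^2=\binom{2t}{t}$ for the second formula. Your extra bookkeeping (identifying the endomorphism dimension with the Cartan entry via Lemma~\ref{projc} and Remark~\ref{recoering}, and pinning down $\rho=\lambda$) just makes explicit what the paper leaves implicit.
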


\begin{proof}
The first formula is easily derived from Theorem~\ref{crazy};
in the summation over $\tau$ there, one just has
$\tau$'s with $\tau_i = t, \tau_{i+1} = r$ for $0 \leq r \leq t$, and
all other parts equal to zero.
To deduce the second formula, use $\sum_{r=0}^t \binom{t}{r}^2 = \binom{2t}{t}$.
\end{proof}

\subsection{An idempotented form for
  \texorpdfstring{$W$}{W}}\label{lastss}
In general, the blocks $\overline{\O}_\xi$
of the category $\overline{\O}_\Z$ have infinitely isomorphism classes
of irreducible objects. In such situations, it is often appropriate to consider
locally unital rather than unital algebras.
Here, by a {\em locally unital algebra}, we mean an associative algebra $A$
equipped with a distinguished system of mutually orthogonal
idempotents
$\{1_i\}_{i \in I}$ such that $A = \bigoplus_{i,j \in I} 1_i A 1_j$.
By a (left) {module} $M$ over a locally unital algebra $A$,
we always mean a
module as usual which is locally unital in the sense that $M =
\bigoplus_{i \in I} 1_i M$.
Let $A\lmof$ be the category of all such modules with $\dim M <
\infty$.
In this subsection,
we
construct a locally unital
algebra $W_\xi$ whose module category is equivalent to
$\overline{\O}_\xi$.
In the next subsection, we will show for maximally atypical blocks
that $W_\xi$ is isomorphic to
the locally unital endomorphism algebra of a minimal projective
generating family in $\overline{\O}_\xi$.
For further discussion of our motivation, see \cite[$\S$4]{Bsurvey}.

To define $W_\xi$, we must first pass from $W$ to an idempotented form
$\dot W$.
By definition, this is the locally unital algebra
with distinguished idempotents $\{1_\eta\}_{\eta\vDash m}$,
generators
\begin{align*}
c^{(r)} 1_\eta &\in 1_\eta \dot W 1_\eta
&&\text{for $\eta \vDash m$ and $r \geq 0$},\\
d^{(r)} 1_\eta &\in 1_\eta \dot W 1_\eta
&&\text{for $\eta \vDash m$ and $r \geq 0$},\\
f_i^{(r)} 1_\eta &\in 1_{\eta+\alpha_i} \dot W 1_\eta
&&\text{for $\eta \vDash m,\:
r > s_-$ and $i \in \Z$ such that $\eta_{i+1} > 0$,}\\
e_i^{(r)} 1_\eta &\in 1_{\eta-\alpha_i} \dot W 1_\eta
&&\text{for $\eta \vDash m,\:r > s_+$ and $i \in \Z$ such that $\eta_{i}>0$,}
\end{align*}
and certain relations.
In order to write these down, we need a couple of conventions.
We interpret the following currently undefined expressions as zero:
$1_\eta$, $c^{(r)} 1_\eta$ and $d^{(r)} 1_\eta$
if $\eta \not\vDash m$;
$e_i^{(r)} 1_\eta$ if either
$\eta \not\vDash m$ or
$\eta-\alpha_i \not\vDash m$;
$f^{(r)}_i 1_\eta$ if either $\eta \not\vDash m$
or $\eta+\alpha_i \not\vDash m$.
This means
that for given $\eta \vDash m$, we have $e^{(r)}_i 1_\eta = f^{(r)}_i 1_\eta = 0$ for all but
finitely many $i$.
Also
 we will omit idempotents from the middles of monomials when they are
clear from the context.
Then, the relations are as follows:
\begin{align}
c^{(0)} 1_\eta = d^{(0)} 1_\eta = 1_\eta,\quad
d^{(1)} 1_\eta &= \sum_{i} i \eta_i 1_\eta,\quad
d^{(r)} 1_\eta = 0
\text{ for $r > m$},\\
c^{(r)}1_\eta\text{ is central},\qquad
 d^{(r)} d^{(s)} 1_\eta
&=
 d^{(s)} d^{(r)}1_{\eta},\\
d^{(r)}
e^{(s)}_i 1_\eta
-
 e^{(s)}_i
 d^{(r)}1_\eta
&=
\sum_{a=0}^{r-1} d^{(a)} e^{(r+s-1-a)}_i  1_\eta,\\
d^{(r)} f^{(s)}_i 1_\eta -
f^{(s)}_i d^{(r)} 1_\eta
&=-
\sum_{a=0}^{r-1} f^{(r+s-1-a)}_i d^{(a)} 1_\eta,\\
e^{(r)}_ie^{(s)}_j 1_\eta+
e^{(r)}_j e^{(s)}_i 1_\eta
+&e^{(s)}_i e^{(r)}_j
1_\eta
+e^{(s)}_j e^{(r)}_i
1_\eta=0
,\\
f^{(r)}_i f^{(s)}_j 1_\eta+
f^{(r)}_j f^{(s)}_i 1_\eta
+&f^{(s)}_i f^{(r)}_j
1_\eta
+f^{(s)}_j f^{(r)}_i
1_\eta=0,\\
e^{(r)}_i  f^{(s)}_j 1_\eta
+
f^{(s)}_j e^{(r)}_i 1_\eta
&= 0\:\text{for $i \neq j$,}\label{fourth}\\
\sum_i (e^{(r)}_i  f^{(s)}_i 1_\eta
+
f^{(s)}_i e^{(r)}_i 1_\eta)
&=
c^{(r+s-1)} 1_\eta.\label{fifth}
\end{align}

We leave it as an exercise for the reader to check in the degenerate
case $m=0$ that $\dot W = \C[c^{(1)},\dots,c^{(n)}]$, i.e.\ it is a polynomial
algebra in $n$ variables.
The case $m=1$ is also particularly easy to understand;
in this case, we
let $1_i := 1_{\eps_i},
e 1_i := e_i^{(s_++1)} 1_{\eps_i},
f 1_i := f_{i-1}^{(s_-+1)} 1_{\eps_i}$
and $z 1_i := c^{(n)} 1_{\eps_i}$ for short.

\begin{Lemma}\label{swimming}
In the case $m=1$, the algebra $\dot W$ is
$\C[c^{(1)},\dots,c^{(n-1)}] \otimes T$
where $T$ is the path algebra of the infinite
quiver
\begin{displaymath}
\:\cdots
\xymatrix{
\ar@(dr,dl)[]^{z}
\stackrel{1}{\bullet}
\ar@/^/[r]^{e}&\ar@/^/[l]^{f}
\ar@(dr,dl)[]^{z}
\stackrel{2}{\bullet}
\ar@/^/[r]^{e}&\ar@/^/[l]^{f}
\ar@(dr,dl)[]^{z}
\stackrel{3}{\bullet}
\ar@/^/[r]^{e}&\ar@/^/[l]^{f}
\ar@(dr,dl)[]^{z}
\stackrel{4}{\bullet}}
\:\cdots
\end{displaymath}
subject to the relations $e^2 1_i = f^2 1_i = 0$ and
$ef1_i + fe 1_i = z 1_i$ for all $i \in \Z$.
\end{Lemma}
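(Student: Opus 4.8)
The plan is to derive the claimed presentation directly from the defining presentation of $\dot W$ by specialising $m=1$ and then systematically eliminating redundant generators and relations; since both algebras in the statement are given by generators and relations, it suffices to show the two presentations coincide. First I would record the basic combinatorics: the only compositions $\eta \vDash 1$ are $\eta = \eps_i$ for $i \in \Z$, so the distinguished idempotents are exactly $\{1_i := 1_{\eps_i}\}_{i \in \Z}$; and the relation $d^{(1)} 1_\eta = \sum_i i\eta_i 1_\eta$ together with $d^{(r)} 1_\eta = 0$ for $r > m = 1$ shows that every $d^{(r)} 1_i$ is a scalar ($1_i$, $i\, 1_i$, or $0$), so the $d$-generators contribute nothing new. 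The conventions declaring various expressions zero force $e^{(r)}_j 1_\eta \neq 0$ only when $\eta = \eps_j$ (with image in $1_{\eps_{j+1}}\dot W$) and $f^{(r)}_j 1_\eta \neq 0$ only when $\eta = \eps_{j+1}$ (with image in $1_{\eps_j}\dot W$); this is precisely what makes the underlying quiver the one in the statement. Accordingly set $e 1_i := e^{(s_++1)}_i 1_{\eps_i}$, $f 1_i := f^{(s_-+1)}_{i-1} 1_{\eps_i}$ and $z 1_i := c^{(n)} 1_{\eps_i}$.

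Next I would cut down the generators. Feeding $d^{(r)} 1_i = 0$ for $r \geq 2$ into the $d$--$e$ and $d$--$f$ commutation relations yields, for each $i$, the recursions $e^{(r+1)}_i 1_{\eps_i} = -(i+1)\, e^{(r)}_i 1_{\eps_i}$ and $f^{(r+1)}_{i-1} 1_{\eps_i} = -i\, f^{(r)}_{i-1} 1_{\eps_i}$ (the $r=1$ instances being automatically satisfied, the $r \geq 2$ instances all yielding the same recursion). Hence every $e^{(r)}_i 1_{\eps_i}$ with $r > s_+$ is a scalar multiple of $e 1_i$, and similarly for $f$. Since $s_- + s_+ = n-1$ for a pyramid with one box on top, relation (\ref{fifth}) with $r = s_++1$, $s = s_-+1$ becomes $ef 1_i + fe 1_i = c^{(n)} 1_i = z 1_i$, while larger values of $r+s$ give $c^{(n+j)} 1_i = (-i)^j\, ef 1_i + (-(i+1))^j\, fe 1_i$ for all $j \geq 0$; in particular the value depends only on $r+s$, so these relations are mutually consistent, and the generators $c^{(r)} 1_i$ with $r \geq n$ become redundant (they are expressible through $e 1_j, f 1_j$, and a short check confirms the displayed expression is indeed central). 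Thus $\dot W$ is generated by $\{1_i, e 1_i, f 1_i, z 1_i\}_{i\in\Z}$ together with $\{c^{(r)} 1_i\}_{1 \le r \le n-1}$.

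Finally I would simplify the surviving relations. After imposing the nonzero-product constraints, the $ee$-relation survives only in the shape $e^{(r)}_{i+1} e^{(s)}_i 1_{\eps_i} + e^{(s)}_{i+1} e^{(r)}_i 1_{\eps_i} = 0$; via the scalar recursions this is a scalar multiple of $e^2 1_i$, and taking $r = s = s_++1$ gives the nonzero coefficient $2$, so the $ee$-relations are equivalent to $e^2 1_i = 0$, and symmetrically the $ff$-relations to $f^2 1_i = 0$. Relation (\ref{fourth}) with $i \neq j$ is vacuous, since the idempotent constraints make each of its monomials vanish, and the only remaining relations involving $c^{(1)},\dots,c^{(n-1)}$ are their centrality. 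Collecting everything, the presentation of $\dot W$ is exactly the presentation of $\C[c^{(1)},\dots,c^{(n-1)}] \otimes T$, with the polynomial factor central and $T$ the path algebra of the displayed quiver modulo $e^2 1_i = f^2 1_i = 0$ and $ef 1_i + fe 1_i = z 1_i$, whence the two locally unital algebras are isomorphic.

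I expect no genuine obstacle here: the entire content is careful bookkeeping, and the only delicate points are the three consistency checks flagged above — that the $r \geq 2$ instances of the $d$-commutators all yield the same recursion, that (\ref{fifth}) for different pairs $(r,s)$ with a common sum determines the same element $c^{(r+s-1)} 1_i$, and that the resulting formula for $c^{(r)} 1_i$ with $r \geq n$ is genuinely central. One must also remember to run the presentation-matching in both directions, i.e.\ check that the relations of $\C[c^{(1)},\dots,c^{(n-1)}] \otimes T$ are consequences of those of $\dot W$ and conversely.
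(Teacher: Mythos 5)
Your proof is correct and amounts to essentially the same argument as the paper's: the paper packages the identical bookkeeping as a pair of explicit mutually inverse locally unital homomorphisms between $\C[c^{(1)},\dots,c^{(n-1)}]\otimes T$ and $\dot W$, and the scalars in its formula for the inverse map on $e^{(s_++r)}_j 1_{\eps_i}$ and $f^{(s_-+r)}_j 1_{\eps_i}$ are exactly your recursions $e^{(r+1)}_i 1_{\eps_i} = -(i+1)\,e^{(r)}_i 1_{\eps_i}$ and $f^{(r+1)}_{i-1} 1_{\eps_i} = -i\, f^{(r)}_{i-1} 1_{\eps_i}$. As a small bonus, your identity $c^{(n+j)}1_{\eps_i} = (-i)^j\,ef\,1_i + (-(i+1))^j\,fe\,1_i$ is the correct evaluation of the higher central generators (and, as you note, it is central in $T$), whereas the paper's printed value of the inverse map on $c^{(r)}1_{\eps_i}$ for $r>n$, a scalar multiple of $z1_i$, is incompatible with relation (\ref{fifth}) for $r+s-1>n$ because $ef\,1_i$ and $fe\,1_i$ are linearly independent in $T$ --- so in effect you have also corrected a typo in that displayed formula.
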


\begin{proof}
It is easy to check from the defining relations for $\dot W$ that
there
is a locally unital algebra homomorphism
$f:\C[c^{(1)},\dots,c^{(n-1)}] \otimes T \rightarrow \dot W$
sending $1_i, c^{(r)} 1_i, e 1_i, f 1_i$ and $z 1_i$ to
the corresponding
elements of $\dot W$.
To show this is an isomorphism, we
define
$f^{-1}:\dot W \rightarrow
\C[c^{(1)},\dots,c^{(n-1)}] \otimes T$
by sending
\begin{align*}
1_{\eps_i}&\mapsto 1_i,\\
c^{(r)} 1_{\eps_i} &\mapsto
\left\{
\begin{array}{ll}
c^{(r)} 1_i&
\text{if $r < n$,}\\
(-1)^{r-n} ((i+1)^{r-n+1}-i^{r-n+1}) z 1_i&\text{if $r \geq n$,}
\end{array}\right.\\
d^{(r)} 1_{\eps_i} & \mapsto
\delta_{r,1} i 1_{i},\\
e_j^{(s_++r)} 1_{\eps_i} & \mapsto \delta_{i,j} (-i-1)^{r-1}
e 1_i,\\
f_j^{(s_-+r)} 1_{\eps_i} & \mapsto \delta_{i,j-1} (-i)^{r-1}
f 1_i,
\end{align*}
for each $r \geq 1$ and $i,j \in \Z$.
This is well defined by another (longer) relation check.
Finally, it is obvious that $f^{-1} \circ f = \operatorname{id}$.
To see that $f \circ f^{-1} = \operatorname{id}$, note from the
relations that the
elements
$1_i, e 1_i, f 1_i$ and $c^{(1)} 1_i,\dots,c^{(n)} 1_i$ for all $i \in \Z$
already suffice to generate $\dot W$.
\end{proof}

\begin{Lemma}\label{factors}
The following two categories may be identified:
\begin{enumerate}
\item
The full subcategory of $W\lmof$
consisting of all modules $M$ such that
$M$ is equal to the direct sum
$\bigoplus_{\eta \vDash m} M_\eta$
of its weight spaces from
(\ref{thewtspace}) and the endomorphism defined by the action
of $d_1^{(1)}$ is diagonalizable;
\item
The full subcategory of $\dot W\lmof$
consisting of all modules $M$ such that
$d^{(r)}1_\eta - e_r(\eta) 1_\eta$
acts nilpotently for all $\eta \vDash m$ and $r > 1$.
\end{enumerate}
\end{Lemma}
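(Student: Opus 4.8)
The plan is to build a pair of mutually inverse functors between the two categories, using the weight space decomposition (\ref{thewtspace}) to produce the system of idempotents $\{1_\eta\}_{\eta \vDash m}$; as the underlying vector spaces will literally coincide on the two sides, this amounts to an isomorphism of categories. Starting with $M$ in the first category, I would let $1_\eta$ act as the projection of $M = \bigoplus_{\eta' \vDash m} M_{\eta'}$ onto $M_\eta$, let $d^{(r)} 1_\eta$ and $c^{(r)} 1_\eta$ act as the restrictions to $M_\eta$ of the operators $d_1^{(r)}$ and $c^{(r)}$ of $W$ (both preserve $M_\eta$, the former since the $d_1^{(s)}$ commute, the latter since $c^{(r)}$ is central), and use Lemma~\ref{jumpers} with $k=1$ to decompose the action of $f^{(r)}$ (resp.\ $e^{(r)}$) on $M_\eta$ as a finite sum $\sum_i f_i^{(r)}$ (resp.\ $\sum_i e_i^{(r)}$) of graded pieces $f_i^{(r)}: M_\eta \to M_{\eta+\alpha_i}$ (resp.\ $e_i^{(r)}: M_\eta \to M_{\eta-\alpha_i}$). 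By the definition of $M_\eta$ in (\ref{thewtspace}), $d^{(r)} 1_\eta - e_r(\eta) 1_\eta$ acts nilpotently for all $r$, and the assumed diagonalizability of $d_1^{(1)}$ upgrades the case $r=1$ to the equality $d^{(1)} 1_\eta = e_1(\eta) 1_\eta = \bigl(\sum_{i \in \Z} i \eta_i\bigr) 1_\eta$; so this data defines a module of the type in (2) provided it satisfies the defining relations of $\dot W$.

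The key point is that those relations are exactly the weight-graded components of the defining relations of $W$ from Theorem~\ref{wpres}, once the latter are rewritten in terms of $d_1^{(r)}, c^{(r)}, e^{(r)}, f^{(r)}$ using $d_2(u) = d_1(u) c(u)$ (so in particular $e^{(r)} f^{(s)} + f^{(s)} e^{(r)} = c^{(r+s-1)}$). Concretely, one applies the relevant $W$-relation to a vector of $M_\eta$, uses Lemma~\ref{jumpers} with $k \le 2$ (no relation of $\dot W$ involves more than two of the $e$'s and $f$'s) to see into which weight space each resulting monomial maps, and reads off the $M_{\eta'}$-component. For instance, the $M_\eta$-component of $e^{(r)} f^{(s)} + f^{(s)} e^{(r)} = c^{(r+s-1)}$ is (\ref{fifth}); the $M_{\eta+\alpha_j-\alpha_i}$-component for $i \ne j$ is (\ref{fourth}); the $M_{\eta-\alpha_i-\alpha_j}$-components of $e^{(r)} e^{(s)} + e^{(s)} e^{(r)} = 0$ give the four-term relations among the $e$'s (and likewise for the $f$'s); and the $M_{\eta-\alpha_i}$-component of $[d_1^{(r)}, e^{(s)}] = \sum_{a=0}^{r-1} d_1^{(a)} e^{(r+s-1-a)}$ gives the $[d,e]$-relation (similarly for $[d,f]$). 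The relations expressing that $c^{(r)} 1_\eta$ is central, that the $d^{(r)} 1_\eta$ commute, that $d^{(0)} 1_\eta = c^{(0)} 1_\eta = 1_\eta$, and that $d^{(r)} 1_\eta = 0$ for $r > m$ are immediate, and the monomials that $\dot W$ declares to be zero (those involving an $\eta \pm \alpha_i$ that is not a composition of $m$) indeed vanish on $M_\eta$ by Lemma~\ref{jumpers}.

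Conversely, given $M$ in the second category I would use the decomposition $M = \bigoplus_\eta 1_\eta M$ supplied by local unitality and define operators on $M$ by $d_1^{(r)} := \sum_\eta d^{(r)} 1_\eta$, $c^{(r)} := \sum_\eta c^{(r)} 1_\eta$, $e^{(r)} := \sum_{\eta, i} e_i^{(r)} 1_\eta$, $f^{(r)} := \sum_{\eta, i} f_i^{(r)} 1_\eta$ (finite sums, since $e_i^{(r)} 1_\eta = f_i^{(r)} 1_\eta = 0$ for all but finitely many $i$, and finitely many $\eta$ act non-trivially on a finite-dimensional $M$), together with $d_2(u) := d_1(u) c(u)$; reassembling the graded relations of the previous paragraph shows these satisfy the presentation of $W$ in Theorem~\ref{wpres}, so $M$ becomes a $W$-module. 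The nilpotency hypothesis forces $1_\eta M$ into the $W$-weight space $M_\eta$ of (\ref{thewtspace}), and since $M = \bigoplus_\eta 1_\eta M$ while distinct weight spaces are linearly independent, in fact $1_\eta M = M_\eta$ and $M = \bigoplus_\eta M_\eta$; meanwhile $d^{(1)} 1_\eta = \bigl(\sum_{i \in \Z} i\eta_i\bigr) 1_\eta$ makes $d_1^{(1)}$ act by a scalar on each $M_\eta$, hence diagonalizably, so $M$ lies in the first category. These two constructions are plainly mutually inverse on underlying spaces, and a linear map between two such modules is a $W$-module homomorphism if and only if it commutes with all $d_1^{(r)}, c^{(r)}, e^{(r)}, f^{(r)}$ and preserves weight spaces, which is precisely the condition to be a $\dot W$-module homomorphism; thus they extend to mutually inverse functors and give the stated identification. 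The one genuinely fiddly step is the relation check in this converse direction, specifically deriving the $W$-relations involving $d_2^{(r)}$ from those of $\dot W$ using $d_2(u) = d_1(u) c(u)$ and the centrality of $c(u)$; this is routine generating-function bookkeeping once the $[e,f]$-, $[d,e]$- and $[d,f]$-relations are in hand.
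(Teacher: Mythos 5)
Your proposal is correct and follows essentially the same route as the paper's proof: both directions use the projections onto the weight spaces to define the $\dot W$-action (and conversely sum the $e_i^{(r)}1_\eta$, $f_i^{(r)}1_\eta$ and set $d_2(u)=d_1(u)c(u)$ via (\ref{ctilde})), with Lemma~\ref{jumpers} for $k=1$ supplying the weight-shift bookkeeping needed to match the relations of Theorem~\ref{wpres} with those of $\dot W$. Your extra remarks — that diagonalizability of $d_1^{(1)}$ is exactly what forces the relation $d^{(1)}1_\eta=\sum_i i\eta_i 1_\eta$, and that the $\dot W$-relations are precisely the weight-graded components of the $W$-relations — are consistent with, and slightly more explicit than, the paper's own verification.
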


\begin{proof}
Take $M$ belonging to the category (1).
We make it into a $\dot W$-module by declaring that
\begin{itemize}
\item
$1_\eta$ acts as the projection $\pr_\eta$
along the weight space decomposition;
\item
$c^{(r)} 1_\eta$ acts as $c^{(r)} \circ \pr_\eta$ and $d^{(r)} 1_\eta$ acts as $ d_1^{(r)} \circ \pr_\eta$; and
\item $e^{(r)}_i 1_\eta$ acts as $\pr_{\eta-\alpha_i} \circ e^{(r)}
\circ \pr_\eta$ and
$f^{(r)}_i1_\eta$ acts as $\pr_{\eta+\alpha_i} \circ f^{(r)}
\circ \pr_\eta$.
\end{itemize}
To see that this makes sense,
we need to verify that the defining relations of $\dot W$ are satisfied. This
follows from the relations for $W$ in Theorem~\ref{wpres}.
For example, to check (\ref{fourth})--(\ref{fifth}),
we have by the definition of the action of the generators of $\dot W$ on $v \in M_\eta$ that
\begin{align*}
\sum_{i,j} (e_i^{(r)} &f_j^{(s)}
1_\eta +
f_j^{(s)} e_i^{(r)} 1_\eta)v\\
&=
\sum_{i,j} (\pr_{\eta+\alpha_j-\alpha_i} \circ e^{(r)} \circ \pr_{\eta+\alpha_j}
\circ f^{(s)} + \pr_{\eta-\alpha_i+\alpha_j} \circ f^{(s)} \circ \pr_{\eta-\alpha_i}
\circ e^{(r)})v\\
&=(e^{(r)} f^{(s)} + f^{(s)} e^{(r)})v = \delta_{i,j} c^{(r+s-1)} v =
c^{(r+s-1)} 1_\eta v,
\end{align*}
using Lemma~\ref{jumpers} with $k=1$ for the second equality.
Applying $\pr_{\eta+\alpha_j-\alpha_i}$ to both sides, this establishes
(\ref{fourth}) when $i \neq j$ and (\ref{fifth}) when $i=j$.

Conversely, take $M$ belonging to the category (2). Then
we make it into a $W$-module by declaring that
\begin{itemize}
\item
$c^{(r)}, d_1^{(r)}, e^{(r)}$ and $f^{(r)}$
act on $1_\eta M$ as $c^{(r)} 1_\eta, d^{(r)} 1_\eta,
\sum_i e_i^{(r)} 1_\eta$ and $\sum_i f_i^{(r)} 1_\eta$;
\item
$d_2^{(r)}$ acts via the formula $d_2^{(r)} = \sum_{s=0}^r d_1^{(s)} c^{(r-s)}$
from (\ref{ctilde}).
\end{itemize}
Then one needs to verify that the relations for $W$ from
Theorem~\ref{wpres} are satisfied.
Moreover, we have that $M_\eta = 1_\eta M$.

Finally, the two constructions just explained are inverses of
each other. Again, this uses the $k=1$ case of Lemma~\ref{jumpers}.
\end{proof}

\begin{Theorem}\label{landing}
For $\xi = (\mu,\nu;t) \in \Xi(m|n)$, the category $\overline{\O}_\xi$
is isomorphic to the category $W_\xi\lmof$, where
$$
W_\xi := \dot W \Big/\bigcap_{\lambda \vDash t} \operatorname{Ann}_{\dot
  W}(\overline{P}_\xi(\lambda)).
$$
This is a locally unital algebra with distinguished
idempotents $\{1_\eta\}_{\eta\vDash m}$ that are the images of the
ones in $\dot W$.
Moreover:
\begin{enumerate}
\item
All of the left ideals $W_\xi 1_\eta$ and right ideals $1_\upsilon
W_\xi$ are finite-dimensional.
\item
We have that $c^{(1)} 1_\eta = \sum_i i(\nu_i-\mu_i) 1_\eta$ in $W_\xi$.
\end{enumerate}
\end{Theorem}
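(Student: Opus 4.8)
The plan is to produce an explicit isomorphism of categories $\Phi\colon\overline{\O}_\xi\to W_\xi\lmof$ (with inverse $\Psi$), exploiting the identification in Lemma~\ref{factors} to move between $W$-modules and $\dot W$-modules, and then to extract (1) and (2) from information already available. First, note that any object $M\in\overline{\O}_\xi$, regarded as a $W$-module (its $\Z/2$-grading being recovered from the $d_2^{(1)}$-eigenspaces by Remark~\ref{recoering}), lies in category (1) of Lemma~\ref{factors}: it is the sum of its weight spaces by Theorem~\ref{characters} and $d_1^{(1)}$ acts semisimply on it by Remark~\ref{recoering}. So Lemma~\ref{factors} endows $M$ with a $\dot W$-module structure, and since $M$ is a subquotient of a finite direct sum of the $\overline{P}_\xi(\lambda)$'s, this structure is killed by $\bigcap_{\lambda\vDash t}\operatorname{Ann}_{\dot W}(\overline{P}_\xi(\lambda))$ and hence factors through $W_\xi$; this defines $\Phi$. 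Because the correspondence in Lemma~\ref{factors} is an \emph{isomorphism} of categories and the gradings are determined, a $W$-module map between objects of $\overline{\O}_\xi$ is the same as a $W_\xi$-module map of the images, so $\Phi$ is fully faithful, and it is plainly injective on objects. (Here $W_\xi$ is locally unital with idempotents the images of the $1_\eta$, since every two-sided ideal $I$ of $\dot W$ satisfies $I=\bigoplus_{\eta,\upsilon}1_\eta I1_\upsilon$.)

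The remaining, and main, point is that $\Phi$ is surjective on objects. The definition of $W_\xi$ gives an injection $W_\xi\hookrightarrow\prod_{\lambda\vDash t}\End_\C(\overline{P}_\xi(\lambda))$, under which the image of $u1_\eta$ in the $\lambda$-th factor factors through the $\eta$-weight space, so lies in $\Hom_\C(\overline{P}_\xi(\lambda)_\eta,\overline{P}_\xi(\lambda))$ and is zero unless $\overline{P}_\xi(\lambda)_\eta\neq0$. I would next show that for fixed $\eta$ only finitely many $\lambda\vDash t$ have $\overline{P}_\xi(\lambda)_\eta\neq0$: exactness of the $\eta$-weight-space functor forces a composition factor $\overline{L}_\xi(\kappa)$ of $\overline{P}_\xi(\lambda)$ with $\overline{L}_\xi(\kappa)_\eta\neq0$; by (\ref{lch}) only finitely many $\kappa$ qualify; and for each such $\kappa$ the symmetry $[\overline{P}_\xi(\lambda)\!:\!\overline{L}_\xi(\kappa)]=[\overline{P}_\xi(\kappa)\!:\!\overline{L}_\xi(\lambda)]$ of Theorem~\ref{crazy} confines $\lambda$ to the finite set of composition factors of $\overline{P}_\xi(\kappa)$. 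Consequently $W_\xi1_\eta$ embeds into a finite direct sum of finite-dimensional spaces, so is finite-dimensional, and likewise $1_\upsilon W_\xi$, which proves (1). Moreover, the same embedding realizes $W_\xi1_\eta$ as a $W$-submodule of the finite direct sum $\bigoplus_\lambda\overline{P}_\xi(\lambda)^{\oplus\dim\overline{P}_\xi(\lambda)_\eta}$, hence $W_\xi1_\eta\in\overline{\O}_\xi$. Since any $N\in W_\xi\lmof$ equals $\bigoplus_{\eta\vDash m}1_\eta N$ and is therefore a quotient of a finite direct sum of the $W_\xi1_\eta$'s, and $\overline{\O}_\xi$ is closed under quotients, the functor $\Psi$ taking $N$ to the associated $W$-module lands in $\overline{\O}_\xi$; by Lemma~\ref{factors} it is inverse to $\Phi$, so $\Phi$ is an isomorphism of categories.

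For (2): from (\ref{ctilde}) we have $c^{(1)}=d_2^{(1)}-d_1^{(1)}$, a difference of commuting operators that act semisimply on any object of $\overline{\O}_\Z$ by Remark~\ref{recoering}, so $c^{(1)}$ acts semisimply. On the highest weight vector of $\overline{M}_\xi(\lambda)$, using (\ref{hwa}) and the shape of $\tabA(\mu,\nu;\lambda)$, $d_1^{(1)}$ and $d_2^{(1)}$ act by $\sum_i i(\lambda_i+\mu_i)$ and $\sum_i i(\lambda_i+\nu_i)$, so $c^{(1)}$ acts there by the scalar $\sum_i i(\nu_i-\mu_i)$; being central, $c^{(1)}$ then acts by this (now $\lambda$-independent) scalar on all of $\overline{M}_\xi(\lambda)$, on each composition factor $\overline{L}_\xi(\kappa)$, and hence, by semisimplicity, on $\overline{P}_\xi(\lambda)$. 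Plugging this into $W_\xi\hookrightarrow\prod_\lambda\End_\C(\overline{P}_\xi(\lambda))$ gives $c^{(1)}1_\eta=\bigl(\sum_i i(\nu_i-\mu_i)\bigr)1_\eta$ in $W_\xi$. I expect the one genuine obstacle to be the finiteness step above — controlling which $\overline{P}_\xi(\lambda)$ contain a prescribed weight $\eta$ — for which the explicit characters of Theorem~\ref{characters} and the Cartan-matrix symmetry of Theorem~\ref{crazy} are essential; the rest is bookkeeping built on Lemma~\ref{factors}.
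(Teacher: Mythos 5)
Your proposal is correct and follows essentially the same route as the paper: the identification of categories via Lemma~\ref{factors} (with the grading recovered by Remark~\ref{recoering}), the key finiteness of the weight spaces $1_\eta\overline{P}_\xi(\lambda)$ deduced from Theorem~\ref{characters} together with Theorem~\ref{crazy}, the embedding of $W_\xi 1_\eta$ into a finite direct sum of the $\overline{P}_\xi(\lambda)$'s to get (1) and essential surjectivity, and the computation $c^{(1)}=d_2^{(1)}-d_1^{(1)}$ acting by $\sum_i i(\nu_i-\mu_i)$ for (2). The only cosmetic difference is that the paper packages the faithful representation through an auxiliary locally unital matrix algebra $E=\bigoplus_{\eta,\upsilon}\Hom_\C(1_\eta V,1_\upsilon V)$ with $V=\bigoplus_\lambda\overline{P}_\xi(\lambda)$, whereas you work directly with $\prod_\lambda\End_\C(\overline{P}_\xi(\lambda))$ and invoke the Cartan-matrix symmetry for the finiteness step.
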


\begin{proof}
Set
$V := \bigoplus_{\lambda \vDash t} \overline{P}_\xi(\lambda)$.
For each $\eta \vDash m$, there are only finitely many $\kappa \vDash
t$ such that $1_\eta \overline{L}_\xi(\kappa) \neq 0$. This follows from
Theorem~\ref{characters}: we must have that $\kappa = \eta - \mu - \sum_i \theta_i
\alpha_i$ for $\theta = (\theta_i)_{i \in \Z}$ with $0 \leq \theta_i
\leq \mu_{i+1}$, and there are only finitely many such $\theta$'s.
Moreover, for each $\kappa \vDash t$, there are only finitely many
$\lambda \vDash t$ such that
$[\overline{P}_\xi(\lambda):\overline{L}_\xi(\kappa)] \neq 0$, as is
clear from Theorem~\ref{crazy}.
Hence, there are only finitely many $\lambda \vDash t$ such that
$1_\eta \overline{P}_\xi(\lambda) \neq 0$. Thus, we have shown that
all of the weight spaces $1_\eta V$ of
the $\dot W$-module $V$ are finite-dimensional.

Consider the locally unital algebra
$$
E := \bigoplus_{\eta,\upsilon\vDash m}
\Hom_\C(1_\eta V, 1_\upsilon V)
$$
with multiplication coming from the usual composition.
This is a simple locally unital matrix algebra with unique (up to isomorphism) irreducible module $V$.
The representation of $\dot W$ on $V$ defines
a locally unital algebra homomorphism $\rho:\dot W \rightarrow E$
sending $a \in 1_\upsilon \dot W 1_\eta$ to the linear map
$1_\eta V \rightarrow 1_\upsilon V$ defined by left multiplication by
$a$.
We have that $\ker \rho = \bigcap_{\lambda \vDash t} \operatorname{Ann}_{\dot
  W} \overline{P}_\xi(\lambda)$, hence, $\rho$ induces an embedding
$W_\xi \hookrightarrow E$.
Since each $1_\upsilon E 1_\eta =
\Hom_\C(1_\eta V, 1_\upsilon V)
$ is finite-dimensional,
we deduce
that each $1_\upsilon W_\xi 1_\eta$ is finite-dimensional.
Also $1_\upsilon W_\xi 1_\eta$ is non-zero
only if there exists $\lambda \vDash t$ such that $1_\upsilon
\overline{P}_\xi(\lambda) \neq 0 \neq 1_\eta
\overline{P}_\xi(\lambda)$.
For fixed $\eta$, there are only finitely many such $\lambda$, hence,
only finitely many such $\upsilon$.
This shows that $W_\xi 1_\eta$ is finite-dimensional. Similarly,
so is $1_\upsilon W_\xi$, and we have established (1).

Now we explain how to identify $\overline{\O}_\xi$ with $W_\xi\lmof$.
Given any $M \in \overline{\O}_\xi$, we can forget the $\Z/2$-grading
then apply Lemma~\ref{factors} to view it as a $\dot W$-module,
using also Lemma~\ref{bookkeeping}
and the last part of Theorem~\ref{characters}.
This defines a functor from $\overline{\O}_\xi$ to
the full
subcategory of $\dot W\lmof$ consisting
of subquotients of finite direct sums of the modules
$\{\overline{P}_\xi(\lambda)\}_{\lambda \vDash t}$.
Each $\overline{P}_\xi(\lambda)$ factors through the quotient $W_\xi$,
so the latter category may also be described as
the full subcategory of $W_\xi\lmof$ consisting
of subquotients of finite direct sums of the modules
$\{\overline{P}_\xi(\lambda)\}_{\lambda \vDash t}$.
In fact, this defines an isomorphism of categories, since the
$\Z/2$-grading on $M$
can be recovered uniquely thanks to Remark~\ref{recoering}.

It just remains to observe that {\em every} finite-dimensional
$W_\xi$-module belongs to the full subcategory of $W_\xi\lmof$ consisting
of subquotients of finite direct sums of the modules
$\{\overline{P}_\xi(\lambda)\}_{\lambda \vDash t}$.
To see this, note that any $M \in W_\xi\lmof$ is a quotient of a finite direct
sum of the projective modules $W_\xi 1_\eta$ for $\eta \vDash m$.
Moreover, $W_\xi 1_\eta \hookrightarrow E 1_\eta$,
which is a direct sum of copies of $V$,
so that $W_\xi 1_\eta$ embeds into a (possibly infinite) direct sum of
the modules $\overline{P}_\xi(\lambda)$.
In fact, it embeds into a finite direct sum of these modules
since it is finite-dimensional, as is each
$\overline{P}_\xi(\lambda)$.

To establish (2), note that $c^{(1)} = d_2^{(1)} - d_1^{(1)}$,
so it acts diagonalizably on any object of $\overline{\O}_\xi$
thanks to Lemma~\ref{bookkeeping}.
Moreover,
$d_2^{(1)} - d_1^{(1)}$
acts on $\overline{L}(\tabA)$ as
$b(\tabA)-a(\tabA) = \sum_i i (\nu_i-\mu_i)$
for any $\tabA \in \xi^\circ$.
Thus, $c^{(1)} - \sum_i i (\nu_i - \mu_i)$ annihilates all
$\overline{P}_\xi(\lambda)$, and (2) follows.
\end{proof}

\begin{Remark}
Here is a slightly different construction of the locally unital
algebra $W_\xi$. Given any finite subset $X \subset \xi$,
the $W$-module $\bigoplus_{\lambda \in X} \overline{P}_\xi(\lambda)$
is finite-dimensional. Hence,
$$
W_X := W \Big/ \bigcap_{\lambda \in X}
\operatorname{Ann}_W(\overline{P}_\xi(\lambda))
$$
is a finite-dimensional algebra.
Each $W_X$ possesses a distinguished family of idempotents
$\{1_\eta\}_{\eta\vDash m}$
such that $W_X = \bigoplus_{\upsilon,\eta\vDash m} 1_\upsilon W_X
1_\eta$, namely, $1_\eta$ is the primitive idempotent in
the finite-dimensional commutative subalgebra of $W_X$
generated by
$d_1^{(1)},\dots,d_m^{(1)}$
that projects any module onto its
$\eta$-weight space.
Then
$W_\xi$ is the inverse limit $\varprojlim_X W_X$ over all finite
subsets $X$ of $\xi$, taking the inverse limit in the category of
locally unital algebras with idempotents indexed by compositions of $m$.
\end{Remark}

\begin{Remark}\label{ball1}
The special case $m=n=1$ is particularly trivial.
If $m=n=t=1$ then
$W_\xi$ is the algebra $T$ from Lemma~\ref{swimming} subject to the
additional relations $z 1_i = 0$ for all $i \in \Z$.
If $m=n=1$ and $t=0$,
then we have that $\mu = \eps_i$ and $\nu = \eps_j$ for $i \neq j$,
and
$W_\xi$ is the algebra $T$  with the
additional relations $z 1_i  = (j-i) 1_i, z 1_{i-1} = (j-i) 1_{i-1}$,
and $1_k = 0$ for $k \neq i,i-1$. (In this case, $W_\xi\cong M_2(\C)$.)
\end{Remark}

\subsection{Graded lifts and the Soergel algebra of a block}\label{coffee}
Throughout the subsection, we fix $\xi = (\mu,\nu;t) \in
\Xi(m|n)$
and set $\gamma := \mu+\nu$ for short.
Let
\begin{equation}
\AA_\xi := \bigoplus_{\tabA,\tabB \in \xi} \Hom_{\g}(P(\tabA), P(\tabB)),
\end{equation}
viewed as an algebra with multiplication that is the {opposite} of
composition.
Note $\AA_\xi$ is a locally unital in the sense introduced at the
start of \S\ref{lastss}, with
distinguished idempotents $\{1_\tabA\}_{\tabA \in \xi}$ coming
from the identity endomorphisms of each $P(\tabA)$.
By standard theory, the functor
\begin{equation}\label{terrible}
\bigoplus_{\tabA \in \xi} \Hom_{\g}(P(\tabA),-):
\O_\xi \rightarrow \AA_\xi\lmof
\end{equation}
is an equivalence of categories.
Since we have that
$1_\tabA \AA_\xi 1_\tabB = \Hom_{\g}(P(\tabA), P(\tabB))$,
this functor sends $P(\tabB)$ to the left ideal $\AA_\xi 1_\tabB$.

By \cite[Theorem 5.26]{BLW} (plus \cite[Theorem 3.10]{BLW}), the basic algebra $\AA_\xi$
admits a positive grading $\AA_\xi = \bigoplus_{d \geq 0} (\AA_\xi)_d$ making it into a {Koszul
algebra}.
In view of (\ref{terrible}),
this means that we can introduce a {\em graded lift} of the block $\O_\xi$, namely,
the category $\AA_\xi\grlmof$ of finite-dimensional graded
$\AA_\xi$-modules. This is entirely analogous to the situation in
\cite{Soergel}, where Soergel introduced a graded lift of the category $\O$ for a semisimple
Lie algebra.
For further discussion, see
\cite[$\S\S$5.2--5.5]{BLW}. In particular, \cite[Theorem 5.11]{BLW} shows
that categorical action on $\O_\Z$ discussed in \S\ref{Catact} also
lifts to the graded setting.
Note finally by \cite[Corollary 2.5.2]{BGS} (which extends obviously to the present locally
unital setting) that the Koszul grading on $\AA_\xi$ is unique up to
automorphism. This means that the grading is {canonical}, so it can be used to refine the invariants of
blocks computed already in Theorem~\ref{crazy}; see
Theorem~\ref{crazier} below.

Recalling next that the indecomposable projective objects in the quotient category
$\overline{\O}_\xi$ are
the $W$-supermodules $\left\{\overline{P}_\xi(\lambda)\right\}_{\lambda \vDash
t}$,
we can also consider the locally unital algebra
\begin{equation}\label{cxi}
\BB_\xi := \bigoplus_{\kappa,\lambda \vDash t}
\Hom_{W}(\overline{P}_\xi(\kappa),
\overline{P}_\xi(\lambda)),
\end{equation}
again with
multiplication that is opposite of composition.
Its distinguished idempotents are denoted
$\{1_\lambda\}_{\lambda\vDash t}$, so
that
$1_\kappa \BB_\xi 1_\lambda = \Hom_{W}\left(\overline{P}_\xi(\kappa),
\overline{P}_\xi(\lambda)\right)$.
In view of the double centralizer property of
Corollary~\ref{controls},
we can identify $\BB_\xi$ with the subalgebra $\bigoplus_{\kappa,
  \lambda \vDash t} 1_{\tabA(\mu,\nu;\kappa)} \AA_\xi 1_{\tabA(\mu,\nu;\lambda)}$ of
$\AA_\xi$.
In particular, $\BB_\xi$ inherits
a positive grading $\BB_\xi = \bigoplus_{d \geq 0} (\BB_\xi)_d$
from the Koszul grading on $\AA_\xi$.
We call the graded algebra $\BB_\xi$ the {\em Soergel algebra} of the block
$\O_\xi$.
Just like in (\ref{terrible}), there is an equivalence of categories
\begin{equation}\label{terrible2}
\bigoplus_{\kappa \vDash t} \Hom_{W}\!\left(\overline{P}_\xi(\kappa),-\right):
\overline{\O}_\xi \rightarrow \BB_\xi\lmof,
\end{equation}
so that the category $\BB_\xi\grlmof$ is a graded lift of
$\overline{\O}_\xi$.
The algebra $\BB_\xi$ is Morita equivalent to the algebra
$W_\xi$ from Theorem~\ref{landing}.
The following theorem shows that
these two algebras coincide for maximally atypical blocks.
It gives us hope that the algebra $\BB_\xi$ can be described
in these cases as the path algebra of an infinite quiver with
explicit relations.

\begin{Theorem}\label{moreispossible}
For any block $\xi = (0,\nu;m) \in \Xi(m|n)$ of maximal atypicality,
there is an isomorphism
$W_\xi \stackrel{\sim}{\rightarrow} \BB_\xi$ such that $1_\lambda \mapsto
1_\lambda$
for each $\lambda \vDash m$.
\end{Theorem}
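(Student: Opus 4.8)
The plan is to show that for a maximally atypical block $\xi=(0,\nu;m)$ the locally unital algebra $W_\xi$ of Theorem~\ref{landing} is already basic, and hence must coincide with its basic-algebra incarnation $\BB_\xi$, with the idempotents matching up on the nose. Since $\mu=0$ and $t=m$, formula (\ref{lch}) of Theorem~\ref{characters} gives $\operatorname{ch}\overline{L}_\xi(\kappa)=\chi^\kappa$ for each $\kappa\vDash m$; hence $\overline{L}_\xi(\kappa)$ is one-dimensional, concentrated in the single weight space labelled by $\kappa$. Transporting this through the isomorphism of categories $\overline{\O}_\xi\cong W_\xi\lmof$ of Theorem~\ref{landing} (built via Lemma~\ref{factors}), the simple $W_\xi$-module $\overline{L}_\xi(\kappa)$ is one-dimensional with $1_\eta$ acting as $\delta_{\eta,\kappa}$. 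From this I would read off that each $1_\eta$ is a nonzero primitive idempotent of $W_\xi$: the finite-dimensional algebra $1_\eta W_\xi 1_\eta$ (finiteness by Theorem~\ref{landing}(1)) has exactly one simple module, since $1_\eta$ kills every $\overline{L}_\xi(\kappa)$ with $\kappa\neq\eta$, hence is local; and $\Hom_{W_\xi}(W_\xi 1_\eta,\overline{L}_\xi(\kappa))\cong 1_\eta\overline{L}_\xi(\kappa)$ is nonzero exactly when $\kappa=\eta$, so the projectives $W_\xi 1_\eta$ have pairwise distinct simple heads $\overline{L}_\xi(\eta)$.

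Next I would match the projectives. Each $W_\xi 1_\eta$ is finite-dimensional by Theorem~\ref{landing}(1), hence an object of $W_\xi\lmof$; it is projective there (a summand of the free module $\bigoplus_\eta W_\xi 1_\eta$) and, by the previous step, indecomposable with head $\overline{L}_\xi(\eta)$, so it is the projective cover of $\overline{L}_\xi(\eta)$. Under $\overline{\O}_\xi\cong W_\xi\lmof$ it therefore corresponds to $\overline{P}_\xi(\eta)$, the projective cover of $\overline{L}_\xi(\eta)$ in $\overline{\O}_\xi$ (Lemma~\ref{projc}). Since $\{1_\eta\}_{\eta\vDash m}$ is a complete orthogonal set of idempotents and $\eta\vDash m$ runs over exactly the same index set as $\lambda\vDash t=m$, the modules $\{W_\xi 1_\eta\}_{\eta\vDash m}$ form a complete irredundant list of indecomposable projectives of $W_\xi\lmof$, matched bijectively with $\{\overline{P}_\xi(\lambda)\}_{\lambda\vDash m}$ via $\eta\leftrightarrow\lambda$.

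Finally, using the standard identification $\Hom_{W_\xi}(W_\xi 1_\kappa,W_\xi 1_\lambda)\stackrel{\sim}{\to}1_\kappa W_\xi 1_\lambda$, $\phi\mapsto\phi(1_\kappa)$, under which composition of homomorphisms becomes multiplication of elements in the opposite order, I would assemble the isomorphism
\[
\BB_\xi=\bigoplus_{\kappa,\lambda\vDash m}\Hom_W(\overline{P}_\xi(\kappa),\overline{P}_\xi(\lambda))\;\cong\;\bigoplus_{\kappa,\lambda\vDash m}1_\kappa W_\xi 1_\lambda=W_\xi ,
\]
where $\BB_\xi$ carries its given (opposite-of-composition) multiplication; by construction the identity endomorphism $1_\lambda$ of $\overline{P}_\xi(\lambda)$ is sent to the idempotent $1_\lambda\in W_\xi$, as required.

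The only genuinely essential use of maximal atypicality is the one-dimensionality of $\overline{L}_\xi(\kappa)$ coming from (\ref{lch}); without it the $1_\eta$ need not be primitive and one recovers merely the Morita equivalence noted after (\ref{cxi}), as for a general block. Everything else is bookkeeping: one should note that every $W$-module homomorphism between objects of $\overline{\O}_\xi$ automatically preserves parity (Remark~\ref{recoering}), so there is no discrepancy between $\Hom_W$ and $\Hom_{W_\xi}$; and one must keep track of the opposite multiplication and of the idempotent labels correctly through the last isomorphism, which I expect to be the only place where care is really needed.
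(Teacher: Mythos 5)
Your proposal is correct and follows essentially the same route as the paper's proof: maximal atypicality forces the simples $\overline{L}_\xi(\kappa)$ to be one-dimensional, so each $1_\lambda$ is primitive, $W_\xi 1_\lambda$ is the projective cover of $\overline{L}_\xi(\lambda)$ corresponding to $\overline{P}_\xi(\lambda)$, and the standard identification $\Hom_{W_\xi}(W_\xi 1_\kappa, W_\xi 1_\lambda)\cong 1_\kappa W_\xi 1_\lambda$ (with opposite multiplication) yields $\BB_\xi\cong W_\xi$ with $1_\lambda\mapsto 1_\lambda$. You merely spell out a few details the paper leaves implicit (weight-space concentration from (\ref{lch}), locality of $1_\eta W_\xi 1_\eta$, and the parity bookkeeping via Remark~\ref{recoering}), so there is nothing to correct.
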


\begin{proof}
Maximal atypicality implies that all of the irreducible $W$-modules
in $\overline{\O}_\xi$ are one-dimensional. By Theorem~\ref{landing}, these are the
irreducible $W_\xi$-modules.
Hence, $W_\xi$ is a locally
unital basic algebra such that $W_\xi\lmof$ is isomorphic to
$\overline{\O}_\xi$.
Since the idempotent $1_\lambda \in W_\xi$ acts as the identity on
$\overline{L}_\xi(\lambda)$ and as zero on all other
$\overline{L}_\xi(\kappa)$'s,
it is a primitive idempotent, and
the left ideal $W_\xi 1_\lambda$ is
isomorphic to the projective cover
$\overline{P}_\xi(\lambda)$ of $\overline{L}_\xi(\lambda)$.
Then comparing with (\ref{cxi}), we get that
$$
\BB_\xi \cong \bigoplus_{\kappa,\lambda\vDash m}
\Hom_{W_\xi}(W_\xi 1_\kappa, W_\xi 1_\lambda)
\cong \bigoplus_{\kappa,\lambda \vDash m}
1_\kappa W_\xi 1_\lambda = W_\xi.
$$
This isomorphism sends $1_\lambda \mapsto 1_\lambda$ for each $\lambda
\vDash m$.
\end{proof}

\begin{Remark}\label{ball2}
Let $\xi$ be as in Theorem~\ref{moreispossible}, so that $B_\xi \cong W_\xi$.
If $m=n=1$, we described this algebra already in
Remark~\ref{ball1}.
If $m=1 < n$, using also Theorem~\ref{landing},
we see that $B_\xi$ is a quotient of
$\C[c^{(2)},\dots,c^{(n-1)}]
\otimes T$, where $T$ is as in
Lemma~\ref{swimming}.
Setting $x_i := ef 1_i$ and $y_i := fe 1_i$, we have that
$1_i T 1_i \cong \C[x_i,y_i]$. It follows that
$1_{\eps_i} B_\xi 1_{\eps_i}$ is a quotient of the polynomial
algebra $\C[c^{(2)},\dots,c^{(n-1)}, x_i, y_i]$.
\end{Remark}

Our next theorem computes the graded dimensions of
the spaces $1_\kappa B_\xi 1_\lambda$. It is a graded analog of Theorem~\ref{crazy}.
To formulate it,
for a positively graded vector space $V = \bigoplus_{n \geq 0} V_n$, we let
$\qdim V := \sum_{n \geq 0} (\dim V_n) q^n$ where $q$ is an indeterminate.
Let $[n]$ be the quantum integer $(q^n -
q^{-n})/ (q-q^{-1})$, let $[n]!$ be the corresponding quantum factorial, and let
$\sqbinom{n}{r}$ be the quantum binomial coefficient.

\begin{Theorem}\label{crazier}
For any $\lambda,\kappa \vDash t$,
the space $1_\kappa \BB_\xi 1_\lambda$
is non-zero if and
only if
$\kappa=\lambda+ \sum_i (\lambda_{i+1}-\rho_{i+1}) \alpha_i$
for a composition $\rho$
with
$0 \leq \rho_{i+1} \leq \lambda_{i+1}+\min(\lambda_{i},\rho_{i})$ for all $i$,
in which case
$$
\qdim
1_\kappa \BB_\xi 1_\lambda
=
[m]![n]!
\hspace{-23mm}
\sum_{\substack{\tau = (\tau_i)_{i \in \Z}
\\
\max(\lambda_{i+1},\rho_{i+1})\leq \tau_{i+1} \leq
\lambda_{i+1}+\min(\lambda_i,\rho_i)
}}
\hspace{-22mm}
 q^{s(\tau)}
\prod_i \frac{\sqbinom{\lambda_{i+1}+\tau_i-\tau_{i+1}}{\tau_{i}-\lambda_{i}}
\sqbinom{\lambda_{i+1}+\tau_i-\tau_{i+1}}{\tau_{i}-\rho_{i}}}
{[\lambda_{i+1}+\tau_i-\tau_{i+1}]!
[\lambda_{i+1}+\tau_i-\tau_{i+1}\!+\!\gamma_i]!}\,,
$$
where
\begin{multline*}
s(\tau) := \binom{m}{2}+\binom{n}{2}
+
\sum_i(2\tau_i-\lambda_i-\rho_i)(\lambda_{i+1}+\tau_i-\tau_{i+1}+\gamma_{i})\\
-\sum_i \binom{\lambda_{i+1}+\tau_i-\tau_{i+1}}{2}
-\sum_i \binom{\lambda_{i+1}+\tau_i-\tau_{i+1}+\gamma_i}{2}.
\end{multline*}
\end{Theorem}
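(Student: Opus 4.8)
The plan is to combine the double centralizer property of Corollary~\ref{controls} with graded BGG reciprocity in the Koszul category $\O_\xi$, and then to evaluate the resulting graded decomposition numbers by means of the super Kazhdan--Lusztig conjecture. Concretely, by Corollary~\ref{controls} and the Koszul grading on $\AA_\xi$ supplied by \cite[Theorem 5.26]{BLW}, the graded vector space $1_\kappa\BB_\xi 1_\lambda$ is identified with $\Hom_\g\bigl(P(\tabC),P(\tabA)\bigr)$, where $\tabA:=\tabA(\mu,\nu;\kappa)$ and $\tabC:=\tabA(\mu,\nu;\lambda)$. Since $\O_\xi$ with this grading is a Koszul highest weight category --- taking the standard graded lifts of the Verma supermodules, normalised so that the head of each $M(\tabD)$ sits in degree $0$ --- a standard computation of the graded Cartan invariant via a graded Verma flag together with graded BGG reciprocity gives
\begin{equation*}
\qdim 1_\kappa\BB_\xi 1_\lambda \;=\; \sum_{\tabB\in\xi}\,[M(\tabB):L(\tabA)]_q\,[M(\tabB):L(\tabC)]_q,
\end{equation*}
where $[M(\tabB):L(\tabD)]_q\in\Z[q]$ is the graded composition multiplicity. (One must watch the exact normalisation, namely whether a power of $q$ or a substitution $q\mapsto q^{-1}$ enters through reciprocity, but Koszulity makes this uniform.)

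The crucial step is to evaluate these graded multiplicities when the lower label is anti-dominant. Grouping the sum over $\tabB\in\xi$ according to the row-equivalence class of $\tabB$, i.e.\ to the composition $\beta\vDash t$ with $\tabB\roweq\tabA(\mu,\nu;\beta)$, I would establish two graded refinements. First, that the graded lift of $H_0$ sends $M(\tabB)$ to a shift $\overline{M}_\xi(\beta)\langle s_\tabB\rangle$, so that $[M(\tabB):L(\tabA)]_q=q^{s_\tabB}\,[\overline{M}_\xi(\beta):\overline{L}_\xi(\kappa)]_q$ with $[\overline{M}_\xi(\beta):\overline{L}_\xi(\kappa)]_q$ a product of Gaussian binomial coefficients --- the graded lift of Lemma~\ref{crazypre}. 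Second, that $\sum_{\tabB\roweq\tabA(\mu,\nu;\beta)}q^{s_\tabB}$ equals the Gaussian $q$-analogue of the orbit count $m!\,n!/\prod_i\beta_i!\,(\beta_i+\gamma_i)!$ that appears in the proof of Theorem~\ref{crazy}, that is, a product of quantum factorials and quantum binomials. Both assertions follow from the identification of graded decomposition numbers with parabolic Kazhdan--Lusztig polynomials of type $A_\infty$ for the parabolic subgroup $S_m\times S_n$, as in \cite{CLW,BLW}.

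Feeding these into the displayed formula and reindexing exactly as in the proof of Theorem~\ref{crazy} --- replacing $\phi_i$ by $\tau_{i+1}-\lambda_{i+1}$ and $\theta_i$ by $\tau_{i+1}-\rho_{i+1}$ --- produces the stated sum over $\tau$, and collecting the accumulated powers of $q$ gives the exponent $s(\tau)$. Its contributions arise respectively from: passing between the quantum multinomial $m!\,n!/\prod_i\beta_i!(\beta_i+\gamma_i)!$ and the balanced form $[m]!\,[n]!/\prod_i[\,\cdots\,]!\,[\,\cdots\,]!$ used in the statement (the constant $\binom m2+\binom n2$ together with the terms $-\sum_i\binom{\lambda_{i+1}+\tau_i-\tau_{i+1}}{2}-\sum_i\binom{\lambda_{i+1}+\tau_i-\tau_{i+1}+\gamma_i}{2}$, which record the discrepancy between ordinary and balanced quantum factorials and binomials); and the degree shifts $s_\tabB$ in the graded Verma flag (the bilinear term $\sum_i(2\tau_i-\lambda_i-\rho_i)(\lambda_{i+1}+\tau_i-\tau_{i+1}+\gamma_i)$). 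Two useful checks are that setting $q=1$ recovers Theorem~\ref{crazy}, and that the right-hand side is symmetric under interchanging $\lambda$ and $\kappa$, in accordance with $\qdim 1_\kappa\BB_\xi 1_\lambda=\qdim 1_\lambda\BB_\xi 1_\kappa$, which is already visible from the reciprocity formula.

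I expect the main obstacle to be the second graded refinement above, together with the bookkeeping of grading normalisations: one has to compute the relevant anti-dominant parabolic Kazhdan--Lusztig polynomials for $S_m\times S_n\subset S_\infty$ explicitly, verify the generating-function identity upgrading the rational orbit count to its quantum analogue, and pin down conventions so that the final answer comes out as a polynomial in $q$ with the correct symmetry. An alternative to invoking the super Kazhdan--Lusztig conjecture directly would be to lift the categorical $\mathfrak{sl}_\infty$-action of \S\ref{Catact} to the graded level and reduce to the $\mathfrak{gl}_{1|1}$-type computations underlying Lemma~\ref{crazypre}, but this appears to require essentially the same combinatorial input.
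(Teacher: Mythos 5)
Your overall strategy coincides with the paper's: by Corollary~\ref{controls} and the Koszul grading of \cite[Theorem 5.26]{BLW}, the graded Cartan number $\qdim 1_\kappa\BB_\xi 1_\lambda$ is a sum over $\tabB\in\xi$ of products of graded decomposition numbers with anti-dominant top label, these are evaluated via the super Kazhdan--Lusztig theorem, and one reindexes exactly as in Theorem~\ref{crazy}. The paper's Appendix~\ref{appendixb} runs this same argument, only phrased dually in terms of canonical bases: the graded Cartan number is the pairing $(b_\kappa,b_\lambda)=\sum_{\tabB}(b_\kappa,v_\tabB)(b_\lambda,v_\tabB)$ in $T^{m|n}$, where $(b_\kappa,v_\tabB)$ is the graded multiplicity of $M(\tabB)$ in $P(\tabA(\mu,\nu;\kappa))$, by \cite[Corollary 5.30]{BLW} and the identification of $(-,-)$ with the form of \cite[(5.30)]{BLW}.

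The genuine gap is that your two ``graded refinements'' are exactly where all the content lies, and they are asserted rather than proved; as you yourself note, one still has to compute the relevant anti-dominant parabolic Kazhdan--Lusztig data and the quantum orbit-count identity. The paper does not construct a graded lift of $H_0$ at all; instead it computes the coefficients $(b_\kappa,v_\tabB)$ directly by projecting $T^{m|n}$ onto a quantum analogue $S$ of the symmetric algebra whose dual canonical basis admits the closed formula of Theorem~\ref{incredible}(3). Lemma~\ref{edible} then yields your first refinement, but with a correction: the multiplicity is not a single shift $q^{s_\tabB}$ times bare Gaussian binomials, since each factor carries an extra power $q^{\theta_i(\lambda_{i+1}+\gamma_{i+1})}$ depending on $\gamma$, and these powers are needed to produce the bilinear term in $s(\tau)$. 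Your second refinement is the identity $\sum_{\tabB\roweq\tabA(\mu,\nu;\beta)}q^{2\ell(\tabB)}=q^{\binom{m}{2}+\binom{n}{2}}[m]!\,[n]!\big/\prod_i q^{\binom{\beta_i}{2}}[\beta_i]!\,q^{\binom{\beta_i+\gamma_i}{2}}[\beta_i+\gamma_i]!$ used in the proof of Lemma~\ref{pumpkins}, which accounts for the remaining binomial-coefficient terms in $s(\tau)$. Without carrying out these computations (or an equivalent explicit evaluation of the KL polynomials, which is what Theorem~\ref{incredible} and Lemmas~\ref{edible}--\ref{pumpkins} amount to), the exponent $s(\tau)$ cannot be determined, so the proposal is a correct plan of attack matching the paper's but stops short of the proof's technical core.
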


\begin{proof}
See Appendix \ref{appendixb}.
\end{proof}

\begin{Corollary}\label{pling}
For all
compositions
$\lambda\vDash t$ which are generic in the sense that $\lambda_i \neq 0
\Rightarrow \lambda_i +\lambda_{i+1}+\gamma_i + \gamma_{i+1} =
1$, we have that
$$
\qdim
1_\lambda \BB_\xi 1_\lambda
=
q^{\binom{m}{2}+\binom{n}{2}-\sum_i \binom{\gamma_i}{2}}
(1+q^2)^t\:
[m]! [n]! \big/ \textstyle\prod_i [\gamma_i]!.
$$
\end{Corollary}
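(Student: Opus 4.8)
The plan is to obtain this as a specialization of Theorem~\ref{crazier} to the diagonal case $\kappa=\lambda$, using the genericity hypothesis to collapse the sum over $\tau$ to a sum of $2^t$ monomials in $q$. First I would note that when $\kappa=\lambda$ the relation $\kappa=\lambda+\sum_i(\lambda_{i+1}-\rho_{i+1})\alpha_i$ forces $\rho=\lambda$: since the $\alpha_i$ are linearly independent it gives $\lambda_{i+1}-\rho_{i+1}=\lambda_i-\rho_i$ for all $i$, and both $\lambda$ and $\rho$ have finite support, so $\rho=\lambda$. The inequalities $0\le\rho_{i+1}\le\lambda_{i+1}+\min(\lambda_i,\rho_i)$ are then automatic, so $1_\lambda\BB_\xi1_\lambda\neq 0$ and Theorem~\ref{crazier} applies with $\rho=\lambda$.

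Next I would unwind the genericity condition. For $k$ in the support of $\lambda$ the hypothesis $\lambda_k+\lambda_{k+1}+\gamma_k+\gamma_{k+1}=1$ forces $\lambda_k=1$ and $\lambda_{k+1}=\gamma_k=\gamma_{k+1}=0$; hence $\lambda$ consists of isolated parts equal to $1$ (so its support has exactly $t$ elements), and one also deduces $\lambda_{k-1}=0$, and $\gamma_{k-1}=0$ whenever $k-2$ also lies in the support. With $\rho=\lambda$ the summation condition in Theorem~\ref{crazier} reads $\lambda_{i+1}\le\tau_{i+1}\le\lambda_{i+1}+\lambda_i$; this pins $\tau_p=1$ for $p$ in the support, $\tau_p=0$ unless $p$ or $p-1$ lies in the support, and leaves one free bit $\tau_{k+1}\in\{0,1\}$ for each support element $k$. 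Thus the tuples $\tau$ occurring in the sum are parametrized by functions $e$ from the $t$-element support of $\lambda$ to $\{0,1\}$, and the statistic $\#\{k:e_k=1\}$ will ultimately produce the factor $(1+q^2)^t$.

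Then I would evaluate the summand for fixed $e$. Writing $N_i:=\lambda_{i+1}+\tau_i-\tau_{i+1}$, one checks $0\le N_i\le 1$ (the value $2$ is impossible, since $\lambda_{i+1}=1$ forces $i+1$ into the support and hence $\tau_{i+1}=1$) and $0\le\tau_i-\lambda_i\le N_i$; therefore every $\sqbinom{N_i}{\tau_i-\lambda_i}$ and every $[N_i]!$ equals $1$, and the product in Theorem~\ref{crazier} collapses to $\prod_i[N_i+\gamma_i]!^{-1}$. A short case check on the three ways to have $N_i=1$ shows that $\gamma_i=0$ at every such position, so $\prod_i[N_i+\gamma_i]!^{-1}=\prod_i[\gamma_i]!^{-1}$, independent of $e$. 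For the exponent $s(\tau)$: with $\rho=\lambda$ the coefficient $2\tau_i-\lambda_i-\rho_i$ equals $2(\tau_i-\lambda_i)$, which is $1$ exactly at the positions $i=k+1$ with $e_k=1$; since $N_{k+1}=1$ and $\gamma_{k+1}=0$ there, the cross-term contributes $2\#\{k:e_k=1\}$; the $\binom{N_i}{2}$-sum vanishes as $N_i\le 1$; and the $\binom{N_i+\gamma_i}{2}$-sum equals $\sum_i\binom{\gamma_i}{2}$. Hence $s(\tau_e)=\binom{m}{2}+\binom{n}{2}-\sum_i\binom{\gamma_i}{2}+2\#\{k:e_k=1\}$, and summing $q^{s(\tau_e)}$ over the $2^t$ choices of $e$ gives $q^{\binom{m}{2}+\binom{n}{2}-\sum_i\binom{\gamma_i}{2}}(1+q^2)^t$. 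Multiplying by $[m]![n]!/\prod_i[\gamma_i]!$ yields the claimed formula.

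The main obstacle is the bookkeeping in the third step: one must verify carefully that $N_i\le 1$ at every index and that $\gamma_i$ vanishes wherever $N_i=1$, for otherwise $[N_i+\gamma_i]!$ would fail to be the quantum factorial of a $0$-or-$1$ integer and the product would not reduce to $\prod_i[\gamma_i]!^{-1}$. This is exactly where the genericity of $\lambda$ (equivalently, that its support consists of isolated $1$'s flanked by vanishing $\gamma$'s) enters essentially, and it also guarantees that the ``active'' positions $k$ and $k+1$ attached to distinct support elements never collide.
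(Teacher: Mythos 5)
Your argument is correct and follows essentially the same route as the paper: specialize Theorem~\ref{crazier} to $\kappa=\lambda$, note the $\tau$'s are parametrized by a free bit at each position following a support element of $\lambda$ (the paper's set $I=\{i:\lambda_{i-1}=1\}$, $J\subseteq I$), check the big product collapses to $1/\prod_i[\gamma_i]!$, and compute $s(\tau)=\binom{m}{2}+\binom{n}{2}-\sum_i\binom{\gamma_i}{2}+2|J|$. Your more careful step deducing $\rho=\lambda$ (rather than $\rho=0$, as the paper's proof asserts in passing, apparently a slip) is the correct reading, and the rest of your bookkeeping with $N_i\in\{0,1\}$ and $\gamma_i=0$ wherever $N_i=1$ matches the paper's computation.
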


\begin{proof}
We apply Theorem~\ref{crazier} with $\kappa =\lambda$, hence, $\rho_i =
0$ for all $i$.
Letting
$I := \{i \in \Z\:|\:\lambda_{i-1}=1\}$,
the summation is over the compositions
$\tau^J$ for $J \subseteq I$ defined from
$\tau^J_i := \lambda_i + 1$ if $i \in J$,
 and
$\tau^J_i := \lambda_i$ otherwise.
We have that $s(\tau^J) =
\binom{m}{2}+\binom{n}{2}-\sum_i \binom{\gamma_i}{2}
+2|J|$, hence,
$\sum_{J \subseteq I} q^{s(\tau^J)} =
q^{\binom{m}{2}+\binom{n}{2}-\sum_i \binom{\gamma_i}{2}} (1+q^2)^t$,
while the big product is always equal to $1 \big/ \prod_i [\gamma_i]!$.
\end{proof}

By Lemma~\ref{projc}, the basic algebra $\BB_\xi$ is a Frobenius
algebra. It is also graded and indecomposable.
It is a general fact about such algebras that the top degrees
of the endomorphism algebras of the indecomposable projective objects
are all the
same (and these endomorphism algebras are one-dimensional in this degree); see
e.g.\ \cite[Proposition 5.18]{Rougrad}.
The following describes this top degree explicitly.

\begin{Corollary}\label{zcd}
Let $d := m^2+n^2 - \sum_i \gamma_i^2$.
For every $\lambda \vDash t$, we have that
$\dim (1_\lambda \BB_\xi 1_\lambda)_d = 1$
and
$(1_\lambda \BB_\xi 1_\lambda)_{d'} = 0$ for all $d' > d$.
\end{Corollary}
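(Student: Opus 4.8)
The plan is to pin down the common top degree of the endomorphism algebras of the indecomposable projectives of $\overline{\O}_\xi$ by computing it for one conveniently chosen element of the block. As recorded just before the statement, $\BB_\xi$ is a basic, indecomposable, positively graded Frobenius algebra: self-injectivity is Lemma~\ref{projc}, indecomposability is the fact that $\overline{\O}_\xi$ is a block (Lemma~\ref{oblock}), and the grading is the Koszul grading inherited from $\AA_\xi$. Hence \cite[Proposition 5.18]{Rougrad} applies and shows that the graded algebras $1_\lambda\BB_\xi 1_\lambda=\End_W(\overline{P}_\xi(\lambda))$ (each of which is finite-dimensional) all have the same top degree, being one-dimensional in that top degree. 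So it suffices to produce a single $\lambda\vDash t$ whose top degree equals $d$.

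For this I would take $\lambda$ generic in the sense of Corollary~\ref{pling}, say with $t$ parts equal to $1$ placed at positions $N,N+2,\dots,N+2(t-1)$, where $N$ is chosen so large that $\gamma_i=0$ for all $i\geq N-1$; then at each position $i$ with $\lambda_i\neq 0$ we get $\lambda_i+\lambda_{i+1}+\gamma_i+\gamma_{i+1}=1+0+0+0=1$, so $\lambda$ is generic (and when $t=0$ the zero composition is vacuously generic), and such a $\lambda$ always exists. By Corollary~\ref{pling},
\[
\qdim 1_\lambda\BB_\xi 1_\lambda = q^{c}(1+q^2)^t\,[m]![n]!\Big/\prod_i[\gamma_i]!,\qquad
c:=\binom m2+\binom n2-\sum_i\binom{\gamma_i}2 .
\]
Writing $[k]!=q^{-\binom k2}P_k(q)$ with $P_k(q):=\prod_{j=1}^k\bigl(1+q^2+\cdots+q^{2(j-1)}\bigr)$ an honest polynomial in $q^2$ of $q$-degree $k^2-k$ and leading coefficient $1$, the power $q^{c}$ exactly cancels the power of $q$ produced by the quantum factorials, so that
\[
\prod_i P_{\gamma_i}(q)\cdot\qdim 1_\lambda\BB_\xi 1_\lambda=(1+q^2)^t P_m(q)P_n(q).
\]
Since $\qdim 1_\lambda\BB_\xi 1_\lambda$ is a genuine polynomial in $q$, comparing degrees and leading coefficients on the two sides of this polynomial identity shows that its top degree is $2t+(m^2-m)+(n^2-n)-\sum_i(\gamma_i^2-\gamma_i)$ and its leading coefficient is $1$. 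Using $\sum_i\gamma_i=(m-t)+(n-t)=m+n-2t$, this top degree collapses to $m^2+n^2-\sum_i\gamma_i^2=d$, and together with the first paragraph this proves the corollary.

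There is no serious obstacle; the proof is essentially the combination of \cite[Proposition 5.18]{Rougrad} with Corollary~\ref{pling}. The one place needing care is the bookkeeping in the second step: one must track the monomial prefactors $q^{-\binom k2}$ contributed by the various Laurent-polynomial quantum factorials and check that they combine with the prefactor $q^{c}$ of Corollary~\ref{pling} to leave a genuine polynomial carrying no residual, genericity-dependent shift. (Alternatively one could try to read the top degree off the general formula of Theorem~\ref{crazier}, but first reducing to a generic $\lambda$ is considerably cleaner.)
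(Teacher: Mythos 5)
Your proposal is correct and follows essentially the same route as the paper: it invokes the fact (via \cite[Proposition 5.18]{Rougrad} and the Frobenius/indecomposability properties noted before the statement) that all the graded endomorphism algebras share a common one-dimensional top degree, and then computes that degree for a generic $\lambda$ using Corollary~\ref{pling}. Your explicit bookkeeping with $[k]!=q^{-\binom{k}{2}}P_k(q)$ simply fills in the computation the paper describes as "easily done."
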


\begin{proof}
In view of the remarks just made, it suffices to
compute the top degree of $1_\lambda \BB_\xi 1_\lambda$
for generic $\lambda$, which is easily done
using Corollary~\ref{pling}.
(One can also check this directly for arbitrary $\lambda$; in general,
the monomial of top degree
in the
polynomial $\qdim 1_\lambda \BB_\xi 1_\lambda$ as computed by
Theorem~\ref{crazier} comes from the summand with $\tau_{i+1} =
\lambda_{i+1}+\lambda_i$
for each $i$.)
\end{proof}

\begin{Conjecture}
The algebra $\BB_\xi$ is a graded symmetric Frobenius algebra.
\end{Conjecture}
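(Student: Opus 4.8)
The plan is to produce an explicit homogeneous symmetrizing trace on the locally unital algebra $\BB_\xi$. (When $t=0$ the block $\overline{\O}_\xi$ is semisimple and there is nothing to prove, so assume $t>0$.) Much is already in place. By Lemma~\ref{projc}, $\BB_\xi$ is self-injective, hence a graded Frobenius algebra with trivial Nakayama permutation; and by Corollary~\ref{zcd} every diagonal block $1_\lambda\BB_\xi 1_\lambda=\End_W(\overline{P}_\xi(\lambda))$ is concentrated in degrees $0,\dots,d$ with $d:=m^2+n^2-\sum_i\gamma_i^2$ \emph{independent of $\lambda$}, and with one-dimensional top degree $d$. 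For each $\lambda\vDash t$ fix a spanning vector $\omega_\lambda\in(1_\lambda\BB_\xi 1_\lambda)_d$, namely the degree-$d$ endomorphism $\overline{P}_\xi(\lambda)\twoheadrightarrow\overline{L}_\xi(\lambda)\langle d\rangle\hookrightarrow\overline{P}_\xi(\lambda)$ obtained by composing the projection onto the head with the inclusion of the socle (the head and socle sitting in degrees $0$ and $d$). The candidate trace is $\tau(b):=\sum_{\lambda\vDash t}\tau_\lambda(1_\lambda b1_\lambda)$, where $\tau_\lambda(g)$ is the coefficient of $\omega_\lambda$ in the degree-$d$ part of $g\in1_\lambda\BB_\xi1_\lambda$. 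Non-degeneracy of $(a,b)\mapsto\tau(ab)$ is just the Frobenius property above; the real content of the conjecture is the cyclicity $\tau(gh)=\tau(hg)$, i.e.\ the assertion that the normalisations $\{\omega_\lambda\}_{\lambda\vDash t}$ can be chosen globally coherently.

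The first ingredient for this coherence is a graded contravariant duality. Category $\O_\Z$ carries a simple-preserving duality $\circledast$ (linear dual twisted by a Chevalley anti-automorphism); it preserves Gelfand--Kirillov dimension, hence the Serre subcategory $\mathcal{T}_\Z$, so by Theorem~\ref{qmain} it descends to a simple-preserving duality on $\overline{\O}_\Z$, which by the graded lift of \cite[\S5.5]{BLW} refines to a graded duality on $\BB_\xi\grlmof$ with $\circledast^2\cong\id$. Because $\overline{P}_\xi(\lambda)$ is projective-injective with head and socle $\overline{L}_\xi(\lambda)$ in degrees $0$ and $d$, this duality forces $\overline{P}_\xi(\lambda)^\circledast\cong\overline{P}_\xi(\lambda)\langle d\rangle$ with the \emph{same} shift $d$ for every $\lambda$ --- which is precisely where the uniformity in Corollary~\ref{zcd} is used. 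Transporting $\circledast$ across the equivalence (\ref{terrible2}) gives a homogeneous involutive anti-automorphism $\sigma$ of $\BB_\xi$ fixing each $1_\lambda$, with $\sigma(\omega_\lambda)=\pm\omega_\lambda$; re-scaling, one may assume $\sigma(\omega_\lambda)=\omega_\lambda$ whenever the sign is $+1$, and then $\tau$ is $\sigma$-invariant, yielding the $\sigma$-twisted identity $\tau(gh)=\tau(\sigma(h)\sigma(g))$.

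The main obstacle is that this falls short of cyclicity: a graded Frobenius algebra with trivial Nakayama permutation admitting a simple-preserving graded involutive duality is automatically graded weakly symmetric and isomorphic to its opposite, but it need not be symmetric. To close the gap one must genuinely exploit the rest of the structure of $\overline{\O}_\xi$. The route I would take is to bring in the $\mathfrak{sl}_\infty$-categorification of Theorem~\ref{bigcat}: the biadjoint functors $\overline{F},\overline{E}$ satisfy coherent cyclic biadjunction relations (inherited from $\O$ through the strongly equivariant functor $H_0$) and commute with $\circledast$ compatibly with those relations; since, by the proof of Lemma~\ref{projc}, each $\overline{P}_\xi(\lambda)$ is a summand of $\overline{F}^{\,d(\lambda)}$ applied to a projective in a semisimple block, one can attempt to propagate a single choice of $\omega$ through these functors, the cyclicity of biadjunction being exactly what rules out a sign discrepancy between blocks. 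A parallel route, modelled on the case $m=1$ of Lemma~\ref{swimming} and Remark~\ref{ball2}, is to put a grading on the idempotented algebra $\dot W$ of \S\ref{lastss} and its cyclotomic quotients $W_\xi$ --- which are Morita equivalent, and for maximally atypical $\xi$ isomorphic, to $\BB_\xi$ by Theorem~\ref{moreispossible} --- matching the Koszul grading, recognise the resulting presentation as that of a cyclotomic quiver-Hecke-type algebra for the $A_\infty$-quiver with a loop at each vertex, and import the known graded symmetry of cyclotomic quiver Hecke algebras. In either approach the crux is the global normalisation of the top-degree socle maps $\omega_\lambda$, equivalently showing the Nakayama automorphism of $\BB_\xi$ is inner; as a sanity check, the graded Cartan numbers computed in Theorem~\ref{crazier} should satisfy $\qdim(1_\kappa\BB_\xi1_\lambda)=q^d\,\qdim(1_\lambda\BB_\xi1_\kappa)\big|_{q\mapsto q^{-1}}$, the numerical shadow of graded symmetry.
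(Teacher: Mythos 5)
You are offering a proof for a statement that the paper itself leaves open: this is stated as a Conjecture, with no proof given, so there is nothing in the paper to compare your argument against — and your proposal does not close the gap either. What your first two paragraphs actually establish is only what the paper already records in the surrounding text: $\BB_\xi$ is a graded Frobenius algebra with trivial Nakayama permutation (Lemma~\ref{projc}), its diagonal blocks have one-dimensional top degree $d=m^2+n^2-\sum_i\gamma_i^2$ uniformly in $\lambda$ (Corollary~\ref{zcd}), and the graded Cartan matrix of Theorem~\ref{crazier} has the numerical symmetry that a graded symmetric algebra would force. You correctly isolate the real content — that the Nakayama \emph{automorphism} is inner, equivalently that the top-degree socle maps $\omega_\lambda$ admit a globally coherent normalisation making $\tau$ cyclic — but at exactly that point the argument stops: ``one can attempt to propagate a single choice of $\omega$ through these functors'' is a plan, not a proof, and the duality step it relies on (that $\circledast$ descends through $H_0$ to $\overline{\O}_\xi$ compatibly with the Koszul grading of \S\ref{lastss}--\ref{SMorita}) is itself asserted rather than proved here.

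Neither of your two proposed routes closes this gap as stated. For the categorification route, biadjointness of $\overline{F}$ and $\overline{E}$ (Theorem~\ref{bigcat}) together with the summand argument from the proof of Lemma~\ref{projc} again yields self-injectivity and weak symmetry, but the scalar ambiguity in transporting $\omega$ along units and counits of adjunction is precisely the Nakayama automorphism you are trying to kill; without an explicit compatibility between the adjunction data and the duality (a genuinely new verification, nowhere available in the paper), no sign or scalar discrepancy is ruled out. For the second route, the presentation of $\dot W$ in \S\ref{lastss} is of shifted-Yangian type, not a KLR presentation: the would-be quiver has a loop at every vertex (visible already in Lemma~\ref{swimming}), and the known graded-symmetry results for cyclotomic quiver Hecke algebras exclude loops, so there is nothing to ``import''; moreover Theorem~\ref{moreispossible} identifies $W_\xi$ with $\BB_\xi$ only for maximally atypical blocks, so even a successful version of this argument would not cover general $\xi$. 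In short, the proposal is a sensible strategy outline consistent with the evidence the paper assembles, but the statement remains, as in the paper, a conjecture.
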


The last important algebra that we associate to the block $\O_\xi$ is its
{\em center}
$\CC_\xi$, i.e.\ the endomorphism
algebra of the identity functor $\operatorname{Id}:\O_\xi \rightarrow
\O_\xi$.
The double centralizer property implies that $\CC_\xi$ is also the center of
the quotient category $\overline{\O}_\xi$.
It can be recovered from the Soergel algebra
$\BB_\xi$. To explain this, we view elements
of $\BB_\xi$ as infinite
matrices of the form
$x = (x_{\kappa,\lambda})_{\kappa,\lambda \vDash t}$ for
$x_{\kappa,\lambda} \in
1_\kappa \BB_\xi 1_\lambda$, all but finitely many
of which are zero.
If we drop this finiteness condition, we obtain a
completion
$\widehat{\BB}_\xi$ of this algebra.
In fact, we have simply that
\begin{equation}
\widehat{\BB}_\xi = \End_W\left(\bigoplus_{\lambda \vDash t}
\overline{P}_\xi(\lambda)\right)^{\op}.
\end{equation}
Moreover, finite-dimensional $\BB_\xi$-modules are the same as finite-dimensional modules over the completion.
This all depends on the fact that $\BB_\xi$
is {\em bounded} in the sense that all of the ideals
$1_\kappa \BB_\xi$ and $\BB_\xi 1_\lambda$
are finite-dimensional.
Finally, we may identify
\begin{equation}\label{gradedcenter}
\CC_\xi = Z(\widehat{\BB}_\xi).
\end{equation}
The grading on $\BB_\xi$ induces a positive grading $\CC_\xi =
\bigoplus_{d \geq 0} (\CC_\xi)_d$.

\begin{Lemma}\label{jedi}
The top graded component of $\CC_\xi$
is $\bigoplus_{\lambda \vDash t} (1_\lambda \CC_\xi
1_\lambda)_d$
where $d := m^2 + n^2 - \sum_i \gamma_i^2$,
with each summand
$(1_\lambda \CC_\xi
1_\lambda)_d$ being one-dimensional.
Also, the Jacobson radical of $\CC_\xi$ is nilpotent of codimension 1.
\end{Lemma}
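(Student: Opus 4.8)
The plan is to argue entirely inside the completed Soergel algebra, using $\CC_\xi = Z(\widehat{\BB}_\xi)$ from (\ref{gradedcenter}) together with the graded Frobenius structure on $\BB_\xi$ recalled before Corollary~\ref{zcd}. Write $D := m^2+n^2-\sum_i \gamma_i^2$. The first step is to pin down the top of $\BB_\xi$: I want $(1_\kappa\BB_\xi 1_\lambda)_e = 0$ whenever $e > D$ (for all $\kappa,\lambda\vDash t$), $1_\kappa(\BB_\xi)_D 1_\lambda = 0$ for $\kappa\neq\lambda$, and $\dim 1_\lambda(\BB_\xi)_D 1_\lambda = 1$ for every $\lambda$. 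Corollary~\ref{zcd} gives this for the diagonal blocks only; to get it in general I would invoke that $\BB_\xi$ is a graded indecomposable basic Frobenius algebra whose Gorenstein parameter equals $D$, so that a homogeneous Frobenius form $\tau$ pairs $(\BB_\xi)_D$ perfectly with the degree-zero part $(\BB_\xi)_0 = \bigoplus_{\lambda}\C 1_\lambda$; since $\tau(1_\kappa\BB_\xi 1_\lambda) = 0$ for $\kappa\neq\lambda$, perfectness of this pairing forces exactly the stated shape. Equivalently, one can use that each $\overline{P}_\xi(\lambda)$ is projective--injective with simple socle $\overline{L}_\xi(\lambda)$ concentrated in its top degree $D$, so that no homogeneous morphism into $\overline{P}_\xi(\lambda)$ has degree above $D$.

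With that in hand, I would describe $\CC_\xi$ concretely. For a central element $z$ one has $1_\kappa z 1_\lambda = 1_\kappa 1_\lambda z = 0$ when $\kappa\neq\lambda$, so $\CC_\xi$ consists of diagonal tuples $z = (z_\lambda)_{\lambda\vDash t}$ with $z_\lambda\in 1_\lambda\BB_\xi 1_\lambda$, and it inherits the grading $\CC_\xi = \bigoplus_{e\geq 0}(\CC_\xi)_e$. By Step~1, $(\CC_\xi)_e = 0$ for $e > D$. In degree $0$, a central tuple $(c_\lambda 1_\lambda)$ with $c_\lambda\in\C$ must satisfy $c_\kappa = c_\lambda$ whenever $1_\kappa\BB_\xi 1_\lambda\neq 0$, i.e.\ $(c_\lambda)$ is constant on the connected components of the quiver of $\BB_\xi$; indecomposability of $\overline{\O}_\xi$ (Lemma~\ref{oblock}) then gives $(\CC_\xi)_0 = \C\cdot 1$ with $1 = \sum_\lambda 1_\lambda$.

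The second assertion of the lemma now follows formally: $\operatorname{rad}\CC_\xi$ contains the ideal $I := \bigoplus_{e\geq 1}(\CC_\xi)_e$ since $\CC_\xi/I = \C$ is a field, while conversely $I$ is nilpotent because $I^{D+1}\subseteq\bigoplus_{e\geq D+1}(\CC_\xi)_e = 0$, hence $I\subseteq\operatorname{rad}\CC_\xi$; so $\operatorname{rad}\CC_\xi = I$ is nilpotent of codimension $1$. For the first assertion I would show that \emph{every} diagonal tuple $z = (z_\lambda)$ with $z_\lambda\in(1_\lambda\BB_\xi 1_\lambda)_D$ is automatically central: for homogeneous $x\in\widehat{\BB}_\xi$ with $\deg x\geq 1$, every component of $zx$ and of $xz$ lies in a space $(1_\kappa\BB_\xi 1_\mu)_{D+\deg x} = 0$, so $zx = 0 = xz$; while for $\deg x = 0$, $x$ is a linear combination of the $1_\lambda$ (as $(\BB_\xi)_0 = \bigoplus_\lambda\C 1_\lambda$), which commute with $z$ by construction. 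Hence $(1_\lambda\CC_\xi 1_\lambda)_D = (1_\lambda\BB_\xi 1_\lambda)_D$, one-dimensional by Corollary~\ref{zcd}, and the top graded component of $\CC_\xi$ is $\bigoplus_{\lambda\vDash t}(1_\lambda\CC_\xi 1_\lambda)_D$ as claimed.

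I expect the only real work to be Step~1, namely propagating the degree bound and the one-dimensionality from the diagonal blocks $1_\lambda\BB_\xi 1_\lambda$ (where they come from Corollary~\ref{zcd}) to all of $\BB_\xi$. This is exactly where the graded Frobenius structure is essential --- via the perfect pairing furnished by a homogeneous Frobenius form, or via self-duality of the projective--injectives $\overline{P}_\xi(\lambda)$ --- and everything after it is a formal consequence of $\CC_\xi$ being the center of $\widehat{\BB}_\xi$ with its induced positive grading.
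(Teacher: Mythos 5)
Your proposal is correct and follows essentially the same route as the paper, whose proof simply deduces the first assertion from Corollary~\ref{zcd} and the second from the indecomposability of $\O_\xi$ (hence of $\overline{\O}_\xi$). You merely make explicit the steps the paper leaves implicit --- using the prinjectivity from Lemma~\ref{projc} and the positive grading to bound the off-diagonal blocks $1_\kappa\BB_\xi 1_\lambda$ by degree $d$ and to see that the degree-$d$ socle elements are central --- which is a faithful elaboration rather than a different argument.
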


\begin{proof}
The first assertion follows from Corollary~\ref{zcd}.
 The second assertion then follows because $\O_\xi$, hence,
$\overline{\O}_\xi$,
is indecomposable.
\end{proof}

For the following conjecture, we observe that $B_\xi \cap C_\xi$ is an ideal of $C_\xi$.

\begin{Conjecture}
The image of the canonical map $Z(W) \rightarrow C_\xi$ is isomorphic to $C_\xi/B_\xi \cap C_\xi$.
\end{Conjecture}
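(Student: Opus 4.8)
The plan is to split the conjecture into the two statements that (i) the image of $Z(W)$ together with $\BB_\xi\cap\CC_\xi$ spans all of $\CC_\xi$, and (ii) the image of $Z(W)$ meets $\BB_\xi\cap\CC_\xi$ only in zero; since $\BB_\xi\cap\CC_\xi$ is an ideal of $\CC_\xi$, these together say exactly that the quotient map $\CC_\xi\to\CC_\xi/(\BB_\xi\cap\CC_\xi)$ restricts to an isomorphism on the image of $Z(W)$, which is the assertion. (Here I assume $t>0$; the case $t=0$ is degenerate.) The first thing to do is to identify the canonical map. By Theorem~\ref{zonto} we may write $Z(W)=\pr(Z(\g))$ with $\pr(z_r)=\tilde c^{(r)}$, and for $z\in Z(\g)$ one has $z-\pr(z)\in\m_\chi U(\g)$, so $(z-\pr(z))M\subseteq\m_\chi M$ and hence $\pr(z)$ acts on $H_0(M)$ just as $z$ does on $M$. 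Therefore, writing $\bar Z$ for the image of $Z(W)$ in $\CC_\xi$, the map $Z(W)\to\CC_\xi$ is identified, via $\pr$, with the natural map $Z(\g)\to Z(\O_\xi)=\CC_\xi$ coming from the action of the centre on the block $\O_\xi$. I would also record, straight from the definitions, that $\BB_\xi\cap\CC_\xi$ is the finitely supported part of $\CC_\xi=Z(\widehat\BB_\xi)$, whereas the image of $1\in Z(W)$ is $\sum_{\lambda\vDash t}1_\lambda$, which has full support.

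For (ii): on the block $\O_\xi$ every $z\in Z(\g)$ acts with a single generalised eigenvalue $\chi_\xi(z)\in\C$, so its image in $\CC_\xi$ decomposes as $\chi_\xi(z)\cdot 1+n(z)$, where $1=\sum_\lambda 1_\lambda$ has full support and $n(z)$ is nilpotent. If $\bar z\in\bar Z$ lies in $\BB_\xi\cap\CC_\xi$, hence has finite support, then evaluating on a head $\overline L_\xi(\kappa)$ for $\kappa$ outside this support shows $\chi_\xi(z)=0$ (there $z$ acts both as the scalar $\chi_\xi(z)$ and, being nilpotent, as $0$), so $\bar z=n(z)$ is a finitely supported, nilpotent element of $\bar Z$. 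To rule this out I would use a rigidity argument: the $1_\lambda$-component of the image of any $z\in Z(W)$ depends polynomially on $\lambda$ --- visible from the explicit formulae for the $c^{(r)}$ and $\tilde d_i^{(r)}$ in \cite{BBG} and from Theorem~\ref{landing}(2) --- so it cannot vanish for cofinitely many $\lambda$ without vanishing identically. The cleanest incarnation should pass through the grading: by Lemma~\ref{jedi} the top graded piece of $\CC_\xi$ is $\bigoplus_\lambda(1_\lambda\CC_\xi 1_\lambda)_d$ with one-dimensional summands, and in the maximally atypical model block ($\mu=0$, $t=m$, where $\dot W\twoheadrightarrow\BB_\xi$ by Theorems~\ref{landing} and \ref{moreispossible}) one exhibits via Corollary~\ref{zcd} a central generator of $W$ hitting this top degree with every $1_\lambda$-component nonzero; one then transports this to arbitrary $\xi$ using that the relevant data depends on $\gamma=\mu+\nu$ only and the $\mathfrak{sl}_\infty$-equivariance of $H_0$ from Theorem~\ref{bigcat}.

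Part (i), surjectivity of $Z(W)\to\CC_\xi/(\BB_\xi\cap\CC_\xi)$, is where I expect the real difficulty: it is essentially the analogue of Soergel's Endomorphismensatz flagged as open in the introduction. An element of $\CC_\xi=Z(\widehat\BB_\xi)$ is a compatible family $(\theta_\lambda)_{\lambda\vDash t}$ of $W$-endomorphisms $\theta_\lambda\in\End_W(\overline P_\xi(\lambda))$, and one must show that after subtracting a finitely supported central correction this family coincides with the action of a single $z\in Z(W)$. I would try this by descending induction on the grading, using Theorem~\ref{crazier} and Lemma~\ref{jedi} to pin down $\CC_\xi$ (equivalently $\CC_\xi/(\BB_\xi\cap\CC_\xi)$) degree by degree and checking that the already available $\tilde c^{(r)}=\pr(z_r)$ suffice to hit each graded piece modulo finite support --- reducing, via the strong equivariance of $H_0$ under the $\mathfrak{sl}_\infty$-action, to a single maximally atypical model block where $\dot W\twoheadrightarrow\BB_\xi$ makes the surjectivity transparent. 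A more robust alternative would feed in the Koszulity and super Kazhdan--Lusztig input of \cite{BLW,CLW} to compute $\CC_\xi$ outright and match it with $I(\t)$ modulo the generalised central character ideal of $\xi$. In either approach the main obstacle is controlling the completion $\widehat\BB_\xi$ --- that is, the passage between $W$ with its unit, its idempotented form $\dot W$, and the infinite-support part of $\CC_\xi$ --- and making the surjectivity uniform over all $\lambda\vDash t$ rather than just on a Zariski-dense subset.
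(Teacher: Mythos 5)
You should first note that the paper does not prove this statement at all: it appears only as a Conjecture (immediately after Lemma~\ref{jedi}), and the surrounding text makes clear that an Endomorphismensatz-type description of $\CC_\xi$ and $\BB_\xi$ is precisely what the authors regard as open. So there is no proof in the paper to compare with, and the question is whether your proposal itself closes the gap. It does not. The decisive step is your part (i), the surjectivity of $Z(W)\rightarrow \CC_\xi/(\BB_\xi\cap\CC_\xi)$, and there you only describe two possible programs. Neither is carried out: the reduction to a maximally atypical model block via the $\mathfrak{sl}_\infty$-equivariance of $H_0$ is not justified (equivariance relates blocks connected by the categorical action, but transporting \emph{central} elements and the subalgebra generated by the $\tilde c^{(r)}$'s across such equivalences is exactly the kind of Soergel-type statement being conjectured), and even in the model block the surjection $\dot W\twoheadrightarrow W_\xi\cong \BB_\xi$ of Theorems~\ref{landing} and~\ref{moreispossible} does not make the claim ``transparent'': one still has to identify $Z(\widehat{\BB}_\xi)$ and show it is exhausted, modulo finitely supported elements, by the image of the commutative subalgebra generated by the $c^{(r)}1_\eta$'s. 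Knowing the graded Cartan matrix (Theorem~\ref{crazier}) and the top degree of $\CC_\xi$ (Lemma~\ref{jedi}) constrains dimensions but does not determine the centre of $\widehat{\BB}_\xi$, so the proposed descending induction on degree has no actual engine behind it.

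Part (ii) also has an unproven core. Your reduction to ruling out finitely supported nilpotent elements of the image is fine (the scalar part is the central character of the block, and evaluating off the support kills it), but the ``rigidity'' claim that the $1_\lambda$-component of the image of $z\in Z(W)$ depends polynomially on $\lambda$ is asserted, not proved. Theorem~\ref{landing}(2) only covers $c^{(1)}$, where the action is a genuine scalar; for higher $\tilde c^{(r)}=\pr(z_r)$ the action on $\overline{P}_\xi(\lambda)$ has a nontrivial nilpotent component living in positive degree of $1_\lambda\widehat{\BB}_\xi 1_\lambda$, and nothing in the paper (nor in your sketch) controls how this component varies with $\lambda$, let alone uniformly enough to exclude cofinite vanishing. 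So both halves of your decomposition rest on statements that are themselves at the level of the conjecture. The splitting into (i) and (ii) and the identification of the map $Z(W)\rightarrow\CC_\xi$ with the action of $Z(\g)$ on the block via $\pr$ are sensible and correct, but as it stands this is a plan of attack, not a proof, and the hard content --- a computation of $Z(\widehat{\BB}_\xi)$ or an Endomorphismensatz for $\overline{\O}_\xi$ --- is still missing.
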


\subsection{Morita and derived equivalences between blocks}\label{SMorita}
In the final subsection, we make some remarks about the problem of classifying blocks of
$\O$ up to Morita and/or derived equivalence.
Actually, we just look at the integral blocks, which is
justified thanks to
\cite[Theorem 3.10]{CMW}.
Recall the definitions of $\lambda^+$ and $\lambda^T$ from
 the beginning of \S\ref{morec}.

We begin by discussing derived equivalences.
Following \cite[Definition 4.2]{CM},
we say that $\O_\xi$ and $\O_{\xi'}$ are {\em gradable derived
  equivalent}
if there is a $\C$-linear equivalence $F: D^b(\O_\xi) \rightarrow D^b(\O_{\xi'})$ of
triangulated categories with inverse $G$, such that both $F$ and
$G$ admit graded lifts.

The following theorem gives many examples of gradable derived
equivalences. It comes for free from the theory of braid group actions from
\cite{CR}.

\begin{Theorem}
\label{freebie}
Suppose that $\xi = (\mu,\nu;t) \in \Xi(m|n)$ and $i \in \Z$.
Let $s_i(\xi) := (s_i(\mu), s_i(\nu);t)$, where $s_i(\mu)$ (resp.
$s_i(\nu)$) is obtained by interchanging the $i$th and $(i+1)$th parts
of $\mu$ (resp. $\nu$).
Then, there is a gradable derived
equivalence
$$
\Theta_i:D^b(\O_\xi) \rightarrow D^b(\O_{s_i(\xi)})
$$
inducing a map of the form $[M(\tabA)] \mapsto \pm[M(s_i(\tabA))]$
at the level of Grothendieck groups, where $s_i(\tabA)$ is obtained by
replacing all entries $i$ of $\tabA\in\xi$
by $(i+1)$ and vice versa.
If $t=\mu_i \mu_{i+1} = \nu_{i}\nu_{i+1} = 0$, this functor is induced
by a $\C$-linear equivalence $\Theta_i:\O_\xi \rightarrow
\O_{s_i(\xi)}$ such that $\Theta_i M(\tabA) \cong M(s_i(\tabA))$ for all $\tabA
\in \xi$.
\end{Theorem}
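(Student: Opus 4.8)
The plan is to obtain both $\Theta_i$ and its stated properties essentially for free from the categorical $\mathfrak{sl}_\infty$-action on $\O_\Z$ recalled in \S\ref{Catact} (which comes from \cite[Theorem 3.10]{BLW}), together with the Chuang--Rouquier machinery of \cite{CR}. Fix $i \in \Z$ and consider the $i$th $\mathfrak{sl}_2$-subaction, given by the biadjoint pair $F_i,E_i$ with its natural transformations $x$ and $s$. By \cite{CR} this produces a Rickard complex $\Theta_i$ --- a bounded complex of divided-power functors $E_i^{(a)}F_i^{(b)}$ --- which is a self-equivalence of $D^b(\O_\Z)$ whose quasi-inverse is again a Rickard complex, and which acts on the Grothendieck group by the simple reflection $s_i$ of the Weyl group of $\mathfrak{sl}_\infty$. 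Under the isomorphism $K_0(\O_\Z^\Delta)_\C \cong T^{m|n}$ from (\ref{ggg}), $s_i$ acts on $T^{m|n} = (V^+)^{\otimes m}\otimes(V^-)^{\otimes n}$ as a signed permutation taking the monomial $v_\tabA$ to $\pm v_{s_i(\tabA)}$, so $[\Theta_i][M(\tabA)] = \pm[M(s_i(\tabA))]$.

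Next I would determine how $\Theta_i$ moves blocks. The $\mathfrak{sl}_\infty$-weight of the block $\O_\xi$ inside $T^{m|n}$ is $\sum_j(\mu_j-\nu_j)\eps_j$, and because of the constraint $\mu_j\nu_j=0$ in (\ref{Xi}) this weight recovers $(\mu,\nu;t)$ uniquely; hence the blocks of $\O_\Z$ are in bijection with the nonzero weight spaces of $T^{m|n}$. Since $\Theta_i$ realizes $s_i$ on $K_0$, it carries $D^b(\O_\xi)$ into $D^b(\O_{\xi'})$ with $\wt(\xi')=s_i\,\wt(\xi)=\wt(s_i(\xi))$, i.e.\ $\xi'=s_i(\xi)$; as the quasi-inverse does the reverse, restriction gives the asserted derived equivalence $\Theta_i\colon D^b(\O_\xi)\to D^b(\O_{s_i(\xi)})$. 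Gradability is then immediate from \cite[Theorem 5.11]{BLW}: the categorical action lifts to the Koszul graded lifts of the blocks, so each divided-power functor, hence $\Theta_i$ and its quasi-inverse, admits a graded lift; this is exactly a gradable derived equivalence in the sense of \cite[Definition 4.2]{CM}.

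For the last sentence I would show that under the hypothesis $t=\mu_i\mu_{i+1}=\nu_i\nu_{i+1}=0$ the Rickard complex collapses to a single term. Each monomial $v_\tabA$ with $\tabA\in\xi$ is a tensor product of standard basis vectors of $V^+$ and $V^-$, each lying in a trivial or a $2$-dimensional $\mathfrak{sl}_2^{(i)}$-summand; a short case check on $(\mu_i,\mu_{i+1},\nu_i,\nu_{i+1})$, using also the constraint $\mu_i\nu_i=0$, shows that the three vanishing conditions force every one of these $2$-dimensional factors to be extremal of one and the same $\mathfrak{sl}_2^{(i)}$-weight, so that $v_\tabA$ is a highest-weight vector when $\langle\wt(\xi),\alpha_i^\vee\rangle=:d\ge 0$ and a lowest-weight vector when $d\le 0$, the same type for all $\tabA\in\xi$. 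Consequently $T^{m|n}_{\wt(\xi)}$ lies in $\ker E_i$ (resp.\ $\ker F_i$), so it meets only $\mathfrak{sl}_2^{(i)}$-strings on whose extreme end it sits, and by the shape of the Chuang--Rouquier complex the restriction $\Theta_i|_{\O_\xi}$ equals the single functor $F_i^{(d)}$ (resp.\ $E_i^{(-d)}$) placed in cohomological degree $0$. Since $F_i=U\otimes-$ is exact, so is its summand $F_i^{(d)}$, and thus $\Theta_i\colon\O_\xi\to\O_{s_i(\xi)}$ is an exact equivalence of abelian categories; finally $F_i^{(d)}M(\tabA)$ has a Verma flag with $K_0$-class $[M(s_i(\tabA))]$, hence is isomorphic to $M(s_i(\tabA))$.

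I expect the main obstacle to be this collapse argument: one must extract from the explicit action (\ref{e})--(\ref{f}) of $\mathfrak{sl}_\infty$ on $T^{m|n}$ precisely why the three vanishing conditions force $T^{m|n}_{\wt(\xi)}$ to consist of extremal vectors of a single type, and then match this with the exact form of the Rickard complex in \cite{CR} to see that only its degree-zero term survives. Everything else is a routine transcription of the standard categorical braid-group formalism to the present locally finite super setting.
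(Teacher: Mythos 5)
Your proposal is correct and follows essentially the same route as the paper's own (very brief) proof: the categorical $\mathfrak{sl}_\infty$-action of \S\ref{Catact}, the Chuang--Rouquier Rickard complex giving the derived equivalence and the sign-twisted reflection on $K_0$, graded lifts via \cite{BLW} and \cite[\S5.3.2]{Rou}, and collapse of the complex to a single divided power (a ``Scopes equivalence'') under the hypothesis $t=\mu_i\mu_{i+1}=\nu_i\nu_{i+1}=0$; you simply spell out the weight-space bookkeeping and the collapse argument that the paper leaves implicit. One small point: for gradability you need the graded lift of the whole Rickard complex (homogeneous differentials), which is exactly what the cited \cite[\S5.3.2]{Rou} provides, not merely graded lifts of the individual functors $E_i^{(a)}F_i^{(b)}$.
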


\begin{proof}
Since we have a categorical action of $\mathfrak{sl}_\infty$ on
$\O_\Z$
as described in \S\ref{Catact},
and this categorical action admits a graded lift by
\cite[Theorem 5.26]{BLW},
the existence of $\Theta_i$ follows from
\cite[Theorem 6.4]{CR}.
The functor $\Theta_i$ is defined there
by tensoring with the ``Rickard
complex'', which
admits a graded lift by \cite[\S5.3.2]{Rou}.
When $t=\mu_i\mu_{i+1}=\nu_i\nu_{i+1}=0$, the Rickard complex
collapses to a single term,
hence, it is
a ``Scopes equivalence''.
\end{proof}

Theorem~\ref{freebie} motivates the following conjecture.

\begin{Conjecture}\label{firstc}
Take blocks $\xi = (\mu,\nu;t)\in \Xi(m|n)$ for $0 \leq m \leq n$
and $\xi' = (\mu,\nu;t') \in \Xi(m'|n')$ for $0 \leq m' \leq n'$,
such that
$\O_\xi$ and $\O_{\xi'}$ are non-trivial, i.e.\ they have more than one isomorphism class of
irreducible object.
Then $\O_\xi$ and $\O_{\xi'}$ are gradably derived equivalent if and
only if
$t = t'$, $m=m'$, $n=n'$
and $(\mu+\nu)^T = (\mu'+\nu')^T$.
\end{Conjecture}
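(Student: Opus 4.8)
We have not established Conjecture~\ref{firstc}; what follows is the strategy we would pursue and an indication of the main obstacle. Throughout set $\gamma := \mu+\nu$ and $\gamma' := \mu'+\nu'$, and note that $(\mu+\nu)^T = (\mu'+\nu')^T$ holds exactly when $\gamma$ and $\gamma'$ have the same multiset of nonzero parts, i.e.\ when $\gamma' = w\gamma$ for some finitely supported $w \in S_\infty$.

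For the ``only if'' direction, suppose $\O_\xi$ and $\O_{\xi'}$ are gradably derived equivalent. Passing through the graded version of the equivalence~(\ref{terrible}) and then Morita equivalence with the Soergel algebras of~(\ref{cxi}), this yields a gradable derived equivalence between the module categories of $\BB_\xi$ and $\BB_{\xi'}$. Such an equivalence preserves the rank of the (graded) Grothendieck group, which is finite precisely when $t=0$; so it reduces at once to two cases. If $t=t'=0$ then $\overline{\O}_\xi$ is trivial and one argues directly with $\O_\xi$, which for typical central characters is governed by the even Harish-Chandra theory, reducing the claim to the classical (Soergel) description of singular integral blocks of $\O$ for $\gl_m\oplus\gl_n$. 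If $t,t'\geq 1$ one uses that gradable derived equivalence preserves graded Hochschild cohomology, in particular the graded center $\CC_\xi = HH^0(\BB_\xi)$ of~(\ref{gradedcenter}): by Lemma~\ref{jedi} the socle of $\CC_\xi$ lies in degree $m^2+n^2-\sum_i\gamma_i^2$; and it preserves the graded Cartan form on the Grothendieck group up to $\Z((q))$-congruence. Feeding in the explicit matrix of Theorem~\ref{crazier} (which depends only on $m,n,t$ and on $\gamma$, and symmetrically in the parts of $\gamma$), one reads off $|\gamma|=m+n-2t$ and then, by a combinatorial analysis of this congruence class together with the socle degree above, the remaining invariants $m$, $n$, $t$, and the multiset of parts of $\gamma$, i.e.\ $\gamma^T$. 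Carrying this numerical bookkeeping through rigorously --- in particular separating the various positive values of $t$, for which one may need higher Hochschild invariants rather than the Cartan form alone --- is the only gap we foresee on this side, and it should be elementary though combinatorially delicate.

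The ``if'' direction is where the real difficulty lies. Assume $t=t'$, $m=m'$, $n=n'$ and $(\mu+\nu)^T = (\mu'+\nu')^T$, so $\gamma'=w\gamma$ for some $w\in S_\infty$. A sequence of the Scopes/Rickard functors $\Theta_i$ of Theorem~\ref{freebie} --- each simultaneously interchanging the $i$th and $(i+1)$th parts of $\mu$ and of $\nu$, hence of $\gamma$ --- produces a gradable derived equivalence from $\O_\xi$ to the block with core $(w\mu,w\nu)$. This block and $\O_{\xi'}$ share the parameters $(m,n,t)$ and the \emph{composition} $w\gamma=\gamma'$, but their cores need not coincide, since a fixed composition $\gamma$ can admit several ``colourings'': different ways of allocating its parts between the first and second rows subject to the fixed partial sums $m-t$ and $n-t$. (The exchange $\mu\leftrightarrow\nu$ is realized when $m=n$ by the isomorphism $\gl_{m|n}\cong\gl_{n|m}$, but even combined with the $\Theta_i$ this does not connect all colourings.) We are therefore reduced to showing that two blocks with the same $(m,n,t)$ and the same composition $\gamma$ are gradably derived equivalent. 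The Remark following Theorem~\ref{crazy} and Theorem~\ref{crazier} show that the graded Cartan matrices of the corresponding Soergel algebras $\BB_\xi$ and $\BB_{\xi'}$, in the basis of indecomposable projectives, are literally equal in this situation, so it is natural to expect that $\BB_\xi$ and $\BB_{\xi'}$ are already isomorphic as graded algebras; this would complete the proof. Establishing such an isomorphism amounts to a super analogue of Soergel's Endomorphismensatz providing a presentation of $\BB_\xi$ in terms of $(m,n,t,\gamma)$ alone --- precisely the problem raised in the introduction --- together with, in the $t=0$ case, the classical Endomorphismensatz applied to $\O_\xi$ itself. This is the central obstruction. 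Short of such a presentation, one might instead try to exhibit the two blocks as central specializations of a common family, or to apply a uniqueness-of-categorification argument for a categorical action enlarged by the recolouring move; we are not aware of a workable version of either at present.
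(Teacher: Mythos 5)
There is no proof in the paper to compare against: the statement you were given is Conjecture~\ref{firstc}, which the authors leave open, offering only partial evidence. Your proposal correctly recognizes this and, in outline, matches the paper's own discussion: for the ``if'' direction, Theorem~\ref{freebie} (together with the translation/duality Morita equivalences of \S\ref{SMorita}) handles the case $\mu^T=\mu'^T$, $\nu^T=\nu'^T$, and the remaining ``recolouring'' problem --- redistributing the parts of $\gamma=\mu+\nu$ between the two rows --- is exactly what the authors isolate in Conjecture~\ref{crossbow} and tie to the missing super analogue of the Endomorphismensatz; for the ``only if'' direction, the paper likewise points to invariants of gradable derived equivalence such as the graded center $\CC_\xi$ (via \cite[Lemma 4.6]{CM} and Lemma~\ref{jedi}). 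So your identification of the central obstruction is accurate.

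Two cautions, though, where your sketch is more optimistic than what is actually available. First, on the ``only if'' side you describe the remaining work as ``elementary though combinatorially delicate'' bookkeeping with the graded Cartan data and $HH^0$; but the authors state explicitly that at present they cannot even show that gradably derived equivalent blocks have the same atypicality, and your appeal to preservation of the graded Cartan form ``up to $\Z((q))$-congruence'' is itself unestablished here: for $t>0$ the algebras $\BB_\xi$ are locally unital with infinitely many idempotents and infinite-rank Grothendieck groups, so the usual finite-dimensional argument (perfect complexes versus finite-dimensional modules and the resulting congruence of Cartan matrices) does not transfer without further justification. Second, your $t=0$ case of the ``if'' direction does not reduce to classical Soergel theory: a typical block is equivalent to a singular block of $\O$ for $\gl_m(\C)\oplus\gl_n(\C)$, but redistributing the parts of $\gamma$ between $\mu$ and $\nu$ changes how the singularity is split between the two direct summands, and the stabilizers are not conjugate inside $S_m\times S_n$; the Endomorphismensatz computes the centers (which do agree), but it does not by itself produce the required derived equivalence --- this is again one of the genuinely new equivalences the conjecture demands.
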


Part of the ``if'' implication of this conjecture is implied by Theorem~\ref{freebie}:
$\O_\xi$ and $\O_{\xi'}$ are gradably derived equivalent if $t = t'$, $m=m'$, $n=n'$,
$\mu^T = \mu'^T$ and $\nu^T = \nu'^T$.  Our hope is that there should be some additional gradable derived equivalences
allowing these existing ones to be upgraded to include the case that $(\mu+\nu)^T =
(\mu'+\nu')^T$.

The graded algebra
$\CC_\xi$ from (\ref{gradedcenter}) is an invariant of gradable
derived equivalence thanks to \cite[Lemma 4.6]{CM}.
So, to prove  the ``only if'' direction of Conjecture~\ref{firstc},
one should look for more information about the structure of
$\CC_\xi$ along the lines of Lemma~\ref{jedi}.
At present, we do not even know how to show
that gradably derived equivalent blocks
have the
same atypicality.
We expect that the atypicality of a block should
be related to the dimension of its derived category in the sense of
\cite{Roucrazyone}.

\vspace{2mm}

The remainder of the subsection is concerned with Morita equivalences
between blocks.
We first point out some obvious ones which arise by twisting with automorphisms
of
$U(\g)$.
For $\xi = (\mu,\nu;t)$ and $\xi' = (\mu',\nu';t)$,
the blocks
$\O_\xi$ and $\O_{\xi'}$
are
equivalent as $\C$-linear categories if any of the following hold:
\begin{itemize}
\item
(``Translation'') There exists $s \in \Z$ such that $\mu_i = \mu_{i+s}'$ and
$\nu_i = \nu_{i+s}'$ for all $i$; use the automorphism
$e_{i,j} \mapsto e_{i,j} + (-1)^{|i|} s \delta_{i,j}$.
\item
(``Duality'') We have that
$\mu_i = \mu_{-i}'$ and $\nu_i = \nu_{-i}'$
for all $i$; use the automorphism
$e_{i,j} \mapsto -(-1)^{|i||j|} e_{w_0(j), w_0(i)}$
where $w_0$ is the longest element of $S_m \times S_n$.
\item
We have that $m=n$, $\mu = \nu'$ and $\nu = \mu'$;
use the automorphism
that switches the top left and bottom right blocks and the top right and
bottom left blocks in the standard matrix realization of $\g$.
\end{itemize}
We also get some
more interesting Morita equivalences between
typical blocks from the last part of Theorem~\ref{freebie}:
if $t=t'=0$, $\mu^+ = (\mu')^+$ and $\nu^+ = (\nu')^+$
then $\O_\xi$ and $\O_{\xi'}$ are equivalent.

The following theorem shows that there are very few equivalences
between atypical blocks.
This
was pointed out already in low rank by
Coulembier and Serganova in \cite[\S6.3]{CS}; see also
\cite[Remark 6.6]{CS} which predicts the importance of the
invariants used in the proof of Theorem~\ref{moritatheorem}.

\begin{Theorem}\label{moritatheorem}
Let $\xi = (\mu,\nu;t)\in \Xi(m|n)$ for $0 \leq m \leq n$,
and $\xi' = (\mu,\nu;t') \in \Xi(m'|n')$ for $0 \leq
m' \leq n'$.
Suppose that  $\O_\xi$ and $\O_{\xi'}$ are equivalent as
$\C$-linear categories. Then:
\begin{enumerate}
\item $t=t'$.
\end{enumerate}
Suppose in addition that there is more than one isomorphism class of
irreducible object in
the blocks $\O_\xi$ and $\O_{\xi'}$. Then:
\begin{enumerate}
\item[(2)] $m=m'$ and $n=n'$.
\end{enumerate}
Finally,
assume that there are
infinitely many isomorphism classes of irreducible objects,
so that $t,t' > 0$.
Then:
\begin{enumerate}
\item[(3)] $\mu+\nu$ and $\mu'+\nu'$ are equal up to translation
and duality.
\end{enumerate}
\end{Theorem}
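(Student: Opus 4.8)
The plan is to push the problem down to the Serre quotient $\overline{\O}_\xi$, where the explicit combinatorics of Theorem~\ref{crazy} and the lemmas of \S\ref{sactions} are available, and then to extract $t$, the pair $(m,n)$, and the core from purely categorical data. For the reduction: any $\C$-linear equivalence $F\colon\O_\xi\stackrel{\sim}{\to}\O_{\xi'}$ preserves projective--injective objects, hence preserves the class of simples whose projective cover is prinjective; by Lemma~\ref{prinj} together with the characterization of maximal Gelfand--Kirillov dimension in \S\ref{so}, these are exactly the simples of maximal Gelfand--Kirillov dimension. Since the Gelfand--Kirillov dimension of a finite-length module is the maximum over its composition factors, the Serre subcategory $\mathcal T_\xi$ of objects of less than maximal Gelfand--Kirillov dimension is the Serre subcategory generated by the simples whose projective cover is not prinjective, a description preserved by $F$. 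Hence $F$ induces an equivalence of Serre quotients, which by Theorem~\ref{qmain} (restricted to a block via Lemma~\ref{oblock}) is an equivalence $\overline{\O}_\xi\stackrel{\sim}{\to}\overline{\O}_{\xi'}$, i.e.\ a Morita equivalence of the Soergel algebras $\BB_\xi$ and $\BB_{\xi'}$.

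\emph{Part (1).} For an indecomposable projective $P\in\overline{\O}_\xi$, the number $c(P)$ of isomorphism classes of composition factors of $P$ is a Morita invariant. The indecomposable projectives of $\overline{\O}_\xi$ are the $\overline P_\xi(\lambda)$, $\lambda\vDash t$ (Lemma~\ref{projc}), and Theorem~\ref{crazy} (whose non-vanishing condition counts exactly the compositions $\rho$ that define $h(\lambda)$) gives $c(\overline P_\xi(\lambda))=h(\lambda)$ in the notation of Lemma~\ref{step1}. By that lemma $\min_{\lambda\vDash t}c(\overline P_\xi(\lambda))=\binom{t+2}{2}$ (for $t=0$ one has $\overline{\O}_\xi\simeq\mathrm{Vec}$ and the minimum is $1=\binom{2}{2}$). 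As $x\mapsto\binom{x+2}{2}$ is injective on $\Z_{\ge0}$, this forces $t=t'$.

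\emph{Parts (2) and (3).} Now $t=t'=:t$; suppose first $t>0$. By Lemma~\ref{step1} the indecomposable projectives $P$ with $c(P)=\binom{t+2}{2}$ are precisely the $\overline P_\xi(t\eps_i)$, $i\in\Z$, so the multiset $\{\dim_\C\End_W(\overline P_\xi(t\eps_i)):i\in\Z\}$ is a Morita invariant. Writing $\gamma:=\mu+\nu$, Lemma~\ref{step3} identifies $\dim_\C\End_W(\overline P_\xi(t\eps_i))$ with $\frac{m!\,n!}{t!\,\prod_j\gamma_j!}\,g(\gamma_i,\gamma_{i+1})$, where $g(a,b):=\sum_{r=0}^t\binom{t}{r}\frac{a!\,b!}{(a+t-r)!\,(b+r)!}$ is symmetric in $a$ and $b$ with $g(0,0)=\binom{2t}{t}/t!$. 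Since $\gamma$ is finitely supported, $\frac{m!\,n!\,\binom{2t}{t}}{(t!)^2\prod_j\gamma_j!}$ is the unique value occurring infinitely often in the multiset, so $K:=m!\,n!/\prod_j\gamma_j!$ is determined, and dividing by this now-known constant recovers the bi-infinite ``edge multiset'' $\{g(\gamma_i,\gamma_{i+1}):i\in\Z\}$ of the sequence $(\gamma_i)$. Granting that this edge multiset reconstructs $(\gamma_i)$ up to index shift and reversal, we obtain $\gamma$ up to translation and duality, which is (3); then $\prod_j\gamma_j!$ and $\sum_j\gamma_j=m+n-2t$ become known, so $m!\,n!=K\prod_j\gamma_j!$ is known, and since $m!\,n!$ is strictly decreasing in $m$ on $0\le m\le\tfrac{m+n}{2}$ the pair $(m,n)$ is determined, which is (2). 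In the remaining typical case $t=0$, $\O_\xi$ is equivalent to an external tensor product of singular blocks of category $\O$ for $\gl_m$ and $\gl_n$, and (1)--(2) follow from the corresponding classical facts. (Further Morita invariants, such as the common Loewy length $1+m^2+n^2-\sum_j\gamma_j^2$ of the indecomposable projectives coming from Corollary~\ref{zcd} and the Koszulity of $\BB_\xi$, or the isomorphism type of the centre $\CC_\xi=Z(\widehat{\BB}_\xi)$ — a finite-dimensional local algebra by Lemma~\ref{jedi} — can be brought to bear if extra rigidity is needed.)

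\emph{Main obstacle.} The essential difficulty is that a $\C$-linear equivalence forgets both the grading of $\BB_\xi$ and module dimensions, so every ingredient must be phrased through composition multiplicities, Loewy filtrations, Ext-quivers and centres. Concretely, the technical heart is the combinatorial lemma that the edge multiset $\{g(\gamma_i,\gamma_{i+1})\}_{i\in\Z}$ of a finitely-supported sequence determines it up to translation and duality (part (3) for $t>0$), together with the separate bookkeeping needed to run the analogous argument in the typical case for part (2).
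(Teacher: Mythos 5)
Your reduction to $\overline{\O}_\xi$ and your argument for (1) are essentially the paper's. The genuine gap is in (3): you only retain the \emph{unordered multiset} $\{\dim\End_W(\overline{P}_\xi(t\eps_i))\}_{i\in\Z}$, i.e.\ the edge multiset $\{g(\gamma_i,\gamma_{i+1})\}_{i\in\Z}$, and you ``grant'' that this determines $\gamma$ up to translation and duality. That granted lemma is false. Since $g$ is symmetric, the multiset only records how many adjacent pairs carry each unordered pair of values; e.g.\ take $t=1$, $m=n=2$, $\mu=\eps_0,\nu=\eps_2$ versus $\mu'=\eps_0,\nu'=\eps_3$, so $\gamma=\eps_0+\eps_2$ and $\gamma'=\eps_0+\eps_3$. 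Both edge multisets consist of four copies of $g(0,1)$ and infinitely many copies of $g(0,0)$, yet $\gamma$ and $\gamma'$ are not related by translation or duality (the gap between the two nonzero parts differs). So your invariant cannot separate these blocks, and the heart of (3) is missing. What is needed — and what the paper supplies via Lemma~\ref{step2} — is the adjacency information among the minimal projectives: $\Hom(\overline{P}_\xi(t\eps_j),\overline{P}_\xi(t\eps_i))\neq 0$ iff $|i-j|\leq 1$, which lets one enumerate $X_{\min}$ as a chain $\{x_i\}_{i\in\Z}$, unique up to shift and reversal. Only then is the \emph{sequence} $i\mapsto\dim\End(P(x_i))$ (not just its multiset) a Morita invariant, and the paper recovers the $\gamma_i$ one at a time from this sequence using the strict monotonicity of the expression in Lemma~\ref{step3}, inducting downward from the largest $i$ with $\gamma_i\neq 0$. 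In the counterexample above the sequences are $(\dots,g_1,g_1,g_1,g_1,\dots)$ versus $(\dots,g_1,g_1,g_0,g_1,g_1,\dots)$, which are indeed different, illustrating why the ordering is indispensable.

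Two further points. First, granting (3), your route to (2) for $t>0$ (recover the generic value $N$, hence $m!\,n!$ and $m+n=2t+\sum_j\gamma_j$, then use monotonicity of $m\mapsto m!\,(s-m)!$ on $m\leq s/2$) is correct and is actually lighter than the paper's, which passes through the unicity of Koszul gradings and the graded dimension formula of Corollary~\ref{pling}; this is a genuine simplification of that step, but it inherits the gap in (3). Second, your typical case $t=0$ of (2) is not proved: ``(1)--(2) follow from the corresponding classical facts'' is not a known statement in the generality you need (note that maximally singular blocks of category $\O$ for $\gl_m$ are semisimple for every $m$, so some argument exploiting nontriviality is unavoidable). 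The paper handles $t=t'=0$ by transferring the Koszul grading and comparing graded endomorphism dimensions via Corollary~\ref{pling}, following the argument of \cite[Lemma 8.2]{CM}; you would need to either invoke that or supply an ungraded substitute.
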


\begin{proof}
In view of Theorem~\ref{qmain},
the assumption that $\O_\xi$ is equivalent to $\O_{\xi'}$
implies that
$\overline{\O}_\xi$ is equivalent to
$\overline{\O}_{\xi'}$.
If either of the blocks $\overline{\O}_\xi$ or $\overline{\O}_{\xi'}$
has a unique irreducible object up to isomorphism,
then so does the other, and we must have that
$t=t'=0$.
Otherwise, we have that $t,t' > 0$ and these blocks have infinitely
many classes of irreducible objects.
To prove (1) and (3) in these cases, we will show that $t$
and $\gamma := \mu+\nu$
can be recovered
uniquely (up to translation and duality)
from the category $\overline{\O}_\xi$
without using any
information about its structure that is external to the abstract
$\C$-linear category.

Starting from $\overline{\O}_\xi$, we can choose a complete set of pairwise
inequivalent irreducible objects $\{L(x)\:|\:x \in X\}$ indexed by
some set $X$.
Since we assumed $t > 0$, the set $X$ is infinite.
Let $P(x)$ be a projective cover of $L(x)$
and $h(x) := \#\left\{y \in X\:|\:[P(x):L(y)] \neq 0\right\}$.
By Lemma~\ref{step1}, we know that the minimal possible value for
$h(x)$ as $x$ ranges over all of $X$ is equal to $\binom{t+2}{2}$
for some $t \geq 1$.
Thus, we have recovered the atypicality $t$ of the block $\overline\O_\xi$ from
the underlying abstract category.

Now that we know $t$, we can define
$X_{\min} := \left\{x \in X\:\big|\:h(x) = \binom{t+2}{2}\right\}$.
By Lemma~\ref{step1} again, we know that $X_{\min}$ is in bijection with
$\Z$. To fix a choice of such a bijection, we arbitrarily pick some
$x_0 \in X$. Now we appeal to Lemma~\ref{step2}. It tells us that
the set $\left\{x \in X_0 \setminus\{x_0\}\:\big|\:[P(x_0):L(x)] \neq
0\right\}$ contains exactly two elements. We arbitrarily call one of these
elements $x_1$ and the other $x_{-1}$.
Then the set $\left\{x \in X_0\setminus\{x_0,x_1\}\:\big|\:[P(x_1): L(x)] \neq 0\right\}$
is a singleton $\{x_2\}$, the set
$\left\{x \in X_0\setminus\{x_1,x_2\}\:\big|\:[P(x_2): L(x)] \neq 0\right\}$
is a singleton $\{x_3\}$, and so on.
Similarly, $\left\{x \in X_0\setminus\{x_0,x_{-1}\}\:\big|\:[P(x_{-1}): L(x)] \neq 0\right\}$
is a singleton $\{x_{-2}\}$, and so on.
In this way, we have enumerated the elements of $X_{\min}$ as
$\{x_i\:|\:i \in \Z\}$.
We have done this in a way that ensures that it agrees with the
canonical labelling
$\{\overline{L}_\xi(t \eps_i)\:|\:i \in \Z\}$, at least up to some duality and
translation which we can simply ignore due to the symmetry of the
invariants that we are about to use.

Next, we explain how to recover the composition $\gamma$
from the dimensions
$\dim \End_{\O_\xi}(P(x_i))$.
By the formula in Lemma~\ref{step3}, these all take the same value
$N := \frac{m!n!}{(t!)^2 \prod_j \gamma_j!} \binom{2t}{t}$
for all but finitely
many $i \in \Z$. Thus, we have recovered the number $N$.
Rescaling, we get the numbers
\begin{equation}\label{demon}
d(i) := \binom{2t}{t}
\dim \End_{\O_\xi}(P(x_i)) \big / N
=
\sum_{r=0}^t
\binom{t}{r}
\frac{t!\gamma_i! \gamma_{i+1}!}{(\gamma_{i}+t-r)!(\gamma_{i+1}+r)!}.
\end{equation}
for each $i \in \Z$, and will explain how to recover the $\gamma_i$'s
uniquely from this sequence.
We have that
$d(i) =\binom{2t}{t}$ with equality if and only if
$\gamma_i = 0 = \gamma_{i+1}$. This already determines all but finitely many
of the $\gamma_i$'s. Observe moreover that the expression on the right
hand side of (\ref{demon}) is monotonic in
$\gamma_i$:
it
gets strictly
smaller if we make the non-negative integer $\gamma_i$ bigger.
So we can use this equation to compute each $\gamma_i$ uniquely,
assuming $\gamma_{i+1}$ has already been determined inductively
(starting from the biggest $i$ such that
$\gamma_i \neq 0$).

At this point, we have established (1) and (3). Our proof of (2)
requires considerably more force as we need to exploit the existence of
the Koszul grading on the basic algebra $\AA_\xi$ discussed in the
previous section.
If $\O_\xi$ and $\O_{\xi'}$ are Morita equivalent, then the algebras
$\AA_\xi$ and $\AA_{\xi'}$
are isomorphic as locally unital graded algebras thanks
to the unicity of Koszul gradings.
Hence, so too are the algebras $\BB_\xi$ and $\BB_{\xi'}$.
Since we already know that $t=t'$,
we can then invoke Corollary~\ref{pling}  to
deduce that
$$
q^{\binom{m}{2}+\binom{n}{2}-\sum_i \binom{\gamma_i}{2}}
[m]! [n]! \big/ \textstyle\prod_i [\gamma_i]!
=
q^{\binom{m'}{2}+\binom{n'}{2}-\sum_i \binom{\gamma'_i}{2}}
[m']! [n']! \big/ \textstyle\prod_i [\gamma'_i]!,
$$
where $\gamma' := \mu'+\nu'$ of course.
When $t,t' > 0$, we already know that $(\gamma)^+ = (\gamma')^+$, so get easily from this that
$m=m'$ and $n=n'$.
If $t = t' = 0$, we need to use also that the blocks are not trivial
(and deduce in addition that $\gamma^T = (\gamma')^T$);
one also finds this argument in the proof of \cite[Lemma 8.2]{CM}.
\end{proof}

\begin{Corollary}
For $\xi = (0,\nu;m) \in \Xi(m|n)$ and $\xi' = (0,\nu';m') \in
\Xi(m'|n')$ with $m,m' > 0$, the blocks
$\O_\xi$ and $\O_{\xi'}$ are equivalent if and only if
$m=m'$
and $\nu$ equals $\nu'$ up to translation and duality.
\end{Corollary}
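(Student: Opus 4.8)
The plan is to read this off directly from Theorem~\ref{moritatheorem} and the list of automorphism-twist equivalences collected just before it.

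For the forward implication, suppose $\O_\xi$ and $\O_{\xi'}$ are $\C$-linearly equivalent. A block $(0,\nu;m)\in\Xi(m|n)$ has atypicality $m$ (this is its value of $t$), and likewise $(0,\nu';m')$ has atypicality $m'$, so part (1) of Theorem~\ref{moritatheorem} immediately gives $m=m'$. Because $m=m'>0$ by hypothesis, both blocks have infinitely many isomorphism classes of irreducible object, so part (3) applies and shows that $\mu+\nu=\nu$ and $\mu'+\nu'=\nu'$ agree up to translation and duality. That is all the forward direction needs. I would also record in passing that equality up to translation and duality forces $|\nu|=|\nu'|$, hence $n-m=n'-m'$ and so $n=n'$; this is consistent with part (2), but we will also want it for the converse.

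For the converse, assume $m=m'$ and that $\nu$ equals $\nu'$ up to translation and duality. By the observation just made, $n=n'$ too, so $\xi$ and $\xi'$ are genuinely blocks of $\O$ for one and the same $\g=\gl_{m|n}(\C)$, both maximally atypical of atypicality $t=m$. Unwinding the definition from \S\ref{morec}, there is $s\in\Z$ with either $\nu_i=\nu'_{s+i}$ for all $i$, or $\nu_i=\nu'_{s-i}$ for all $i$. In the first case the ``Translation'' equivalence listed just before Theorem~\ref{moritatheorem} (using $\mu=\mu'=0$) already identifies $\O_\xi$ with $\O_{\xi'}$. In the second case I would first apply that ``Translation'' equivalence to replace $\xi'$ by the block with core $(0,(\nu'_{i+s})_{i\in\Z})$ of the same atypicality, and then apply the ``Duality'' equivalence; composing the two $\C$-linear equivalences identifies $\O_\xi$ with $\O_{\xi'}$.

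There is no genuine obstacle here: the substance lies entirely in Theorem~\ref{moritatheorem} together with the elementary fact that $|\nu|=n-m$ is a translation/duality invariant pinning down $n$ once $m$ is known. The only point needing mild care is bookkeeping the composite of the translation twist and the duality twist so that it lands precisely on $\O_{\xi'}$ rather than on an intermediate block; that is routine from the explicit automorphisms quoted in the text.
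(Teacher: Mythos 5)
Your proposal is correct and is exactly the intended derivation: the "only if" direction reads off parts (1) and (3) of Theorem~\ref{moritatheorem} (using $t=m=m'>0$ to guarantee infinitely many irreducibles and $\mu=\mu'=0$ so that $\mu+\nu=\nu$), and the "if" direction composes the ``Translation'' and ``Duality'' twisting equivalences listed just before that theorem, after noting $|\nu|=|\nu'|$ forces $n=n'$. This matches the paper's (unwritten) proof, including the bookkeeping of the composite twist in the duality case.
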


Evidence for the following conjecture
comes from
Theorem~\ref{crazier}: it shows that
the Soergel algebras $\BB_\xi$ and
$\BB_{\xi'}$ in the statement
have the same graded Cartan matrices.

\begin{Conjecture}\label{crossbow}
Assume that $\xi = (\mu,\nu;t)$ and $\xi'=(\mu',\nu';t)$
for $\mu,\mu'\vDash m-t$ and $\nu,\nu' \vDash n-t$
such that $\mu+\nu$ equals $\mu'+\nu'$ up to translation and duality.
Then
$\BB_\xi \cong \BB_{\xi'}$ as locally unital graded algebras, so that the blocks
$\overline{\O}_\xi$ and $\overline{\O}_{\xi'}$ are equivalent.
\end{Conjecture}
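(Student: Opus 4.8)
The plan is to transfer the whole problem to the locally unital algebra $W_\xi$ of Theorem~\ref{landing} (the statement being a conjecture, what follows is a strategy, not a finished argument). Since $\BB_\xi$ is Morita equivalent to $W_\xi$ (as recorded just after (\ref{cxi})), and the two even coincide when $t=m$ by Theorem~\ref{moreispossible}, it suffices to construct an isomorphism of locally unital algebras $W_\xi\stackrel{\sim}{\rightarrow}W_{\xi'}$ intertwining a bijection between the weight supports $\{\eta\vDash m: 1_\eta W_\xi\neq 0\}$ and $\{\eta\vDash m: 1_\eta W_{\xi'}\neq 0\}$; such an isomorphism at once yields an equivalence $W_\xi\lmof\simeq W_{\xi'}\lmof$, hence $\overline{\O}_\xi\simeq\overline{\O}_{\xi'}$. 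The \emph{graded} statement demands more: one must arrange that the induced ungraded isomorphism $\BB_\xi\cong\BB_{\xi'}$ respects the gradings inherited from the Koszul gradings on $\AA_\xi$ and $\AA_{\xi'}$. By the unicity of Koszul gradings (\cite[Corollary 2.5.2]{BGS}) this would follow if the equivalence $\overline{\O}_\xi\simeq\overline{\O}_{\xi'}$ were extracted from a construction respecting a fixed positive grading (for instance a diagrammatic one), so I regard the graded refinement as part of the same construction rather than a separate matter.

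Next one reduces the hypothesis. Using the three $U(\g)$-automorphisms listed before Theorem~\ref{moritatheorem}, together with the functoriality of the Serre quotient of Theorem~\ref{qmain}, we may replace $\xi$ by an equivalent block so as to assume $\gamma:=\mu+\nu$ equals $\gamma':=\mu'+\nu'$ on the nose. Because $\mu_i\nu_i=0$ and $\mu_i+\nu_i=\gamma_i$, a core $(\mu,\nu)$ with core sum $\gamma$ is precisely a choice, for each $i$ with $\gamma_i>0$, of whether the block of $\gamma_i$ boxes sits in the top row ($\mu_i=\gamma_i$) or the bottom row ($\nu_i=\gamma_i$), subject to $\sum_{i:\,\mu_i>0}\gamma_i=m-t$. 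Given two such choices $\xi,\xi'$, let $S$ (resp.\ $S'$) be the set of positions on the top in $\xi$ and the bottom in $\xi'$ (resp.\ bottom in $\xi$ and top in $\xi'$); then $\sum_{i\in S}\gamma_i=\sum_{j\in S'}\gamma_j$, so $\xi'$ is obtained from $\xi$ by a single ``balanced transposition'' of blocks of $\gamma$ between the two rows. It therefore suffices to prove $W_\xi\cong W_{\xi'}$ for two blocks differing by one such move.

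For this core step the plan is to write down a presentation of $W_\xi$ as a quotient of the fixed algebra $\dot W$ (taking a left-justified pyramid, so that $\dot W$ depends only on $m,n$), and to show that the presentation depends only on $(t,\gamma)$ after a suitable relabeling of the idempotents $1_\eta$ and a rescaling of the generators $e_i^{(r)},f_i^{(r)}$. The inputs available are: Theorem~\ref{characters} and Theorem~\ref{landing}, which pin down which $1_\eta$ survive in $W_\xi$ and their roles; Lemma~\ref{jumpers}, controlling how $e_i^{(r)},f_i^{(r)}$ move between weight spaces; and Theorem~\ref{landing}(2), which gives $c^{(1)}1_\eta=\sum_i i(\nu_i-\mu_i)1_\eta$. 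This last relation depends on $\mu$ and $\nu$ separately rather than only on $\gamma$, so the first concrete task is to show that this dependence is cosmetic, i.e.\ absorbed by a central twist together with the rescaling of the odd generators; one then has to show that the \emph{entire} kernel $\bigcap_{\lambda\vDash t}\operatorname{Ann}_{\dot W}\overline{P}_\xi(\lambda)$ is carried to $\bigcap_{\lambda}\operatorname{Ann}_{\dot W}\overline{P}_{\xi'}(\lambda)$ under the relabeling and rescaling. The most promising route is induction on $t$ via the categorical $\mathfrak{sl}_\infty$-action of Theorem~\ref{bigcat}: that action sees $\gamma$ but not the partition of $\gamma$ into top and bottom, so one hopes to realize a balanced transposition as a composite of the functors $\overline{F}_i,\overline{E}_i$ followed by an idempotent truncation, in the spirit of the Scopes/Rickard equivalences underlying Theorem~\ref{freebie}.

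The main obstacle is exactly this last point. The Rickard complex realizing a balanced transposition will in general \emph{not} collapse to a single term (unlike the collapsed case in Theorem~\ref{freebie}), so categorical $\mathfrak{sl}_\infty$-machinery by itself yields only a derived equivalence, whereas the conjecture asserts a Morita equivalence. Theorem~\ref{crazier} shows the graded Cartan matrices of $\BB_\xi$ and $\BB_{\xi'}$ coincide, but equality of Cartan matrices is far weaker than an algebra isomorphism: one must also match the composition of homomorphisms among the $\overline{P}_\xi(\lambda)$. Bridging this gap almost certainly requires a genuinely new, manifestly $(\gamma,t)$-dependent combinatorial model for the Soergel algebras $\BB_\xi$ — for example a generalised Khovanov arc algebra in the style of Brundan--Stroppel — after which the conjecture, including its graded form, should follow by inspection. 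Pending such a model, the cases settled unconditionally are $t=m$, where $W_\xi\cong\BB_\xi$ and the corollary following Theorem~\ref{moritatheorem} already applies, and any instance in which $(\mu,\nu)$ and $(\mu',\nu')$ differ by one of the three $U(\g)$-automorphisms listed there.
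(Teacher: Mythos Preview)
The statement you are addressing is a \emph{conjecture} in the paper, not a theorem: the paper provides no proof. The only thing offered there is the remark immediately preceding the conjecture, namely that Theorem~\ref{crazier} shows $\BB_\xi$ and $\BB_{\xi'}$ have identical graded Cartan matrices, which you also cite. So there is nothing to compare your proposal against; you and the paper are in the same position.

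That said, your write-up is honest about this: you explicitly call it a strategy rather than a finished argument, and you correctly isolate the main obstruction (the Rickard complex does not collapse, so the categorical $\mathfrak{sl}_\infty$-machinery only gives derived equivalences, whereas the conjecture demands an abelian equivalence). A couple of remarks on the details of your outline. First, your ``single balanced transposition'' reduction is not really a reduction: the constraint $\sum_{i:\,\mu_i>0}\gamma_i=m-t$ prevents you from swapping one $\gamma_i$ at a time between rows, so the elementary step you would need is still as hard as the general case. Second, transferring to $W_\xi$ and seeking an explicit isomorphism $W_\xi\cong W_{\xi'}$ is reasonable in the maximally atypical case $t=m$ (where $W_\xi=\BB_\xi$ by Theorem~\ref{moreispossible}), but in general $W_\xi$ is only Morita equivalent to $\BB_\xi$, so an isomorphism $W_\xi\cong W_{\xi'}$ would give $\overline{\O}_\xi\simeq\overline{\O}_{\xi'}$ and hence $\BB_\xi\cong\BB_{\xi'}$ as \emph{ungraded} basic algebras; lifting this to a graded isomorphism is not obviously automatic from unicity of Koszul gradings alone, since that argument needs the equivalence to come from an isomorphism $\AA_\xi\cong\AA_{\xi'}$, which is exactly what the paper's counterexample after the conjecture rules out. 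Your final suggestion --- that a diagrammatic model in the style of Khovanov arc algebras is likely the right tool --- is a sensible guess and consistent with the spirit of the paper's closing discussion.
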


Believing this conjecture, we also thought initially that the blocks $\O_\xi$ and
$\O_{\xi'}$ themselves
should also be equivalent (under the same hypotheses as in the conjecture).
However, this is too optimistic, due to the following
counterexample communicated to us by Coulembier: for $\g =
\gl_{3|4}(\C)$, $t=1$,
and $\xi,\xi'$ defined so that $\mu_1 = 2$, $\mu'_2 = 1$,
$\mu'_3 = 1$, $\nu_2 = 1$, $\nu_3 = 1$, $\nu_4 = 1$, $\nu'_1=2$ and
$\nu'_4=1$, the blocks $\O_\xi$ and $\O_{\xi'}$ are
not equivalent. To establish this, Coulembier shows that they have
different finitistic global dimensions by an application of \cite[Theorem~6.4]{CS}.

\appendix
\section{Proof of Theorem~\ref{T:main}}\label{appendix}

Let notation be as in the statement of the theorem.
We note that the case $m = 0$ follows from Corollary \ref{hrestc} along with
the trivial observations that $M(\tabA) = M'(\tabA)$ and $\overline K(\tabA) = \overline M(\tabA)$
when $m=0$.
So we assume henceforth that $m > 0$.
Set $M := M(\tabA)$ for short.

\begin{Lemma}\label{claim1} $H_0(M)$ is spanned by a subset $\cS$ of size $2^m$.
\end{Lemma}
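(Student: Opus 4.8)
The plan is to compute $\dim_\C H_0(M)$ by a comparison argument rather than to write down a basis by hand; once the dimension is known, any basis will serve as $\cS$. Let $M'(\tabA)$ denote the Verma supermodule for the reverse-lexicographic normal order $<'$. By Lemma~\ref{vermasequal}, $[M(\tabA)] = [M'(\tabA)]$ in the Grothendieck group $K_0(\O)$. Since $H_0$ restricts to an exact functor $\O \to W\lsmof$ (Lemma~\ref{exactness}), it induces a homomorphism on Grothendieck groups, so $[H_0(M(\tabA))] = [H_0(M'(\tabA))]$, and the right-hand side equals $[\overline{K}(\tabA)]$ by Corollary~\ref{hrestc}. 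Taking dimensions gives $\dim_\C H_0(M) = \dim_\C \overline{K}(\tabA) = \dim_\C K(\tabA) = 2^m$. As $H_0(M)$ is finite-dimensional, any basis $\cS$ has $|\cS| = 2^m$ and spans $H_0(M)$, which proves the lemma.

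It is worth recording why a more hands-on route is delicate. Writing $\n^\pm$ for the strictly upper/lower triangular parts of $\g$, one has $\g = (\m\cap\n^-)\oplus\p\oplus(\m\cap\n^+)$, and since $\m\cap\n^+\subseteq\n^+$ annihilates the highest weight vector $v_\tabA$ we get $M = U(\m\cap\n^-)\,U(\p)\,v_\tabA$; because $\m$ acts on $H_0(M)$ through the character $\chi$, the left-hand factor contributes only scalars and $H_0(M) = \overline{U(\p)v_\tabA}$. Decomposing $\p = (\n\cap\n^-)\oplus(\h\cap\n^-)\oplus\t\oplus(\h\cap\n^+)\oplus(\n\cap\n^+)$ and discarding the summands that act by scalars or kill $v_\tabA$ yields $H_0(M) = \overline{U(\n\cap\n^-)\,U(\h\cap\n^-)\,v_\tabA}$. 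Now $(\n\cap\n^-)\oplus(\h\cap\n^-)$ is a purely odd abelian subalgebra, so this is a spanning set of size $2^{r+m}$ with $r := \dim(\n\cap\n^-)$, which overshoots $2^m$ whenever $\n\cap\n^- \neq 0$. One might hope the ``off-pyramid'' odd root vectors spanning $\n\cap\n^-$ act redundantly on $\overline{U(\h\cap\n^-)v_\tabA}$, but this is false in general: already for $\gl_{1|2}$ with the top box in the right-hand column one has $\chi(e_{1,2}) = 0$, $\chi(e_{3,2}) = -1$, and $e_{1,2}(e_{3,1}v_\tabA) = e_{3,2}v_\tabA$ (from $e_{1,2}e_{3,1} + e_{3,1}e_{1,2} = e_{3,2}$), so $e_{3,2}v_\tabA \in \m_\chi M$, whence $v_\tabA \in \m_\chi M$ and $\overline{v_\tabA} = 0$; thus $U(\h\cap\n^-)v_\tabA$ fails to span. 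For this reason I would not try to produce $\cS$ concretely here, and instead rely on the dimension count of the first paragraph.

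So I do not expect Lemma~\ref{claim1} to be the hard step — given Lemma~\ref{vermasequal}, Lemma~\ref{exactness} and Corollary~\ref{hrestc} the dimension is essentially forced, and the only real ``choice'' is to compare with $M'(\tabA)$. The genuinely technical content of Theorem~\ref{T:main} should be downstream: one must locate inside $H_0(M)$ an honest $W$-highest weight vector of highest weight $\tabA$ — which, by the example above, will typically be the image of some $x_0\,v_\tabA$ with $x_0 \in U(\n\cap\n^-)$ rather than the image of $v_\tabA$ — and check that it generates $H_0(M)$, is annihilated by all the $e^{(r)}$, and has the $d_i^{(r)}$ acting by the prescribed scalars, so that $\overline{M}(\tabA)$ (up to parity shift) surjects onto $H_0(M)$ and, by $\dim H_0(M) = 2^m$, is isomorphic to it. These verifications require the explicit formulae for the generators of $W$ from \cite{BBG}, and I would expect the hardest single point to be showing that the candidate highest weight vector is killed by all of the $e^{(r)}$.
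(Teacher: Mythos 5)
Your argument is correct and non-circular: Lemmas~\ref{vermasequal} and \ref{exactness} and Corollary~\ref{hrestc} are all proved before the appendix and independently of Theorem~\ref{T:main}, exactness of $H_0$ on $\O$ gives $[H_0(M(\tabA))]=[H_0(M'(\tabA))]=[\overline{K}(\tabA)]$ in $K_0(W\lsmof)$, and additivity of dimension then forces $\dim H_0(M)=2^m$, which is even a little stronger than the stated lemma. But it is a genuinely different route from the paper. The paper's proof of Lemma~\ref{claim1} is exactly the hands-on construction you chose to avoid: starting from the obvious PBW spanning set of size $2^{\dim(\n^-\cap\p)}$, it develops a ``straightening'' procedure and a double induction (on length and on reverse-lexicographic weight) to cut this down to the explicit set $\cS=\{u(L)m_\tabA+\m_\chi M\}$ indexed by the subsets $L\subseteq I^-_\ge$ containing $H=\{(i,j)\in I^-_\ge\:|\:i>m+1\}$; the base case of that induction is precisely your $\gl_{1|2}$-type computation with $e_{m,k-1}e_{k,m}m_\tabA$ showing $m_\tabA\in\m_\chi M$, so your diagnosis of why the naive candidate $U(\h\cap\n^-)v_\tabA$ fails is on target. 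The trade-off is as you predict: the Grothendieck-group comparison you use appears in the paper only at the end, in the proof of Theorem~\ref{T:main}, where it upgrades the explicit spanning set to a basis and pins down the highest weight, whereas your proof gets the dimension at once but produces no explicit vectors. Since the proof of Lemma~\ref{claim2} begins from the specific $\cH$ and $\cS$ of (\ref{barty}) and re-uses the straightening process --- the highest weight vector is the image of $u(H)m_\tabA$ rather than of $m_\tabA$, and what is actually verified is that the vectors $\prod_r(f^{(r)})^{a_r}u(H)m_\tabA$ span --- your version of Lemma~\ref{claim1}, while valid for the statement, would leave all of that work to be redone inside Lemma~\ref{claim2}; it displaces rather than removes the technical content, exactly as your final paragraph anticipates.
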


\begin{proof}
We define
\begin{align*}
I^- &:=
\{(i,j) \:|\: i > j, \, i,j = 1,\dots,m+n \},\\
I^-_\ge &:= \{(i,j) \in I^- \:|\: \col(i) \le \col(j) \},\\
I^-_< &:= \{(i,j) \in I^- \:|\: \col(i) > \col(j) \}.
\end{align*}
Let $\n^-$ be the subalgebra of $\g$ of strictly lower triangular
matrices, so that $\{e_{i,j}\}_{(i,j) \in I^-}$
is a basis of $\n^-$.  Also $\{e_{i,j}\}_{(i,j) \in I^-_\ge}$
is a basis of $\n^- \cap \p$ and $\{e_{i,j}\}_{(i,j) \in I^-_<}$
is a basis of $\n^- \cap \m$.
We note that $\n^- \cap \p \sub \g_\1$, so the elements
of $\{e_{i,j}\}_{(i,j) \in I^-_\ge}$ actually all supercommute.

Fix a
total order on $I^-$ in such a way that $I^-_<$ precedes $I^-_\ge$.
The set of ordered monomials of the form $\left(\prod_{(i,j) \in I^-}
e_{i,j}^{d_{i,j}} \right)m_\tabA$, where
$d_{i,j} \in \Z$ if $\row(i) = \row(j)$ and $d_{i,j} \in \{0,1\}$ if $\row(i) > \row(j)$,
forms a basis of $M$.
For $(i,j) \in I^-_<$, we have $e_{ij} \in \m$, so $e_{ij} -
\chi(e_{ij}) \in \m_\chi$,
and $\chi(e_{ij}) \in \{0,\pm 1\}$.
Hence, the following ordered monomials span
$M/\m_\chi M$:
$$
\bigg\{
\Big(\prod_{(i,j) \in I^-_\ge} e_{i,j}^{d_{i,j}}\Big) m_\tabA + \m_\chi
M
\:\bigg|\:
\begin{array}{ll}
d_{i,j} \in \{0,1\}
\end{array}
\bigg\}.
$$
We are going to cut this
spanning set down
to one of the required size $2^m$.

For $K \sub I^-_\ge$, we use the notation
\begin{equation}
u(K) := \prod_{(i,j) \in K} e_{i,j} \in U(\n^- \cap \p),
\end{equation}
and define $\wt(K) := \sum_{(i,j) \in K} (\delta_i - \delta_j) \in \t^*_\Z$ to be the $\t$-weight of $u(K)$.
In this paragraph, we are going to focus on the monomials
$$
u(K)e_{k,l}u(L) m_\tabA + \m_\chi M \in M/\m_\chi M,
$$
where $K, L \sub I^-_\ge$ and $e_{k,l} \in \m$ or $e_{k,l} \in \b$.
The goal is to describe a {\em straightening process} in order to prove that this monomial can
be expressed as a linear combination of monomials of the form
\begin{equation} \label{e:smon}
u(J) m_\tabA + \m_\chi M,
\end{equation}
for $J \sub I^-_\ge$ such that one of the conditions (S1)--(S3) holds:
\begin{itemize}
\item[(S1)] $|J| = |K|+|L|$ and $\wt(J) = \wt(K)+\wt(L)+\delta_k-\delta_l$;
\item[(S2)] $|J| = |K|+|L|$ and $\wt(J) = \wt(K)+\wt(L)$, this is only possible if $k \ne m+1$ and $l = k-1$, or if $l = k$; or
\item[(S3)] $|J| < |K|+|L|$.
\end{itemize}
We proceed by induction on
$|K|+|L|$.
Our objective is clear when $|K|+|L| = 0$, as
$e_{k,l} m_\tabA + \m_\chi M = \chi(e_{k,l}) + \m_\chi M$ if $e_{k,l} \in \m$,
and $e_{k,l} m_\tabA + \m_\chi M = \lambda_\tabA(e_{k,l}) + \m_\chi M$ if $e_{k,l} \in \b$,
where $\lambda_\tabA$ is viewed as an element of $\b^*$.
Now assume that $|K|+|L| > 0$.

For the case $e_{k,l} \in \m$, we have
\begin{equation} \label{e:cleft}
u(K)e_{k,l}u(L) m_\tabA +\m_\chi M = \pm e_{k,l}u(K)u(L) m_\tabA  +
[u(K),e_{k,l}]u(L) m_\tabA +\m_\chi M.
\end{equation}
We do not need to know the sign in the equation above,
so we won't specify these explicitly; this is also
the case in some other equations below.
The first term  in \eqref{e:cleft} is (up to sign)
$$
e_{k,l}u(K)u(L) m_\tabA + \m_\chi M = \chi(e_{k,l}) u(K)u(L) + \m_\chi M,
$$
which is zero or (up to sign) a monomial as in \eqref{e:smon} satisfying (S2).

For the case $e_{k,l} \in \b$, we have
\begin{equation} \label{e:cright}
u(K)e_{k,l}u(L) m_\tabA +\m_\chi M = \pm u(K)u(L)e_{k,l} m_\tabA  +
u(K)[e_{k,l},u(L)] m_\tabA +\m_\chi M.
\end{equation}
The first term in \eqref{e:cright} is (up to sign)
$$
u(K)u(L)e_{k,l} m_\tabA + \m_\chi M = \lambda_\tabA(e_{k,l}) u(K)u(L) + \m_\chi M,
$$
where $\lambda_\tabA$ is viewed as an element of $\b^*$.
This is zero or (up to sign) a monomial as in \eqref{e:smon} satisfying (S2).

Now we consider the case $e_{k,l} \in \m$ further. Observe that the term $[u(K),e_{k,l}]$ occurring in \eqref{e:cleft} is a sum of terms of the form
$\pm u(K_{i,j})[e_{i,j},e_{k,l}] u(K^{i,j})$,
summed over $(i,j) \in K$ where $K_{i,j}$ is the set of elements of
$K$ before $(i,j)$ in our fixed order of $I^-$ and $K^{i,j}$ is the
set of those after $(i,j)$.
Either $[e_{i,j},e_{k,l}]$ is zero, or an element of one of
$\n^- \cap \p$, $\m$ or $\b$; note that $\m \cap \b \ne \{0\}$ in general,
so $[e_{i,j},e_{k,l}]$ can be an element of both $\m$ and $\b$
but this does not matter.
If $[e_{i,j},e_{k,l}] \in \n^- \cap \p$, then
$u(K_{i,j})[e_{i,j},e_{k,l}] u(K^{i,j})u(L) m_\tabA +\m_\chi M$ is (up to sign) a monomial of the form \eqref{e:smon}
for which (S1) holds.  Whereas if $[e_{i,j},e_{k,l}] \in \m$ or $[e_{i,j},e_{k,l}] \in \b$, then
we can apply induction to deduce that $u(K_{i,j})[e_{i,j},e_{k,l}]
u(K^{i,j})u(L) m_\tabA +\m_\chi M$ is
a sum of monomials as in \eqref{e:smon} that satisfy (S3).

For the case $e_{k,l} \in \b$ we can argue entirely similarly, but working with the term $u(K)[e_{k,l},u(L)]$,
which occurs in \eqref{e:cright}.

We have now established that our straightening process works.

\vspace{1mm}
\smallskip

Now we let
$H := \{(i,j) \in I^-_\ge \:|\: i > m+1\}$, then define
\begin{align}\label{barty}
\cH
&:= \{L \sub I^-_\ge \:|\: H \sub L \},
&\cS &:= \{u(L) + \m_\chi M \:|\: L \in \cH\}.
\end{align}
Let $<_{\lex}$ be the order on $\t_\Z^*$ defined by
$\sum_{i=1}^{m+n} r_i \delta_i <_{\lex} \sum_{i=1}^{m+n} s_i \delta_i$ if
$r_j < s_j$ where $j$ is maximal such that $r_j \ne s_j$.
Take $K \sub I^-_\ge$.
Proceeding by induction on $|K|$ and reverse induction on $\wt(K)$
with respect to $<_{\lex}$,
we are going to prove that $u(K)$ is in the space spanned by $\cS$.
Before proceeding, we note that the case $m =1$ and $\pi$ is left justified
is trivial, because $H = \varnothing$. So we assume that this is not the case.

For the base step we consider the case $|K| = 0$, so that $u(K) = m_\tabA$.
In this case we let $k = 2m + s_-$. Our assumption above implies that $k$ is maximal such that
the column of $k$ in the pyramid $\pi$ has two boxes, and that this is not the leftmost column in $\pi$.
In particular, $k-1$ also lies in the second row of $\pi$.

Now consider $e_{m,k-1}e_{k,m} m_\tabA + \m_\chi M$.
We have that $e_{m,k-1} \in \m$ and that $\chi(e_{m,k-1}) = 0$, so that
$e_{m,k-1}e_{k,m} m_\tabA + \m_\chi M = 0 + \m_\chi M$.
Moreover,
\begin{align*}
e_{m,k-1}e_{k,m} m_\tabA + \m_\chi M &= -e_{k,m}e_{m,k-1} m_\tabA + e_{k,k-1} m_\tabA + \m_\chi M
= - m_\tabA + \m_\chi M,
\end{align*}
where we use that $e_{m,k-1} m_\tabA = 0$, because $e_{m,k-1} \in \n$, and that $e_{k,k-1} m_\tabA + \m_\chi M =
- m_\tabA + \m_\chi M$ because $e_{k,k-1}+1 \in \m_\chi$.
It follows that $m_\tabA \in \m_\chi M$, so $m_\tabA + \m_\chi M$ is certainly
in the span of $\cS$.

Next let $K \sub I^-_\ge$ and assume inductively that $u(L)m_\tabA + \m_\chi M$ is in the
span of $\cS$ whenever $|L| < |K|$ or when $|L| = |K|$ and $\wt(L) >_{\lex} \wt(K)$.
If $H \sub K$, then $u(K) \in \cS$, so we may assume this is not the case.
We choose $(k,l) \in H \, \setminus K$ such that $k$ is maximal and $l$ minimal given $k$.
Consider $e_{l,k-1}e_{k,l}u(K)m_\tabA$.  Since $e_{l,k-1} \in \m$ and $\chi(e_{l,k-1}) = 0$,  we have
$e_{l,k-1}e_{k,l} u(K) + \m_\chi M = 0 + \m_\chi M$.
Moreover,
\begin{align*}
e_{l,k-1}e_{k,l} u(K) m_\tabA + \m_\chi M &= -e_{k,l}e_{l,k-1} u(K) m_\tabA + e_{k,k-1} u(K) m_\tabA + \m_\chi M \\
&= -e_{k,l}e_{l,k-1} u(K) m_\tabA - u(K) + \m_\chi M,
\end{align*}
where we use that $e_{k,k-1} u(K) m_\tabA + \m_\chi M =
- u(K) m_\tabA + \m_\chi M$, because $e_{k,k-1}+1 \in \m_\chi$.
Thus we see that it suffices to show that $e_{k,l}e_{l,k-1} u(K) m_\tabA +\m_\chi M$
is in the span of $\cS$.

To see this, we note that the $\t$-weight of $e_{k,l}e_{l,k-1} u(K)$ is
$\wt(K) + \delta_k - \delta_{k-1} >_{\lex} \wt(K)$.
Next we calculate
\begin{align*}
e_{k,l} e_{l,k-1} u(K) m_\tabA &= (-1)^{|K|} e_{k,l} u(K) e_{l,k-1} m_\tabA + e_{k,l}[e_{l,k-1},u(K)] m_\tabA \\
&= e_{k,l}[e_{l,k-1},u(K)] m_\tabA,
\end{align*}
because $e_{l,k-1} \in \n$ so that $e_{l,k-1} m_\tabA = 0$.
Then we see that $[e_{l,k-1},u(K)]$ is a sum of terms of the form
$(-1)^{|K_{i,j}|} u(K_{i,j})[e_{l,k-1},e_{i,j}] u(K^{i,j})$
over $(i,j) \in K$.   The nonzero possibilities for $[e_{l,k-1},e_{i,j}]$ are
$e_{l,l}+e_{k-1,k-1}$,  $e_{l,j}$ or $e_{i,k-1}$ all of which lie in either $\m$ or $\b$.
Moreover, we note that $e_{k,k-1}$ is not possible, because $e_{k,l} \notin K$, though $e_{l,l-1}$ can occur.
Therefore, using the straightening process, we obtain that
$e_{k,l}u(K_{i,j})[e_{l,k-1},e_{i,j}] u(K^{i,j})+\m_\chi M$
is a sum of monomials of the form
$u(J)m_\tabA + \m_\chi M$
for $J \sub I^-_\ge$.
Moreover, in the present situation the conditions (S1)--(S3) translate to
saying that
\begin{itemize}
\item[(S1$'$)] $|J| = |K|$ and $\wt(J)$ is either $\wt(K) + \delta_k-\delta_{k-1}$ or
$\wt(K) + \delta_k-\delta_{k-1} + \delta_l-\delta_{l-1}$; or
\item[(S2$'$)] $|J| < |K|$.
\end{itemize}
In the first case the possibilities for $\wt(M)$ satisfy $\wt(K) <_{\lex} \wt(M)$.  Hence,
we can apply induction to deduce that $u(K)m_\tabA + \m_\chi M$ is in the
span of $\cS$.
Since $|\cS| = 2^m$,
this completes the proof of the lemma.
\end{proof}

\begin{Lemma}\label{claim2}
 $H_0(M)$ is a highest weight supermodule for $W$.
\end{Lemma}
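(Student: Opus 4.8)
The plan is to exhibit an explicit highest weight vector, namely
$v := u(H)\, m_\tabA + \m_\chi M$, where $H$ and $u(-)$ are as in the proof of
Lemma~\ref{claim1}. I would begin by recording that $\dim H_0(M) = 2^m$:
by Lemmas~\ref{vermasequal} and~\ref{exactness} the classes $[H_0(M(\tabA))]$ and
$[H_0(M'(\tabA))]$ agree in $K_0(W\lsmof)$, while $H_0(M'(\tabA)) \cong \overline K(\tabA)$ has
dimension $2^m$ by Corollary~\ref{hrestc}. Combined with Lemma~\ref{claim1} this shows that the
spanning set $\cS$ is in fact a basis; in particular $v \in \cS$ is nonzero.

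Next I would check that $v$ is a highest weight vector. By (\ref{nobetter}) the element
$d_1^{(1)}$ lies in $S(\t)$, so it acts on $u(L)\, m_\tabA + \m_\chi M$ (for $H \subseteq L \subseteq I^-_\ge$)
by the scalar $(\lambda_\tabA + \wt(L) + \rho',\delta_1+\cdots+\delta_m)$; adjoining to $L$ any
generator of $I^-_\ge$ not in $H$ lowers this by $1$, so elements of $\cS$ at different
``levels'' have distinct $d_1^{(1)}$-eigenvalues and the top $d_1^{(1)}$-eigenspace of $H_0(M)$ is
exactly $\C v$. Since $[d_1^{(1)}, e^{(r)}] = e^{(r)}$, the operators $e^{(r)}$ raise this eigenvalue,
so $e^{(r)} v = 0$ for every $r > s_+$; and since the commuting family $W^0$ preserves the
one-dimensional space $\C v$, each of its generators acts on $v$ by a scalar. (Using the explicit
formulae for the $d_i^{(r)}$ from \cite[\S4]{BBG} --- cf.\ the proof of \cite[Lemma 8.3]{BBG} ---
one checks that these scalars are $e_r(a_1,\dots,a_m)$ and $e_s(b_1,\dots,b_n)$, so the highest
weight is $\tabA$; but this refinement is not needed for the present statement.)

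It then remains to see that $v$ generates $H_0(M)$. Writing $N := Wv$ and using the triangular
decomposition $W = W^-W^0W^+$ together with $W^0 v = \C v$ and $W^+ v = \C v$ (the latter because
$W^+$ is the Grassmann algebra on the $e^{(r)}$ with $s_+ < r \leq s_+ + m$, all of which annihilate
$v$), one gets $N = W^- v$. The map $\phi: W^- \to H_0(M)$, $w \mapsto wv$, is left $W^-$-linear; since
$W^-$ is a finite-dimensional local algebra (the exterior algebra on $m$ generators) whose socle is
the one-dimensional ideal spanned by $f^{(s_-+1)} f^{(s_-+2)} \cdots f^{(s_-+m)}$, a nonzero kernel of
$\phi$ would contain this socle. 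Hence it suffices to prove $f^{(s_-+1)} \cdots f^{(s_-+m)}\, v \neq 0$;
granting this, $\phi$ is injective, so $\dim N = \dim W^- = 2^m = \dim H_0(M)$ and $N = H_0(M)$. The
vector $v$ is homogeneous (it lies in a single $\t$-weight space), its parity being the one
dictated by Lemma~\ref{bookkeeping}.

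The genuinely hard point is therefore the single inequality
$f^{(s_-+1)} \cdots f^{(s_-+m)}\, v \neq 0$. For this I would use the explicit formulae for the
$f^{(r)}$ from \cite[\S4]{BBG} together with the straightening procedure of Lemma~\ref{claim1}:
the vector $f^{(s_-+1)} \cdots f^{(s_-+m)}\, v$ lies in the lowest $d_1^{(1)}$-eigenspace of $H_0(M)$,
which by Lemma~\ref{claim1} is the line spanned by $u(I^-_\ge)\, m_\tabA + \m_\chi M$; a computation
of the coefficient --- the surviving contributions coming precisely from the terms of the
$f^{(r)}$'s that adjoin the matrix units $e_{m+1,1},\dots,e_{m+1,m}$ to $u(H)$, so that
$f^{(s_-+1)} \cdots f^{(s_-+m)}\, v = \pm\, u(I^-_\ge)\, m_\tabA + \m_\chi M$ --- shows it is nonzero
since $\cS$ is a basis. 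I expect this last bookkeeping, tracking which terms of each $f^{(r)}$ can
increase the number of $e_{m+1,\bullet}$-factors under straightening, to be the main obstacle, and
it is where the detailed structure of $W$ established in \cite[\S4]{BBG} is indispensable.
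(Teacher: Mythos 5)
Your proposal is correct in outline, and its reductions differ from the paper's proof even though the technical core ends up being the same. The paper does not use the dimension count, the $d_1^{(1)}$-eigenvalue analysis, or the socle of $W^-$ here: it writes each $f^{(r)}$ as $\pm\sum_i e_{m+i,\,i+r-(s_-+1)} + g^{(r)}$ with $g^{(r)}$ of strictly smaller Lie degree, proves that $g^{(r)}u(K)m_\tabA + \m_\chi M$ is a combination of $u(L)m_\tabA + \m_\chi M$ with $L \in \cH$ of strictly smaller Lie degree, and concludes by a triangularity/induction argument that all $2^m$ products $\prod_r (f^{(r)})^{a_r}u(H)m_\tabA + \m_\chi M$ span $H_0(M)$; for this only the spanning statement of Lemma~\ref{claim1} is needed (that $\cS$ is a basis is deferred to the proof of Theorem~\ref{T:main}). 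Your skeleton — importing $\dim H_0(M)=2^m$ from Lemmas~\ref{vermasequal}, \ref{exactness} and Corollary~\ref{hrestc} (legitimate and non-circular), locating the highest weight vector as the top $d_1^{(1)}$-eigenspace and getting $e^{(r)}v=0$ and the $W^0$-eigenvector property from $[d_1^{(1)},e^{(r)}]=e^{(r)}$ and commutativity of $W^0$, then using that $W^-$ is a local Grassmann algebra with one-dimensional socle to reduce generation to the single nonvanishing $f^{(s_-+1)}\cdots f^{(s_-+m)}v \neq 0$ — is sound, and it buys a cleaner formal structure: only one leading coefficient has to be identified rather than a whole triangular system.

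The caveat is that the step you defer is precisely where all the work in the paper's proof lies, and the straightening of Lemma~\ref{claim1} is not by itself sufficient to carry it out: that straightening controls $|J|$ and $\wt(J)$ but says nothing about Lie degree, so on its face it does not exclude contributions of the correction terms $g^{(r)}$ to the top monomial $u(I^-_\ge)m_\tabA + \m_\chi M$. What is needed is exactly the content of the paper's argument: the shape conditions (F1)--(F2) on the monomials occurring in $g^{(r)}$, extracted from the explicit formula in \cite[\S 4]{BBG}, together with a direct check that multiplying $u(L)m_\tabA$ for $L\in\cH$ by $e_{i,j}$ with $\row(i)=\row(j)=1$, or by $e_{m+1,j}$, stays in the span of the monomials $u(L')m_\tabA$, $L'\in\cH$, without raising the Lie degree beyond $\deg(e_{i,j})$, while $e_{i,j}u(L)=0$ when $\row(i)=2$ and $\col(i)>1$ (since then $(i,j)\in H\subseteq L$). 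Granting that analysis, your computation does close: the product of the leading terms contributes $\pm\, u(I^-_\ge)m_\tabA + \m_\chi M$, every other contribution is a combination of basis vectors $u(L)m_\tabA + \m_\chi M$ of strictly smaller Lie degree and so has zero coefficient on $u(I^-_\ge)m_\tabA + \m_\chi M$, whence $f^{(s_-+1)}\cdots f^{(s_-+m)}v \neq 0$. So the plan is valid, but as written it postpones, rather than replaces, the delicate bookkeeping that constitutes the paper's proof.
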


\begin{proof}
We recall that the grading on $\g$ determined by the pyramid $\pi$ given
by $\deg(e_{i,j}) = \col(j) - \col(i)$ induces a grading
on $U(\p)$.  We refer to this is grading as the {\em Lie grading} as in \cite{BBG}.
Also we require the explicit
formulas for the elements $f^{(r)} \in W \sub U(\p)$ for $r = s_-+1,\dots,s_-+m$ given in \cite[\S4]{BBG}.
We let $\bar e_{i,j} := (-1)^{|\row(i)|} e_{i,j}$ and recall that
$$
f^{(r)} =
S_{\rho'} \bigg(\sum_{s=1}^r (-1)^{r-s}
  \sum_{\substack{i_1,\dots,i_s \\ j_1,\dots,j_s}}
(-1)^{\#\left\{a=1,\dots,s-1\:|\:
\row(j_a) =1
\right\}}
\bar e_{i_1,j_1} \cdots \bar e_{i_s,j_s}\bigg),
$$
where the sum is over all $1 \le i_1, \dots , i_s, j_1, \dots , j_s \le m+n$ such that
\begin{itemize}
\item[(R1)] $\row(i_1) = 2$ and $\row(j_s) = 1$;
\item[(R2)] $\col(i_a) \le \col(j_a)$ ($a=1,\dots,s$);
\item[(R3)] $\row(i_{a+1}) = \row(j_{a})$ ($a=1,\dots,s-1$);
\item[(R4)] if $\row(j_a) = 2$, then $\col(i_{a+1}) > \col(j_a)$
  ($a=1,\dots,s-1$);
\item[(R5)] if $\row(j_a) = 1$, then $\col(i_{a+1}) \leq\col(j_a)$
  ($a=1,\dots,s-1$);
\item[(R6)] $\deg(e_{i_1,j_1}) + \dots + \deg(e_{i_s,j_s}) = r-s$.
\end{itemize}
To spell out the key properties we need from this formula, write
$$
f^{(r)} = \pm \sum_{i=1}^{m-r+1} e_{m+i,i+r-(s_-+1)} + g^{(r)}
$$
where $g^{(r)} \in U(\p)$ is a linear combination of terms of the form
$e_{i_1,j_1} \cdots e_{i_s,j_s} \in U(\p)$
which satisfy the following conditions.
\begin{itemize}
\item[(F1)] The degree of $e_{i_1,j_1} \cdots e_{i_s,j_s}$ in the Lie grading is strictly less than $r-1$.
\item[(F2)] There exists $s'$ such that $\row(i_t),\row(j_t) = 1$ for all $t > s'$ and
$\row(i_{s'}) = 2$, $\row(j_{s'}) = 1$; moreover, if $i_{s'} = m+1$, then
$s' = 1$.
\end{itemize}

Now let $\cH$ and $\cS$ be as in (\ref{barty}).
Let $K \in \cH$, so that $u(K)m_\tabA + \m_\chi M \in \cS$, and let $r \in \{s_-+1,\dots,s_-+m\}$.
We will prove that
$g^{(r)} u(K)m_\tabA + \m_\chi M$ is a linear combination of terms of the form $u(L) m_\tabA + \m_\chi M$,
where $L \in \cH$ such that $u(L)$ has Lie degree strictly less than $u(K) + r-1$.

Let $i,j \in \{1,\dots,m+n\}$ such that
$\row(i) = \row(j) = 1$, $\col(i) \le \col(j)$ and let $L \in \cH$.
Consider $e_{i,j} u(L) m_\tabA + \m_\chi M$.
We have that
\begin{align*}
e_{i,j} u(L) m_\tabA + \m_\chi M &= [e_{i,j},u(L)] m_\tabA + u(L)\lambda_\tabA(e_{i,j})m_\tabA + \m_\chi M.
\end{align*}
Now $[e_{i,j},u(L)]$ is a sum of terms of the form
$\pm u(L_{k,l})[e_{i,j},e_{k,l}] u(L^{k,l})$
over $(k,l) \in L$, where $L_{k,l}$ is the set of elements of
$K$ before $(k,l)$ in our fixed order of $I^-$ and $L^{k,l}$ is the
set of those after $(k,l)$.  We have that $[e_{i,j},e_{k,l}]$ is either zero or equal to $-e_{k,j}$ if $i = l$, and in
this case we have $(k,j) \in I^-_{\ge 0}$.
It follows that $u(L_{i,j})[e_{i,j},e_{k,l}] u(L^{i,j})$ is either zero or equal to
$\pm u(L')$ for some $L' \in \cH$ with the same Lie degree as $e_{i,j} u(L)$.

Next let $i,j \in \{1,\dots,m+n\}$ such that
$\row(i) = 2$, $\row(j) = 1$, $\col(i) \le \col(j)$ and let $L \in \cH$.
We observe that $e_{i,j} u(L) =0$ if $\col(i) > 1$.  Also if $\col(i) = 1$ (so
$i = m+1$), then we have that $e_{i,j} u(L)$ is either zero or equal to
$\pm u(L')$ for some $L' \in \cH$ with the same Lie degree as $e_{i,j} u(L)$.

Combining the discussion in the previous two paragraphs with the
fact that $g^{(r)} u(K)m_\tabA + \m_\chi M$ is a sum of terms of the form
$e_{i_1,j_1} \cdots e_{i_s,j_s} u(K)m_\tabA + \m_\chi M$
subject to conditions (F1) and (F2) we deduce that
$g^{(r)} u(K)m_\tabA + \m_\chi M$ is a linear combination of terms of the form $u(L) m_\tabA + \m_\chi M$,
where $L \in \cH$ such that $u(L)$ has Lie degree strictly less than $u(K) + r-1$.

Now suppose
that $(m+1,r-s_-) \not\in K$.  Then we have that
$e_{m+1,r-s_-}u(K) = \pm u(K \cup \{(m+1,r-s_-)\})$, and that $e_{m+i,i+r-(s_-+1)} u(K) = 0$
for all $i > 1$.  Therefore,
$$
f^{(r)}(u(K)m_\tabA + \m_\chi M) = u(K \cup \{(m+1,r-s_-\}) + g^{(r)} u(K) + \m_\chi M.
$$
We deduce that
$$
\left\{\prod_{r=s_-+1}^{s_-+m} (f^{(r)})^{a_r} u(H)m_\tabA + \m_\chi M \:\: \vline \:\: a_r \in \{0,1\}\right\}
$$
spans $H_0(M)$, because
$(f^{(r)})^{a_r} u(H)m_\tabA + \m_\chi M$ is equal to
$$
u(H \cup \{(m+1,r-s_-) \:|\: a_r = 1\})m_\tabA  + (\text{terms of lower Lie degree)} + \m_\chi M.
$$
Hence, $H_0(M)$ is a highest weight supermodule
as required.
\end{proof}

\proof[Proof of Theorem~\ref{T:main}]
By Lemma~\ref{claim2} and the universal property of Verma supermodules,
there is a surjective homomorphism $\theta:\Pi^p
\overline{M}(\tabB) \twoheadrightarrow H_0(M)$
for some $\tabB \in \Tab$ and some parity $p \in \Z/2$.
By Lemmas \ref{vermasequal}
and \ref{exactness},  we have that
$[H_0(M)] = [H_0(M'(\tabA))]$
in the Grothendieck group $K_0(W\lsmof)$.
By Corollary \ref{hrestc}, $[H_0(M'(\tabA))] = [\overline{K}(\tabA)]$.
These facts imply that $\dim H_0(M) = \dim \overline{K}(\tabA) = 2^m$, so
that the spanning
set from Lemma~\ref{claim1} is actually a basis.
Since $\dim \overline{M}(\tabB) = 2^m$ too, this shows that
$\theta$ is in fact an isomorphism,
and moreover we have established that $[\Pi^p \overline{M}(\tabB)] = [\overline{K}(\tabA)]$.

It remains to show that $p=\0$ and $\tabB=\tabA$.
By their definitions (\ref{craven}) and (\ref{e:WVerma}), the
$W$-supermodules $\overline{K}(\tabA)$ and $\overline{M}(\tabB)$ are both
diagonalizable with respect to $d_2^{(1)}$,
the vectors $\overline k_\tabA$ and $\overline m_\tabB$ are eigenvectors
of $d_2^{(1)}$-eigenvalues $b(\tabA)$ and $b(\tabB)$, and
have parities $\parity(b(\tabA))$ and $\parity(b(\tabB))$, respectively.
Moreover, all other $d_2^{(1)}$-eigenspaces in these supermodules correspond to
strictly smaller eigenvalues.
As $e^{(r)}$ raises $d_2^{(1)}$-eigenvalues by one, $\overline k_\tabA$
must be a highest weight
vector. Hence, there is a non-zero (but not necessarily surjective) homomorphism
$\overline{M}(\tabA) \rightarrow \overline{K}(\tabA), \overline m_\tabA
\mapsto \overline k_\tabA$. This discussion implies that
\begin{align*}
[\overline K(\tabA)] &= [\overline L(\tabA)] + \left(\text{$[\overline
  L(\tabC)]$'s and $[\Pi\overline L(\tabC)]$'s
with $b(\tabC) < b(\tabA)$}\right),\\\
[\Pi^p\overline M(\tabB)] &= [\Pi^p \overline L(\tabB)] + \left(\text{$[\overline
  L(\tabC)]$'s and $[\Pi \overline L(\tabC)]$'s with $b(\tabC) < b(\tabA)$}\right).
\end{align*}
In the previous paragraph,
we
established already that
$[\Pi^p \overline{M}(\tabB)] = [\overline{K}(\tabA)]$.
So we must have that $\tabA=\tabB$ and
$p=0$, and the proof is complete.
\endproof

\section{Proof of Theorem~\ref{crazier}}\label{appendixb}

Fix $N \geq 2$ and let $U_q \mathfrak{sl}_N$ be the usual
quantized enveloping algebra
over the field $\Q(q)$ ($q$ an
indeterminate) that is
associated to the simple Lie algebra $\mathfrak{sl}_N(\C)$.
We denote its standard generators by $\left\{F_i,
  E_i, K_i^{\pm}\right\}_{1 \leq i <
  N}$.
Let $P := \bigoplus_{i=1}^N \Z \eps_i$ be its weight lattice, with
simple roots $\{\alpha_i := \eps_i-\eps_{i+1}\}_{1 \leq i < N}$
and symmetric form $(-,-)$ defined from $(\eps_i,\eps_j) :=
\delta_{i,j}$.
We have the natural $U_q \mathfrak{sl}_N$-module $V^+$ on basis
$\{v_i^+\}_{1 \leq i \leq N}$ and the dual natural module $
V^-$ on basis $\{v_i^-\}_{1 \leq i\leq  N}$. The actions of
the generators on these bases are given by the following formulae:
\begin{align*}
F_i v^{+}_j &= \delta_{i,j} v^{+}_{i+1},
&E_i v^{+}_j &= \delta_{i+1,j} v^{+}_i,
&
K_i  v^+_j &= q^{(\alpha_i,\eps_j)}  v^+_j,
\\
F_i v^{-}_j &= \delta_{i+1,j} v^{-}_i,
&
E_i v^{-}_j &= \delta_{i,j} v^{-}_{i+1},
&K_i  v^-_j &= q^{(\alpha_i,-\eps_j)} v^-_j.
\end{align*}
We'll work with the comultiplication
$\Delta:U_q \mathfrak{sl}_N \rightarrow U_q \mathfrak{sl}_N \otimes U_q \mathfrak{sl}_N$ defined from
$$
\Delta(F_i) =
1 \otimes F_i  + F_i \otimes K_i,
\quad
\Delta(E_i) =
K_i^{-1} \otimes E_i  + E_i \otimes 1,
\quad
\Delta(K_i) = K_i \otimes K_i.
$$
This is not quite the same as the comultiplication in Lusztig's book
\cite[\S3.1.3]{Lubook}: our
$q$ and $K_i$ are Lusztig's $v^{-1}$ and $K_i^{-1}$.
All definitions from \cite{Lubook} cited below should be
modified accordingly.

Let $\Theta$ be the quasi-$R$-matrix from \cite[\S4.1.1]{Lubook} (with $v$ replaced
by $q^{-1}$), and $R=R_{V,W}:V \otimes W \stackrel{\sim}{\rightarrow} W \otimes V$
be the $R$-matrix from \cite[\S32.1.4]{Lubook} for any integrable modules $V, W$. Thus, for
vectors $v \in V, w \in W$ of
weights
$\lambda,\mu \in P$, we have that $R(v \otimes w) =
q^{(\lambda,\mu)} \Theta(w \otimes v)$.
The following explicit formulae
for the action of the inverse of the $R$-matrix
on $V^\pm$ are
derived in \cite[$\S$5]{BSW}:
\begin{align}
R^{-1}(v^+_i \otimes v^+_j) &=
\left\{
\begin{array}{ll}
 v^+_j \otimes v^+_i&\text{if $i > j$},\\
q^{-1}v^+_j \otimes v^+_i&\text{if $i =j$},\\
 v^+_j \otimes v^+_i- (q-q^{-1}) v^+_i \otimes v^+_j\hspace{22.6mm}&\text{if $i < j$};
\end{array}
\right.\notag\\
R^{-1}(v^-_i \otimes v^-_j) &=
\left\{
\begin{array}{ll}
 v^-_j \otimes v^-_i&\text{if $i < j$},\\
 q^{-1}v^-_j \otimes v^-_i&\text{if $i =j$},\\
v^-_j \otimes v^-_i- (q-q^{-1}) v^-_i \otimes v^-_j\hspace{22.8mm}&\text{if $i > j$};
\end{array}
\right.\notag\\
R^{-1}(v^+_i \otimes v^-_j) &=
\left\{
\begin{array}{ll}
v^-_j \otimes v^+_i&\text{if $i \neq j$},\\
\displaystyle q v^-_j \otimes v^+_i + (q-q^{-1})\sum_{r=1}^{N-i} (-q)^{r} v^-_{j+r}\otimes v^+_{i+r}&\text{if $i = j$};
\end{array}
\right.\notag
\\
R^{-1}(v^-_i \otimes v^+_j) &=
\left\{
\begin{array}{ll}
v^+_j \otimes v^-_i&\text{if $i \neq j$},\\
\displaystyle q v^+_j \otimes v^-_i + (q-q^{-1})\sum_{r=1}^{i-1} (-q)^{r} v^+_{j-r}\otimes v^-_{i-r}\:&\text{if $i = j$}.
\end{array}
\right.\notag
\end{align}

For a sign sequence $\bsig = (\sig_1,\dots,\sig_k) \in \{\pm\}^k$,
we have the tensor space $V^{\otimes\bsig} :=
V^{\sig_1}\otimes\cdots\otimes V^{\sig_k}$, with basis
$\{v_{i_1}^{\sig_1}\otimes\cdots\otimes v_{i_k}^{\sig_k}\}_{1 \leq
i_1,\dots,i_k \leq N}$.
Let $(-,-)$ be the symmetric bilinear form on $V^{\otimes \bsig}$
defined by declaring that this basis is orthonormal.
There is an anti-linear
algebra automorphism
$\psi:U_q\mathfrak{sl}_N \rightarrow U_q\mathfrak{sl}_N$ defined by
$$
\psi(F_i) := F_i,\qquad
\psi(E_i) := E_i,\qquad
\psi(K_i) := K_i^{-1}.
$$
The modules $V^{\pm}$ possess anti-linear bar-involutions $\psi$ compatible
with this in the sense that $\psi(u v) = \psi(u) \psi(v)$ for all $u
\in \dot I, v \in V^{\pm}$; these are defined simply so that
$\psi(v_i^{\pm}) = v_i^{\pm}$ for all $1 \leq i \leq N$.
Applying
Lusztig's general construction from \cite[\S27.3.1]{Lubook},
we get also a (highly non-trivial)
compatible bar involution
$\psi:V^{\otimes\bsig} \rightarrow V^{\otimes \bsig}$.
Finally, let $\psi^*:V^{\otimes\bsig}\rightarrow V^{\otimes
  \bsig}$
be the adjoint anti-linear involution to $\psi$ with respect to the
form $(-,-)$, i.e.\
$\overline{(\psi(v),w)} = (v, \psi^*(w))$ for all $v, w \in
V^{\otimes\bsig}$.

\begin{Lemma}\label{getstarted}
Let $w_0$ be the longest element of the symmetric group $S_k$, so that
$w_0(\bsig) = (\sig_k,\dots,\sig_1)$.
For $s_i = (i\:\:i\!+\!1) \in S_k$, let $R_{i}$
be the $R$-matrix $1^{\otimes (i-1)} \otimes R \otimes 1^{\otimes
  (k-i-1)}$.
Then let
$R_{w_0}:
V^{\otimes w_0(\bsig)} \stackrel{\sim}{\rightarrow} V^{\otimes \bsig}$ be
the isomorphism
$R_{i_1} \circ \cdots \circ R_{i_{k(k-1)/2}}$
obtained from any reduced expression $w_0 = s_{i_1}\cdots s_{i_{k(k-1)/2}}$.
Define $R_{w_0}^{-1}: V^{\otimes w_0(\bsig)} \stackrel{\sim}{\rightarrow} V^{\otimes \bsig}$
similarly using the inverse $R$-matrices throughout.
Then, we have that
\begin{align}
\psi(v_{i_1}^{\sig_1}\otimes\cdots\otimes v_{i_k}^{\sig_k})
&= q^{-\sum_{1 \leq r < s \leq k} (\sig_{i_r} \eps_{i_r}, \sig_{i_s} \eps_{i_s})}
R_{w_0}(v_{i_k}^{\sig_k}\otimes\cdots\otimes v_{i_1}^{\sig_1}),\\
\psi^*(v_{i_1}^{\sig_1}\otimes\cdots\otimes v_{i_k}^{\sig_k})
&= q^{\sum_{1 \leq r < s \leq k}  (\sig_{i_r} \eps_{i_r}, \sig_{i_s} \eps_{i_s})}
R^{-1}_{w_0}(v_{i_k}^{\sig_k}\otimes\cdots\otimes v_{i_1}^{\sig_1}),\label{getstartedb}
\end{align}
for any $1 \leq i_1,\dots,i_k \leq N$.
\end{Lemma}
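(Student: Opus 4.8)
The proof of this lemma is the technical heart of Appendix~\ref{appendixb}: once it is available, the graded multiplicities in Theorem~\ref{crazier} follow by a bar-invariance/canonical-basis computation. The plan for the lemma itself is to combine three ingredients: the construction of the bar involution on a tensor product of based modules (\cite[\S27.3.1]{Lubook}), the defining relation of the $R$-matrix in terms of the quasi-$R$-matrix $\Theta$ (\cite[\S32.1.4]{Lubook}), and the hexagon relations for $R$. First recall that for integrable modules $M,M'$ with compatible bar involutions one has $\psi_{M\otimes M'}=\Theta_{M,M'}\circ(\psi_M\otimes\psi_{M'})$, and that this construction is coherent, so $\psi$ on $V^{\otimes\bsig}$ may be computed by bracketing the factors in any order. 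Since $\psi(v^\pm_i)=v^\pm_i$, on a standard basis vector every single-factor involution acts trivially, and hence $\psi$ on $V^{\otimes\bsig}$ restricts on the standard basis to an iterate of quasi-$R$-matrices. On the other hand, by its definition in \cite[\S32.1.4]{Lubook} the $R$-matrix obeys $R_{A,B}(a\otimes b)=q^{(\lambda,\mu)}\Theta_{B,A}(b\otimes a)$ for weight vectors $a\in A$, $b\in B$ of weights $\lambda,\mu$, equivalently $\Theta_{A,B}(a\otimes b)=q^{-(\lambda,\mu)}R_{B,A}(b\otimes a)$. For $k=1$ the first identity is trivial ($R_{w_0}=\mathrm{id}$, empty exponent), and for $k=2$, where $R_{w_0}=R$, it is exactly this rewriting of $\Theta$.

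For the inductive step, write $V^{\otimes\bsig}=V^{\sig_1}\otimes V^{\otimes\bsig'}$ with $\bsig'=(\sig_2,\dots,\sig_k)$. On a basis vector, $\psi=\Theta_{V^{\sig_1},V^{\otimes\bsig'}}\circ(\mathrm{id}\otimes\psi_{V^{\otimes\bsig'}})$; applying the inductive hypothesis to the second tensor factor and the coproduct-compatibility of $\Theta$ (equivalently, the hexagon identities for $R$, which also guarantee that $R_{w_0}$ does not depend on the chosen reduced expression for $w_0$), one expands $\Theta_{V^{\sig_1},V^{\otimes\bsig'}}$ as an ordered product of elementary $R$-matrices realising the cycle carrying the first tensor slot past all the others. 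Assembling these with the elementary $R$-matrices coming from $\psi_{V^{\otimes\bsig'}}$ gives a product of elementary $R$-matrices indexed by a reduced word for $w_0\in S_k$, and collecting the scalars (each transposition of weight vectors of weights $\lambda,\mu$ contributing $q^{-(\lambda,\mu)}$, these summing over the inversions of $w_0$ to the exponent displayed in the statement) yields the first formula.

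The second identity follows from the first by taking adjoints with respect to the orthonormal symmetric bilinear form. Since $\psi$ is an involution, one has $\overline{(\psi(v_\bi),v_\bj)}=(v_\bi,\psi^*(v_\bj))$ for basis vectors; substituting the first formula, bar-conjugating the matrix entries of $R_{w_0}$ (which lie in $\Z[q^{\pm1}]$), using $\overline{\Theta}\cdot\Theta=\mathrm{id}$ from \cite[\S4.1.2--4.1.3]{Lubook} to rewrite $\overline{R_{w_0}}$ through $R_{w_0}^{-1}$, and invoking the self-adjointness up to interchange of factors of the elementary $R$-matrices with respect to the orthonormal forms, one finds that the scalar has its $\eps$-form exponent negated and $R_{w_0}$ is replaced by $R_{w_0}^{-1}$, which is precisely \eqref{getstartedb}. (Alternatively, as the modules $V^\pm$ are small and the elementary $R^{\pm1}$-matrices are written out explicitly just above the lemma, both formulae can in principle be verified by a direct, if lengthy, computation; the structural argument sketched here is cleaner.)

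The main obstacle is the bookkeeping in the induction: one must check that the coproduct-compatibility of $\Theta$ genuinely reassembles the elementary $R$-matrices into $R_{w_0}$ with the correctly accumulated $q$-powers, with the reduced-word independence (hexagon/braid relations) making this unambiguous; and, in the adjoint step, one must pin down exactly which properties of based modules ($\overline{\Theta}\cdot\Theta=\mathrm{id}$, compatibility of the form with the bar involution, self-adjointness of $R$) are used so that the computation closes with the sign of the exponent flipped and no stray diagonal factors remaining. These are all standard facts, but they must be applied in the appropriate form.
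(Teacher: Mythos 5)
Your treatment of the first identity is essentially the paper's: the paper simply notes that it is immediate from Lusztig's construction of the bar involution on a tensor product of based modules \cite[\S 27.3.1]{Lubook} combined with the relation $R(v\otimes w)=q^{(\lambda,\mu)}\Theta(w\otimes v)$ from \cite[\S 32.1.4]{Lubook}; your induction on $k$ merely spells out the coassociativity/hexagon bookkeeping left implicit there. For the second identity the routes diverge: the paper reduces to the case $k=2$ (adjointness of a composition of two-factor operators and of a product of pairwise scalars is checked factor by factor) and then verifies the two-factor case directly from the explicit formulae for $R$ and $R^{-1}$ displayed just before the lemma, whereas you derive it structurally by taking adjoints of the first identity, bar-conjugating, and invoking $\overline{\Theta}\cdot\Theta=1$ together with the self-adjointness of $R$ up to interchange of factors. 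Your route does close, but two of the points you defer to ``bookkeeping'' should be made explicit. First, the scalar $q^{\,\sum_{r<s}(\sig_r\eps_{i_r},\sig_s\eps_{i_s})}$ obtained by bar-conjugating the coefficient attached to $v_{\bi}$ can be replaced by the corresponding scalar for $v_{\bj}$ only because $R_{w_0}$ preserves weights and the exponent equals $((\lambda,\lambda)-k)/2$, i.e.\ depends only on the weight $\lambda$ of the monomial, so it is constant on the support of the pairing. Second, when you conjugate the bar-conjugated adjoint of $R_{i_1}\circ\cdots\circ R_{i_{k(k-1)/2}}$ by the global reversal of tensor factors, you need that the reversed word (with indices relabelled $i\mapsto k-i$) is again a reduced expression for $w_0$, so that the local identities ``bar of adjoint of an elementary $R$ equals flip composed with the elementary $R^{-1}$ composed with flip'' (which is exactly what the displayed formulae for $R^{\pm 1}$ give, and is what the paper checks at $k=2$) reassemble into $R_{w_0}^{-1}$. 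With those two remarks supplied, your proof is complete; the paper's version is shorter only because the matrices of $R^{\pm1}$ are already written out, making the direct two-factor verification the path of least resistance.
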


\begin{proof}
The formula for $\psi$ follows immediately from Lusztig's construction
in \cite[\S27.3.1]{Lubook}, plus the formula expressing the
$R$-matrix
in terms of the quasi-$R$-matrix from \cite[\S32.1.4]{Lubook}.
To deduce the formula for the adjoint map $\psi^*$, one reduces to the case that
$k=2$, which may then be checked directly using the formulae for $R$
and $R^{-1}$ displayed above.
\end{proof}

Henceforth, we will be interested just in the spaces
$T^{m|n} := (V^+)^{\otimes m} \otimes (V^-)^{\otimes n}$
for $m,n \geq 0$.
Set
$$
T := \bigoplus_{m,n \geq 0} T^{m|n},
$$
with bar involutions $\psi, \psi^*:T \rightarrow T$ obtained from the
ones on each $T^{m|n}$.
Like in (\ref{monomials}),
we denote the monomial basis of $T^{m|n}$ by $\{v_\tabA\}_{\tabA \in \Tab_{m|n}}$,
where $\Tab_{m|n}$ denotes the set of all
tableaux
$\tabA
= \substack{a_1 \cdots a_{m} \\ b_1 \cdots b_{n}}$ with
entries satisfying $1 \leq a_1,\dots,a_m,b_1,\dots,b_n \leq N$.
Also let $\Tab_{m|n}^\circ$ be the set of all the anti-dominant
tableaux in $\Tab_{m|n}$, i.e.\ the tableaux
$\tabA
= \substack{a_1 \cdots a_{m} \\ b_1 \cdots b_{n}}$
satisfying $1 \leq a_1 \leq \cdots \leq a_m \leq N
\geq b_1 \geq
\cdots \geq b_n \geq 1$.

As in \cite[\S27.3.1]{Lubook},
the bar involutions $\psi$ and $\psi^*$ have the properties
\begin{align}
\psi(v_\tabA) &= v_\tabA + \text{(a $\Z[q,q^{-1}]$-linear combination
of $v_\tabB$'s for $\tabB \succ \tabA$)},\\
\psi^*(v_\tabA) &= v_\tabA + \text{(a $\Z[q,q^{-1}]$-linear combination
of $v_\tabB$'s for $\tabB \prec \tabA$)}.\label{soy}
\end{align}
So we can apply Lusztig's Lemma as in the proof of \cite[Theorem 27.3.2]{Lubook} to introduce
the {\em canonical basis}
$\{b_\tabA\}_{\tabA \in \Tab_{m|n}}$
and {\em dual canonical basis}
$\{b_\tabA^*\}_{\tabA \in \Tab_{m|n}}$ of $T^{m|n}$, which are
the unique bases determined by the following properties:
\begin{align}
\psi(b_\tabA) &= b_\tabA,
&
b_\tabA
&\in v_\tabA +\bigoplus_{\tabB \in \Tab_{m|n}} q \Z[q] v_\tabB,\\
\psi^*(b_\tabA^*) &= b^*_\tabA,&
b_\tabA^*
&\in v_\tabA +\bigoplus_{\tabB \in \Tab_{m|n}} q \Z[q] v_\tabB.
\end{align}
Since $\psi$ and $\psi^*$ are adjoint, the canonical and dual
canonical bases are dual bases with respect to the form
$(-,-)$.

Let $S$ be the $\Q(q)$-algebra defined by generators $\{x_i,y_i\}_{1
  \leq i \leq N}$ subject to the following relations:
\begin{align}\label{rel1}
x_i x_j &= q x_j x_i&\text{if }i > j,\\\label{rel2}
y_i y_j &= q y_j y_i&\text{if }i < j,\\\label{rel3}
y_i x_j &= x_j y_i&\text{if }i \neq j,\\
y_i x_i &= q x_i y_i + (q-q^{-1}) \sum_{r=1}^{i-1} (-q)^r x_{i-r}
y_{i-r}. \label{rel4}
\end{align}
The algebra $S$ admits compatible gradings
$S = \bigoplus_{\gamma \in P} S_\gamma$ and
$S= \bigoplus_{m,n \geq 0} S^{m|n}$,
the first of which is defined
by declaring that $\deg(x_i) := \eps_i$ and $\deg(y_i) := -\eps_i$ for
each $i$,
and the second by declaring that
$S^{m|n}$
is the span of the monomials
\begin{equation}
u_\tabA := x_{a_1}\cdots x_{a_m} y_{b_1}\cdots y_{b_n}
\end{equation}
for all $\tabA =
\substack{a_1 \cdots a_{m} \\ b_1 \cdots b_{n}}\in
\Tab_{m|n}$.
The following theorem shows that each $S^{m|n}$ has two distinguished bases: the
{\em monomial basis}
 $\{u_\tabA\}_{\tabA \in
    \Tab^\circ_{m|n}}$ and the {\em dual canonical basis} $\{d_\tabA\}_{\tabA \in
    \Tab^\circ_{m|n}}$, which is explicitly computed.
The proof is analogous to that of \cite[Theorem 20]{Bdual}.

\begin{Theorem}\label{incredible}
The vectors $\{u_\tabA\}_{\tabA \in \Tab^{\circ}_{m|n}}$ give a
basis for $S^{m|n}$.
Moreover:
\begin{enumerate}
\item The $\Q(q)$-linear map
$\pi: T \twoheadrightarrow S,\: v_\tabA \mapsto u_\tabA$
intertwines the dual bar involution $\psi^*$ on $T$ with the
unique anti-linear involution $\psi^*:S\rightarrow S$
such that $\psi^*(x_i) = x_i, \psi^*(y_i) = y_i$ and
\begin{equation}\label{newbarinv}
\psi^*(u u') = q^{(\gamma,\gamma')-mm'-nn'}\psi^*(u') \psi^*(u)
\end{equation}
for all
$u \in S^{m|n}\cap S_\gamma$ and $u' \in S^{m'|n'} \cap S_{\gamma'}$.
\item
For $\tabA \in \Tab_{m|n}$, we have that $\pi(b_\tabA^*) = 0$ unless $\tabA$ is
anti-dominant, in which case the vector $d_\tabA :=
\pi(b_\tabA^*)$ is characterized uniquely by the following properties:
$\psi^*(d_\tabA) = d_\tabA$,
$d_\tabA \in u_\tabA + \sum_{\tabB \in \Tab_{m|n}^\circ}
q \Z[q] u_\tabB$.
\item
Let $z_0 := 0$ and $z_i := x_i y_i - q  x_{i-1} y_{i-1} + \cdots + (-q)^{i-1} x_1
y_1$ for $1 \leq i \leq N$.
Given $\tabA \in \Tab^\circ_{m|n}$ of atypicality $t$,
choose $\substack{c_1 \cdots c_t a_1 \cdots a_{m -t} \\ c_1 \cdots c_t
 b_1 \cdots b_{n-t}  } \sim \tabA$
such that $a_1 \leq \cdots \leq a_{m-t}$ and $b_1 \geq \cdots \geq
b_{n-t}$.
Then:
$$
d_\tabA = q^{-t(t-1)/2 - \#\{(i,j)\:|\:a_i > c_j\} - \#\{(i,j)\:|\:b_i > c_j\}} x_{a_1} \cdots x_{a_{m-t}} z_{c_1} \cdots z_{c_t} y_{b_{1}}\cdots y_{b_{n-t}}.
$$
\end{enumerate}
The vectors $\{d_\tabA\}_{\tabA \in
\Tab_{m|n}^\circ}$ give another basis for $S^{m|n}$.
\end{Theorem}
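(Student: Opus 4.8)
The plan is to follow the argument of \cite[Theorem 20]{Bdual}. The first task is to show that $\{u_\tabA\}_{\tabA \in \Tab^\circ_{m|n}}$ is a basis of $S^{m|n}$. Spanning is a straightening argument: in an arbitrary monomial in the generators, relations \eqref{rel3}--\eqref{rel4} move every $y_j$ to the right of every $x_i$, and \eqref{rel1}--\eqref{rel2} then reorder the $x$-part into weakly increasing and the $y$-part into weakly decreasing order; since every correction term produced has strictly fewer generators, iterating on length expresses any monomial as a $\Z[q,q^{-1}]$-combination of the $u_\tabA$ with $\tabA$ anti-dominant. For linear independence I would specialise $q \mapsto 1$: the defining relations degenerate to those of the polynomial ring $\Q[x_1,\dots,x_N,y_1,\dots,y_N]$, in which the images of the $u_\tabA$ ($\tabA$ anti-dominant) are distinct monomials and hence a $\Q$-basis of the degree-$(m,n)$ component; a nontrivial $\Q(q)$-linear relation among the $u_\tabA$ could be cleared of denominators, made primitive, and specialised at $q=1$ to give a contradiction. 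In particular $\dim_{\Q(q)} S^{m|n} = |\Tab^\circ_{m|n}|$.

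For part (1), there is a unique anti-linear involution on the free algebra fixing each $x_i$ and $y_i$ and satisfying the twisted anti-multiplicativity \eqref{newbarinv}; one checks it descends to $S$ by verifying directly on the four families \eqref{rel1}--\eqref{rel4} that it preserves the ideal of relations, once the degree shifts in \eqref{newbarinv} are taken into account. To prove $\pi \circ \psi^* = \psi^* \circ \pi$, I would use the description of $\psi^*$ on $T^{m|n}$ from \eqref{getstartedb}, which reverses the tensor word and applies the composite inverse $R$-matrix $R^{-1}_{w_0}$ up to an explicit power of $q$; projecting to $S$ and repeatedly applying \eqref{rel1}--\eqref{rel4}, which by construction encode reordering by $R^{-1}$, recovers exactly the reversed, $q$-shifted product prescribed by \eqref{newbarinv}. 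Reconciling the two $q$-exponents is the one genuinely fiddly point here.

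For part (2), one first observes that the straightening relations are compatible with the Bruhat order: each elementary move sends $u_\tabB$ to $q^{\pm1} u_{\tabB'} + (\text{shorter monomials})$ with $\tabB' \prec \tabB$. Hence $\pi$ maps $\operatorname{span}\{v_\tabB : \tabB \preceq \tabA\}$ into $\operatorname{span}\{u_\tabC : \tabC \in \Tab^\circ_{m|n},\ \tabC \preceq \tabA\}$, and $\pi(v_\tabA) = u_\tabA$ when $\tabA$ is itself anti-dominant. Combined with the unitriangularity \eqref{soy} of $b_\tabA^*$ and the intertwining from (1), this shows for anti-dominant $\tabA$ that $d_\tabA := \pi(b_\tabA^*)$ is $\psi^*$-invariant and lies in $u_\tabA + \sum_{\tabC \prec \tabA} q\Z[q] u_\tabC$, so the uniqueness in Lusztig's Lemma characterises it. That $\pi(b_\tabA^*) = 0$ for non-anti-dominant $\tabA$ follows by the based-submodule argument of \cite[Theorem 20]{Bdual}: $\ker \pi$ is $\psi^*$-stable and is spanned by unitriangular vectors whose leading terms are indexed by the non-anti-dominant tableaux, which forces it to have $\{b_\tabA^*\}_{\tabA \notin \Tab^\circ_{m|n}}$ as a basis; the dimension count $\dim \ker \pi = |\Tab_{m|n}| - |\Tab^\circ_{m|n}|$ confirms there is nothing left over, so the surviving vectors $\{d_\tabA\}_{\tabA \in \Tab^\circ_{m|n}}$ form a basis of $S^{m|n}$.

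Finally, for the explicit formula (3), let $d_\tabA'$ denote the right-hand side; it suffices to verify the two properties characterising $d_\tabA$. First, $\psi^*(z_i) = z_i$: from \eqref{newbarinv} one gets $\psi^*(x_r y_r) = q^{-1} y_r x_r$, and substituting \eqref{rel4} and telescoping the alternating sum defining $z_i$ yields $\psi^*(z_i) = z_i$. Since $\psi^*$ fixes each $x_i$, $y_i$ and $z_i$, applying \eqref{newbarinv} reverses the product $x_{a_1} \cdots z_{c_1} \cdots y_{b_{n-t}}$ up to an explicit power of $q$; the choice of representative $\substack{c_1 \cdots c_t\, a_1 \cdots a_{m-t} \\ c_1 \cdots c_t\, b_1 \cdots b_{n-t}} \sim \tabA$, together with the $q$-commutation relations among the $x$'s, $y$'s and $z$'s, collapses the reversed product back to a scalar multiple of $d_\tabA'$, and the prefactor $q^{-t(t-1)/2 - \#\{a_i > c_j\} - \#\{b_i > c_j\}}$ is precisely what makes $\psi^*(d_\tabA') = d_\tabA'$. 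Second, expanding $d_\tabA'$ in the monomial basis and straightening shows $d_\tabA' \in u_\tabA + \sum_{\tabC \prec \tabA} q\Z[q] u_\tabC$. Uniqueness in Lusztig's Lemma then gives $d_\tabA' = d_\tabA$. Throughout, the main obstacle is the $q$-power bookkeeping — reconciling the exponents in \eqref{getstartedb}, \eqref{newbarinv} and the prefactor in (3) — and checking that straightening genuinely respects both the Bruhat order and $q\Z[q]$-integrality.
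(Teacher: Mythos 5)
The main gap is in your proof that the anti-dominant monomials $\{u_\tabA\}_{\tabA\in\Tab^\circ_{m|n}}$ are linearly independent. Spanning by straightening is fine, but the specialisation at $q=1$ does not deliver independence. A relation $\sum_\tabA c_\tabA(q)\,u_\tabA=0$ in $S$ means that the corresponding element of the free algebra lies in the two-sided ideal generated by \eqref{rel1}--\eqref{rel4} \emph{over} $\Q(q)$; when you express it as a combination of the defining relations, the coefficients may have poles at $q=1$, so even after clearing denominators and making the $c_\tabA$ primitive you cannot conclude that $\sum_\tabA c_\tabA(1)\,\bar u_\tabA=0$ in the specialised algebra. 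Put differently, the $q=1$ fibre of the natural $\Z[q,q^{-1}]$-form of $S$ is indeed a polynomial algebra, but unless you know that form is flat (torsion-free at $q-1$) -- which is essentially the statement being proved -- the fibre dimension at $q=1$ only bounds the generic dimension from \emph{above}, and that upper bound is exactly what spanning already gives. Independence therefore needs a different input: either a diamond-lemma/confluence check for the straightening rules, or, as in the paper, an explicit realisation: one verifies from the displayed formula for $R^{-1}(v_i^-\otimes v_j^+)$ that $x_i\mapsto v_i^+\otimes 1$, $y_j\mapsto 1\otimes v_j^-$ defines a homomorphism from $S$ to $S(V^+)\otimes S(V^-)$ equipped with the $R^{-1}$-twisted multiplication, under which the anti-dominant monomials visibly map to a basis.

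Otherwise your route is essentially the paper's (both follow \cite[Theorem 20]{Bdual}). For (2) the paper argues more directly than your kernel-basis argument: for non-anti-dominant $\tabA$ the vector $\pi(b^*_\tabA)$ is $\psi^*$-fixed and lies in $\bigoplus_{\tabB\in\Tab^\circ_{m|n}}q\Z[q]u_\tabB$ (since $u_\tabC=q^ku_{\tabC'}$ with $k>0$ for non-anti-dominant $\tabC$), hence is zero; your version can be completed but is more roundabout. For (3), your observation $\psi^*(z_i)=z_i$ together with the $q$-commutation of the $x$'s, $y$'s and $z$'s is a reasonable alternative to the paper's induction on $t$, then $n$, then $m$, but those commutation formulae (the paper's \eqref{wine1}--\eqref{wine3}) must themselves be proved, and the $q$-exponent checks you defer in (1) and (3) are the real content: in the paper, part (1) -- the existence of $\psi^*$ on $S$ satisfying \eqref{newbarinv} and its compatibility with \eqref{getstartedb} -- is precisely where the realisation $S\cong S(V^+)\otimes S(V^-)$ and \cite[Lemma 2]{Bdual} are used, so that step should not be waved through as bookkeeping.
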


\begin{proof}
In this proof, we will cite some results from \cite{Bdual}. The
conventions followed there are consistent with those of \cite{Lubook}, so that one
needs to replace $q$ by $q^{-1}$ and $K_i$ by $K_i^{-1}$ when
translating from \cite{Bdual} to the present setting.
The $R$-matrix $\mathcal R_{V,W}$ in \cite{Bdual} is the same as our inverse
$R$-matrix $R_{V,W}^{-1} := (R_{W,V})^{-1}$ (with $q$ replaced by
$q^{-1}$); hence, in view also of (\ref{getstartedb}),
the bar
involution defined in \cite[(3.2)]{Bdual} corresponds to our $\psi^*$.

We begin by recalling the standard definitions of the quantum symmetric algebras
$S(V^+) = \bigoplus_{m \geq 0} S^m(V^+)$ and $S(V^-) = \bigoplus_{n
  \geq 0} S^n(V^+)$.
As discussed in detail in \cite[\S5]{Bdual}, the former
is the quotient of the tensor algebra $T(V^+)$ by the
two-sided ideal $$
I^+ := \langle v^+_i \otimes v^+_j - q v^+_j \otimes
v^+_i\:|\:i > j\rangle.
$$
It has a basis consisting of the
images
$v_{i_1}^+ \cdots v_{i_m}^+$ of the tensors
$v_{i_1}^+ \otimes \cdots \otimes v_{i_m}^+$ for $m \geq 0$ and $i_1 \leq \cdots \leq i_m$.
Similarly, $S(V^-)$ is the quotient of $T(V^-)$ by
$$I^- :=
\langle v^-_i \otimes v_j^- - q v_j^- \otimes v_i^-\:|\:i < j
\rangle,
$$
and it has basis
$v_{j_1}^- \cdots v_{j_n}^-$ for $n \geq 0$ and $j_1 \geq \cdots \geq j_n$.

Let $\mu^{\pm}:S(V^{\pm}) \otimes S(V^{\pm}) \rightarrow S(V^{\pm})$ be
the multiplications on these two algebras.
Then define a multiplication $\mu$ on the vector space $S(V^+) \otimes S(V^-)$
by the composition
$(\mu^+ \otimes \mu^-) \circ (\operatorname{id}_{S(V^+)}
\otimes R_{S(V^-), S(V^+)}^{-1} \otimes \operatorname{id}_{S(V^-)})$.
Since the $R$-matrix is a braiding, this makes $S(V^+) \otimes S(V^-)$
into an associative algebra. Using the formula for $R^{-1}(v_i^-
\otimes v_j^+)$ displayed above, it is easy to check the relations
(\ref{rel1})--(\ref{rel4}) to
show that there is an algebra homomorphism
$$
f: S \rightarrow S(V^+) \otimes S(V^-),
\qquad
x_i \mapsto v_i^+ \otimes 1,
y_j \mapsto 1 \otimes v_j^-.
$$
Also the relations easily give that the anti-dominant monomials $\{u_\tabA\:|\:\tabA \in
\Tab^\circ_{m|n}\}$
span $S^{m|n}$. Moreover, their images under $f$ are a basis for
$S^m(V^+) \otimes S^n(V^-)$.
This shows that $f$ is an isomorphism, thereby establishing the
first statement of the theorem about the monomial basis.

To prove (1), we consider
the diagram
$$
\begin{diagram}
\node[2]{T}\arrow{sw,t,A}{\pi}\arrow{se,t,A}{\pi' := f \circ\pi}\node{}\\
\node{S}\arrow[2]{e,tb}{\sim}{f}\node[2]{S(V^+)\otimes S(V^-).}
\end{diagram}
$$
It suffices to define an anti-linear map $\psi^*:S(V^+) \otimes S(V^-)
\rightarrow S(V^+)\otimes S(V^-)$ such that
$\psi^* \circ \pi' = \pi' \circ \psi^*$, and then show
that
\begin{equation}\label{pens}
\psi^*((x \otimes y) (x' \otimes y'))
= q^{(\alpha+\beta,\alpha'+\beta')-mm' - nn'} \psi^*(x'\otimes y')
\psi^*(x \otimes y)
\end{equation}
for $x \in S^m(V^+)_\alpha, x' \in S^{m'}(V^+)_{\alpha'}, y \in
S^n(V^-)_\beta$ and $y' \in
S^{n'}(V^-)_{\beta'}$.
The generators of the ideal $I^+$ belong to the dual canonical basis
of $V^+ \otimes V^+$, hence, they are fixed by $\psi^*$. This implies
that $I^+$ is $\psi^*$-invariant, hence, $\psi^*:T(V^+) \rightarrow
T(V^+)$ factors through the quotient $S(V^+)$ to induce
$\psi^*:S(V^+) \rightarrow S(V^+)$. The latter map may be defined
directly: it is the unique anti-linear involution that fixes all the
monomials
$v_{i_1}^+\cdots v_{i_m}^+$ for $m \geq 0$ and $i_1 \leq \cdots \leq
i_m$.
Similarly, $\psi^*:T(V^-) \rightarrow T(V^-)$ induces
$\psi^*:S(V^-)\rightarrow S(V^-)$, which fixes
$v_{j_1}^- \cdots v_{j_n}^-$ for $n \geq 0$ and $j_1 \geq \cdots \geq j_n$.
Then we let $\psi^*:S(V^+) \otimes S(V^-) \rightarrow S(V^+) \otimes S(V^-)$
be defined from
$$
\psi^*(x \otimes y) = q^{(\alpha,\beta)} R^{-1}_{S(V^-), S(V^+)}
(\psi^*(y) \otimes \psi^*(x))
$$
for $x \in S(V^+)$ of weight $\alpha$ and $y \in S(V^-)$ of weight
$\beta$.
It is immediate from this definition and (\ref{getstartedb}) that
$\psi^* \circ \pi' = \pi' \circ \psi^*$.
It remains to establish (\ref{pens}).
Let $\tilde \mu^+:S(V^+) \otimes S(V^+)\rightarrow S(V^+)$ be the
twisted multiplication $m^+ \circ R^{-1}_{S(V^+),S(V^+)}$.
Define $\tilde \mu^-:S(V^-)\otimes S(V^-)\rightarrow S(V^-)$
similarly.
Then let $\tilde \mu := (\tilde \mu^+ \otimes \tilde \mu^-) \circ
(\operatorname{id}_{S(V^+)} \otimes R^{-1}_{S(V^-), S(V^+)} \otimes
\operatorname{id}_{S(V^-)}$.
This gives a twisted multiplication on $S(V^+) \otimes S(V^-)$.
Now let $x,y,x'$ and $y'$ be as in (\ref{pens}).
We apply \cite[Lemma 2]{Bdual} to deduce immediately that
$$
\psi^*((x \otimes y) (x' \otimes y'))
= q^{(\alpha+\beta,\alpha'+\beta')}
\tilde{\mu}(\psi^*(x'\otimes y')
\otimes \psi^*(x \otimes y)).
$$
We are thus reduced to checking that
$$
\tilde{\mu}(\psi^*(x'\otimes y')
\otimes \psi^*(x \otimes y))
= q^{-mm'-nn'}
\mu(\psi^*(x'\otimes y')
\otimes \psi^*(x \otimes y)),
$$
which follows as $\tilde \mu^+(x \otimes x') = q^{-mm'}
\mu^+(x\otimes x')$ and
$\tilde \mu^-(y \otimes y') = q^{-nn'} \mu^-(y\otimes y')$, as is pointed out at the
beginning of the proof of \cite[Theorem 16]{Bdual}.

We turn our attention to (2).
If $\tabA \in \Tab_{m|n}$ is {\em not} anti-dominant, then the defining
relations for $S$ imply that
$u_\tabA = \pi(v_\tabA)= q^k u_\tabB$
for $k > 0$ and $\tabA \prec \tabB \in \Tab^\circ_{m|n}$.
Combined with (\ref{soy}), we deduce for $\tabA \in \Tab^\circ_{m|n}$ that
$$
\psi^*(u_\tabA) = u_\tabA + \text{(a $\Z[q,q^{-1}]$-linear combination of
  $u_\tabB$'s
for $\tabA \prec \tabB \in \Tab^\circ_{m|n}$)}.
$$
Hence, we can apply Lusztig's Lemma once again to deduce that
$S^{m|n}$
has another basis $\{d_\tabA\:|\:\tabA \in \Tab_{m|n}^\circ\}$,
with $d_\tabA$ being determined uniquely by the properties
that
$\psi^*(d_\tabA) = d_\tabA$ and $d_\tabA \in u_\tabA + \sum_{\tabB \in \Tab^\circ_{m|n}}
q\Z[q] u_\tabB$. This is the basis appearing in the final statement of the
theorem.
In view of (1), for $\tabA \in \Tab_{m|n}^\circ$,
the vector $\pi(b_\tabA^*)$ satisfies the defining
properties of $d_\tabA$, hence,
$\pi(b_\tabA^*) = d_\tabA$. To complete the proof of (2), we need to show that
$\pi(b_\tabA^*) = 0$ for $\tabA \in \Tab_{m|n} \setminus
\Tab_{m|n}^\circ$. This follows because in that case
$\pi(b_\tabA^*)$ lies in $\bigoplus_{\tabB \in \Tab_{m|n}^\circ} q \Z[q] u_\tabB$,
which contains no non-zero $\psi^*$-invariant vectors.

Finally, we must establish (3).
For this, we first prove the following commutation formulae involving the
$z_i$'s:
\begin{align}
x_j z_i &= \left\{
\begin{array}{rl}
q z_i x_j&\text{if $j > i$,}\\
q^{-1} z_i x_j&\text{if $j\leq i$;}
\end{array}\right. \label{wine1}\\
y_j z_i &= \left\{
\begin{array}{rl}
q^{-1} z_i y_j&\text{if $j > i$,}\\
q z_i y_j&\text{if $j\leq i$;}
\end{array}\right.\label{wine2}\\
z_j z_i &= z_i z_j.\label{wine3}
\end{align}
Actually, we just prove (\ref{wine1}); then the proof of (\ref{wine2})
is similar, and together they obviously imply (\ref{wine3}).
It is obvious that $x_j z_i = q z_i x_j$ for $j > i$.
Also from the definitions we have that
\begin{align}
x_i y_i &= z_i + q z_{i-1},\label{wine4}\\
y_i x_i &= q z_i + z_{i-1}.\label{wine5}
\end{align}
Hence, $z_i x_i = (x_i y_i - q z_{i-1}) x_i = x_i (y_i x_i - z_{i-1})
= q x_i z_i$.
Finally, to show that $z_i x_j = q x_j z_i$ for $i > j$, we proceed by
induction on $i$:
$z_i x_j = (x_i y_i  - q z_{i-1}) x_j = q x_j (x_i y_i - q z_{i-1}) =
q x_j z_i$.

Next we derive the formula for $d_\tabA$ under the assumption that
$m=n=t$. We need to show simply that
$d_\tabA = q^{-t(t-1)/2} z_{c_1} \cdots z_{c_t}$.
Since the $z$'s commute, we may assume that $c_1 \leq \cdots \leq
c_t$.
We proceed by induction on $t$, leaving the base case $t=1$ to the
reader as an exercise.
For the induction step, we have by induction that
$d_{\overline{\tabA}} = q^{-(t-1)(t-2)/2} z_{c_2} \cdots z_{c_{t}}$
where $\overline{\tabA} :=
\substack{c_2 c_3 \cdots c_{t} \\ c_{t} \cdots c_3  c_2 }$,
and must show that $d_\tabA = q^{-(t-1)} z_{c_1} d_{\overline{\tabA}}$.
Expanding the definition of $z_{c_1}$ then commuting
$y$'s past $d_{\overline{\tabA}}$, we get that
$$
q^{-(t-1)} z_{c_1} d_{\overline{\tabA}}=
x_{c_1} d_{\overline{\tabA}} y_{c_1}
-q x_{c_1-1} d_{\overline{\tabA}} y_{c_1-1}
+\cdots + (-q)^{c_1-1} x_{1} d_{\overline{\tabA}} y_{1}.
$$
It follows easily that this vector lies in $u_\tabA + \sum_\tabB q \Z[q] u_\tabB$,
and it just remains to show that it is $\psi^*$-invariant.
Using (\ref{newbarinv}), we have that
$$
\psi^*(q^{-(t-1)} z_{c_1} d_{\overline{\tabA}})
= q^{t-1} q^{-2(t-1)} d_{\overline{\tabA}} z_{c_1}
= q^{-(t-1)} z_{c_1} d_{\overline{\tabA}}.
$$

To complete the proof of (3), assume first that $m=t < n$.
Let $\overline{\tabA}$ be obtained from $\tabA$ by removing the entry
$b_{n-t}$ from its
bottom row. By induction on $n$, we may assume that
$$
d_{\overline{\tabA}} = q^{-t(t-1)/2-\#\{(i,j)\:|\:b_i > c_j +
    \#\{j\:|\:b_{n-t} > c_j\}}
z_{c_1} \cdots z_{c_t} y_{b_{1}} \cdots
y_{b_{n-t-1}}.
$$
We need to show that
$d_\tabA  = q^{-\#\{j\:|\:b_{n-t} > c_j\}}d_{\overline{\tabA}} y_{b_{n-t}}$.
It is easy to see that it equals $u_\tabA$ plus a $q \Z[q]$-linear
combination of other $u_\tabB$'s. Then one checks that it is
$\psi^*$-invariant by a calculation using the commutation formulae and
(\ref{newbarinv}).
Finally, one treats the case $m > t$ in a very similar way: let $\overline{\tabA}$ be $\tabA$ with the entry $a_1$
removed from its top row; by induction we have a formula for
$d_{\overline{\tabA}}$; then one deduces that $d_\tabA = q^{-\#\{j\:|\:a_1 > c_j\}} x_{a_1}
d_{\overline{\tabA}}$.
\end{proof}

Now we switch to the combinatorial framework of \S\ref{morec},
modified slightly since we are working with $\mathfrak{sl}_N$ rather
than $\mathfrak{sl}_\infty$.
We re-use the notation $\lambda \vDash n$ now to indicate that $\lambda$ is
an $N$-part composition of $n$, i.e.\ a sequence
$\lambda = (\lambda_1,\dots,\lambda_N)$ of non-negative integers
summing to $n$.
Fix for the remainder of the appendix integers $m,n \geq 0$ and
a triple
$(\mu,\nu;t)$
such that $0 \leq t \leq \min(m,n)$, $\mu \vDash m-t$, $\nu \vDash
n-t$,
and $\mu_i \nu_i = 0$ for all $i=1,\dots,N$.
For each $\lambda \vDash t$, let $\tabA(\mu,\nu;\lambda)$
be the unique anti-dominant tableau with $\lambda_i+\mu_i$ entries
equal to $i$ on its top row and $\lambda_i+\nu_i$ entries equal to $i$
on its bottom row, for all $i=1,\dots, N$.
We denote $b_\tabA, v_\tabA, b_\tabA^*, u_\tabA$ and
$d_\tabA$ for $\tabA := \tabA(\mu,\nu;\lambda)$ simply by
$b_\lambda,v_\lambda, b_\lambda^*, u_\lambda$ and $d_\lambda$,
respectively.
Set $\gamma := \mu+\nu$.

\begin{Lemma}\label{edible}
For $\lambda,\kappa \vDash t$, the $d_\kappa$-coefficient of
$u_\lambda$ when expanded in terms of the dual canonical basis for
$S^{m|n}$ is non-zero if and only if $\lambda = \kappa-
\sum_{i=1}^{N-1} \theta_i \alpha_i$ for
$(\theta_1,\dots,\theta_{N-1})$ with $0 \leq \theta_i \leq \lambda_i$
for all $i$, in which case the coefficient equals
$$
\prod_{i=1}^{N-1} q^{\theta_i (\lambda_{i+1}+\gamma_{i+1})}
   \sqbinom{\lambda_{i+1}}{\theta_i}.
$$
\end{Lemma}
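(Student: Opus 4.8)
The plan is to prove Lemma~\ref{edible} by a direct computation inside $S^{m|n}$, using the closed formula for $d_\kappa$ from Theorem~\ref{incredible}(3) together with the commutation relations (\ref{rel1})--(\ref{rel4}) and their consequences (\ref{wine1})--(\ref{wine5}). Write $X_\mu := x_1^{\mu_1}\cdots x_N^{\mu_N}$, $Y_\nu := y_N^{\nu_N}\cdots y_1^{\nu_1}$ and $Z_\kappa := z_1^{\kappa_1}\cdots z_N^{\kappa_N}$, the last being independent of the order of its factors by (\ref{wine3}). By Theorem~\ref{incredible}(3), $d_\kappa = q^{g(\kappa)}X_\mu Z_\kappa Y_\nu$ for an explicit exponent $g(\kappa)$, which I would first record as a function of $\mu,\nu,\kappa,t$. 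On the other hand $u_\lambda$ is by definition the monomial with $\lambda_i+\mu_i$ letters $x_i$ in increasing order of $i$, followed by $\lambda_i+\nu_i$ letters $y_i$ in decreasing order of $i$; since $\mu_i\nu_i=0$, one reorders within the block of $x$'s and within the block of $y$'s (crossing no $x$ past a $y$, so introducing no lower terms) to rewrite $u_\lambda = q^{h(\lambda)}X_\mu M_\lambda Y_\nu$, where $M_\lambda := x_1^{\lambda_1}\cdots x_N^{\lambda_N}y_N^{\lambda_N}\cdots y_1^{\lambda_1}$ is the ``fully matched'' monomial and $h(\lambda)$ is explicit.

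Since $S$ is a domain and $X_\mu,Y_\nu$ are products of generators, hence nonzerodivisors, the asserted expansion $u_\lambda=\sum_\kappa c_{\lambda,\kappa}d_\kappa$ is equivalent, after cancelling the common flanking factors $X_\mu(\,\cdot\,)Y_\nu$, to
$$
M_\lambda \;=\; \sum_{\kappa}c_{\lambda,\kappa}\,q^{\,g(\kappa)-h(\lambda)}\,Z_\kappa
$$
inside the commutative subalgebra $\Q(q)[z_1,\dots,z_N]\subseteq S$ (the elements $x_iy_i$ and $z_j$ all commute with one another by (\ref{wine1})--(\ref{wine3}), and both $M_\lambda$ and every $Z_\kappa$ lie in this subalgebra). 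I would therefore prove, in this polynomial algebra, the identity
$$
M_\lambda \;=\; q^{-\binom{t}{2}}\!\!\!\sum_{\substack{\theta=(\theta_1,\dots,\theta_{N-1})\\ 0\le\theta_i\le\lambda_{i+1}}}\!\!\Bigl(\prod_{i=1}^{N-1}q^{\theta_i\lambda_{i+1}}\sqbinom{\lambda_{i+1}}{\theta_i}\Bigr)\prod_{i=1}^N z_i^{\kappa_i},
\qquad \kappa_i:=\lambda_i+\theta_i-\theta_{i-1},
$$
and then separately verify the $q$-power identity $h(\lambda)-g(\kappa)=\binom{t}{2}+\sum_i\theta_i\gamma_{i+1}$, which converts the previous display into the desired coefficient $c_{\lambda,\kappa}=\prod_i q^{\theta_i(\lambda_{i+1}+\gamma_{i+1})}\sqbinom{\lambda_{i+1}}{\theta_i}$, the support condition $\lambda=\kappa-\sum_i\theta_i\alpha_i$ with $0\le\theta_i\le\lambda_{i+1}$ being read off along the way.

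The displayed identity for $M_\lambda$ I would prove by induction on $N$, peeling off colour $N$. Using (\ref{wine1})--(\ref{wine2}) once to commute the colour-$N$ block to the right, $M_\lambda=q^{-\lambda_N(t-\lambda_N)}M_{\lambda'}\,x_N^{\lambda_N}y_N^{\lambda_N}$ with $\lambda':=(\lambda_1,\dots,\lambda_{N-1})$. The engine of the induction is the single-colour straightening formula
$$
x_N^{\,a}y_N^{\,a} \;=\; \sum_{b=0}^{a}q^{\,ab-\binom{a}{2}}\sqbinom{a}{b}\,z_N^{\,a-b}z_{N-1}^{\,b},
$$
which I would establish by a short induction on $a$ from $x_Ny_N=z_N+qz_{N-1}$, equation (\ref{wine4}), and the relations $x_Nz_N=q^{-1}z_Nx_N$, $x_Nz_{N-1}=qz_{N-1}x_N$ from (\ref{wine1}); the inductive step is precisely the $q$-Pascal recurrence $\sqbinom{a+1}{b}=q^{b}\sqbinom{a}{b}+q^{-(a+1-b)}\sqbinom{a}{b-1}$, which is where the quantum binomial coefficients enter. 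Substituting this into the factorisation of $M_\lambda$, renaming the $z_{N-1}$-exponent $b$ as $\theta_{N-1}$, collecting powers of each $z_i$, and reconciling prefactors via $\binom{t}{2}=\binom{t-\lambda_N}{2}+\binom{\lambda_N}{2}+\lambda_N(t-\lambda_N)$, the $N$-variable identity follows from the $(N-1)$-variable one, the base case $N=1$ being trivial.

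The step I expect to be the main obstacle is the $q$-power bookkeeping: computing $g(\kappa)$ and $h(\lambda)$ explicitly and verifying $h(\lambda)-g(\kappa)=\binom{t}{2}+\sum_i\theta_i\gamma_{i+1}$, and pinning down the exponent $ab-\binom{a}{2}$ in the single-colour straightening formula correctly so that, after the renaming $b\mapsto\theta_{N-1}$ and the reconciliation of prefactors in the inductive step, the $q$-binomials reassemble into $\prod_i\sqbinom{\lambda_{i+1}}{\theta_i}$. By contrast the combinatorial content — the support condition, and the fact that at $q=1$ the coefficient degenerates to the ordinary binomial $\prod_i\binom{\lambda_{i+1}}{\theta_i}$ — is immediate once the straightening formula is in hand.
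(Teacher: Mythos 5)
Your argument is correct and is essentially the paper's own proof: both hinge on the single-colour straightening identity $x_i^ry_i^r=\sum_{s=0}^r q^{sr-r(r-1)/2}\sqbinom{r}{s}z_i^{r-s}z_{i-1}^s$ (the paper's (\ref{last1}), established by the same induction and the same $q$-Pascal identity), the closed formula for $d_\kappa$ from Theorem~\ref{incredible}(3), and the same telescoping of $q$-powers $\sum_{i<j}(\kappa_i-\lambda_i)\gamma_j=\sum_i\theta_i\gamma_{i+1}$, the only difference being organisational, in that the paper interleaves $x^\lambda y^\lambda$ into $\prod_i x_i^{\lambda_i}y_i^{\lambda_i}$ and expands all colours simultaneously rather than peeling off colour $N$ by induction. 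Your extra framing step (cancelling $X_\mu$ and $Y_\nu$ because $S$ is a domain) is true but left unproven and is in any case unnecessary: you only need the direction from the identity among the $z_i$'s to the $d_\kappa$-expansion, which is mere left and right multiplication, since uniqueness of the coefficients is automatic from $\{d_\tabA\}$ being a basis of $S^{m|n}$.
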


\begin{proof}
We first observe by induction on $r \geq 0$ that
\begin{equation}\label{last1}
x_i^r y_i^r =
\sum_{s=0}^r q^{sr-r(r-1)/2} \sqbinom{r}{s}
z_i^{r-s} z_{i-1}^s.
\end{equation}
The base case is trivial, while the induction step follows using
(\ref{wine1}), (\ref{wine3}), (\ref{wine4}) and the usual identity
$\sqbinom{r+1}{s} = q^s \sqbinom{r}{s} + q^{s-r-1}\sqbinom{r}{s-1}$.
Combining (\ref{last1}) with (\ref{wine2}), we get also that
\begin{equation}\label{last2}
(x_i^r y_i^r) y_j^s
= q^{-sr} y_j^s (x_i^r y_i^r)
\end{equation}
whenever $j < i$.

Now take any $\lambda \vDash t$ and
set $x^\lambda := x_1^{\lambda_1} \cdots x_N^{\lambda_N},
y^\lambda := y_N^{\lambda_N} \cdots y_1^{\lambda_1}$
and $z^\lambda := z_1^{\lambda_1} \cdots z_N^{\lambda_N}$.
By (\ref{rel1})--(\ref{rel2}) then (\ref{last2}), we have that
\begin{align*}
u_\lambda &=
q^{-\sum_{i < j} \lambda_i (\mu_j+\nu_j)}
x^\mu x^\lambda y^\lambda y^\nu=
q^{-\sum_{i < j} \lambda_i (\lambda_j+\gamma_j)}
x^\mu (x_1^{\lambda_1} y_1^{\lambda_1})
\cdots (x_N^{\lambda_N} y_N^{\lambda_N})
 y^\nu.
\end{align*}
Expanding each $x_i^{\lambda_i} y_i^{\lambda_i}$ here using
(\ref{last1}), we obtain
$$
u_\lambda =
q^{-\sum_{i < j} \lambda_i (\lambda_j+\gamma_j)}
\sum_{\substack{(\theta_0,\theta_1,\dots,\theta_{N-1}) \\ 0 \leq \theta_i \leq
    \lambda_{i+1}}}
\left(\prod_{i = 1}^N
q^{\theta_{i-1}
  \lambda_i-\lambda_i(\lambda_i-1)/2} \sqbinom{\lambda_i}{\theta_{i-1}} \right)
x^\mu z^{(\lambda,\theta)}
y^\nu
$$
where
$z^{(\lambda,\theta)} :=  z_0^{\theta_0} z_1^{\lambda_1+\theta_1-\theta_0} \cdots
z_{N-1}^{\lambda_{N-1}+\theta_{N-1}-\theta_{N-2}}
z_N^{\lambda_N
  - \theta_{N-1}}$.
Since $z_0^{\theta_0} = 0$ unless $\theta_0 = 0$,
and $\sum_{i < j} \lambda_i \lambda_j +\sum_i
\lambda_i(\lambda_i-1)/2 = t(t-1)/2$, this simplifies to
$$
u_\lambda =
q^{-t(t-1)/2-\sum_{i < j} \lambda_i \gamma_j}
\sum_{\substack{(\theta_0, \theta_1,\dots,\theta_{N-1}) \\ 0 \leq \theta_i \leq
    \lambda_{i+1} \\ \theta_0 = 0}}
\left(\prod_{i = 1}^{N-1}
q^{\theta_{i}
  \lambda_{i+1}} \sqbinom{\lambda_{i+1}}{\theta_{i}} \right)
x^\mu z^{(\lambda,\theta)}
y^\nu.
$$
Now pick some $(\theta_0,\theta_1,\dots,\theta_{N-1})$ appearing in
this summation, and set $\kappa := \lambda + \sum_{i=1}^{N-1} \theta_i
\alpha_i \vDash t$. By the formula from
Theorem~\ref{incredible}(3), we get that
$$
d_\kappa = q^{-t(t-1)/2 - \sum_{i < j} \kappa_i \gamma_j} x^\mu
z^\kappa y^\nu.
$$
Moreover,
$$
\sum_{1 \leq i < j \leq N} (\kappa_i - \lambda_i) \gamma_j
= \!\!\sum_{1 \leq i < j \leq N} (\theta_i - \theta_{i-1}) \gamma_j
= \sum_{i=1}^{N-1}
\theta_i \gamma_{i+1}.
$$
The last three identities displayed combine to show that
$$
u_\lambda = \sum_{\substack{(\theta_1,\dots,\theta_{N-1})\\
0 \leq \theta_i \leq \lambda_{i+1}}}
\left(\prod_{i=1}^{N-1} q^{\theta_i (\lambda_{i+1}+\gamma_{i+1})} \sqbinom{\lambda_{i+1}}{\theta_i}\right) d_{\lambda + \sum_{i=1}^{N-1} \theta_i \alpha_i},
$$
and the lemma follows.
\end{proof}

\begin{Lemma}\label{pumpkins}
For any $\lambda,\kappa \vDash t$,
the inner product $(b_\kappa,b_\lambda)$ is non-zero if and
only if
$\kappa=\lambda+ \sum_{i=1}^{N-1} (\lambda_{i+1}-\rho_{i+1}) \alpha_i$
for $\rho = (\rho_1,\dots,\rho_N)$
with $\rho_1 = \lambda_1$ and
$0 \leq \rho_{i+1} \leq \lambda_{i+1}+\min(\lambda_{i},\rho_{i})$ for all
$i=1,\dots,N-1$.
In that case
\begin{align*}
(b_\kappa,b_\lambda)
&=
[m]![n]!\sum_{\tau}
q^{s(\tau)}
\frac{\prod_{i=2}^{N}
\sqbinom{\lambda_{i+1}+\tau_i-\tau_{i+1}}{\tau_{i}-\lambda_{i}}
\sqbinom{\lambda_{i+1}+\tau_i-\tau_{i+1}}{\tau_{i}-\rho_{i}}}
{\prod_{i=1}^N[\lambda_{i+1}+\tau_i-\tau_{i+1}]!
[\lambda_{i+1}+\tau_i-\tau_{i+1}\!+\!\gamma_i]!}\,,
\end{align*}
where we interpret $\lambda_{N+1}$ as zero,
the summation is over $\tau =
(\tau_1,\dots,\tau_{N+1})$
with $\tau_1=\lambda_1, \tau_{N+1}=0$
and
$\max(\lambda_{i+1},\rho_{i+1})\leq \tau_{i+1} \leq
\lambda_{i+1}+\min(\lambda_i,\rho_i)$
for $i=1,\dots,N-1$, and
\begin{multline*}
s(\tau) := \binom{m}{2}+\binom{n}{2}
+
\sum_{i=2}^N(2\tau_i-\lambda_i-\rho_i)(\lambda_{i+1}+\tau_i-\tau_{i+1}+\gamma_{i})\\
-\sum_{i=1}^N \binom{\lambda_{i+1}+\tau_i-\tau_{i+1}}{2}
-\sum_{i=1}^N \binom{\lambda_{i+1}+\tau_i-\tau_{i+1}+\gamma_i}{2}.
\end{multline*}
\end{Lemma}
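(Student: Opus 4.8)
The plan is to move the computation into $S^{m|n}$, where Theorem~\ref{incredible} and Lemma~\ref{edible} give everything explicitly. The first, purely formal, step is this: since $\{b_\tabA\}$ and $\{b^*_\tabA\}$ are dual bases of $T^{m|n}$ for $(-,-)$, the number $(b_\kappa,b_\lambda)$ is the coefficient of $b^*_{\tabA(\mu,\nu;\lambda)}$ in the expansion of $b_\kappa$ into the dual canonical basis. Applying $\pi$ and invoking Theorem~\ref{incredible}(2) --- which annihilates $b^*_\tabB$ for non-anti-dominant $\tabB$ and sends $b^*_{\tabA(\mu,\nu;\beta)}$ to $d_\beta$ --- this says that $(b_\kappa,b_\lambda)$ equals the coefficient of $d_\lambda$ in $\pi(b_\kappa)$. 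So everything reduces to computing $\pi(b_\kappa)$, say in the monomial basis $\{u_\beta\}_{\beta\vDash t}$ of $S^{m|n}$, and then using Lemma~\ref{edible} to re-expand in the $d$-basis.

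To compute $\pi(b_\kappa)$ I would exploit that, because $\kappa$ is anti-dominant, the canonical basis element $b_\kappa$ is quasi-symmetric under permuting the $V^+$-tensor factors among themselves and the $V^-$-factors among themselves --- the quantum shadow of the fact that anti-dominant projectives are prinjective. Concretely, one checks (using the formulas for $\psi$ and the $R$-matrices from Lemma~\ref{getstarted}) that the coefficient of $v_\tabC$ in $b_\kappa$ equals an explicit nonnegative power of $q$, depending only on the rearrangement and not on $\kappa$, times the coefficient of $v_{\tabC^\circ}$, where $\tabC^\circ$ is the anti-dominant tableau row-equivalent to $\tabC$. Applying $\pi$ (which sends $v_\tabC$ to a power of $q$ times $u_{\tabC^\circ}$) and summing the resulting $q$-powers over all rearrangements of the rows of a fixed anti-dominant tableau $\tabA(\mu,\nu;\beta)$ yields a $q$-multinomial coefficient; this is the source of the factor $[m]![n]!\big/\prod_i[\lambda_{i+1}+\tau_i-\tau_{i+1}]!\,[\lambda_{i+1}+\tau_i-\tau_{i+1}+\gamma_i]!$ in the statement, since $\lambda_{i+1}+\tau_i-\tau_{i+1}=\beta_i$ for the intermediate composition $\beta$, the multiplicities of $i$ in the two rows of $\tabA(\mu,\nu;\beta)$ are $\beta_i+\mu_i$ and $\beta_i+\nu_i$, and $(\beta_i+\mu_i)!(\beta_i+\nu_i)!=\beta_i!(\beta_i+\gamma_i)!$ because $\mu_i\nu_i=0$. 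Combined with the explicit product formula for the $d$-basis from Theorem~\ref{incredible}(3) and the $z_i$-commutation relations \eqref{wine1}--\eqref{wine2}, this produces $\pi(b_\kappa)=\sum_\beta c_{\kappa,\beta}(q)\,u_\beta$ with $c_{\kappa,\beta}(q)$ expressible through the same binomials that appear in Lemma~\ref{crazypre}.

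Finally I would substitute this into ``$(b_\kappa,b_\lambda)=$ the $d_\lambda$-coefficient of $\pi(b_\kappa)$'', which via Lemma~\ref{edible} becomes a double sum over intermediate compositions $\beta$ --- recorded by the variable $\tau$ through $\beta=\lambda+\sum_i(\lambda_{i+1}-\tau_{i+1})\alpha_i$ --- of the product of a binomial factor attached to $\kappa$ (encoded by $\rho$, via $\kappa=\lambda+\sum_i(\lambda_{i+1}-\rho_{i+1})\alpha_i$) and one attached to $\lambda$, weighted by the $q$-multinomial normalisation. The support conditions $\max(\lambda_{i+1},\rho_{i+1})\le\tau_{i+1}\le\lambda_{i+1}+\min(\lambda_i,\rho_i)$, the boundary values $\tau_1=\rho_1=\lambda_1$ and $\tau_{N+1}=0$, and the non-vanishing constraint on $\kappa$ all drop out of matching monomial supports, exactly as in the proof of Theorem~\ref{crazy}. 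I expect the main obstacle to be the $q$-power bookkeeping: assembling the shift from Theorem~\ref{incredible}(3), the terms produced by the quantum commutation relations \eqref{rel1}--\eqref{rel4} and \eqref{last1} when reorganising the monomials (these yield the self-interaction contributions $-\binom{\lambda_{i+1}+\tau_i-\tau_{i+1}}{2}$ and $-\binom{\lambda_{i+1}+\tau_i-\tau_{i+1}+\gamma_i}{2}$ and the cross terms $(2\tau_i-\lambda_i-\rho_i)(\lambda_{i+1}+\tau_i-\tau_{i+1}+\gamma_i)$), and the $\binom{m}{2}+\binom{n}{2}$ coming from normalising the $q$-multinomial, and then verifying that they combine into precisely the stated exponent $s(\tau)$.
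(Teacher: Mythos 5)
Your proposal is essentially the paper's own argument in a reorganized form: unwinding ``the $d_\lambda$-coefficient of $\pi(b_\kappa)$'' via Lemma~\ref{edible} gives back exactly the identity $(b_\kappa,b_\lambda)=\sum_{\tabB}(b_\kappa,v_\tabB)(b_\lambda,v_\tabB)$ with both factors computed as $d$-coefficients of monomials in $S^{m|n}$, followed by the same $q$-multinomial sum $\sum_{\tabB\sim\tabA(\mu,\nu;\beta)}q^{2\ell(\tabB)}$ over row rearrangements and the same substitution $\phi_i=\tau_{i+1}-\lambda_{i+1}$, $\theta_i=\tau_{i+1}-\rho_{i+1}$, so the approach, key lemmas and deferred exponent bookkeeping all match the paper. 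The one adjustment I would make is to your ``quasi-symmetry'' of $b_\kappa$: rather than attempting to verify it from the $R$-matrix formulas of Lemma~\ref{getstarted} (which would amount to recomputing canonical basis coefficients), observe that it is immediate from Theorem~\ref{incredible}(2) together with the duality of the bases, since $\ker\pi$ is spanned by the $b^*_{\tabB}$ with $\tabB$ non-anti-dominant and these are orthogonal to $b_\kappa$ --- which is precisely the flip the paper uses to evaluate $(b_\kappa,v_\tabB)$ as the $d_\kappa$-coefficient of $u_\tabB$.
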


\begin{proof}
We have that $b_\lambda = \sum_{\tabB \in \Tab_{m|n}} (b_\lambda, v_\tabB) v_\tabB$.
Hence,
\begin{equation}\label{pho}
(b_\kappa,b_\lambda) = \sum_{\tabB \in \Tab_{m|n}} (b_\kappa,v_\tabB) (b_\lambda,v_\tabB)
=\sum_{\beta \vDash t}
\bigg[\sum_{\tabB \sim \tabA(\mu,\nu;\beta)} (b_\kappa,v_\tabB) (b_\lambda,v_\tabB)\bigg].
\end{equation}
To compute the number $(b_\kappa,v_\tabB)$
appearing on the right hand side of this
formula, we have that $v_\tabB = \sum_{\tabA \in \Tab_{m|n}} (b_\tabA, v_\tabB)
b_\tabA^*$.
Hence, in view of Theorem~\ref{incredible}(2), we can compute $(b_\kappa,v_\tabB)$ by
applying $\pi$: it is the $d_\kappa$-coefficient of $u_\tabB =
\pi(v_\tabB)$ when expanded in terms of the dual canonical basis of $S^{m|n}$.
For $\tabB=
\substack{a_1 \cdots a_{m} \\ b_1 \cdots b_{n}}$,
we let $\ell(\tabB) := \#\{i < j\:|\:a_i > a_j\} + \#\{i < j\:|\:b_i < b_j\}$
so that $u_\tabB = q^{\ell(\tabB)} u_\beta$.
Then Lemma~\ref{edible} shows that $(b_\kappa,v_\tabB)$ is non-zero only if
$\beta = \kappa -\sum_{i=1}^{N-1} \theta_i \alpha_i$ for
$(\theta_1,\dots,\theta_{N-1})$ with $0 \leq \theta_i \leq
\kappa_{i}$ for each $i$, in which case
$$
(b_\kappa,v_\tabB) = q^{\ell(\tabB)} \prod_{i=1}^{N-1} q^{\theta_i(\beta_{i+1}
  + \gamma_{i+1})} \sqbinom{\beta_{i+1}}{\theta_i}.
$$
Similarly, $(b_\lambda,v_\tabB)$ is non-zero only if
$\beta=\lambda -\sum_{i=1}^{N-1} \phi_i \alpha_i$ for
$(\phi_1,\dots,\phi_{N-1})$ with $0 \leq \phi_i \leq
\lambda_{i}$ for each $i$, in which case
$$
(b_\lambda,v_\tabB) = q^{\ell(\tabB)}\prod_{i=1}^{N-1} q^{\phi_i(\beta_{i+1}
  + \gamma_{i+1})} \sqbinom{\beta_{i+1}}{\phi_i}.
$$
Observe also that
\begin{align*}
\sum_{\tabB \sim \tabA(\mu,\nu;\beta)} q^{2 \ell(\tabB)}
&= \frac{q^{\binom{m}{2}}
[m]!q^{\binom{n}{2}}[n]!}{\prod_{i=1}^N q^{\binom{\beta_i+\mu_i}{2}}[\beta_i+\mu_i]!
q^{\binom{\beta_i+\nu_i}{2}}[\beta_i+\nu_i]!}\\
&= \frac{q^{\binom{m}{2}}
[m]!q^{\binom{n}{2}}[n]!}{\prod_{i=1}^N q^{\binom{\beta_i}{2}}[\beta_i]!
q^{\binom{\beta_i+\gamma_i}{2}}[\beta_i+\gamma_i]!}.
\end{align*}
Putting these observations together, we deduce that
the $\beta$th summand on the right hand side of (\ref{pho}) is
non-zero only if
$\beta  = \kappa -\sum_{i=1}^{N-1} \theta_i \alpha_i = \lambda - \sum_{i=1}^{N-1} \phi_i \alpha_i$
for $(\theta_1,\dots,\theta_{N-1})$ and $(\phi_1,\dots,\phi_{N-1})$
satisfying
$0 \leq \theta_i \leq \kappa_i, 0 \leq \phi_i \leq \lambda_{i}$ for all $i=1,\dots,N-1$, in which case it equals
\begin{equation}\label{drips}
q^{\binom{m}{2}}
 [m]!
q^{\binom{n}{2}}
[n]!
\frac{\prod_{i=1}^{N-1} q^{(\phi_i+\theta_i)(\beta_{i+1}
  + \gamma_{i+1})}
\sqbinom{\beta_{i+1}}{\phi_i}
\sqbinom{\beta_{i+1}}{\theta_i}
}
{\prod_{i=1}^N q^{\binom{\beta_i}{2}}[\beta_i]!
q^{\binom{\beta_i+\gamma_i}{2}}[\beta_i\!+\!\gamma_i]!}.
\end{equation}
Now we complete the proof by repeating the last part of the proof
of Theorem~\ref{crazy}:
in the formula (\ref{drips}), we replace $\phi_i$ by $\tau_{i+1}-\lambda_{i+1}$ and
$\theta_i$ by $\tau_{i+1}-\rho_{i+1}$,
to deduce that the $\beta$th summand of (\ref{pho})
gives a non-zero contribution only if
there exist $(\rho_2,\dots,\rho_{N})$ and $(\tau_2,\dots,\tau_N)$
such that
$\beta = \lambda + \sum_{i=1}^{N-1} (\lambda_{i+1}-\tau_{i+1}) \alpha_i$,
$\kappa = \lambda + \sum_{i=1}^{N-1} (\lambda_{i+1}-\rho_{i+1}) \alpha_i$,
and $\max(\lambda_{i+1},\rho_{i+1}) \leq \tau_{i+1} \leq
\lambda_{i+1}+\min(\lambda_i,\rho_i)$
for all $i=1,\dots,N-1$, interpreting $\rho_1$ as $\lambda_1$.
Then we simplify as before.
\end{proof}

\proof[Proof of Theorem~\ref{crazier}]
This follows from Lemma~\ref{pumpkins}
together with
the discussion in \cite[\S5.9]{BLW}, on passing to the limit as $N \rightarrow \infty$.
The main point is that the canonical basis $\{b_\tabA\:|\:\tabA \in
\Tab_{m|n}\}$ corresponds to the indecomposable graded projectives in
the graded lift of $\O_\Z$ constructed in {\em loc. cit.}
thanks to \cite[Corollary 5.30]{BLW} (and \cite[Theorem 3.10]{BLW}).
The bilinear form $(-,-)$
is the same as the pairing from \cite[(5.30)]{BLW}.
\endproof

\end{document}